\documentclass[11pt,a4paper]{article}

\usepackage[margin=1in]{geometry}
\usepackage{amsmath,amssymb,amsthm,mathtools,bm,mathrsfs}
\usepackage{booktabs,array,graphicx}
\usepackage{enumitem}
\usepackage{microtype}
\usepackage{xcolor}
\usepackage{tikz}
\usetikzlibrary{arrows.meta,positioning,fit,calc,shapes.geometric}
\usepackage{algorithm}
\usepackage{algpseudocode}
\usepackage[numbers,sort&compress]{natbib}
\usepackage[hidelinks]{hyperref}
\usepackage[capitalize,nameinlink]{cleveref}
\makeatletter
\renewcommand{\theHALG@line}{\thealgorithm.\arabic{ALG@line}}
\makeatother
\allowdisplaybreaks[3]
\usepackage{array}
\usepackage{makecell}
\newcolumntype{C}[1]{>{\centering\arraybackslash}m{#1}}

\theoremstyle{plain}
\newtheorem{theorem}{Theorem}[section]
\newtheorem{lemma}[theorem]{Lemma}
\newtheorem{proposition}[theorem]{Proposition}
\newtheorem{corollary}[theorem]{Corollary}
\theoremstyle{definition}

\theoremstyle{plain}

\newcommand{\newsiamremark}[2]{%
  \theoremstyle{remark}%
  \newtheorem{#1}[theorem]{#2}%
  \theoremstyle{plain}%
}

\newenvironment{keywords}
  {\par\smallskip\begingroup\small\noindent\textbf{Keywords.}\ \ignorespaces}
  {\par\endgroup}

\crefname{section}{section}{sections}
\Crefname{section}{Section}{Sections}
\crefname{subsection}{subsection}{subsections}
\Crefname{subsection}{Subsection}{Subsections}
\crefname{theorem}{Theorem}{Theorems}
\Crefname{theorem}{Theorem}{Theorems}
\crefname{lemma}{Lemma}{Lemmas}
\Crefname{lemma}{Lemma}{Lemmas}
\crefname{algorithm}{Algorithm}{Algorithms}
\Crefname{algorithm}{Algorithm}{Algorithms}

\newsiamremark{assumption}{Assumption}
\newsiamremark{remark}{Remark}
\crefname{assumption}{Assumption}{Assumptions}
\Crefname{assumption}{Assumption}{Assumptions}
\crefname{remark}{Remark}{Remarks}
\Crefname{remark}{Remark}{Remarks}

\newcommand{\R}{\mathbb R}
\newcommand{\E}{\mathbb E}
\newcommand{\Ical}{\mathcal I}
\newcommand{\Pcal}{\mathcal P}
\newcommand{\Rcal}{\mathcal R}
\newcommand{\Acal}{\mathcal A}
\newcommand{\Hcal}{\mathcal H}
\newcommand{\Tcal}{\mathcal T}
\newcommand{\Mcal}{\mathcal M}
\newcommand{\F}{\mathrm F}
\newcommand{\op}{\mathrm{op}}
\newcommand{\rank}{\operatorname{rank}}
\newcommand{\range}{\operatorname{range}}
\newcommand{\inner}[2]{\left\langle #1,#2\right\rangle}
\newcommand{\norm}[1]{\left\lVert #1\right\rVert}
\newcommand{\abs}[1]{\left\lvert #1\right\rvert}
\newcommand{\ceil}[1]{\left\lceil #1\right\rceil}
\newcommand{\bX}{\bm X}
\newcommand{\bY}{\bm Y}
\newcommand{\bZ}{\bm Z}
\newcommand{\bE}{\bm E}
\newcommand{\bN}{\bm N}
\newcommand{\bU}{\bm U}
\newcommand{\bV}{\bm V}
\newcommand{\bA}{\bm A}
\newcommand{\bB}{\bm B}
\newcommand{\bC}{\bm C}
\newcommand{\bM}{\bm M}
\newcommand{\bL}{\bm L}
\newcommand{\bR}{\bm R}
\newcommand{\bI}{\bm I}
\newcommand{\bG}{\bm G}
\newcommand{\bQ}{\bm Q}

\newcommand{\bS}{\bm S}
\newcommand{\bW}{\bm W}
\newcommand{\sharpnorm}[1]{\norm{#1}_{\sharp}}

\tikzset{
  diagramnode/.style={draw=black!65, rounded corners=2pt, fill=black!3,
    align=center, inner sep=5pt, font=\small},
  proofnode/.style={draw=black!60, rounded corners=2pt, fill=black!2,
    align=center, inner sep=4pt, font=\scriptsize},
  diagramarrow/.style={-{Latex[length=2.2mm]}, thick, draw=black!70}
}

\title{Near-Optimal Nonconvex Matrix Completion}
\author{%
Jian-Feng Cai\thanks{Department of Mathematics, Hong Kong University of Science and Technology. E-mail: \texttt{jfcai@ust.hk}.}
\and
Xiliang Lu\thanks{School of Mathematics and Statistics, Hubei Center for Applied Mathematics, and Hubei Key Laboratory of Computational Science, Wuhan University, Wuhan 430072, China. E-mail: \texttt{xllv.math@whu.edu.cn}.}
\and
Juntao You\thanks{(corresponding author.) School of Artificial Intelligence, Hubei Center for Applied Mathematics, and Hubei Key Laboratory of Computational Science, Wuhan University, Wuhan 430072, China.  E-mail: \texttt{youjuntao@whu.edu.cn}. }
}
\date{}
\hypersetup{
  pdftitle={Near-Optimal Nonconvex Matrix Completion},
  pdfauthor={Jian-Feng Cai, Xiliang Lu, and Juntao You},
  pdfkeywords={matrix completion, sample complexity, Riemannian optimization, Riemannian gradient descent, Riemannian Gauss--Newton method, leave-one-out analysis}
}

\begin{document}
\maketitle

\begin{abstract}
We study nonconvex methods for matrix completion, the problem of recovering a low-rank matrix from a subset of its entries. Convex methods achieve sample complexity linear in the matrix dimension and the rank, up to logarithmic factors, whereas global guarantees for commonly used nonconvex methods require a higher polynomial dependence on the rank. We close this gap by analyzing Riemannian gradient descent (RGD) and Riemannian Gauss--Newton (RGN) methods. For an $n\times n$ matrix of rank $r$ with incoherence parameter $\mu$ and condition number $\kappa$, the two methods achieve exact recovery with high probability from $O(\mu nr\log n\log(n\kappa))$ and $O(\mu nr\log n\log(2\mu r\kappa))$ observations, respectively. The methods use a multiscale residual initialization, while the analysis simultaneously controls the spectral error and incoherence. The resulting RGD iterates converge linearly, whereas RGN eventually converges Q-quadratically.
\end{abstract}

\begin{keywords}
nonconvex matrix completion, sample complexity, Riemannian gradient descent, Riemannian Gauss--Newton method, multiscale  initialization, leave-one-out analysis
\end{keywords}

\section{Introduction}\label{sec:introduction}

Matrix completion seeks to recover a low-rank matrix from a subset of its entries and arises in a broad range of problems in machine learning and data analysis, including collaborative filtering~\cite{KeshavanMontanariOh2010}, dimensionality reduction and clustering~\cite{ChenBhojanapalliSanghaviWard2015}, model reduction and system identification~\cite{LiuVandenberghe2009}, and sensor network localization~\cite{ChenBhojanapalliSanghaviWard2015}. Given an unknown matrix $\bX_\star\in\R^{n\times n}$ of rank $r$ and an observed index set $\Omega$, a natural formulation is
\begin{equation}\label{eq:matrix-completion}
\min_{\bX\in\R^{n\times n}}\rank(\bX) \quad\text{subject to}\quad \Pcal_{\Omega}(\bX)=\Pcal_{\Omega}(\bX_\star),
\end{equation}
where $\Pcal_\Omega$ retains the entries indexed by $\Omega$.

Since rank minimization is computationally intractable in general, the seminal work of Cand\`es and Recht~\cite{CandesRecht2009} studied the convex relaxation obtained by replacing the rank with the nuclear norm. Cand\`es and Tao~\cite{CandesTao2010} proved exact recovery from $O(\mu_{\mathrm{s}}^2nr\log^6 n)$ randomly observed entries under the strong incoherence condition. Chen~\cite{Chen2015} later showed that $O(\mu nr\log^2 n)$ observations suffice under standard incoherence. This bound is optimal in its dependence on $\mu$, $n$, and $r$, up to logarithmic factors. Nevertheless, nuclear norm minimization can be computationally expensive. To reduce this computational cost, a number of efficient nonconvex methods have been developed that exploit the rank constraint directly. Their global recovery guarantees, however, generally require a higher polynomial dependence on $r$; see Table~\ref{tab:theory-comparison}.  A natural question is \emph{whether efficient nonconvex methods for matrix completion can attain a sample complexity linear in \(n\) and \(r\), up to logarithmic factors.}

We address this question by establishing near-optimal sample complexity bounds for RGD and RGN equipped with a multiscale residual initialization. Under standard incoherence condition, we establish global recovery guarantees for RGD and RGN, with RGD converging linearly and RGN eventually converging Q-quadratically. With high probability, the two methods recover $\bX_\star$ with sample complexities
\[
O(\mu nr\log n\log(n\kappa)) \qquad\text{and}\qquad O(\mu nr\log n\log(2\mu r\kappa)),
\]
respectively, where $\kappa=\sigma_1(\bX_\star)/\sigma_r(\bX_\star)$ is the condition number.

\paragraph{Related work.}
A common nonconvex approach to matrix completion is to factorize
$\bX=\bL\bR^\top$ and optimize over the factors, as in
OptSpace~\cite{KeshavanMontanariOh2010},
alternating minimization~\cite{JainNetrapalliSanghavi2013,Hardt2014},
and gradient-based methods~\cite{SunLuo2016,WangZhangGu2017,CaiCaiYou2023}. For incoherent positive semidefinite matrices, gradient descent~\cite{MaWangChiChen2020} converges linearly without explicit regularization while maintaining incoherence along the iterates, and scaled projected gradient descent~\cite{NgoSaad2012,TongMaChi2021,XuShenChiMa2023,CaiHuangLuYou2025} further removes the dependence of the convergence rate on $\kappa$. Projected-gradient and hard-thresholding methods~\cite{JainMekaDhillon2010,JainNetrapalli2015,TannerWei2013,BlanchardTannerWei2015} instead work directly with the matrix variable, but require a rank-$r$ approximation after each gradient step. Riemannian methods~\cite{Vandereycken2013,WeiCaiChanLeung2020} avoid this large-scale truncation by restricting the search direction to the tangent space of the fixed-rank manifold. Second-order methods have also been studied, including MatrixIRLS~\cite{KummerleVerdun2021} and Gauss--Newton methods~\cite{ZilberNadler2022,ZhuYuan2025,VakninLauferNadler2025}. Despite these developments, the known global recovery guarantees generally retain a higher polynomial dependence on the rank; for example, Riemannian gradient descent~\cite{WeiCaiChanLeung2020} requires $O(\mu\kappa^6nr^2\log^2 n)$ observations under certain conditions. In contrast, for Gaussian matrix sensing, Riemannian gradient descent~\cite{CaiWuXia2025} attains the optimal $O(nr)$ sample complexity. For matrix completion, however, the sampling operator does not satisfy a uniform restricted isometry over low-rank matrices, and the iterates must be shown to remain incoherent. This makes the analysis more challenging.

Another issue is the dependence on the condition number in initialization. The ordinary spectral estimator, widely used in nonconvex methods, can incur an additional factor $\kappa^2$ in the sampling requirement (see \Cref{thm:necessary-scales} in this work). Stagewise methods such as SoftDeflate~\cite{HardtWootters2014} and the projected-gradient method~\cite{JainNetrapalli2015} reduce or remove this dependence, but the recovery guarantees still have a higher polynomial dependence on $r$. Our multiscale initialization instead controls the spectral, row, column, and entrywise errors simultaneously, which avoids an additional polynomial dependence on $r$ in the sampling requirement.

\begin{table}[t]
\caption{Complexity comparison for matrix completion under the stated recovery guarantees. All results assume incoherence, while Factorized GD~\cite{MaWangChiChen2020} also assumes PSD and $\kappa=O(1)$; Riemannian GD~\cite{WeiCaiChanLeung2020} additionally assumes spikiness.}
\label{tab:theory-comparison}
\centering
\footnotesize
\setlength{\tabcolsep}{2.5pt}
\renewcommand{\arraystretch}{1.15}
\setlength{\extrarowheight}{0pt}

\begin{tabular}{|
>{\centering\arraybackslash}m{\dimexpr0.125\textwidth-2\tabcolsep-1.2\arrayrulewidth\relax}|
>{\centering\arraybackslash}m{\dimexpr0.125\textwidth-2\tabcolsep-1.2\arrayrulewidth\relax}|
>{\centering\arraybackslash}m{\dimexpr0.165\textwidth-2\tabcolsep-1.2\arrayrulewidth\relax}|
>{\centering\arraybackslash}m{\dimexpr0.250\textwidth-2\tabcolsep-1.2\arrayrulewidth\relax}|
>{\centering\arraybackslash}m{\dimexpr0.295\textwidth-2\tabcolsep-1.2\arrayrulewidth\relax}|}
\hline

\Gape[4pt][4pt]{\textbf{Method}}
& \Gape[4pt][4pt]{\shortstack[c]{\textbf{Iteration}\\\textbf{complexity}}}
& \Gape[4pt][4pt]{\shortstack[c]{\textbf{Cost per}\\\textbf{iteration}}}
& \Gape[4pt][4pt]{\shortstack[c]{\textbf{Sample}\\\textbf{complexity}}}
& \Gape[4pt][4pt]{\shortstack[c]{\textbf{Total computational}\\\textbf{complexity}}} \\
\hline
\Gape[4pt][4pt]{\shortstack[c]{Nuclear norm\\minimization\\\cite{CandesTao2010,Chen2015,DingChen2020}}}
&\Gape[4pt][4pt]{\shortstack[c]{Polynomial\\ time; solver\\ dependent}}
&\Gape[4pt][4pt]{Solver dependent}
&\Gape[4pt][4pt]{$O\!\left(\mu nr\log(2\mu r)\log n\right)$}
&\Gape[4pt][4pt]{Solver dependent}  \\
\hline

\Gape[4pt][4pt]{\shortstack[c]{Factorized \\ GD\cite{MaWangChiChen2020}}}
&
\Gape[4pt][4pt]{$O\!\left(\kappa^2\log\frac{1}{\varepsilon}\right)$}
&
\Gape[4pt][4pt]{$O(|\Omega|r+nr)$}
&
\Gape[4pt][4pt]{$O(\mu^3nr^3\log^3 n)$}
&
\Gape[4pt][4pt]{$O\!\left(\mu^3\kappa^2nr^4\log^3 n\log\frac{1}{\varepsilon}\right)$}
\\
\hline

\Gape[4pt][4pt]{\shortstack[c]{SVP/PGD\\ \cite{JainMekaDhillon2010,DingChen2020,WangWei2026}}}
&
\Gape[4pt][4pt]{$O\!\left(\log\frac{1}{\varepsilon}\right)$}
&
\Gape[4pt][4pt]{$O(n^3)$}
&
\Gape[4pt][4pt]{$O(\mu^2\kappa^4nr^2\log n)$}
&
\Gape[4pt][4pt]{$O\!\left(n^3\log\frac{1}{\varepsilon}\right)$}
\\
\hline

\Gape[4pt][4pt]{\shortstack[c]{ScaledPGD\\ \cite{TongMaChi2021}}}
&
\Gape[4pt][4pt]{$O\!\left(\log\frac{1}{\varepsilon}\right)$}
&
\Gape[4pt][4pt]{$O(|\Omega|r+nr^2)$}
&
\Gape[4pt][4pt]{$O\!\left(\mu\kappa^2nr^2(\mu\kappa^2\vee\log n)\right)$}
&
\Gape[4pt][4pt]{$O\!\left(\mu\kappa^2nr^3(\mu\kappa^2\vee\log n)\log\frac{1}{\varepsilon}\right)$}
\\
\hline

\Gape[4pt][4pt]{\shortstack[c]{Riemannian \\ GD\cite{WeiCaiChanLeung2020}}}
&
\Gape[4pt][4pt]{$O\!\left(\log\frac{1}{\varepsilon}\right)$}
&
\Gape[4pt][4pt]{$O(|\Omega|r+nr^2)$}
&
\Gape[4pt][4pt]{$O\!\left(\max\{\mu_0,\mu_1^2\}\kappa^6nr^2\log^2 n\right)$}
&
\Gape[4pt][4pt]{$O\!\left(\max\{\mu_0,\mu_1^2\}\kappa^6nr^3\log^2 n\log\frac{1}{\varepsilon}\right)$}
\\
\hline

\Gape[4pt][4pt]{\shortstack[c]{RGD\\(this paper)}}
&
\Gape[4pt][4pt]{$O\!\left(\log\frac{1}{\varepsilon}\right)$}
&
\Gape[4pt][4pt]{$O(|\Omega|r+nr^2)$}
&
\Gape[4pt][4pt]{$O\!\left(\mu nr\log n\log(n\kappa)\right)$}
&
\Gape[4pt][4pt]{$O\!\left(\mu nr^2\log^2(n\kappa)\left(\log n+\log\frac{1}{\varepsilon}\right)\right)$}
\\
\hline

\Gape[4pt][4pt]{\shortstack[c]{RGN\\(this paper)}}
&
\Gape[4pt][4pt]{$O\!\left(\log\log\frac{1}{\varepsilon}\right)$}
&
\Gape[4pt][4pt]{$O\!\left(J_k(|\widehat{\Omega}|r+nr^2)\right)$}
&
\Gape[4pt][4pt]{$O\!\left(\mu nr\log n\log(2\mu r\kappa)\right)$}
&
\Gape[4pt][4pt]{$O\!\left(\mu Jnr^2\log^2 n\log(2\mu r\kappa)\log\frac{1}{\varepsilon}\right)$}
\\
\hline

\end{tabular}
\end{table}

\paragraph{Our contributions and proof strategy.}
Our main contribution is to establish near-optimal global sample complexity bounds for efficient nonconvex RGD and RGN methods in matrix completion. As discussed above, two sources of additional sample complexity arise in the usual analysis: the conversion from spectral to Frobenius error can introduce an additional factor $r$, while ordinary spectral initialization can incur a factor $\kappa^2$.

To control the loss in rank $r$, we keep track of the spectral, row, column, and entrywise errors simultaneously. For $\bZ\in\R^{n\times n}$, define
\[
\norm{\bZ}_{2,\infty}:=\max_i\norm{\bm e_i^T\bZ}_2, \qquad \norm{\bZ^T}_{2,\infty}:=\max_j\norm{\bZ\bm e_j}_2, \qquad \norm{\bZ}_{\infty}:=\max_{i,j}|(\bZ)_{ij}|,
\]
and introduce the \emph{sharp-norm} as
\begin{equation}\label{eq:sharp-main}
\sharpnorm{\bZ}:=\max\left\{\norm{\bZ}_{\op},\frac12\sqrt{\frac{n}{\mu r}}\norm{\bZ}_{2,\infty},\frac12\sqrt{\frac{n}{\mu r}}\norm{\bZ^T}_{2,\infty},\frac{n}{4\mu r}\norm{\bZ}_{\infty}\right\}.
\end{equation}
The multiscale initialization maintains this sharp-norm control and reaches the required local regions without introducing an additional polynomial dependence on $r$ in the sampling requirement. For RGD, the eventual conversion to the Frobenius norm affects only the number of initialization steps; for RGN, the sharp-norm control is continued through the initial iterations before the analysis enters the Frobenius regime.

To control the loss in $\kappa^2$, we use a multiscale residual initialization in place of the ordinary spectral estimator. Successive residual reconstructions reduce the sharp-norm error by a fixed factor at the sampling level $p\gtrsim\mu r\log n/n$, and we establish both the sampling and computational complexities of this procedure. We further show that the $\mu\kappa^2r/n$ sampling scale of the ordinary spectral estimator is necessary over an explicit family of incoherent matrices.

The proof combines these initialization estimates with the local convergence analysis. For RGD, the initialization reaches the required Frobenius neighborhood, where a uniform tangent-space sampling estimate yields linear convergence. For RGN, a finite leave-one-out argument propagates the sharp-norm control through the initial iterations; once the iterates enter a sufficiently small Frobenius neighborhood, a local deterministic argument yields quadratic convergence.

\paragraph{Organization and notation.}
Section~\ref{sec:model} introduces the observation model, the Riemannian algorithms, and the multiscale initialization. Section~\ref{sec:main} states the global recovery guarantees and the lower bound for ordinary spectral initialization. Section~\ref{sec:interfaces} presents the proof framework, including the local convergence and initialization results. Their proofs, together with the proofs of the global theorems, are given in Sections~\ref{sec:local-rgd}--\ref{sec:local-rgn}. Sections~\ref{sec:numerics} and~\ref{sec:conclusion} present the numerical experiments and concluding remarks. The appendices collect the supporting probabilistic and geometric estimates, the proof of the spectral lower bound, the sharp-norm estimates for spectral reconstruction, the leave-one-out analysis, and the implementation and complexity analysis.

Throughout the paper, bold lowercase letters denote vectors and bold uppercase letters denote matrices, while scalars are written in ordinary type. The vector $\bm e_i$ denotes the $i$th standard basis vector. For a vector $\bm x$, $\norm{\bm x}_2$ denotes the Euclidean norm. For a matrix $\bZ$, $\norm{\bZ}_{\op}$ and $\norm{\bZ}_{\F}$ denote the operator norm and Frobenius norm respectively. We write $\sigma_i(\bZ)$ for the $i$-th largest singular value of $\bZ$, and $\rank(\bZ)$ and $\range(\bZ)$ for its rank and column space. The symbols $\bI$ and $\Ical$ denote the identity matrix and identity operator, respectively. We use $O(\cdot)$ for bounds up to an absolute numerical constant independent of the problem parameters.

\section{Problem Formulation and Riemannian Algorithms}\label{sec:model}

We first formulate the matrix completion problem and then describe the Riemannian algorithms considered in this paper.

\subsection{Problem setup}
Suppose that $\bX_\star\in\R^{n\times n}$,  $n\ge2$ is an unknown matrix of rank $r$, where $1\le r<n$. Assume we observe its entries independently according to the Bernoulli sampling model:
\[
Y_{ij}=
\begin{cases}
(\bX_\star)_{ij}, & \text{with probability }p,\\
\ast, & \text{with probability }1-p,
\end{cases}
\qquad 1\le i,j\le n,
\]
where $0<p\le1$. Let $\Omega:=\{(i,j):Y_{ij}\neq\ast\}$ denote the set of observed indices, also denoted by $\Omega\sim\operatorname{Bernoulli}(p)$. The associated sampling operator $\Pcal_\Omega$ is defined by
\[
(\Pcal_\Omega(\bZ))_{ij}=
\begin{cases}
Z_{ij}, & (i,j)\in\Omega,\\
0, & (i,j)\notin\Omega.
\end{cases}
\]
The matrix completion problem is to recover $\bX_\star$ from the observed entries $\Pcal_\Omega(\bX_\star)$. Let
\[
\bX_\star=\bU_\star\bm\Sigma_\star\bV_\star^T, \qquad \bm\Sigma_\star=\operatorname{diag}(\sigma_1,\ldots,\sigma_r), \qquad \sigma_1\ge\cdots\ge\sigma_r>0,
\]
be a compact singular value decomposition of $\bX_\star$. We assume that $\bX_\star$ satisfies the following standard incoherence condition.
\begin{assumption}[Incoherence~\cite{CandesRecht2009,CandesTao2010,Chen2015}]\label[assumption]{ass:incoherence}
For some $1\le\mu\le n/r$, it holds
$$
\max\left\{\max_i\norm{\bU_\star^T\bm e_i}_2^2,\max_j\norm{\bV_\star^T\bm e_j}_2^2\right\}\le\frac{\mu r}{n}.
$$
\end{assumption}
The standard incoherence condition was introduced by Cand\`es and Recht \cite{CandesRecht2009} for low-rank matrix completion. It requires the left and right singular spaces of $\bX_\star$ to be weakly correlated with the canonical basis, preventing the matrix from being concentrated on only a few entries. For the completion problem, we consider the following nonconvex formulation:
\begin{equation}\label{eq:ls-objective}
\min_{\bX\in\Mcal_r} f_\Omega(\bX),\qquad f_\Omega(\bX):=\frac{1}{2p}\norm{\Pcal_\Omega(\bX-\bX_\star)}_{\F}^2,
\end{equation}
where
\[
\Mcal_r:=\{\bX\in\R^{n\times n}:\rank(\bX)=r\}
\]
is the manifold of rank-$r$ matrices.

\subsection{Riemannian gradient descent}\label{sec:opt-formulation}

We first present the RGD method for \eqref{eq:ls-objective}. The algorithm updates the current iterate along the negative gradient direction in the tangent space of the fixed-rank manifold and then retracts the tangent update back onto the manifold. We first recall the geometry of the fixed low-rank manifold.  Let $\bX=\bU\bm\Sigma\bV^T\in\Mcal_r$ be a compact singular value decomposition. The tangent space of $\Mcal_r$ at $\bX$ is \cite{Vandereycken2013}
\[
T_{\bX}\Mcal_r = \left\{ \bU\bZ_1^T+\bZ_2\bV^T: \bZ_1,\bZ_2\in\R^{n\times r} \right\},
\]
and the orthogonal projection onto $T_{\bX}\Mcal_r$ is
\begin{equation}\label{eq:tangent-projector}
\Pcal_{T_{\bX}}(\bZ) = \bU\bU^T\bZ+\bZ\bV\bV^T-\bU\bU^T\bZ\bV\bV^T.
\end{equation}
Since $\nabla f_\Omega(\bX) = p^{-1}\Pcal_\Omega(\bX-\bX_\star)$, the Riemannian gradient of $f_\Omega$ at $\bX$ is
\[
\operatorname{grad}f_\Omega(\bX) = \Pcal_{T_{\bX}}\nabla f_\Omega(\bX) = p^{-1}\Pcal_{T_{\bX}}\Pcal_\Omega(\bX-\bX_\star).
\]
A tangent update does not in general belong to $\Mcal_r$. We use the orthographic retraction \cite{AbsilOseledets2015} to map it back onto the fixed-rank manifold. For $\bm\xi\in T_{\bX}\Mcal_r$, define
\begin{equation}\label{eq:graph-retraction-opt}
\operatorname{Retr}_{\bX}(\bm\xi) := (\bX+\bm\xi)\bV \bigl[\bU^T(\bX+\bm\xi)\bV\bigr]^{-1} \bU^T(\bX+\bm\xi),
\end{equation}
whenever $\bU^T(\bX+\bm\xi)\bV$ is nonsingular. Let $\bm\xi_k=-\operatorname{grad}f_\Omega(\bX_k)$. Since $\inner{\nabla f_\Omega(\bX_k)}{\bm\xi_k}=-\norm{\bm\xi_k}_{\F}^2$, exact line search along this tangent direction \cite{WeiCaiChanLeung2020} gives
\begin{equation}\label{eq:rgd-exact-line-search}
\alpha_k =\arg\min_{\alpha\in\R}f_\Omega(\bX_k+\alpha\bm\xi_k) =\frac{\norm{\bm\xi_k}_{\F}^2}{p^{-1}\norm{\Pcal_\Omega(\bm\xi_k)}_{\F}^2},
\end{equation}
whenever $\bm\xi_k\ne\bm0$. The RGD update is then
\[
\bX_{k+1}=\operatorname{Retr}_{\bX_k}(\alpha_k\bm\xi_k),
\]
as summarized in \Cref{alg:fixed-rgd}. If $\bm\xi_k=\bm0$, the algorithm terminates.

\begin{algorithm}[H]
\caption{Riemannian gradient descent (RGD)}
\label{alg:fixed-rgd}
\begin{algorithmic}[1]
\Statex \textbf{Input:} $p$, $\Pcal_\Omega(\bX_\star)$, and $\bX_0\in\Mcal_r$.
\For{$k=0,1,2,\ldots$}
  \State $\bm\xi_k\gets-p^{-1}\Pcal_{T_{\bX_k}}
  \bigl(\Pcal_\Omega(\bX_k)-\Pcal_\Omega(\bX_\star)\bigr)$.
  \State $\alpha_k\gets\norm{\bm\xi_k}_{\F}^2/\bigl(p^{-1}\norm{\Pcal_\Omega(\bm\xi_k)}_{\F}^2\bigr)$ if $\bm\xi_k\neq\bm0$; otherwise, stop.
  \State $\bX_{k+1}\gets\operatorname{Retr}_{\bX_k}(\alpha_k\bm\xi_k)$.
\EndFor
\end{algorithmic}
\end{algorithm}

The tangent gradient can be evaluated using sparse matrix--factor products in $O(|\Omega|r+nr^2)$ operations. The exact line-search stepsize can be evaluated in the same order by computing $\norm{\bm\xi_k}_{\F}$ and the entries of $\bm\xi_k$ on $\Omega$. The orthographic retraction can be computed from low-rank factors using two thin QR factorizations and an $r\times r$ singular value decomposition in $O(nr^2+r^3)$ operations \cite{AbsilOseledets2015}. Thus, since $r<n$, each RGD iteration costs $O(|\Omega|r+nr^2)$ operations. The implementation details are given in \Cref{app:implementation}.

\subsection{Riemannian Gauss--Newton}

Riemannian Gauss--Newton first computes a search direction in the tangent space and then retracts the tangent update back onto $\Mcal_r$. Let $\widehat{\Omega}\subset\Omega$ denote the subset of observations used for the RGN iterations. At $\bX\in\Mcal_r$, since
\[
D\operatorname{Retr}_{\bX}(0)[\bm\xi]=\bm\xi, \qquad \bm\xi\in T_{\bX}\Mcal_r,
\]
linearizing the sampled residual along the retraction gives the Gauss--Newton subproblem
\begin{equation}\label{eq:gn-model}
\min_{\bm\xi\in T_{\bX}\Mcal_r} \frac12\norm{\Pcal_{\widehat{\Omega}}(\bX-\bX_\star+\bm\xi)}_{\F}^2.
\end{equation}
Its first-order optimality condition is the tangent normal equation
\begin{equation}\label{eq:ambient-normal-equation}
\Pcal_{T_{\bX}}\Pcal_{\widehat{\Omega}}\Pcal_{T_{\bX}}\bm\xi = \Pcal_{T_{\bX}}\Pcal_{\widehat{\Omega}}(\bX_\star-\bX).
\end{equation}
When the sampled tangent normal operator is positive definite on $T_{\bX}\Mcal_r$, \eqref{eq:gn-model} has a unique minimizer. We compute this direction by applying the conjugate gradient method to \eqref{eq:ambient-normal-equation} on $T_{\bX}\Mcal_r$. At the $k$-th nonterminal RGN iteration, let $J_k\ge1$ denote the number of CG iterations used to solve the normal equation. CG is started from zero and run to the exact solution. If the right-hand side is zero, the algorithm terminates; the sequence is then continued by its final iterate for the convergence statements. In exact case, the finite-termination property of CG \cite[Theorems~2.3.2 and~3.1.1]{Greenbaum1997} gives
\[
J_k\le\dim(T_{\bX_k}\Mcal_r)=r(2n-r).
\]
The resulting iteration is summarized in \Cref{alg:fixed}.

\begin{algorithm}[H]
\caption{Riemannian Gauss--Newton (RGN)}
\label{alg:fixed}
\begin{algorithmic}[1]
\Statex \textbf{Input:} $\Pcal_{\widehat{\Omega}}(\bX_\star)$ and $\bX_0\in\Mcal_r$.
\For{$k=0,1,2,\ldots$}
  \State Apply $J_k$ CG iterations, starting from zero, to solve the following normal equation for $\bm\xi_k$:
  $$\Pcal_{T_{\bX_k}}\Pcal_{\widehat{\Omega}}\Pcal_{T_{\bX_k}}\bm\xi
  =\Pcal_{T_{\bX_k}}\Pcal_{\widehat{\Omega}}(\bX_\star-\bX_k).$$
  \State $\bX_{k+1}\gets\operatorname{Retr}_{\bX_k}(\bm\xi_k)$.
\EndFor
\end{algorithmic}
\end{algorithm}

We also consider a regularized variant of RGN. At the $k$-th iteration, define
\begin{equation}\label{eq:rgn-lambda}
\lambda_k =\frac{1}{\sigma_r(\bX_k)}\norm{\Pcal_{T_{\bX_k}}\Pcal_{\widehat{\Omega}}(\bX_\star-\bX_k)}_{\F}
\end{equation}
and add $\lambda_k\bm\xi$ to the left-hand side of the normal equation \eqref{eq:ambient-normal-equation}. At every nonterminal iteration, $\lambda_k>0$, so the regularized normal equation is positive definite on $T_{\bX_k}\Mcal_r$. Using the same CG and termination conventions as above gives \Cref{alg:regularized-rgn}.

\begin{algorithm}[H]
\caption{Regularized Riemannian Gauss--Newton}
\label{alg:regularized-rgn}
\begin{algorithmic}[1]
\Statex \textbf{Input:} $\Pcal_{\widehat{\Omega}}(\bX_\star)$ and $\bX_0\in\Mcal_r$.
\For{$k=0,1,2,\ldots$}
  \State $\displaystyle
  \lambda_k\gets
  \frac{1}
  {\sigma_r(\bX_k)}\norm{\Pcal_{T_{\bX_k}}\Pcal_{\widehat{\Omega}}(\bX_\star-\bX_k)}_{\F}.$
  \State Apply $J_k$ CG iterations, starting from zero, to solve the following normal equation for $\bm\xi_k$:
  $$\Pcal_{T_{\bX_k}}\Pcal_{\widehat{\Omega}}\Pcal_{T_{\bX_k}}\bm\xi
  +\lambda_k\bm\xi
  =
  \Pcal_{T_{\bX_k}}\Pcal_{\widehat{\Omega}}(\bX_\star-\bX_k).$$
  \State $\bX_{k+1}\gets\operatorname{Retr}_{\bX_k}(\bm\xi_k)$.
\EndFor
\end{algorithmic}
\end{algorithm}

The normal equations in \Cref{alg:fixed,alg:regularized-rgn} can be solved matrix-free without forming the sampled tangent normal matrix. Each normal-operator application costs $O(|\widehat{\Omega}|r+nr^2)$ operations, and the regularization term in \Cref{alg:regularized-rgn} does not change this order. The orthographic retraction costs $O(nr^2+r^3)$ operations. Hence, since $r<n$ and $J_k\ge1$, the $k$-th iteration of either method costs
\[
O\!\left(J_k(|\widehat{\Omega}|r+nr^2)\right)
\]
operations. The matrix-free implementation and detailed complexity analysis are given in \Cref{app:implementation}.

\subsection{Multiscale initialization}\label{sec:sample-split}

The local convergence results for both RGD and RGN require an initial point sufficiently close to $\bX_\star$. By \Cref{thm:necessary-scales}, the standard spectral estimator can require a sampling probability of order $\mu\kappa^2r/n$ to reach a constant sharp-norm neighborhood. We instead use a multiscale residual spectral initialization, related to residual spectral updates in singular value projection and iterative hard thresholding~\cite{JainMekaDhillon2010,TannerWei2013,BlanchardTannerWei2015} and to stagewise constructions for matrix completion~\cite{HardtWootters2014,JainNetrapalli2015}.

At each scale, the current approximation is corrected by the observed residual and then truncated at a decreasing spectral level. Let $\Omega^{(\ell)}\subset\Omega$ denote the observations used at scale $\ell$, with sampling probability $q$. Starting from $\bZ_0=\bm0$, set $\tau_0:=2q^{-1/2}\norm{\Pcal_{\Omega^{(0)}}(\bX_\star)}_{\F}$ and $\tau_\ell:=4^{-\ell}\tau_0$. At iteration $\ell$, the residual correction satisfies
\[
\bZ_\ell+q^{-1}\Pcal_{\Omega^{(\ell+1)}}(\bX_\star-\bZ_\ell) = \bX_\star+\bigl(q^{-1}\Pcal_{\Omega^{(\ell+1)}}-\Ical\bigr)(\bX_\star-\bZ_\ell).
\]
Thus, the sampling perturbation acts on the current error $\bX_\star-\bZ_\ell$. Accordingly, we use the following update:
\begin{equation}\label{eq:multiscale-update}
\bZ_{\ell+1} = \Tcal_{\tau_\ell}\!\left( \bZ_\ell+q^{-1}\Pcal_{\Omega^{(\ell+1)}}(\bX_\star-\bZ_\ell) \right), \qquad 0\le\ell<K,
\end{equation}
where $\Tcal_{\tau_\ell}$ denotes the spectral truncation operator defined below. For a singular value decomposition $\bY=\sum_j\sigma_j\bm u_j\bm v_j^T$, first define the hard spectral thresholding operator
\begin{equation*}
\Hcal_{\ge\lambda}(\bY):=\sum_{\sigma_j\ge\lambda}\sigma_j\bm u_j\bm v_j^T.
\end{equation*}
For $\tau>0$, $\Tcal_\tau(\bY)$ is computed as follows: Starting from $\bQ_0=\operatorname{qf}(\bG)$, where $\bG\in\R^{n\times r}$ has independent standard Gaussian entries, compute
\begin{equation}\label{eq:threshold-iteration}
\bQ_{t+1}=\operatorname{qf}\!\left(\bY(\bY^T\bQ_t)+\frac{\tau^2}{4096}\bQ_t\right), \qquad 0\le t<\ceil{12\log n},
\end{equation}
where $\operatorname{qf}$ denotes the orthogonal factor in a thin QR factorization. The term $\frac{\tau^2}{4096}\bQ_t$  preserves the eigenspaces of $\bY\bY^T$ and keeps the block iteration well defined. Writing $\bQ$ for the final factor, define
\begin{equation}\label{eq:threshold-definition}
\Tcal_\tau(\bY):=\bQ\Hcal_{\ge\tau/8}(\bQ^T\bY).
\end{equation}
Thus each reconstruction uses matrix--factor products and a compressed singular value decomposition. Its sharp-norm approximation property is established in \Cref{thm:sharp-spectral}, and the computational cost of one reconstruction is as follows.
\begin{proposition}\label[proposition]{prop:reconstruction-cost}
Suppose that $\rank(\bZ)\le r$ and $|\Lambda|=m$. Then
\[
\Tcal_\tau\!\left(\bZ+q^{-1}\Pcal_\Lambda(\bX_\star-\bZ)\right)
\]
can be computed in matrix-free form using $O\!\left((mr+nr^2)\log n\right)$  operations. Evaluating the observed residual costs $O(mr)$ additional operations.
\end{proposition}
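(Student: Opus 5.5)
The plan is to represent $\bY:=\bZ+q^{-1}\Pcal_\Lambda(\bX_\star-\bZ)$ implicitly as the sum of a low-rank part and a sparse part, and never to form it as a dense $n\times n$ matrix. Write $\bZ=\bA\bB^T$ with $\bA,\bB\in\R^{n\times r'}$ and $r'\le r$; this is the factored form produced by the previous reconstruction, namely $\bQ$ times the output of the compressed SVD. Then set
\[
\bS:=q^{-1}\Pcal_\Lambda(\bX_\star-\bZ),
\]
which is sparse with at most $m$ nonzeros. Each entry $(\bZ)_{ij}=\bA_{i,:}\bB_{j,:}^T$ with $(i,j)\in\Lambda$ costs $O(r)$ to evaluate. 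Hence forming $\bS$ from the observed values $(\bX_\star)_{ij}$ costs $O(mr)$, which gives the last sentence of the statement.

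Next, the products with $\bY$ and $\bY^T$. For any $\bW\in\R^{n\times r}$,
\[
\bY^T\bW=\bB(\bA^T\bW)+\bS^T\bW .
\]
The first term costs $O(nr^2)$: form the $r'\times r$ matrix $\bA^T\bW$, then multiply by $\bB$. The second term costs $O(mr)$. The same bound holds for $\bY\bW$. In one step of \eqref{eq:threshold-iteration} we compute $\bY^T\bQ_t$, then $\bY(\bY^T\bQ_t)$, then add $\frac{\tau^2}{4096}\bQ_t$ at cost $O(nr)$. The thin QR of an $n\times r$ matrix costs $O(nr^2)$ via Householder. Each step therefore costs $O(mr+nr^2)$.

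Before iterating, two one-time costs are needed. Generating $\bG$ and computing $\bQ_0=\operatorname{qf}(\bG)$ costs $O(nr^2)$. The value $\tau$ is an input; for $\tau_0$ it only requires $\norm{\Pcal_{\Omega^{(0)}}(\bX_\star)}_\F$, which costs $O(m)$. Over the $\ceil{12\log n}$ iterations the total is $O((mr+nr^2)\log n)$, using $\log n\ge\log 2>0$ so that the one-time costs are absorbed.

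Finally, apply \eqref{eq:threshold-definition}. Computing $\bQ^T\bY=(\bY^T\bQ)^T$ costs one more product, $O(mr+nr^2)$, and yields an $r\times n$ matrix $\bM$. To threshold it without an $n$-dimensional SVD, take a thin QR factorization $\bM^T=\bm{\tilde Q}\bR$ at cost $O(nr^2)$ and compute the SVD of the $r\times r$ matrix $\bR^T$ at cost $O(r^3)$. If $\bR^T=\bm P\bm\Sigma\bm W^T$, then $\bM=\bm P\bm\Sigma(\bm{\tilde Q}\bm W)^T$. Keeping the singular values $\ge\tau/8$ gives $\Hcal_{\ge\tau/8}(\bM)$ in factored form. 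The output $\Tcal_\tau(\bY)=(\bQ\bm P_{\ge})\bm\Sigma_{\ge}(\bm{\tilde Q}\bm W_{\ge})^T$ is assembled in $O(nr^2)$, has rank at most $r$, and is in the factored form needed for the next scale. Since $r<n$, the $O(r^3)$ term is dominated by $O(nr^2)$.

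The only point needing care is the bookkeeping of dimensions, so that no step introduces an $n^2$ or $n^2r$ cost. This rests on two things: $\bZ$ must be kept in factored form rather than formed densely, and $\bS$ must be stored sparsely with $O(m)$ nonzeros, so every product with $\bY$ is charged $O(mr+nr^2)$. Everything else is standard dense linear algebra cost counting.
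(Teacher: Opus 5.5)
Your proposal is correct and follows essentially the same route as the paper: you write $\bY$ as a factored rank-$r$ part plus a sparse part with at most $m$ nonzeros, you charge each product with $\bY$ or $\bY^T$ and each thin QR at $O(mr+nr^2)$ over the $\ceil{12\log n}$ steps, and you realize $\Hcal_{\ge\tau/8}(\bQ^T\bY)$ through a thin QR of $\bY^T\bQ$ followed by an $r\times r$ SVD of $\bR^T$. Your added remark is a small but useful point that the paper leaves implicit: the output is itself returned in factored form, so the factored-$\bZ$ assumption carries over from one scale to the next.
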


\begin{proof}
The proof is given in \Cref{proof:reconstruction-cost}.
\end{proof}

As for the topping rule in the above initialization stage, we adopt the following residual test. For the rank-$r$ iterates, we use the observed residual
\[
R_\ell:=q^{-1/2}\norm{\Pcal_{\Omega^{(\ell+1)}}(\bX_\star-\bZ_\ell)}_{\F},
\]
and stop the initialization at $R_\ell$'s first increase or after $K$ reconstructions. The output is denoted by $\bZ_{\widehat K}$. The procedure is summarized in \Cref{alg:init}.

\begin{algorithm}[H]
\caption{Multiscale initialization}
\label{alg:init}
\begin{algorithmic}[1]
\Statex \textbf{Input:} rank $r$, sampling probability $q$, maximum number of reconstructions $K$, and $\{\Pcal_{\Omega^{(\ell)}}(\bX_\star)\}_{\ell=0}^{K}$.
\State $\bZ_0\gets\bm0$, $\tau_0\gets2q^{-1/2}\norm{\Pcal_{\Omega^{(0)}}(\bX_\star)}_{\F}$, $j\gets\varnothing$.
\For{$\ell=0,\ldots,K-1$}
  \State $\bZ_{\ell+1}\gets\Tcal_{\tau_\ell}\!\left(\bZ_\ell+q^{-1}\Pcal_{\Omega^{(\ell+1)}}(\bX_\star-\bZ_\ell)\right)$.
  \State $\tau_{\ell+1}\gets\tau_\ell/4$.
  \State If $\ell+1<K$, set $R_{\ell+1}\gets q^{-1/2}\norm{\Pcal_{\Omega^{(\ell+2)}}(\bX_\star-\bZ_{\ell+1})}_{\F}$.
  \State \textbf{Stop} and return $\bZ_j$ if $\ell+1<K$, $j\neq\varnothing$, $\rank(\bZ_{\ell+1})=r$, and $R_{\ell+1}>R_j$.
  \State Set $j\gets\ell+1$ if $\ell+1<K$ and $\rank(\bZ_{\ell+1})=r$.
\EndFor
\State \Return $\bZ_{K}$.
\end{algorithmic}
\end{algorithm}

We now specify the observation sets used in the analysis. Set
\[
B:=
\begin{cases}
K+1, & \text{for RGD},\\
K+2, & \text{for RGN},
\end{cases}
\qquad q:=1-(1-p)^{1/B}.
\]
The two choices of the upper bound $K$ are specified in \Cref{sec:main}. For each $(i,j)\in\Omega$, independently draw $\bm b_{ij}\in\{0,1\}^B\setminus\{\bm0\}$ from the product Bernoulli$(q)$ distribution conditioned on being nonzero, and set $\Omega^{(\ell)}:=\{(i,j)\in\Omega:(\bm b_{ij})_\ell=1\}$ for $0\le\ell\le B-1$. Under the unconditional law, $\Omega^{(0)},\ldots,\Omega^{(B-1)}$ are mutually independent Bernoulli$(q)$ subsets with $\Omega=\bigcup_{\ell=0}^{B-1}\Omega^{(\ell)}$, where the subsets may not be disjoint. The first $K+1$ components are used for initialization. For RGN, the last component $\widehat{\Omega}:=\Omega^{(K+1)}$ is reserved for the subsequent iterations.

\begin{remark}\label{rem:initialization-sampling}
The above decomposition is used only in the analysis. In numerical implementation, the same observation set $\Omega$ is used throughout the algorithms, with $q$ replaced by $p$.
\end{remark}

\section{Main Results}\label{sec:main}

In this section, we state the global recovery guarantees for RGD and RGN, and then give a lower bound for ordinary spectral initialization.

\subsection{Global convergence of RGD}

For RGD, let $\bX_0$ be the output of \Cref{alg:init} with the upper bound
$$
K=\ceil{5+\log_4(\kappa\sqrt{nr})}.
$$
Starting from $\bX_0$, let $\{\bX_k\}_{k\ge0}$ be the iterates of \Cref{alg:fixed-rgd} on $\Omega$. The following theorem establishes linear convergence from this initialization.

\begin{theorem}[Global convergence of RGD]\label[theorem]{thm:rgd-global}
Suppose that \Cref{ass:incoherence} holds and $\Omega\sim\operatorname{Bernoulli}(p)$. Let $\bX_0$ be the output of \Cref{alg:init} with $K$ specified above, and let $\{\bX_k\}_{k\ge0}$ be generated by \Cref{alg:fixed-rgd}. If
\[
p\ge C_1\frac{\mu r\log n\log(n\kappa)}{n},
\]
where $C_1>0$ is a sufficiently large absolute constant, then, with probability at least $1-n^{-10}$, the initialization and all subsequent iterates are well defined, and
\[
\norm{\bX_k-\bX_\star}_{\F} \le \left(\frac38\right)^k \norm{\bX_0-\bX_\star}_{\F}, \qquad k\ge0.
\]
\end{theorem}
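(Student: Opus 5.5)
The proof has two parts: an initialization phase that reaches a small Frobenius neighborhood of $\bX_\star$, and a local contraction phase for RGD on the full set $\Omega$.

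\textbf{Initialization phase.} For each scale $\ell$, the set $\Omega^{(\ell+1)}$ is a fresh Bernoulli$(q)$ sample, independent of $\bZ_\ell$. Since $q\ge p/B$ and $B=O(\log(n\kappa))$, the hypothesis $p\ge C_1\mu r\log n\log(n\kappa)/n$ gives $q\gtrsim \mu r\log n/n$. This is exactly the sampling level at which the sharp-norm spectral reconstruction estimate (\Cref{thm:sharp-spectral}) applies. The induction hypothesis is
\[
\sharpnorm{\bZ_\ell-\bX_\star}\le c\,\tau_\ell .
\]
Conditioning on $\bZ_\ell$, I bound
\[
\sharpnorm{(q^{-1}\Pcal_{\Omega^{(\ell+1)}}-\Ical)(\bX_\star-\bZ_\ell)}
\]
by a small constant times $\sharpnorm{\bX_\star-\bZ_\ell}$. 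The operator-norm part follows from matrix Bernstein, using the entrywise and row/column parts of the sharp norm. The $2,\infty$ and entrywise parts follow from scalar Bernstein. \Cref{thm:sharp-spectral} then says that truncation at level $\tau_\ell/8$ returns $\bZ_{\ell+1}$ with error at most $c\,\tau_{\ell+1}$, a reduction by the factor $4$.

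\textbf{Base case and number of scales.} The base case uses $\tau_0\approx 2\norm{\bX_\star}_{\F}\ge 2\sigma_1$; a Bernstein estimate for $\norm{\Pcal_{\Omega^{(0)}}(\bX_\star)}_{\F}$ shows $\tau_0$ is within a constant factor of $2\norm{\bX_\star}_{\F}$. After $K$ steps the error is
\[
\sharpnorm{\bZ_K-\bX_\star}\lesssim 4^{-K}\sqrt r\,\sigma_1\le \frac{\sigma_r}{4^5\sqrt{nr}}\cdot\sqrt r .
\]
Converting to Frobenius norm via rank at most $2r$ gives
\[
\norm{\bZ_K-\bX_\star}_{\F}\le\sqrt{2r}\,\norm{\bZ_K-\bX_\star}_{\op}\le c'\sigma_r\sqrt{\mu r/n}\cdot(\text{small}).
\]
This is comfortably inside the local basin $\norm{\bX_0-\bX_\star}_{\F}\le c\,\sigma_r\sqrt{p/(\mu r\log n)}$, or whatever radius the local RGD result in \Cref{sec:interfaces} requires. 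This is exactly why $K$ contains $\log_4(\kappa\sqrt{nr})$. Since $\tau_\ell$ stays above $8\norm{\bZ_\ell-\bX_\star}_{\op}$ plus a margin while $\sigma_r\ge\tau_\ell/4$, Weyl's inequality shows the truncation keeps exactly rank $r$ at the later scales. A union bound over $K+1$ scales keeps the failure probability below $n^{-11}$.

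\textbf{Early stopping.} The early stop at the first increase of $R_\ell$ must be handled. I would use that $R_\ell$ concentrates around $\norm{\bX_\star-\bZ_\ell}_{\F}$ within a factor $1\pm1/4$, again by fresh-sample Bernstein. While the geometric decrease is in force, the errors drop by factor $4$, so $R$ strictly decreases. Any returned $\bZ_j$ therefore has error comparable to the minimum over scales, which is still in the basin.

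\textbf{Local phase.} Once in the basin, I invoke the local RGD convergence result stated in \Cref{sec:interfaces}. It rests on a uniform tangent-space sampling estimate on the full $\Omega$:
\[
\norm{\Pcal_{T_{\bX_\star}}(p^{-1}\Pcal_\Omega-\Ical)\Pcal_{T_{\bX_\star}}}\le\tfrac1{10}
\quad\text{w.p. } 1-n^{-11}\text{ when } p\gtrsim\mu r\log n/n,
\]
together with the perturbation to nearby tangent spaces. That result yields contraction $3/8$ per step, with the exact line-search step $\alpha_k\in[1-\epsilon,1+\epsilon]^{-1}$ and the orthographic retraction contributing only second-order error $\norm{\bm\xi}^2/\sigma_r$. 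Linear convergence also keeps the iterates in the basin, giving the stated bound for all $k$.

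\textbf{Main obstacle.} The hardest step is the inductive sharp-norm contraction, in particular the entrywise and $2,\infty$ control of the truncated output $\Tcal_\tau$. It hinges on \Cref{thm:sharp-spectral}, which I assume. A further subtlety is that the randomized subspace iteration inside $\Tcal_\tau$ must succeed with high probability; the $\tau^2/4096$ shift and the $12\log n$ iterations are designed so that this failure probability is polynomially small.
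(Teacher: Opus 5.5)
Your overall architecture matches the paper's. You run a fresh-sample induction with \Cref{thm:sharp-spectral} to get $\sharpnorm{\bZ_\ell-\bX_\star}\le4^{-\ell}\tau_0$. At $\ell=K$ you convert to Frobenius norm via rank $2r$, using $\tau_0\le3\sqrt r\,\sigma_1(\bX_\star)$ and $q\ge r/n$. You then apply \Cref{thm:fixed-rgd-local}, whose event is uniform over the Frobenius ball and so tolerates the dependence of $\bX_0$ on $\Omega$.

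The genuine gap is your treatment of the residual stopping rule. It is part of the statement, since $\bX_0$ is the output of \Cref{alg:init}. The sentence ``while the geometric decrease is in force, the errors drop by factor $4$'' conflates the geometrically decreasing \emph{upper bound} $4^{-\ell}\tau_0$ with the actual errors $\bE_\ell=\bZ_\ell-\bX_\star$. The invariant allows $\norm{\bE_{\ell+1}}_\F>\norm{\bE_\ell}_\F$ whenever $\norm{\bE_\ell}_\F$ sits well below its bound $\sqrt{2r}\,4^{-\ell}\tau_0$, so the test can fire at some $j<K$. The returned $\bZ_j$ then only satisfies $R_j\le R_i$ for earlier rank-$r$ indices $i\le j$. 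You established basin membership only at $\ell=K$, and $R_j<R_{j+1}$ says nothing about $R_K$. So ``comparable to the minimum over scales, which is still in the basin'' does not follow.

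The missing ingredient is an error-proportional Frobenius contraction once rank $r$ is reached. At that point $\tau_\ell\le3\sigma_r(\bX_\star)$, so the corrected matrix $\bY$ has a clear gap at index $r$. Compare the output with the best rank-$r$ approximation $\Hcal_r(\bY)$, using:
\begin{itemize}
\item the best-approximation inequality;
\item the rank-$r$ Schur-complement structure, so the normal part of $\bW$ enters only quadratically;
\item fresh-sample Bernstein for $\norm{\Pcal_{T_{\bX_\star}}\bW}_\F\le\frac1{16}\norm{\bE_\ell}_\F$.
\end{itemize}
Adding the $n^{-7}$ accuracy of the subspace iteration gives $\norm{\bE_{\ell+1}}_\F\le\frac14\norm{\bE_\ell}_\F+n^{-6}\tau_\ell$. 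All later iterates have rank $r$, so the compared iterates are consecutive. With your $1\pm\frac14$ concentration, $R_{\ell+1}>R_\ell$ then forces $\norm{\bE_\ell}_\F<2n^{-6}\tau_\ell\le6n^{-6}\sigma_r(\bX_\star)$, which lies in the basin.

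The concentration of $R_\ell$ is itself not implied by the sharp-norm invariant. Bernstein at this sampling rate needs $\norm{\bE_\ell}_\infty\lesssim\sqrt{\mu r/n}\,\norm{\bE_\ell}_\F$. This comes from the incoherence of the singular factors of $\bZ_\ell$, via $\bE_\ell=\bU_\star\bU_\star^T\bE_\ell+(\bI-\bU_\star\bU_\star^T)\bE_\ell\bV_\ell\bV_\ell^T$. All of this is \Cref{prop:no-premature-rebound}; citing \Cref{thm:init}, as the paper does, closes the gap.

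Two smaller points. First, your intermediate claim that $\sharpnorm{(q^{-1}\Pcal_{\Omega^{(\ell+1)}}-\Ical)\bE}$ is a small multiple of $\sharpnorm{\bE}$ is false. Its row and entrywise parts scale like $q^{-1/2}$ and $q^{-1}$; what is small is $\norm{\mathscr W}_\op$ and $\norm{\mathscr W^j\mathscr Q_\star}_{2,\infty}$ (\Cref{lem:spectral-concentration}). The claim is harmless only because \Cref{thm:sharp-spectral} does not use it. Second, the local radius is $\sqrt p\,\sigma_r(\bX_\star)/128$. Your bound $\frac{3\sqrt2}{1024}\sqrt{r/n}\,\sigma_r(\bX_\star)$ meets it because $p\ge q\ge r/n$. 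The radius $c\,\sigma_r(\bX_\star)\sqrt{p/(\mu r\log n)}$ you guessed would instead require $p\gtrsim\mu r^2\log n/n$.
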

\begin{proof}
The proof is deferred to \Cref{sec:proof-global-rgd}.
\end{proof}

Thus $O(\mu nr\log n\log(n\kappa))$ observations suffice for exact recovery with high probability. After \Cref{alg:init}, relative Frobenius accuracy $\varepsilon$ is attained within $O(\log(1/\varepsilon))$ RGD iterations. The complete initialization and computational costs are given in \Cref{thm:complexity}.

\subsection{Global convergence of RGN}

For RGN, set
\[
K=\ceil{6+\log_4(\mu r^{3/2}\kappa)}, \qquad q=1-(1-p)^{1/(K+2)}, \qquad \bar K:=\ceil{\log_2\log_2(4n)}.
\]
We first run \Cref{alg:init} with this upper bound $K$ using the RGN subsets specified in \Cref{sec:sample-split}, and then run \Cref{alg:fixed} on $\widehat{\Omega}$. The following theorem establishes global convergence and eventual quadratic convergence of the resulting RGN iterates.

\begin{theorem}[Global convergence of RGN]\label[theorem]{thm:end}
Suppose that \Cref{ass:incoherence} holds and $\Omega\sim\operatorname{Bernoulli}(p)$. Let $\bX_0$ denote the output of \Cref{alg:init} with $K$ specified above, and let $\{\bX_k\}_{k\ge0}$ be generated by \Cref{alg:fixed} on $\widehat{\Omega}$. If
\[
p\ge C_2\frac{\mu r\log n\log(2\mu r\kappa)}{n},
\]
where $C_2>0$ is a sufficiently large absolute constant, then, with probability at least $1-n^{-10}$, the initialization and all subsequent iterates are well defined. Moreover,
\[
\norm{\bX_k-\bX_\star}_{\sharp} \le \frac{\sigma_r(\bX_\star)}{280\mu r} \left(\frac7{25}\right)^{2^k}, \qquad 0\le k\le\bar K,
\]
and
\[
\norm{\bX_{k+1}-\bX_\star}_{\F} \le 4\frac{\norm{\bX_k-\bX_\star}_{\F}^2} {\sqrt q\,\sigma_r(\bX_\star)}, \qquad k\ge\bar K.
\]
\end{theorem}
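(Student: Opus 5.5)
The proof will run in three stages: initialization, a sharp-norm phase for the first $\bar K$ RGN steps, and a Frobenius phase after that. The initialization and the Frobenius phase rest on results the paper states in \Cref{sec:interfaces}; the middle phase is where the real work lies.

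\textbf{Initialization.} The sharp-norm approximation property of $\Tcal_\tau$ (\Cref{thm:sharp-spectral}) gives a contraction at each scale. With $q\gtrsim \mu r\log n/n$, one reconstruction step reduces the sharp-norm error $\sharpnorm{\bZ_\ell-\bX_\star}$ by a fixed factor, of order $\tau_\ell$. Two consequences follow.
\begin{itemize}
\item After $K=\ceil{6+\log_4(\mu r^{3/2}\kappa)}$ scales the error is at most $\sigma_r/(280\mu r)\cdot(7/25)$. Starting from $\tau_0\asymp\sqrt r\,\sigma_1$, the ratio $\tau_K/\sigma_r$ carries the factors $\mu r^{3/2}\kappa$ that this choice of $K$ absorbs.
\item The residual stopping rule of \Cref{alg:init} never stops at a worse iterate. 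For this I would use the fact that $R_\ell$ concentrates around $\norm{\bX_\star-\bZ_\ell}_{\F}$ on the fresh sample $\Omega^{(\ell+1)}$.
\end{itemize}
Since $q\approx p/(K+2)$ and $K\asymp\log(2\mu r\kappa)$, the requirement $p\ge C_2\mu r\log n\log(2\mu r\kappa)/n$ is exactly what makes $q\gtrsim\mu r\log n/n$. A union bound over $K+2$ stages gives failure probability at most $n^{-10}/3$.

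\textbf{Sharp-norm phase, $0\le k\le\bar K$.} I would prove the step
\[
\sharpnorm{\bX_{k+1}-\bX_\star}\le \frac{280\mu r}{\sigma_r}\sharpnorm{\bX_k-\bX_\star}^2 .
\]
Dividing by $\sigma_r/(280\mu r)$ turns this into squaring of the normalized error, which gives the doubly exponential bound $(7/25)^{2^k}$. The steps are as follows.
\begin{itemize}
\item Write $\bm\xi_k$ as the exact GN solution. Then $\bX_k+\bm\xi_k-\bX_\star = \Pcal_{T^\perp}(\bX_k-\bX_\star)$ plus a correction through $(\Pcal_T\Pcal_{\widehat\Omega}\Pcal_T)^{-1}\Pcal_T\Pcal_{\widehat\Omega}\Pcal_{T^\perp}(\bX_\star-\bX_k)$. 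The first term is quadratic: $\norm{\Pcal_{T^\perp}(\bX_k-\bX_\star)}\lesssim\norm{\bX_k-\bX_\star}^2/\sigma_r$.
\item Control the sharp norm of the correction using the sampling operator restricted to incoherent tangent spaces.
\item Bound the orthographic retraction error $\operatorname{Retr}_{\bX}(\bm\xi)-(\bX+\bm\xi)$ quadratically in sharp norm.
\end{itemize}
Here $\widehat\Omega$ is reused across iterations, so it is not independent of $\bX_k$. I would handle this with a finite leave-one-out construction. For each row or column index $m$, the auxiliary sequence $\bX_k^{(m)}$ uses $\widehat\Omega$ with row/column $m$ replaced by its full expectation. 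One then shows inductively that $\bX_k^{(m)}$ stays close to $\bX_k$ in Frobenius norm at the same quadratic rate. This allows the $2,\infty$ and entrywise terms in row $m$ to be bounded with independent randomness. Because the number of iterations $\bar K$ is only $O(\log\log n)$, the union bound costs nothing.

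\textbf{Frobenius phase, $k\ge\bar K$.} At $k=\bar K$ the bound $(7/25)^{2^{\bar K}}\le n^{-c}$ places $\bX_{\bar K}$ in a tiny Frobenius ball, since $\norm{\cdot}_{\F}\le\sqrt{2r}\,\norm{\cdot}_{\op}$. In that ball I would invoke the deterministic local RGN result, using a uniform bound of the form $\norm{q^{-1}\Pcal_{T}\Pcal_{\widehat\Omega}\Pcal_T-\Pcal_T}\le1/2$ on $\widehat\Omega$ over this neighborhood. That yields
\[
\norm{\bX_{k+1}-\bX_\star}_{\F}\le 4\,\norm{\bX_k-\bX_\star}_{\F}^2/(\sqrt q\,\sigma_r).
\]
The factor $1/\sqrt q$ comes from $\norm{\Pcal_{\widehat\Omega}}\le1$ against the $q$-scaled lower bound. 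An induction then keeps the iterates in the ball. Well-definedness (invertibility of $\bU^T(\bX+\bm\xi)\bV$ and positive definiteness of the normal operator) follows from the same bounds.

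The main obstacle is the leave-one-out induction in the middle phase. It must propagate the $2,\infty$ and $\infty$ components of the sharp norm through the inverse normal operator and the retraction with the exact constant $280\mu r$, without losing a factor of $r$.
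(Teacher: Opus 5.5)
Your proposal follows the paper's route: the proof combines \Cref{thm:init} (after checking $q\ge p/(K+2)\gtrsim\mu r\log n/n$) with \Cref{thm:fixed}, whose proof is exactly your leave-one-out induction on the envelope $\rho_k$, with $d_k^{\rm loo}\le2\sqrt{\mu r/n}\,\rho_k$, followed by the uniform Frobenius-neighborhood transfer of \Cref{lem:det-transfer}. Two points need tightening: you should state that the initializer, including its data-dependent stopping index, depends only on $\Omega^{(0)},\dots,\Omega^{(K)}$ and is therefore independent of $\widehat\Omega$, which is what permits conditioning on $\bX_0$ in the leave-one-out argument; and ``never stopping at a worse iterate'' is not the property the residual test needs, since the required guarantee is \Cref{prop:no-premature-rebound}, which uses the Frobenius contraction $\norm{\bE_{\ell+1}}_{\F}\le\frac14\norm{\bE_\ell}_{\F}+n^{-6}\tau_\ell$ after rank $r$ is reached to show that a residual increase forces the returned iterate to already satisfy the entrance condition.
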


\begin{proof}
The proof is deferred to \Cref{sec:proof-global-rgn}.
\end{proof}

The RGN  can also be regularized and admits a similar global recovery guarantee with the eventual quadratic rate.

\begin{corollary}\label{cor:regularized-rgn}
Under the conditions of \Cref{thm:end}, let $\{\bX_k\}_{0\le k\le\bar K}$ be the RGN iterates in \Cref{thm:end}. Starting from $\bX_{\bar K}$, replace \Cref{alg:fixed} by \Cref{alg:regularized-rgn} on $\widehat{\Omega}$. Then, with probability at least $1-n^{-10}$, all subsequent iterates are well defined and satisfy
\[
\norm{\bX_{k+1}-\bX_\star}_{\F} \le \bar C_2 \frac{\norm{\bX_k-\bX_\star}_{\F}^2} {\sqrt q\,\sigma_r(\bX_\star)}, \qquad k\ge\bar K,
\]
where $\bar C_2>0$ is an absolute constant. 
\end{corollary}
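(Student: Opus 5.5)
The proof reuses the event of \Cref{thm:end} on the first $\bar K$ iterations and then runs a deterministic local argument for the regularized step. On the probability-$(1-n^{-10})$ event of \Cref{thm:end}, the bound at $k=\bar K$ reads
\[
\norm{\bX_{\bar K}-\bX_\star}_{\sharp}\le \frac{\sigma_r}{280\mu r}\left(\frac{7}{25}\right)^{2^{\bar K}}.
\]
Since $2^{\bar K}\ge\log_2(4n)$, this quantity is at most a polynomially small multiple of $\sigma_r/(\mu r)$. Converting to the Frobenius norm with $\norm{\cdot}_{\F}\le\sqrt{2r}\norm{\cdot}_{\op}$ (the difference has rank at most $2r$), $\bX_{\bar K}$ lies in a Frobenius neighborhood of radius $c\sqrt q\,\sigma_r$ for a small absolute constant $c$. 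I will assume, as the proof of \Cref{thm:end} must also use, that on this event the sampling set $\widehat{\Omega}$ satisfies a uniform tangent-space estimate. Concretely, for every $\bX$ in this Frobenius neighborhood,
\[
\bigl\|q^{-1}\Pcal_{T_{\bX}}\Pcal_{\widehat\Omega}\Pcal_{T_{\bX}}-\Pcal_{T_{\bX}}\bigr\|\le\tfrac12,
\]
and $\norm{\Pcal_{\widehat\Omega}(\bZ)}_{\F}\le C\sqrt q\,\norm{\bZ}_{\F}$-type bounds hold for the curvature term $(\Ical-\Pcal_{T_{\bX}})\bX_\star$. These estimates are deterministic consequences of that event, so no new probability is spent and the corollary inherits the $1-n^{-10}$ bound.

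The core is a one-step estimate by induction. Let $\bX=\bX_k$ with $\bE=\bX-\bX_\star$ and $\bN=(\Ical-\Pcal_{T_{\bX}})\bX_\star$, which satisfies $\norm{\bN}_{\F}\le\norm{\bE}_{\F}^2/\sigma_r(\bX)$. Write $\bA=\Pcal_{T}\Pcal_{\widehat\Omega}\Pcal_{T}$. The right-hand side equals $-\bA\Pcal_T\bE+\Pcal_T\Pcal_{\widehat\Omega}\bN$, since $\bX_\star-\bX=-\Pcal_T\bE+\bN$ and $\Pcal_T^\perp\bX=0$. The regularized solution is
\[
\bm\xi=(\bA+\lambda)^{-1}\bigl(-\bA\Pcal_T\bE+\Pcal_T\Pcal_{\widehat\Omega}\bN\bigr),
\]
and therefore
\[
\bm\xi+\Pcal_T\bE=\lambda(\bA+\lambda)^{-1}\Pcal_T\bE+(\bA+\lambda)^{-1}\Pcal_T\Pcal_{\widehat\Omega}\bN.
\]
I bound each term using $\bA\succeq (q/2)\Ical$ on $T$. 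The second term is $O(q^{-1/2}\norm{\bN}_{\F})$, hence $O(\norm{\bE}_{\F}^2/(\sqrt q\sigma_r))$. For the first, $\lambda\le \norm{\Pcal_T\Pcal_{\widehat\Omega}(\bX_\star-\bX)}_{\F}/\sigma_r(\bX)\lesssim q\norm{\bE}_{\F}/\sigma_r$ up to the $\bN$ contribution, so it is at most $(2\lambda/q)\norm{\bE}_{\F}\lesssim \norm{\bE}_{\F}^2/\sigma_r$. Since $q\le1$, this is at most $\norm{\bE}_{\F}^2/(\sqrt q\sigma_r)$. Thus
\[
\norm{\bX+\bm\xi-\bX_\star}_{\F}\le\norm{\bm\xi+\Pcal_T\bE}_{\F}+\norm{\bN}_{\F}\lesssim \frac{\norm{\bE}_{\F}^2}{\sqrt q\,\sigma_r}.
\]

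The next step is the retraction. I will invoke the standard orthographic retraction estimate of \cite{AbsilOseledets2015}: $\norm{\operatorname{Retr}_{\bX}(\bm\xi)-(\bX+\bm\xi)}_{\F}\lesssim\norm{\bm\xi}_{\F}^2/\sigma_r(\bX)$, together with $\norm{\bm\xi}_{\F}\lesssim\norm{\bE}_{\F}$. This preserves the quadratic bound with an absolute constant $\bar C_2$, and it also shows $\bU^T(\bX+\bm\xi)\bV$ is nonsingular, which gives well-definedness. Because $\norm{\bE}_{\F}\le c\sqrt q\sigma_r$ with $c\bar C_2<1$, the error contracts, the iterate stays in the neighborhood, and $\sigma_r(\bX_k)\ge\sigma_r/2$ by Weyl. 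The induction then closes.

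The main obstacle is that $\widehat{\Omega}$ is reused at every iteration, so the iterates depend on it. The tangent-space near-isometry therefore has to hold uniformly over all $\bX$ in the Frobenius neighborhood rather than at a fixed point. I will first try to use the same uniform estimate that underlies the $k\ge\bar K$ part of \Cref{thm:end}. To control the $\Pcal_{\widehat\Omega}\bN$ term without an extra $r$ factor, I will bound it through $\norm{\Pcal_{\widehat\Omega}(\bE\bm\Theta)}_{\F}$ for the factorized form $\bN=(\bI-\bU\bU^T)\bE\bX^\dagger\bE$-type products. If the direct bound loses a factor, I will fall back on Bernstein bounds on $\widehat\Omega$ row sums, which hold with high probability when $p\gtrsim\log n/n$.
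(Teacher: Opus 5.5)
Your plan is essentially the paper's. The paper also works on the event of \Cref{thm:end}. It uses Step~2 of the proof of \Cref{thm:fixed} to place $\bX_{\bar K}$ in a Frobenius ball of radius $O(\sqrt q\,\sigma_r(\bX_\star))$, and then runs a deterministic induction driven by the uniform tangent isometry of \Cref{lem:det-transfer}. That isometry is a deterministic consequence of the $\alpha=0$ case of \Cref{lem:all-true-rip}, so no probability is spent.

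Your closed form $\bm\xi+\Pcal_{T_{\bX}}\bE=(\bA+\lambda)^{-1}\bigl(\lambda\Pcal_{T_{\bX}}\bE+\Pcal_{T_{\bX}}\Pcal_{\widehat\Omega}\bN\bigr)$ is algebraically the same as the paper's argument. The paper compares the regularized direction with the ordinary RGN direction $\widehat{\bm\xi}_k$ via $(\Pcal_{T_{\bX}}q^{-1}\Pcal_{\widehat\Omega}\Pcal_{T_{\bX}}+\lambda_k/q)(\bm\xi_k-\widehat{\bm\xi}_k)=-(\lambda_k/q)\widehat{\bm\xi}_k$.

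Your retraction step is a legitimate simplification. You use \eqref{eq:graph-retraction-identity} to get $\norm{\operatorname{Retr}_{\bX}(\bm\xi)-\bX-\bm\xi}_{\F}\le\norm{\bm\xi}_{\F}^2/(\sigma_r(\bX)-\norm{\bm\xi}_{\F})$ instead of \Cref{lem:normal-F}(ii). That term is $O(\norm{\bE}_{\F}^2/\sigma_r(\bX_\star))$, already below the target $\norm{\bE}_{\F}^2/(\sqrt q\,\sigma_r(\bX_\star))$, so exactness of the population retraction is not needed.

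The step that is not justified as written is the control of $\Pcal_{T_{\bX}}\Pcal_{\widehat\Omega}\bN$. You need it both for the second term and for $\lambda\lesssim q\norm{\bE}_{\F}/\sigma_r$.

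You posit $\norm{\Pcal_{\widehat\Omega}\bN}_{\F}\le C\sqrt q\,\norm{\bN}_{\F}$ as a consequence of the event. Nothing on the event implies this, and it cannot hold uniformly over the ball. $\bN_{\bX}$ depends on $\bX$, hence on $\widehat\Omega$, and it can put a constant fraction of its mass on a single entry that happens to be observed. For example, tilt $\bm u_r,\bm v_r$ of $\bX_\star$ toward $\bm e_1$ by a small multiple of $\sqrt q$.

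Your fallback through row-sum bounds fails too. Turning degree bounds into Frobenius bounds, as in \Cref{lem:weighted-products}, needs $\norm{\cdot}_{2,\infty}$ control of the factors of $\bN_{\bX}$, that is, incoherence of the iterates. That control is not tracked after $\bar K$.

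The fix is that you never need $\Pcal_{\widehat\Omega}\bN$ on its own. By self-adjointness, $\norm{\Pcal_{T_{\bX}}\Pcal_{\widehat\Omega}}_{\F\to\F}=\norm{\Pcal_{\widehat\Omega}\Pcal_{T_{\bX}}}_{\F\to\F}=\norm{\Pcal_{T_{\bX}}\Pcal_{\widehat\Omega}\Pcal_{T_{\bX}}}_{\F\to\F}^{1/2}\le\sqrt{3q/2}$ from your uniform tangent estimate. Hence $\norm{\Pcal_{T_{\bX}}\Pcal_{\widehat\Omega}\bN}_{\F}\le\sqrt{3q/2}\,\norm{\bN}_{\F}$.

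This is exactly how the paper handles the analogous term in Step~3 of the proof of \Cref{thm:fixed} (second bound in \eqref{eq:local-rgn-tail-operator-bounds}). With it, your second term is $O(q^{-1/2}\norm{\bN}_{\F})$, your bound on $\lambda$ follows, and the rest of the induction closes as you describe.
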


\begin{proof}
The proof is deferred to \Cref{sec:proof-reg-rgn}.
\end{proof}

Thus $O(\mu nr\log n\log(2\mu r\kappa))$ observations suffice for both RGN and its regularized variant to converge to $\bX_\star$ with high probability, with quadratic convergence after finitely many iterations. Relative Frobenius accuracy $\varepsilon$ is attained within $O(\log\log(1/\varepsilon))$ subsequent RGN iterations. The computational cost additionally depends on the CG iteration counts $J_k$; see \Cref{thm:complexity}.

\subsection{Lower bound for ordinary spectral initialization}

We next explain why the multiscale initialization is needed in place of the ordinary spectral estimator. The latter may require an additional factor $\kappa^2$ in the sampling probability to reach the sharp-norm neighborhood required by the local convergence results. For $\bY\in\R^{n\times n}$, let
$$
\Hcal_r(\bY)\in\arg\min_{\rank(\bZ)\le r}\norm{\bY-\bZ}_{\F}
$$
denote a fixed best rank-$r$ approximation. The following theorem shows that a sampling probability of order $\mu\kappa^2r/n$ is necessary over an explicit family of incoherent matrices, with the initialization error measured in the sharp norm \eqref{eq:sharp-main}.

\begin{theorem}
\label[theorem]{thm:necessary-scales}
Let $r,s\ge2$ be integers with $rs\le n$, set $\mu=n/(rs)$, and let $1\le\kappa\le\sqrt{s}$. There exist an absolute constant $C_3>0$ and a rank-$r$ matrix $\bX_\star$ satisfying \Cref{ass:incoherence} with coherence parameter $\mu$ and condition number $\kappa$ such that, for every
\[
0<q<C_3\frac{\mu\kappa^2r}{n},
\]
the spectral estimate
\[
\bX_0=\Hcal_r\!\left(q^{-1}\Pcal_\Lambda(\bX_\star)\right), \qquad \Lambda\sim\operatorname{Bernoulli}(q),
\]
satisfies
\[
\Pr\left\{ \sharpnorm{\bX_0-\bX_\star} \ge\frac12\sigma_r(\bX_\star) \right\} \ge\frac12.
\]
\end{theorem}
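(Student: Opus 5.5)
The plan is to use a block-diagonal construction in which $r-1$ "strong" blocks carry the large singular values and one "weak" block carries $\sigma_r$. Partition $r$ disjoint index sets $B_1,\dots,B_r\subset\{1,\dots,n\}$ of size $s$; this is possible since $rs\le n$. Set
\[
\bX_\star=\sum_{k=1}^{r}\sigma_k\,\frac1s\mathbf 1_{B_k}\mathbf 1_{B_k}^T,\qquad \sigma_1=\cdots=\sigma_{r-1}=\kappa\sigma_r .
\]
The singular vectors are $s^{-1/2}\mathbf 1_{B_k}$, so $\norm{\bU_\star^T\bm e_i}_2^2=1/s=\mu r/n$ with $\mu=n/(rs)$. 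Thus \Cref{ass:incoherence} holds, and the condition number is $\kappa$. The key structural fact is that $\bY:=q^{-1}\Pcal_\Lambda(\bX_\star)$ is block diagonal with the same blocks. Its singular values are therefore the union of the singular values of the diagonal blocks $\bY_k$, and $\Hcal_r(\bY)$ keeps the $r$ largest of them, block by block. If the weak block $\bY_r$ contributes no singular value among the top $r$, then $\bX_0$ vanishes on $B_r\times B_r$. In that case $\sharpnorm{\bX_0-\bX_\star}\ge\norm{\bX_0-\bX_\star}_{\op}\ge\sigma_r$, which gives the claim.

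It therefore suffices to show that, with probability at least $1/2$, $\sigma_2(\bY_1)>\sigma_1(\bY_r)$. Then the strong blocks supply $r-1$ top values plus at least one extra, which crowds out $\bY_r$. Write $\bY_1=\frac{\kappa\sigma_r}{qs}\bm A$ and $\bY_r=\frac{\sigma_r}{qs}\bm A'$, where $\bm A,\bm A'$ are independent $s\times s$ Bernoulli$(q)$ matrices. For the weak block, I will use a standard upper bound with constant probability, $\norm{\bm A'}_{\op}\le C(qs+\sqrt{qs}+\log s)$ (for example Bernstein/Latała, or simply the max row count when $qs\lesssim\log s$). For the strong block, $\sigma_2(\bm A)\ge\norm{\bm A-q\mathbf 1\mathbf 1^T}_{\op}-\ldots$ is unreliable. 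Instead I will lower bound $\sigma_2(\bm A)$ directly: by interlacing, $\sigma_2(\bm A)\ge\sigma_1$ of the submatrix of $\bm A$ on rows and columns orthogonal to a one-dimensional subspace. A cleaner route is the bound
\[
\sigma_2(\bm A)^2\ge\frac{\norm{\bm A}_{\F}^2-\sigma_1(\bm A)^2}{s-1}.
\]
Another option is to exhibit two disjoint sets of rows and columns whose submatrices both have a row with many ones. In the regime $qs\ge 1$, this gives $\sigma_2(\bm A)\gtrsim\sqrt{qs}$ with probability near one.

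Combining these bounds requires
\[
\kappa\sqrt{qs}\gtrsim qs+\sqrt{qs}+\log s,
\]
and the dominant requirement $\kappa\sqrt{qs}\gtrsim qs$ reads $q\lesssim\kappa^2/s=\kappa^2\mu r/n$, which matches the threshold $C_3\mu\kappa^2 r/n$. However, the $\sqrt{qs}$ term needs $\kappa\ge C$. So I will split into cases. For $\kappa\ge C_0$ and $1\lesssim qs\le C_3\kappa^2$, the crowding argument above applies.

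In the complementary regimes, namely $qs$ small or $\kappa=O(1)$ (so $q\lesssim 1/s$ up to constants), I will use the entrywise term of the sharp norm instead. Note that $(n/(4\mu r))\norm{\cdot}_\infty=(s/4)\norm{\cdot}_\infty$, while entries of $\bX_\star$ have size at most $\kappa\sigma_r/s$. When $qs\le c$, each block of $\bY$ is a sparse matrix with entries $\sigma_k/(qs)$. I will argue that its top singular pair is localized on a single row or small star, so that $\bX_0$ has an entry of size $\gtrsim\sigma_r/(qs)$. This forces $(s/4)\norm{\bX_0-\bX_\star}_\infty\ge\sigma_r/2$. Alternatively, with probability $\ge1/2$ some block of $\bY$ has an all-zero row or column pattern that makes $\bX_0$ miss a $\sigma_r$-sized component. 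The main obstacle is this bookkeeping: obtaining a clean lower bound on $\sigma_2$ of the strong Bernoulli block and upper bound on the weak block uniformly over the whole range of $q$, and arranging the constant-$\kappa$ and sparse regimes so that a single absolute $C_3$ works with probability at least $1/2$.
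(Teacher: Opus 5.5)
Your block-diagonal construction and your two mechanisms are the right ones. Using $r-1$ strong blocks and one weak block instead of the paper's one strong and $r-1$ weak blocks causes no harm. The mechanisms are: crowding out a block because a strong block has large $\sigma_2$ (bounded below through $\norm{\bm A}_{\F}^2-\sigma_1(\bm A)^2$), and an empty active row. The gap is the case division, which does not cover all $q$.

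Your crowding case is ``$\kappa\ge C_0$ and $1\lesssim qs\le C_3\kappa^2$'' with the weak-block bound $\norm{\bm A'}_{\op}\le C(qs+\sqrt{qs}+\log s)$. It therefore needs $\kappa\sqrt{qs}\gtrsim\log s$, which fails e.g.\ for $qs=2$, $\kappa=C_0$ and $s$ large. Your remaining arguments are invoked only for $qs\le c$: localization, which is unproven and turns out to be unnecessary, and empty rows. So the regime $1\lesssim qs\lesssim\log s$, $C_0\le\kappa\lesssim\log s/\sqrt{qs}$ is not handled. This is exactly the ``bookkeeping'' you leave open, and it is where the proof is decided.

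The missing idea is to split at $qs\asymp\log(rs)$ instead.

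If $qs\le\frac14\log(rs)$: each of the $rs$ active rows is empty independently with probability $(1-q)^s\ge e^{-2qs}\ge(rs)^{-1/2}$, using $q<C_3\kappa^2/s\le C_3<\frac12$. Hence some active row $i\in B_k$ is empty with probability at least $1-e^{-\sqrt{rs}}>\frac12$. Then $\bm e_i^T\bX_0=\bm 0$, and the row term of the sharp norm is at least $\frac12\sqrt s\,\norm{\bm e_i^T\bX_\star}_2=\frac12\sigma_k\ge\frac12\sigma_r$.

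If $qs>\frac14\log(rs)$: Chernoff plus a union over the $2rs$ degrees gives all degrees $\le16qs$, except with probability $1/30$. Hence $\norm{\bY_r}_{\op}\le16\sigma_r$ and $\norm{\bY_1}_{\op}\le16\kappa\sigma_r$, with no $\log s$ term. Paley--Zygmund gives $\norm{\bm A}_{\F}^2\ge qs^2/4$ with probability at least $\frac9{16}\frac{qs^2}{qs^2+1}$. On the intersection, which has probability $>\frac12$,
\[
\sigma_2(\bY_1)^2\ge\frac{\norm{\bY_1}_{\F}^2-\norm{\bY_1}_{\op}^2}{s-1}\ge\frac{1}{s-1}\left(\frac{\kappa^2}{4q}-256\kappa^2\right)\sigma_r^2>256\,\sigma_r^2\ge\norm{\bY_r}_{\op}^2,
\]
using $qs<C_3\kappa^2$, $\kappa^2\le s$ and $C_3=2^{-12}$. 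Since $\frac14\log(rs)<qs<C_3\kappa^2$ already forces $\kappa$ to be large, no separate constant-$\kappa$ case is needed; this is the paper's proof.

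Two smaller repairs remain.

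First, $\sigma_2(\bY_1)>\sigma_1(\bY_r)$ does not make every strong block beat $\bY_r$, so ``the strong blocks supply $r-1$ top values'' is not justified. Conclude by pigeonhole instead. If $\bY_r$ gets a retained singular value, so do both top values of $\bY_1$. That leaves at most $r-2$ slots for the $r-1$ blocks $\bY_2,\dots,\bY_r$, so some block carrying a singular value $\ge\sigma_r$ is missed.

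Second, Bernoulli blocks produce tied singular values with positive probability, so a best rank-$r$ approximation need not be block diagonal. State the missed-block step as a rank bound on the compression of $\bX_0$ to the other blocks, as the paper does via Eckart--Young--Mirsky.
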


\begin{proof}
The proof is deferred to \Cref{sec:proof-necessary-scales}.
\end{proof}

The theorem identifies the $\mu\kappa^2r/n$ sampling scale for the ordinary spectral estimator over this family. In \Cref{alg:init}, reconstruction is applied to successive residuals whose sharp error decreases geometrically.

\section{Proof Framework: Local Convergence and Initialization}\label{sec:interfaces}

The global results are obtained by combining local convergence of the fixed-sample iterations with the multiscale initialization. We first state the local results for RGD and RGN and then show that \Cref{alg:init} reaches their respective hypotheses.

\subsection{Local convergence}
\label{sec:local-condition-geometry}

The two local results use different error controls:
\begin{subequations}\label{eq:local-scales}
\begin{align}
\norm{\bX-\bX_\star}_{\F} &\le\frac1{128}\sqrt{p}\,\sigma_r(\bX_\star), &&\text{for RGD}, \label{eq:rgd-local-scale}\\
\sharpnorm{\bX-\bX_\star} &\le\frac{\sigma_r(\bX_\star)}{1000\mu r}, &&\text{for RGN}. \label{eq:rgn-local-scale}
\end{align}
\end{subequations}
Thus RGD is controlled in a Frobenius neighborhood whose radius depends on the sampling probability, whereas RGN requires the stronger sharp-norm control.

A common geometric identity underlying both analyses is the exactness of the graph retraction for the population tangent correction. If $\sharpnorm{\bX-\bX_\star}\le\frac14\sigma_r(\bX_\star)$, then a later established \Cref{lem:population-exact} shows that the graph core is invertible and
\begin{equation}\label{eq:population-exact-main}
\operatorname{Retr}_{\bX}\!\left(\Pcal_{T_{\bX}}(\bX_\star-\bX)\right)=\bX_\star.
\end{equation}
Hence the local analysis reduces to controlling the error introduced by sampling. For a nonterminal RGD step, let $\bm\xi=\Pcal_{T_{\bX}}p^{-1}\Pcal_\Omega(\bX_\star-\bX)$ and let $\alpha$ be the stepsize in \eqref{eq:rgd-exact-line-search}. Then
\[
\alpha\bm\xi-\Pcal_{T_{\bX}}(\bX_\star-\bX) = (\alpha-1)\Pcal_{T_{\bX}}(\bX_\star-\bX) +\alpha\Pcal_{T_{\bX}}(p^{-1}\Pcal_\Omega-\Ical)(\bX_\star-\bX).
\]
The tangent sampling isometry controls both terms, which are first order in the error. For RGN, if $\bm\xi$ denotes the tangent correction, then
\begin{equation}\label{eq:rgn-correction-main}
\Pcal_{T_{\bX}}q^{-1}\Pcal_{\widehat{\Omega}}\Pcal_{T_{\bX}} \left(\bm\xi-\Pcal_{T_{\bX}}(\bX_\star-\bX)\right) = \Pcal_{T_{\bX}}q^{-1}\Pcal_{\widehat{\Omega}}\Pcal_{T_{\bX}^\perp} (\bX_\star-\bX).
\end{equation}
The right-hand side is driven by the normal component, which is quadratic in the error. Indeed, under the local sharp-norm condition, a later established \Cref{lem:normal} gives
\begin{equation}\label{eq:normal-quadratic-main}
\sharpnorm{\Pcal_{T_{\bX}^\perp}(\bX_\star-\bX)} \le\frac{16}{3} \frac{\sharpnorm{\bX-\bX_\star}^{2}}{\sigma_r(\bX_\star)}.
\end{equation}
The two mechanisms are summarized in \Cref{fig:local-error-equations}.

\begin{figure}[H]
\centering
\resizebox{0.98\linewidth}{!}{%
\begin{tikzpicture}[node distance=5mm and 7mm]
\node[diagramnode, text width=4.1cm] (rgd1) {True-tangent\\sampling isometry\\
\Cref{lem:true-tangent-rip}};
\node[diagramnode, text width=4.1cm, right=of rgd1] (rgd2) {Uniform tangent transfer\\and exact line search\\
\footnotesize{$\alpha\bm\xi-\Pcal_{T_{\bX}}(\bX_\star-\bX)$}\\
\Cref{lem:det-transfer}};
\node[diagramnode, text width=4.1cm, right=of rgd2] (rgd3) {Linear RGD\\ contraction};
\node[diagramnode, text width=4.1cm, below=8mm of rgd1] (rgn1) {Quadratic normal component\\
\footnotesize{$\sharpnorm{\Pcal_{T_{\bX}^\perp}(\bX_\star-\bX)}
\le(16/3)\sharpnorm{\bX-\bX_\star}^2/\sigma_r(\bX_\star)$}\\
\Cref{lem:normal}};
\node[diagramnode, text width=4.1cm, right=of rgn1] (rgn2) {Sampled tangent correction\\and local conditioning\\
\footnotesize{$\Pcal_{T_{\bX}}q^{-1}\Pcal_{\widehat{\Omega}}
\Pcal_{T_{\bX}^\perp}(\bX_\star-\bX)$}\\
\Cref{lem:local-conditioning}};
\node[diagramnode, text width=4.1cm, right=of rgn2] (rgn3) {Quadratic RGN convergence};
\draw[diagramarrow] (rgd1)--(rgd2);
\draw[diagramarrow] (rgd2)--(rgd3);
\draw[diagramarrow] (rgn1)--(rgn2);
\draw[diagramarrow] (rgn2)--(rgn3);
\end{tikzpicture}}
\caption{Local mechanisms for RGD and RGN. For RGD, the tangent sampling isometry controls the stepsize and the first-order tangent perturbation. For RGN, the sampled correction is driven by the quadratic normal component. Finite leave-one-out control brings the iterates into a Frobenius neighborhood where quadratic convergence follows deterministically.}
\label{fig:local-error-equations}
\end{figure}

For RGD, the high-probability event is uniform over the entire Frobenius neighborhood in \eqref{eq:rgd-local-scale}. Consequently, the initial point need not be independent of the observation set in the analysis.

\begin{theorem}[Local convergence of RGD]\label[theorem]{thm:fixed-rgd-local}
Suppose that \Cref{ass:incoherence} holds and $\Omega\sim\operatorname{Bernoulli}(p)$. If $p\ge C_4\frac{\mu r\log n}{n}$, where $C_4>0$ is a sufficiently large absolute constant, then, with probability at least $1-n^{-10}/48$, the following holds simultaneously for every $\bX_0\in\Mcal_r$ satisfying
\[
\norm{\bX_0-\bX_\star}_{\F} \le\frac1{128}\sqrt{p}\,\sigma_r(\bX_\star):
\]
\Cref{alg:fixed-rgd} on $\Omega$ is well defined, and its iterates satisfy
\[
\norm{\bX_k-\bX_\star}_{\F} \le \left(\frac38\right)^k \norm{\bX_0-\bX_\star}_{\F}, \qquad k\ge0.
\]
\end{theorem}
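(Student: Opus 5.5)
The proof has a probabilistic part and a deterministic part. The probabilistic part is a single event, independent of $\bX_0$. On that event, one RGD step contracts the Frobenius error by $3/8$ for \emph{every} $\bX$ in the neighborhood \eqref{eq:rgd-local-scale}. Since the neighborhood is invariant under contraction, induction on $k$ then finishes the proof.

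\textbf{The event.} I would use the tangent sampling isometry at the true tangent space (the one cited as \Cref{lem:true-tangent-rip}):
\[
\norm{\Pcal_{T_{\bX_\star}}(p^{-1}\Pcal_\Omega-\Ical)\Pcal_{T_{\bX_\star}}}_{\op}\le\epsilon_0 ,
\]
valid for a small absolute $\epsilon_0$ once $p\ge C_4\mu r\log n/n$. I would pair it with the elementary bound
\[
\norm{p^{-1/2}\Pcal_\Omega}_{\op}\le p^{-1/2},
\]
which holds deterministically.

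\textbf{Transfer to nearby tangent spaces.} The isometry must be moved to $T_{\bX}$ for $\norm{\bX-\bX_\star}_\F\le\sqrt p\,\sigma_r/128$; this is the uniform tangent transfer of \Cref{lem:det-transfer}. By standard subspace perturbation,
\[
\norm{\Pcal_{T_{\bX}}-\Pcal_{T_{\bX_\star}}}_{\op}\lesssim \frac{\norm{\bX-\bX_\star}_\F}{\sigma_r}\le \frac{\sqrt p}{64}.
\]
Writing $\Pcal_{T_{\bX}}=\Pcal_{T_{\bX_\star}}+\Delta$ and using $\norm{p^{-1}\Pcal_\Omega}\le p^{-1}$, the cross terms cost at most
\[
2p^{-1}\norm{\Delta}+p^{-1}\norm{\Delta}^2\lesssim p^{-1/2}\cdot\sqrt p/64 .
\]
This needs care: the clean way is to factor through $p^{-1/2}\Pcal_\Omega$, so that the error stays $O(1/32)$. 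The conclusion is that on $T_{\bX}$ the operator $p^{-1}\Pcal_\Omega$ is a $(1\pm\epsilon)$-isometry with $\epsilon\le 1/8$, say. This is exactly why the radius carries the factor $\sqrt p$.

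\textbf{One step.} Let $\bE=\bX_\star-\bX$ and $\bm\eta=\Pcal_{T_{\bX}}\bE$.

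First, the stepsize satisfies $\alpha\in[1/(1+\epsilon),1/(1-\epsilon)]$ by the isometry on $T_{\bX}$.

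Second, decompose $\bE=\bm\eta+\Pcal_{T_\bX^\perp}\bE$. Then
\[
\Pcal_{T_\bX}(p^{-1}\Pcal_\Omega-\Ical)\bE=\Pcal_{T_\bX}(p^{-1}\Pcal_\Omega-\Ical)\bm\eta+\Pcal_{T_\bX}p^{-1}\Pcal_\Omega\Pcal_{T_\bX^\perp}\bE .
\]
The first term is at most $\epsilon\norm{\bE}_\F$. The second is at most
\[
p^{-1/2}\cdot\norm{p^{-1/2}\Pcal_\Omega\Pcal_{T_\bX}}\cdot\norm{\Pcal_{T_\bX^\perp}\bE}_\F .
\]
Here I need a normal-component bound, $\norm{\Pcal_{T_\bX^\perp}\bE}_\F\le\norm{\bE}_\F\norm{\bE}_{\op}/\sigma_r$. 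This is the Frobenius analogue of \eqref{eq:normal-quadratic-main} and is standard. Using $\norm{\bE}_{\op}\le\sqrt p\,\sigma_r/128$, the second term is at most $\tfrac{1+\epsilon}{128}\cdot 2\norm{\bE}_\F$. Hence $\norm{\alpha\bm\xi-\bm\eta}_\F\le c\norm{\bE}_\F$ with $c$ about $1/4$.

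Third, the retraction. By \eqref{eq:population-exact-main}, $\operatorname{Retr}_\bX(\bm\eta)=\bX_\star$, and the hypothesis of \Cref{lem:population-exact} holds because
\[
\sharpnorm{\bE}\le\max\{\norm{\bE}_\F,\ \norm{\bE}_\F\sqrt{n/(\mu r)}/2,\ \dots\}.
\]
This is only where I must check that $\sqrt{p n/(\mu r)}$ is harmless; since $p\le1$ this may fail. Instead, I would rely on Lipschitz continuity of the orthographic retraction in its tangent argument near $\bm\eta$: the graph core $\bU^T(\bX+\bm\zeta)\bV$ has smallest singular value $\ge\sigma_r/2$ for $\norm{\bm\zeta-\bm\eta}\lesssim\sigma_r$. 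Differentiating the formula \eqref{eq:graph-retraction-opt} then gives
\[
\norm{\operatorname{Retr}_\bX(\alpha\bm\xi)-\operatorname{Retr}_\bX(\bm\eta)}_\F\le(1+O(\norm{\bE}/\sigma_r))\,\norm{\alpha\bm\xi-\bm\eta}_\F,
\]
and exactness itself only requires $\norm{\bE}_{\op}\le\sigma_r/4$ (the orthographic retraction of $\bm\eta$ returns $\bX_\star$ whenever $\bU^T\bX_\star\bV$ is invertible). Combining the three pieces yields the factor $3/8$.

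\textbf{Main obstacle.} The main obstacle is the uniform tangent transfer with only $p\gtrsim\mu r\log n/n$. A naive bound of $p^{-1}\Pcal_\Omega$ loses a factor $p^{-1}$, whereas the $\sqrt p$-scaled radius only compensates $p^{-1/2}$. I would try to close this gap by always splitting as $p^{-1/2}\Pcal_\Omega\cdot p^{-1/2}\Pcal_\Omega$, so that one factor is handled by the restricted isometry (norm $\le\sqrt{1+\epsilon_0}$ on $T_{\bX_\star}$) and only the other by the crude bound $p^{-1/2}$.
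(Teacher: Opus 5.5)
Your proposal is correct and follows essentially the same route as the paper: one true-tangent isometry event, the deterministic transfer to every nearby $T_{\bX}$ by factoring through $p^{-1/2}\Pcal_\Omega$ (this is exactly how \Cref{lem:det-transfer} avoids the $p^{-1}$ loss, so the naive cross-term estimate you first wrote should simply be dropped), the stepsize bounds, the tangent/normal splitting with the quadratic normal term paid for by one factor $p^{-1/2}$, and a retraction perturbation bound around the exact population correction, followed by invariance and induction. You are also right that exactness needs only $\norm{\bE}_{\op}$ control rather than the sharp-norm hypothesis of \Cref{lem:population-exact}, and your Lipschitz estimate for the retraction is what part~(ii) of \Cref{lem:normal-F} provides; that bound carries an extra $O(\norm{\alpha\bm\xi-\bm\eta}_{\F}/\sigma_r(\bX_\star))$ term in your notation, which is harmless here.
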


\begin{proof}
The proof is deferred to \Cref{sec:proof-fixed-rgd}.
\end{proof}

For RGN, the initial point need be independent of $\widehat{\Omega}$ in the analysis. This independence permits the leave-one-out argument used to control the initial RGN iterates, after which the quadratic Frobenius recursion applies.

\begin{theorem}[Local convergence of RGN]\label[theorem]{thm:fixed}
Suppose that \Cref{ass:incoherence} holds and $\widehat{\Omega}\sim\operatorname{Bernoulli}(q)$. Let $\bX_0\in\Mcal_r$ be independent of $\widehat{\Omega}$ and satisfy $\sharpnorm{\bX_0-\bX_\star}
\le\frac{\sigma_r(\bX_\star)}{1000\mu r}$. Set $\bar K:=\ceil{\log_2\log_2(4n)}$. If $q\ge C_4\frac{\mu r\log n}{n}$, then, conditional on $\bX_0$, with probability at least $1-n^{-10}/12$ over $\widehat{\Omega}$, the iterates of \Cref{alg:fixed} are well defined and satisfy, simultaneously for all $0\le k\le\bar K$,
\[
\sharpnorm{\bX_k-\bX_\star} \le\frac{\sigma_r(\bX_\star)}{280\mu r} \left(\frac7{25}\right)^{2^k}.
\]
Moreover, for every $k\ge\bar K$,
\[
\norm{\bX_{k+1}-\bX_\star}_{\F} \le 4\frac{\norm{\bX_k-\bX_\star}_{\F}^2} {\sqrt q\,\sigma_r(\bX_\star)}.
\]
\end{theorem}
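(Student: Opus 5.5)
The proof has two phases. In the first, for $0\le k\le\bar K$, we propagate sharp-norm control with a leave-one-out argument. In the second, for $k\ge\bar K$, a deterministic Frobenius argument gives the quadratic recursion.

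\textbf{One-step sharp-norm bound.} Fix an iterate $\bX=\bX_k$ with $\sharpnorm{\bX-\bX_\star}\le\sigma_r(\bX_\star)/(1000\mu r)$. Write the RGN correction as $\bm\xi=\Pcal_{T_{\bX}}(\bX_\star-\bX)+\bm\Delta$. By \eqref{eq:rgn-correction-main}, $\bm\Delta$ solves
\[
\Pcal_{T_{\bX}}q^{-1}\Pcal_{\widehat\Omega}\Pcal_{T_{\bX}}\bm\Delta=\Pcal_{T_{\bX}}q^{-1}\Pcal_{\widehat\Omega}\bN,\qquad \bN:=\Pcal_{T_{\bX}^\perp}(\bX_\star-\bX).
\]
By \Cref{lem:normal}, $\sharpnorm{\bN}\le\frac{16}{3}\sharpnorm{\bX-\bX_\star}^2/\sigma_r(\bX_\star)$.

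We invoke \Cref{lem:local-conditioning}: on an event for $\widehat\Omega$, the sampled tangent operator is invertible on $T_{\bX}\Mcal_r$. For a matrix independent of $\widehat\Omega$, this lemma also gives a sharp-norm bound on $\bm\Delta$ of the form $\sharpnorm{\bm\Delta}\lesssim\mu r\,\sharpnorm{\bN}$. The factor $\mu r$ comes from converting $\norm{\cdot}_\infty$ control into operator-norm control through $(q^{-1}\Pcal_{\widehat\Omega}-\Ical)$ at $q\gtrsim\mu r\log n/n$.

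We then pass from $\bm\xi$ to $\bX_+=\operatorname{Retr}_{\bX}(\bm\xi)$. \Cref{lem:population-exact} shows that the retraction maps $\Pcal_{T_{\bX}}(\bX_\star-\bX)$ exactly to $\bX_\star$. A perturbation bound for the graph retraction around this point, Lipschitz in the sharp norm with constant $O(1)$, then gives
\[
\sharpnorm{\bX_+-\bX_\star}\le c\,\mu r\,\sharpnorm{\bX-\bX_\star}^2/\sigma_r(\bX_\star).
\]
Setting $e_k:=280\mu r\,\sharpnorm{\bX_k-\bX_\star}/\sigma_r(\bX_\star)$, this reads $e_{k+1}\le e_k^2$. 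The hypothesis gives $e_0\le 0.28=7/25$, so $e_k\le(7/25)^{2^k}$ by induction, provided the constants in the lemmas combine to $c\le 280$.

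\textbf{Main obstacle: dependence.} For $k\ge1$, $\bX_k$ depends on $\widehat\Omega$, so the entrywise and row bounds for $\bm\Delta$ cannot be applied directly. We would construct leave-one-out sequences $\bX_k^{(m)}$ for each index $m\in[2n]$ (a row or a column), obtained by running RGN with row or column $m$ of $\widehat\Omega$ replaced by its expectation.

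These auxiliary iterates are independent of the samples in that row or column. This independence yields the $\norm{\cdot}_{2,\infty}$ and $\norm{\cdot}_\infty$ components of the sharp norm in row $m$. We then bound $\norm{\bX_k-\bX_k^{(m)}}_{\F}$ by the same quadratic recursion, together with a stability estimate for the sampled normal operator.

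Because only $\bar K+1=O(\log\log n)$ steps are propagated, a union bound over $2n$ leave-one-out sequences and $\bar K$ steps costs only a constant factor in the failure probability. This is the ``finite'' leave-one-out argument. Its failure probability totals $n^{-10}/24$, and the event of \Cref{lem:local-conditioning} accounts for another $n^{-10}/24$.

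\textbf{Frobenius phase.} At $k=\bar K$ we have $(7/25)^{2^{\bar K}}\le(7/25)^{\log_2(4n)}$, which is tiny, so $\norm{\bX_{\bar K}-\bX_\star}_{\F}\le\sqrt{2r}\,\sharpnorm{\bX_{\bar K}-\bX_\star}\ll\sqrt q\,\sigma_r(\bX_\star)/\mu r$. In this neighborhood we use a uniform event, as in \Cref{lem:true-tangent-rip,lem:det-transfer}: $q^{-1}\Pcal_{T}\Pcal_{\widehat\Omega}\Pcal_{T}$ is well conditioned for every tangent space $T$ near $T_{\bX_\star}$, and $\norm{q^{-1/2}\Pcal_{\widehat\Omega}}\le q^{-1/2}$ trivially.

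The right-hand side of the normal equation then has size $\norm{\Pcal_{T}q^{-1}\Pcal_{\widehat\Omega}\bN}_{\F}\le q^{-1/2}\cdot O(\norm{\bN}_{\F})$, where $\norm{\bN}_{\F}\le\norm{\bX-\bX_\star}_{\F}^2/\sigma_r(\bX_\star)$ (a Frobenius analogue of \Cref{lem:normal}). Combining this with conditioning and the retraction Lipschitz bound gives the factor $4/(\sqrt q\,\sigma_r(\bX_\star))$. Since this argument is deterministic on the uniform event, it applies for all $k\ge\bar K$, and the recursion keeps the iterates inside the neighborhood.
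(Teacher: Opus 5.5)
Your architecture is the paper's: row/column-completed trajectories for $k\le\bar K$, then the deterministic Frobenius phase on the uniform event of \Cref{lem:true-tangent-rip,lem:det-transfer} together with \Cref{lem:normal-F}. The Frobenius phase is fine as you describe it.

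The gap is the sharp-norm bound on the correction $\bm\Delta$ (the paper's $\bm\eta$), on which your one-step estimate rests. \Cref{lem:local-conditioning} gives invertibility and only the Frobenius bound $\norm{\bm\Delta}_{\F}\le9\mu r\sharpnorm{\bX-\bX_\star}^2/\sigma_r(\bX_\star)$. It holds uniformly on the sharp-norm ball, so independence plays no role in it, and it says nothing about $\norm{\bm\Delta}_{2,\infty}$ or $\norm{\bm\Delta}_\infty$. Independence of $\bX$ does not supply these bounds either. Even for $\bX=\bX_0$ fixed, $\bm\Delta=(\Pcal_{T_{\bX}}\Rcal^0\Pcal_{T_{\bX}})^{-1}\Pcal_{T_{\bX}}\Rcal^0\bN$ depends on the Bernoulli variables of every row through the inverse. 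So row $i$ of $\bm\Delta$ is not a sum of independent terms over row $i$. The dependence therefore enters through the correction itself, not only through $\bX_k$ for $k\ge1$. This is why the paper must bound $h_0^{\rm corr}$ even though $d_0^{\rm loo}=0$.

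The missing idea is to compare the corrections, not just the iterates. The row-$i$ completed operator is the identity on row $i$. Testing its normal equation against $\bm e_i\bm a^T\bV^T\in T_{\bX}\Mcal_r$ and using $\bN\bV=\bm0$ gives $\bm e_i^T\overline{\bm\eta}\bV=\bm0$. Hence row $i$ of the completed correction consists only of its $\bU\bC^T$ part, and $\norm{\bm e_i^T\overline{\bm\eta}}_2\le2\sqrt{\mu r/n}\,\norm{\overline{\bm\eta}}_{\F}$. This is a deterministic structural fact, not a consequence of independence, contrary to your claim that independence yields the row-$m$ components.

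Row $i$ of the actual correction then costs an additional $\norm{\bm\eta^0-\overline{\bm\eta}}_{\F}$. This must be shown to be $O\bigl(\sqrt{\mu r/n}\,\mu r\rho_k^2/\sigma_r(\bX_\star)\bigr)$, a factor $\sqrt{\mu r/n}$ below the Frobenius size of the correction. That is the content of \Cref{lem:correction-proximity}. Subtracting the two normal equations produces four terms:
\begin{enumerate}
\item motion of the sampled normal component;
\item the operator difference $\Rcal^0-\Rcal^{\mathrm r,i}$ acting on the completed residual $\bN_{\bY}-\overline{\bm\eta}$;
\item tangent-space motion;
\item tangent transport.
\end{enumerate}
Independence is used only for the second term, through vector Bernstein over row $i$ applied to the residual row of the completed trajectory (\Cref{lem:loo-probability}). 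The resulting bound is self-referential in $h_k^{\rm corr}$ and closes only because its coefficient is $7/12<1$.

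Your ``stability estimate for the sampled normal operator'' does not supply this. Moreover, $d_{k+1}^{\rm loo}$ is itself driven by $\tfrac54h_k^{\rm corr}$ through \Cref{lem:graph-two-base}, so neither induction hypothesis propagates without it. You would also need a stopping rule that defines each auxiliary trajectory on every outcome, so that it is exactly independent of its completed row or column.
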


\begin{proof}
The proof is deferred to \Cref{sec:proof-local-rgn}.
\end{proof}

\subsection{Multiscale initialization}\label{sec:thm-ini}

We now show that \Cref{alg:init} reaches the two local conditions above. For a fixed approximation $\bZ$ and an independent observation subset $\Lambda$, the residual correction satisfies
\begin{equation}\label{eq:spectral-initial-definitions}
\bZ+q^{-1}\Pcal_\Lambda(\bX_\star-\bZ) =\bX_\star+(q^{-1}\Pcal_\Lambda-\Ical)(\bX_\star-\bZ).
\end{equation}
Thus its sampling perturbation is determined by the current error, rather than by the largest singular value of $\bX_\star$. The first result bounds one spectral reconstruction.

\begin{theorem}\label[theorem]{thm:sharp-spectral}
Suppose that \Cref{ass:incoherence} holds. Let $\bZ$ be fixed with $\rank(\bZ)\le r$ and $\sharpnorm{\bZ-\bX_\star}\le\tau$, where $\tau>0$. Let $\Lambda\sim\operatorname{Bernoulli}(q)$ be independent of the Gaussian matrix used in $\Tcal_\tau$. If
\[
q\ge C_5\frac{\mu r\log n}{n},
\]
where $C_5>0$ is a sufficiently large absolute constant, then
\[
\bZ^+=\Tcal_\tau\!\left(\bZ+q^{-1}\Pcal_\Lambda(\bX_\star-\bZ)\right)
\]
satisfies, with probability at least $1-n^{-12}/24$,
\begin{equation}\label{eq:sharp-spectral-total}
\rank(\bZ^+)\le r, \qquad \sharpnorm{\bZ^+-\bX_\star}\le\frac14\tau.
\end{equation}
\end{theorem}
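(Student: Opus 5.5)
Set $\bE:=\bZ-\bX_\star$, so $\sharpnorm{\bE}\le\tau$, and $\bm\Delta:=(q^{-1}\Pcal_\Lambda-\Ical)(-\bE)$, so that by \eqref{eq:spectral-initial-definitions} the input to the truncation is $\bY=\bX_\star+\bm\Delta$. The proof has three stages: (i) a sharp-norm concentration bound for $\bm\Delta$; (ii) a deterministic perturbation analysis of the ideal truncation $\Hcal_{\ge\tau/8}(\bY)$; and (iii) an argument that the randomized block iteration in $\Tcal_\tau$ reproduces that truncation up to a small sharp-norm error.

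\emph{Stage (i).} Since $\bE$ is fixed and $\Lambda$ is independent, $\bm\Delta$ is a sum of independent zero-mean matrices $(q^{-1}\delta_{ij}-1)E_{ij}\bm e_i\bm e_j^T$. The entrywise, row and column norms of $\bE$ are controlled by $\tau$ through the sharp norm. Matrix Bernstein then gives
\[
\norm{\bm\Delta}_{\op}\lesssim\sqrt{\tfrac{n\log n}{q}}\norm{\bE}_{2,\infty}\vee\tfrac{\log n}{q}\norm{\bE}_\infty\lesssim\sqrt{\tfrac{\mu r\log n}{qn}}\,\tau ,
\]
which is at most $c\tau$ for a small absolute constant $c$ once $C_5$ is large. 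Vector Bernstein applied to each row and column gives the analogous bound $\frac12\sqrt{n/(\mu r)}\norm{\bm\Delta}_{2,\infty}\le c\tau$. The raw entrywise bound $\norm{\bm\Delta}_\infty\le q^{-1}\norm{\bE}_\infty$ is too weak. For this reason the entrywise part of $\bZ^+-\bX_\star$ will not be read off from $\bm\Delta$ directly but obtained from the factorized structure in stage (ii).

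\emph{Stage (ii).} Set $\widetilde\bZ:=\Hcal_{\ge\tau/8}(\bY)$. Weyl's inequality gives $\sigma_{r+1}(\bY)\le\norm{\bm\Delta}_{\op}<\tau/8$, hence $\rank\widetilde\bZ\le r$. Also $\norm{\widetilde\bZ-\bX_\star}_{\op}\le\norm{\bY-\widetilde\bZ}_{\op}+\norm{\bm\Delta}_{\op}\le\tau/8+c\tau$. The row, column and entrywise norms need more care, because $\widetilde\bZ$ keeps only the singular directions above $\tau/8$, and some of these may have $\sigma_j(\bX_\star)$ of order $\tau$. Write $\widetilde\bZ=\bY\bm\Pi$ with $\bm\Pi$ the projector onto the retained right singular vectors, so that
\[
\widetilde\bZ-\bX_\star=\bm\Delta\bm\Pi-\bX_\star(\bI-\bm\Pi).
\]
The second term has operator norm about $\tau/8$. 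Its row norms are bounded via $\norm{\bU_\star}_{2,\infty}\le\sqrt{\mu r/n}$, and its column norms via a leave-one-out argument: for each row and column, compare with $\bY^{(l)}$, in which the $l$th row/column of $\bm\Delta$ is replaced by its mean. This yields incoherence of the retained singular subspaces with the correct constant, and the entrywise bound then follows as (row norm)$\times$(column norm)$/\sigma$. This is the main obstacle. The analysis has no uniform spectral gap, since the threshold $\tau/8$ may sit among the $\sigma_j(\bX_\star)$. I would handle this by grouping singular values into those above $\tau/4$ (strong, with gap $\gtrsim\tau$ to the noise level) and a boundary band, and by bounding the band contribution only through its operator norm times the incoherence of $\bU_\star,\bV_\star$. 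That loses only constants, which must fit into $\tau/4$.

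\emph{Stage (iii).} Since $\bY\bY^T+\frac{\tau^2}{4096}\bI$ has eigenvalues $\ge(\tau/8)^2$ on the target space and $\le(c\tau)^2+\tau^2/4096$ on the complement, the ratio of the two levels is a constant $>1$. Standard subspace-iteration bounds with Gaussian start (failure probability $n^{-12}/48$) show that after $\ceil{12\log n}$ steps $\bQ$ captures every direction with singular value $\ge\tau/8$ up to error $n^{-C}\tau$. Hence $\bQ\Hcal_{\ge\tau/8}(\bQ^T\bY)$ differs from $\widetilde\bZ$ by at most $n^{-C}\tau$ in operator norm, and so also in every term of the sharp norm, since $\norm{\cdot}_{2,\infty},\norm{\cdot}_\infty\le\norm{\cdot}_{\op}$ and the scaling factors in \eqref{eq:sharp-main} are at most $n$. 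Directions close to the threshold are handled by a continuity argument on $\Hcal$. A union bound over stages (i)–(iii) gives the probability $1-n^{-12}/24$, and combining the stage estimates gives $\rank(\bZ^+)\le r$ and $\sharpnorm{\bZ^+-\bX_\star}\le\tau/4$.
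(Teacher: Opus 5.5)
Your outline has genuine gaps at the points the paper's proof is built around.

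\textbf{Stage (i) and the term $\bm\Delta\bm\Pi$.} The row-norm claim in stage (i) is false. With $\delta_{ij}=\mathbf 1_{\{(i,j)\in\Lambda\}}$, the quantity $\norm{\bm e_i^T\bm\Delta}_2^2=\sum_j(\delta_{ij}/q-1)^2E_{ij}^2$ concentrates around $\frac{1-q}{q}\norm{\bm e_i^T\bE}_2^2$. Since $\sharpnorm{\bE}\le\tau$ allows $\norm{\bm e_i^T\bE}_2\asymp\sqrt{\mu r/n}\,\tau$, you get $\frac12\sqrt{n/(\mu r)}\norm{\bm\Delta}_{2,\infty}\asymp q^{-1/2}\tau\gg\tau$. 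The vector Bernstein bound you invoke itself carries the factor $\sqrt{\log n/q}$. Only projected row norms are small, e.g.\ $\norm{\bm e_i^T\bm\Delta\bV_\star}_2\lesssim C_5^{-1/2}\sqrt{\mu r/n}\,\tau$. This is why \Cref{lem:spectral-concentration} bounds $\norm{\mathscr W^j\mathscr Q_\star}_{2,\infty}$ and never $\norm{\mathscr W}_{2,\infty}$; here $\mathscr W$ is the dilation of $\bm\Delta$ and $\mathscr Q_\star=\operatorname{diag}(\bU_\star,\bV_\star)$.

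Consequently the rows of your term $\bm\Delta\bm\Pi$, with $\bm\Pi=\widehat{\bV}\widehat{\bV}^T$, are uncontrolled. You need $\norm{\bm e_i^T\bm\Delta\widehat{\bV}}_2\lesssim\sqrt{\mu r/n}\,\tau$, yet $\widehat{\bV}$ depends on row $i$ of $\bm\Delta$. Decoupling them by leave-one-out on $\bY$ needs a Davis--Kahan comparison of retained subspaces, hence a gap between retained and discarded singular values of $\bY$. Your grouping at $\tau/4$ does not produce one. Singular values of $\bX_\star$, hence of $\bY$, can sit arbitrarily close to $\tau/4$, and a gap to the noise level $c\tau$ is not the gap Davis--Kahan requires.

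\textbf{The missing idea and stage (iii).} What is missing is a gap-free representation of the retained factors. Every retained singular value is at least $\tau/8\ge64\norm{\mathscr W}_{\op}$, so the dilated singular-vector equation can be solved by a Neumann series:
\[
\widehat{\mathscr Q}=\sum_{j\ge0}\mathscr W^j\mathscr Q_\star\bC\mathscr D^{-j}-\sum_{j\ge0}\mathscr W^j\mathscr F\mathscr D^{-j-1}.
\]
Here $\widehat{\mathscr Q}$ holds the dilated retained singular vectors, $\mathscr D=\operatorname{diag}(\widehat{\bm\Sigma},-\widehat{\bm\Sigma})$, and $\mathscr F$ is the residual of the approximate singular pair. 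The coefficient $\bC$ on the true singular spaces satisfies $\norm{\bC}_{\op}\le5/4$, with no dependence on $\sigma_1(\bX_\star)/\tau$. Together with the power bounds, this controls the rows of $(\bI-\mathscr Q_\star\mathscr Q_\star^T)\widehat{\mathscr Q}$ and $(\bI-\mathscr Q_\star\mathscr Q_\star^T)\widehat{\mathscr Q}\mathscr D$. That gives the row, column and entrywise parts of the sharp norm at once. The paper does use leave-one-out, but only for the resolvent of $\mathscr W$ on $|z|=\tau/64$. There $\norm{\mathscr W}_{\op}\le\tau/512$ makes the gap automatic, because the signal has been removed from that problem.

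Stage (iii) also fails as written. $\Hcal_{\ge\tau/8}$ is discontinuous, and the singular values of $\bQ^T\bY$ never exceed those of $\bY$. So a singular value of $\bY$ just above $\tau/8$ can be kept in $\widetilde{\bZ}$ but dropped by $\Tcal_\tau$, making $\norm{\bZ^+-\widetilde{\bZ}}_{\op}$ of order $\tau/8$; no continuity argument fixes this. The paper never compares with the exact truncation. \Cref{lem:spectral-reconstruction} assumes only $\bY^T\widehat{\bU}=\widehat{\bV}\widehat{\bm\Sigma}$, $\sigma_{\min}(\widehat{\bm\Sigma})\ge\tau/8$, $\norm{\bY\widehat{\bV}-\widehat{\bU}\widehat{\bm\Sigma}}_{\op}\le\frac{\tau}{64}\sqrt{\mu r/n}$ and $\norm{\bY-\bZ^+}_{\op}\le7\tau/32$. \Cref{lem:spectral-approximation} then verifies these directly for the computed output.
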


\begin{proof}
The proof is deferred to \Cref{sec:proof-sharp-spectral}.
\end{proof}

The proof controls the sampling perturbation and its products with the true singular spaces before estimating the reconstructed matrix. The finite computation in \eqref{eq:threshold-iteration} attains the required accuracy without a gap between adjacent singular values. The supporting estimates are proved in \Cref{app:fresh,app:spectral}. 

\begin{theorem}[Multiscale initialization]\label[theorem]{thm:init}
Suppose that \Cref{ass:incoherence} holds and $1\le K\le n$. Let $\{\bZ_\ell\}_{\ell=0}^{K}$ denote the complete sequence in \eqref{eq:multiscale-update}, and let $\bZ_{\widehat K}$ be the output of \Cref{alg:init}. If $q\ge C_5\mu r\log n/n$, then, with probability at least $1-n^{-10}/3$, \Cref{alg:init} is well defined, $\widehat K\le K$, and
\begin{equation}\label{eq:init-invariant}
\norm{\bX_\star}_{\F}\le\tau_0\le3\norm{\bX_\star}_{\F}, \qquad \rank(\bZ_\ell)\le r, \qquad \sharpnorm{\bZ_\ell-\bX_\star}\le4^{-\ell}\tau_0, \quad 0\le\ell\le K.
\end{equation}
In particular, $K=\ceil{5+\log_4(\kappa\sqrt{nr})}$ gives a rank-$r$ output satisfying
\begin{equation}\label{eq:init-rgd-entrance}
\norm{\bZ_{\widehat K}-\bX_\star}_{\F}\le\frac1{128}\sqrt q\,\sigma_r(\bX_\star),
\end{equation}
whereas $K=\ceil{6+\log_4(\mu r^{3/2}\kappa)}$ gives a rank-$r$ output satisfying
\begin{equation}\label{eq:init-rgn-entrance}
\sharpnorm{\bZ_{\widehat K}-\bX_\star}\le\frac{\sigma_r(\bX_\star)}{1000\mu r}.
\end{equation}
\end{theorem}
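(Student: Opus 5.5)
The argument is an induction over scales using \Cref{thm:sharp-spectral}, followed by a separate analysis of the stopping rule.

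\textbf{Step 1: the threshold $\tau_0$.} Since $\Omega^{(0)}$ is independent of everything else, $q^{-1}\norm{\Pcal_{\Omega^{(0)}}(\bX_\star)}_{\F}^2$ is a sum of independent bounded terms with mean $\norm{\bX_\star}_{\F}^2$. Incoherence gives $\norm{\bX_\star}_\infty\le \mu r\sigma_1/n$, so each term is at most
\[
q^{-1}\norm{\bX_\star}_\infty^2\le q^{-1}(\mu r/n)\norm{\bX_\star}_{\F}^2\cdot(\mu r/n)\,n .
\]
A Bernstein bound under $q\ge C_5\mu r\log n/n$ therefore yields $\tfrac14\norm{\bX_\star}_{\F}^2\le q^{-1}\norm{\Pcal_{\Omega^{(0)}}(\bX_\star)}_{\F}^2\le \tfrac94\norm{\bX_\star}_{\F}^2$. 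This gives $\norm{\bX_\star}_{\F}\le\tau_0\le3\norm{\bX_\star}_{\F}$.

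\textbf{Step 2: the invariant \eqref{eq:init-invariant}.} Next check $\sharpnorm{\bX_\star}\le\norm{\bX_\star}_{\F}$. The operator-norm entry is clear. Incoherence gives $\norm{\bX_\star}_{2,\infty}\le\sqrt{\mu r/n}\,\sigma_1$ and $\norm{\bX_\star}_\infty\le \mu r\sigma_1/n$, so the other entries are at most $\sigma_1/2$ and $\sigma_1/4$. Hence $\sharpnorm{\bZ_0-\bX_\star}\le\tau_0$.

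Inductively, $\bZ_\ell$ is a function of $\Omega^{(0)},\dots,\Omega^{(\ell)}$ and earlier Gaussians, so it is independent of $\Omega^{(\ell+1)}$ and of the fresh Gaussian. Conditioning on $\bZ_\ell$, \Cref{thm:sharp-spectral} with $\tau=\tau_\ell$ gives $\rank(\bZ_{\ell+1})\le r$ and $\sharpnorm{\bZ_{\ell+1}-\bX_\star}\le\tau_{\ell+1}$, with failure probability at most $n^{-12}/24$. A union bound over $K\le n$ steps, together with Step 1, keeps the total failure probability below $n^{-10}/3$. Well-definedness follows because every step only uses finite computations.

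\textbf{Step 3: error at the maximal scale, and rank.} For the RGD choice of $K$, $4^{-K}\le 4^{-5}/(\kappa\sqrt{nr})$. Combined with $\tau_0\le3\sqrt r\sigma_1$ and $\norm{\cdot}_{\F}\le\sqrt{2r}\norm{\cdot}_{\op}$, this gives
\[
\norm{\bZ_K-\bX_\star}_{\F}\le 3\sqrt2\,r\,\sigma_1\cdot 4^{-5}/(\kappa\sqrt{nr}) = O\bigl(\sigma_r\sqrt{r/n}\,4^{-5}\bigr),
\]
which is at most $\tfrac1{128}\sqrt q\,\sigma_r$ because $q\ge \mu r/n$.

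For RGN, $\sharpnorm{\bZ_K-\bX_\star}\le 3\sqrt r\sigma_1 4^{-6}/(\mu r^{3/2}\kappa)\le\sigma_r/(1000\mu r)$.

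Once $\sharpnorm{\bZ_\ell-\bX_\star}<\sigma_r$, Weyl's inequality forces $\rank(\bZ_\ell)=r$. This holds for all $\ell\ge \ell_0:=\ceil{\log_4(3\sqrt r\kappa)}+1$.

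\textbf{Step 4: the early stop (main obstacle).} The real difficulty is that $\widehat K$ may be smaller than $K$. I would control the residual tests by concentration on the fresh subsets. Because $\bZ_\ell$ is independent of $\Omega^{(\ell+1)}$, the same Bernstein argument as in Step 1, applied to $\bX_\star-\bZ_\ell$, gives $R_\ell\in[\tfrac12,\tfrac32]\norm{\bX_\star-\bZ_\ell}_{\F}$. This needs an entrywise bound on $\bX_\star-\bZ_\ell$ relative to its Frobenius norm. I would try to obtain it from the sharp-norm control, $\norm{\cdot}_\infty\le 4\mu r\tau_\ell/n$, though when the error is far smaller than $\tau_\ell$ this route is delicate. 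A multiplicative-plus-additive Bernstein bound, with additive slack of order $\tau_\ell\sqrt{\mu r\log n/(qn)}$, should suffice.

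Given this, stopping at $\ell+1$ means $R_{\ell+1}>R_j$, which implies $\norm{\bZ_j-\bX_\star}_{\F}\le 3\norm{\bZ_{\ell+1}-\bX_\star}_{\F}+\text{slack}\lesssim\sqrt r\,\tau_{\ell+1}$. So the returned rank-$r$ iterate loses only a constant factor relative to the level $\ell+1$, and the extra constants in $K$ (the $5$, $6$ and $\log_4$ margins) would be used to absorb this factor in both \eqref{eq:init-rgd-entrance} and \eqref{eq:init-rgn-entrance}.

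For RGN there is a further gap: the bound is in the sharp norm, not the Frobenius norm. There I would argue that the stop cannot occur before the level where $\sqrt r\,\tau_\ell$ is dominated by the target. The reason is that $R$ must decrease by a factor of about $4$ per level while the error is still above the sampling slack, so an early increase forces $j$ to lie within $O(1)$ levels of $K$. Controlling this monotonicity for the RGN sharp-norm target is the step I expect to be hardest.
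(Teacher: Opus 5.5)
Your Steps 1--3 coincide with the paper's proof of \eqref{eq:init-invariant} and of the two entrance bounds at level $K$. There is one slip: in Step 1 the per-term bound should be $q^{-1}\norm{\bX_\star}_\infty^2\le q^{-1}(\mu r/n)^2\norm{\bX_\star}_{\F}^2$. Your expression is larger by a factor $n$, and with it the Bernstein exponent $nq/(\mu r)^2$ would be too weak.

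The genuine gap is Step 4, the early stop, which the paper settles in \Cref{prop:no-premature-rebound}.
\begin{itemize}
\item A residual increase at level $\ell+1$ gives you only $\norm{\bZ_j-\bX_\star}_{\F}\lesssim\sqrt r\,\tau_{\ell+1}$. This is a bound at the level where the test fires, not at level $K$, and the fixed margins in $K$ cannot absorb a factor $4^{K-\ell-1}$.
\item The invariant controls only the envelope $4^{-\ell}\tau_0$, not the actual error. An error far below its envelope at level $\ell$, followed by an error near the envelope at level $\ell+1$, is compatible with \eqref{eq:init-invariant}. It triggers the test while $\bZ_\ell$ may still be far from the target.
\item Your remedy, that $R$ drops by a factor of about $4$ per level, is precisely the missing statement. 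It does not follow from \Cref{thm:sharp-spectral}.
\item Even granting such a contraction, an additive slack proportional to $\tau_\ell$, as you propose, means a residual increase certifies only $\norm{\bE_\ell}_{\F}\lesssim\tau_\ell\sqrt{\mu r/n}$. The paper's conversion $\sharpnorm{\bE}\le n\norm{\bE}_{\F}/(\mu r)$ then loses a factor $n/(\mu r)$, which is far too weak for \eqref{eq:init-rgn-entrance}.
\end{itemize}

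What the paper adds is an analysis relative to the current error, once rank $r$ is reached.
\begin{itemize}
\item For a rank-$r$ iterate with incoherent factors, the identity $\bE_\ell=\bU_\star\bU_\star^T\bE_\ell+(\bI-\bU_\star\bU_\star^T)\bE_\ell\bV_\ell\bV_\ell^T$ gives $\norm{\bE_\ell}_\infty\le3\sqrt{\mu r/n}\,\norm{\bE_\ell}_{\F}$.
\item Bernstein on the fresh set then yields purely multiplicative bounds $\frac34\norm{\bE_\ell}_{\F}^2\le R_\ell^2\le\frac54\norm{\bE_\ell}_{\F}^2$ and $\norm{\Pcal_{T_{\bX_\star}}\bW}_{\F}\le\frac1{16}\norm{\bE_\ell}_{\F}$, with no additive term.
\item Rank $r$ at level $\ell$ forces $\tau_\ell\le3\sigma_r(\bX_\star)$. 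Hence $\Tcal_{\tau_\ell}$ retains all $r$ singular values and agrees with $\Hcal_r(\bY)$ up to $n^{-6}\tau_\ell$.
\item A best-approximation argument gives $\norm{\Hcal_r(\bY)-\bX_\star}_{\F}\le4\norm{\Pcal_{T_{\bX_\star}}\bW}_{\F}$.
\item Together these give $\norm{\bE_{\ell+1}}_{\F}\le\frac14\norm{\bE_\ell}_{\F}+n^{-6}\tau_\ell$. Rank $r$ persists, so the compared iterates are consecutive.
\item Consequently $R_{\ell+1}>R_\ell$ forces $\norm{\bE_\ell}_{\F}<2n^{-6}\tau_\ell\le\sigma_r(\bX_\star)/(1024n)$.
\end{itemize}
This polynomially small floor is what lets the crude bound $\sharpnorm{\bE_\ell}\le n\norm{\bE_\ell}_{\F}/(\mu r)$, together with $q\ge r/n$, deliver both entrance conditions no matter when the test fires. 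You therefore do not need to show that the stop occurs within $O(1)$ levels of $K$.
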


\begin{proof}
The proof is deferred to \Cref{sec:proof-finite-rgd}.
\end{proof}

The two choices of $K$ allow at most $O(\log(n\kappa))$ and $O(\log(2\mu r\kappa))$ reconstructions, respectively. For RGD, $q\le p$ and \eqref{eq:init-rgd-entrance} imply the hypothesis of \Cref{thm:fixed-rgd-local}. For RGN, \eqref{eq:init-rgn-entrance} gives the sharp-norm hypothesis of \Cref{thm:fixed}, while the initialization, including the stopping decision, is independent of the reserved set $\widehat\Omega$. The following proposition justifies the residual test, which shows a residual increase cannot cause a return before the corresponding local convergence condition is satisfied.

\begin{proposition}\label[proposition]{prop:no-premature-rebound}
 Under the assumptions of \Cref{thm:init}, with probability at least $1-n^{-10}/6$, if \Cref{alg:init} returns at the residual test, then its output satisfies
\begin{equation}\label{eq:residual-stop-accuracy}
\norm{\bZ_{\widehat K}-\bX_\star}_{\F} \le\frac{\sigma_r(\bX_\star)}{1024n}.
\end{equation}   
\end{proposition}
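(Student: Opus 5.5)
The plan is to prove the contrapositive on a single high-probability event: if $\norm{\bZ_j-\bX_\star}_{\F}>\sigma_r(\bX_\star)/(1024n)$ at the current last rank-$r$ index $j$, then the residual test cannot fire, because $R$ decreases strictly along rank-$r$ iterates in that regime. Write $\bE_\ell:=\bX_\star-\bZ_\ell$. By construction, $\bZ_\ell$ depends only on $\Omega^{(0)},\ldots,\Omega^{(\ell)}$ and on the Gaussian sketches, so it is independent of $\Omega^{(\ell+1)}$. I would work on the intersection of three events: the event of \Cref{thm:init}, of probability at least $1-n^{-10}/3$; the residual concentration events of Step 1; and the relative reconstruction events of Step 2. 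A union bound over the at most $K\le n$ scales should leave total failure probability at most $n^{-10}/6$.

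\textbf{Step 1 (residuals track Frobenius errors).} Conditional on $\bZ_\ell$, the quantity $R_\ell^2=q^{-1}\sum_{(i,k)\in\Omega^{(\ell+1)}}(\bE_\ell)_{ik}^2$ has mean $\norm{\bE_\ell}_{\F}^2$. Each summand is bounded by $q^{-1}\norm{\bE_\ell}_\infty^2$, and the variance is at most $q^{-1}\norm{\bE_\ell}_\infty^2\norm{\bE_\ell}_{\F}^2$. Bernstein's inequality then gives
\[
\bigl|R_\ell^2-\norm{\bE_\ell}_{\F}^2\bigr|\le C\Bigl(\sqrt{\tfrac{\log n}{q}}\norm{\bE_\ell}_\infty\norm{\bE_\ell}_{\F}+\tfrac{\log n}{q}\norm{\bE_\ell}_\infty^2\Bigr).
\]
Next, $\rank\bE_\ell\le2r$ and the definition of the sharp norm give $\norm{\bE_\ell}_\infty\le\frac{4\mu r}{n}\sharpnorm{\bE_\ell}$. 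I would also use $\norm{\bE_\ell}_\infty\le\norm{\bE_\ell^T}_{2,\infty}\cdot\max_i\norm{\bm e_i^T\bE_\ell}_2/\norm{\bE_\ell}_{\F}$-type interpolation, or more simply the row-wise bound $\norm{\bE_\ell}_\infty^2\le\frac{4\mu r}{n}\cdot\frac{\norm{\bE_\ell}_{\F}^2}{\cdot}$, to turn this into the relative statement $R_\ell\in[\tfrac{9}{10},\tfrac{11}{10}]\norm{\bE_\ell}_{\F}$ once $q\ge C_5\mu r\log n/n$. This requires the incoherence of the error to be comparable to its Frobenius size, that is, $\sharpnorm{\bE_\ell}\lesssim\norm{\bE_\ell}_{\F}$ up to absolute factors. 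I plan to get that from the sharp-norm estimates at the current scale, stated in relative form as in Step 2.

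\textbf{Step 2 (relative contraction along rank-$r$ steps).} The invariant \eqref{eq:init-invariant} only bounds $\sharpnorm{\bE_\ell}$ by the deterministic level $4^{-\ell}\tau_0$, and this level need not be tight. I would therefore reopen the proof of \Cref{thm:sharp-spectral} and extract a relative bound. Suppose $\rank(\bZ_{\ell+1})=r$, so that the truncation at level $\tau_\ell/8$ keeps all $r$ signal directions. Then the error of the reconstruction is controlled by the perturbation $(q^{-1}\Pcal_\Lambda-\Ical)\bE_\ell$ together with the power-iteration error. This should give $\sharpnorm{\bE_{\ell+1}}\le c\,\sharpnorm{\bE_\ell}$ with a small absolute constant $c$, plus an additive term. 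That additive term comes from singular values of the perturbation lying near the threshold, and it is where $\sigma_r(\bX_\star)$ and the factor $n$ enter. Using $\norm{\cdot}_{\F}\le\sqrt{2r}\sharpnorm{\cdot}$ on rank-$2r$ errors and the lower bound $\norm{\bE}_{\F}\ge\sharpnorm{\bE}$-type comparison available in the rank-$r$ regime, the aim is
\[
\norm{\bE_{\ell+1}}_{\F}\le\tfrac12\norm{\bE_j}_{\F}\quad\text{whenever }\norm{\bE_j}_{\F}>\frac{\sigma_r(\bX_\star)}{1024n},
\]
where $j\le\ell$ is the previous rank-$r$ index. The intermediate iterates of rank less than $r$ would be chained through the same bound using the invariant.

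\textbf{Step 3 (conclusion) and the main obstacle.} Combining Steps 1 and 2 gives $R_{\ell+1}\le\tfrac{11}{10}\cdot\tfrac12\norm{\bE_j}_{\F}<\tfrac{9}{10}\norm{\bE_j}_{\F}\le R_j$. Hence no rebound is possible while $\norm{\bE_j}_{\F}>\sigma_r(\bX_\star)/(1024n)$, and any return at the residual test forces \eqref{eq:residual-stop-accuracy}. The main obstacle is Step 2, namely turning the absolute sharp-norm guarantee of \Cref{thm:sharp-spectral} into a contraction relative to the actual current error, including the case where the threshold $\tau_\ell$ is much larger than that error. I would first try to rerun the perturbation argument with $\tau$ replaced by $\max\{\sharpnorm{\bE_\ell},\text{threshold effects}\}$. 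The threshold terms would be bounded using $\rank(\bZ_{\ell+1})=r$, which forces $\tau_\ell/8\lesssim\sigma_r(\bX_\star)$. From there I would track how the $1/n$ floor emerges from the $\ceil{12\log n}$ power iterations.
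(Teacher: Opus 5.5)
Your outer structure is the right one: relative concentration of $R_\ell$, a contraction along rank-$r$ iterates, and a comparison of consecutive residuals. But the contraction in Step~2, which you flag yourself, is genuinely missing, and the sharp-norm route cannot supply it. Even granting $\sharpnorm{\bE_{\ell+1}}\le c\,\sharpnorm{\bE_\ell}$, returning to Frobenius norm gives only $\norm{\bE_{\ell+1}}_{\F}\le\sqrt{2r}\,c\,\sharpnorm{\bE_\ell}$. Since $\sharpnorm{\bE_\ell}\ge\norm{\bE_\ell}_{\op}$ may be comparable to $\norm{\bE_\ell}_{\F}$ (for an essentially rank-one error), you lose a factor $\sqrt{2r}$ and get no contraction for large $r$. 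A comparison $\sharpnorm{\bE}\lesssim\norm{\bE}_{\F}$, which you also do not prove, cannot repair this. Your account of the additive term is also not what happens. Once $\rank(\bZ_\ell)=r$, the previous truncation kept $r$ singular values above $\tau_{\ell-1}/8=\tau_\ell/2$, so Weyl's inequality gives $\tau_\ell\le3\sigma_r(\bX_\star)$ and hence $\norm{\bW}_{\op}\le\tau_\ell/512\le\sigma_r(\bX_\star)/16$. There are then no singular values near the threshold, and all $r$ signal directions are retained at every later step. So the rank stays $r$, the compared iterates are consecutive, and no chaining through rank-deficient iterates is needed. The only additive error is the finite subspace-iteration error, $\norm{\bZ_{\ell+1}-\Hcal_r(\bY)}_{\F}\le n^{-6}\tau_\ell$.

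The missing idea is to prove the contraction directly in Frobenius norm for the exact truncation $\widetilde{\bX}=\Hcal_r(\bY)$, with $\widetilde{\bE}=\widetilde{\bX}-\bX_\star$.
\begin{itemize}
\item Best approximation gives $\norm{\widetilde{\bE}}_{\F}^2\le2\inner{\bW}{\widetilde{\bE}}$.
\item In the singular coordinates of $\bX_\star$, rank $r$ forces the normal block of $\widetilde{\bE}$ to equal $\bC(\bm\Sigma_\star+\bA)^{-1}\bB$. Hence $\abs{\inner{\Pcal_{T_\star^\perp}\bW}{\widetilde{\bE}}}\le\frac{\norm{\bW}_{\op}}{2(\sigma_r(\bX_\star)-2\norm{\bW}_{\op})}\norm{\widetilde{\bE}}_{\F}^2$, and so $\norm{\widetilde{\bE}}_{\F}\le4\norm{\Pcal_{T_\star}\bW}_{\F}$.
\item Bernstein then gives both $\norm{\Pcal_{T_\star}\bW}_{\F}\le\frac1{16}\norm{\bE_\ell}_{\F}$ and $\frac34\norm{\bE_\ell}_{\F}^2\le R_\ell^2\le\frac54\norm{\bE_\ell}_{\F}^2$, once you have the relative entrywise bound $\norm{\bE_\ell}_\infty\le3\sqrt{\mu r/n}\,\norm{\bE_\ell}_{\F}$.
\end{itemize}
That entrywise bound, not $\sharpnorm{\bE_\ell}\lesssim\norm{\bE_\ell}_{\F}$, is what Step~1 actually needs. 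It follows from the identity $\bE_\ell=\bU_\star\bU_\star^T\bE_\ell+(\bI-\bU_\star\bU_\star^T)\bE_\ell\bV_\ell\bV_\ell^T$, which holds because $(\bI-\bU_\star\bU_\star^T)\bX_\star=\bm0$ and $\bZ_\ell\bV_\ell\bV_\ell^T=\bZ_\ell$. Combine it with $\norm{\bV_\ell}_{2,\infty}\le2\sqrt{\mu r/n}$, which comes from the reconstruction estimate \eqref{eq:spectral-offspace-entry}. Altogether $\norm{\bE_{\ell+1}}_{\F}\le\frac14\norm{\bE_\ell}_{\F}+n^{-6}\tau_\ell$. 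A rebound then forces $\norm{\bE_\ell}_{\F}<2n^{-6}\tau_\ell\le6n^{-6}\sigma_r(\bX_\star)\le\sigma_r(\bX_\star)/(1024n)$.

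Finally, fix your probability bookkeeping. Intersecting with the full event of \Cref{thm:init}, whose failure probability can be as large as $n^{-10}/3$, cannot leave total failure $n^{-10}/6$. Moreover, that theorem's proof itself invokes this proposition, so using it is circular. Use only the invariant event from its induction, whose failure probability is at most $(2K+1)n^{-12}/48$.
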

\begin{proof}
The proof is given in \Cref{sec:proof-residual-stopping}.
\end{proof}

\section{Local Convergence of RGD}
\label{sec:local-rgd}

We first establish the geometric estimates used in the convergence and initialization arguments. We then prove a sampling estimate that holds uniformly over a Frobenius neighborhood of $\bX_\star$ and use it to prove \Cref{thm:fixed-rgd-local}. The initialization results are proved in \Cref{sec:initialization}.

\subsection{Deterministic geometry}

We begin with bounds for the singular subspaces and the normal component of the error, followed by perturbation bounds for the graph retraction. The proofs are given in \Cref{app:geometry}.

For $\bX=\bU\bm\Sigma\bV^T\in\Mcal_r$, define
\begin{equation}\label{eq:target-graph-factors}
\bG_{\bX}:=\bU^T\bX_\star\bV, \qquad \bL_{\bX}:=(\bI-\bU\bU^T)\bX_\star\bV, \qquad \bR_{\bX}:=(\bI-\bV\bV^T)\bX_\star^T\bU.
\end{equation}
We also write
\begin{equation}\label{eq:tangent-normal-components}
\bm t_{\bX}:=\Pcal_{T_{\bX}}(\bX_\star-\bX), \qquad \bN_{\bX}:=\Pcal_{T_{\bX}^\perp}(\bX_\star-\bX)
\end{equation}
for the tangent and normal components of the error. Here $\bG_{\bX}$ is the representation of $\bX_\star$ in the current singular bases, while $\bL_{\bX}$ and $\bR_{\bX}$ describe its components outside the current singular subspaces. The following lemma bounds the incoherence of these bases and the variation of the associated projectors. Its operator-norm estimates follow the argument in \cite[Lemma~4.1]{WeiCaiChanLeung2020}, while the rowwise estimates use the sharp norm.

\begin{lemma}
\label[lemma]{lem:current-incoherence}
Suppose that \Cref{ass:incoherence} holds. Let $\bX=\bU\bS\bV^T\in\Mcal_r$ be a compact orthonormal factorization, and set $e_\sharp:=\sharpnorm{\bX-\bX_\star}$. If $e_\sharp\le\sigma_r(\bX_\star)/8$, then
\[
\max\left\{\norm{\bU}_{2,\infty},\norm{\bV}_{2,\infty}\right\} \le2\sqrt{\frac{\mu r}{n}}.
\]
The singular-space projectors satisfy the rowwise bounds
\[
\max\left\{ \norm{\bU\bU^T-\bU_\star\bU_\star^T}_{2,\infty}, \norm{\bV\bV^T-\bV_\star\bV_\star^T}_{2,\infty} \right\} \le\frac{32}{7}\sqrt{\frac{\mu r}{n}}\frac{e_\sharp}{\sigma_r(\bX_\star)}.
\]
Moreover,
\[
\norm{\Pcal_{T_{\bX}}-\Pcal_{T_{\bX_\star}}}_{\F\to\F} \le\frac{16}{7}\frac{e_\sharp}{\sigma_r(\bX_\star)}.
\]
\end{lemma}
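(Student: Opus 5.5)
The plan is to work with the error $\bE:=\bX-\bX_\star$. The sharp-norm hypothesis gives three facts:
\[
\norm{\bE}_{\op}\le e_\sharp,\qquad \norm{\bE}_{2,\infty},\ \norm{\bE^T}_{2,\infty}\le 2\sqrt{\mu r/n}\,e_\sharp .
\]
Weyl's inequality yields $\sigma_r(\bX)\ge\sigma_r(\bX_\star)-e_\sharp\ge\frac78\sigma_r(\bX_\star)$. This is the source of the constant $7$ in the denominators.

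\textbf{Row bounds.} For the row bound on $\bU$, use $\bU=\bX\bV\bS^{-1}$. Here I take the compact SVD so that $\bS$ is diagonal with $\sigma_r(\bS)=\sigma_r(\bX)$; a general orthonormal factorization differs only by rotations, which preserve all quantities. Then
\[
\bm e_i^T\bU=\bm e_i^T(\bX_\star+\bE)\bV\bS^{-1},
\]
and so
\[
\norm{\bm e_i^T\bU}_2\le\bigl(\norm{\bm e_i^T\bU_\star}_2\sigma_1+\norm{\bm e_i^T\bE}_2\bigr)/\sigma_r(\bX).
\]
This naive bound loses a factor $\kappa$. To avoid that loss, I would instead use the identity $\bU\bU^T=\bX\bX^\dagger$. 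Alternatively, write
\[
\bU\bU^T-\bU_\star\bU_\star^T=(\bI-\bU_\star\bU_\star^T)\bU\bU^T-\bU_\star\bU_\star^T(\bI-\bU\bU^T).
\]
Then $(\bI-\bU_\star\bU_\star^T)\bU=(\bI-\bU_\star\bU_\star^T)\bE\bV\bS^{-1}$, because $(\bI-\bU_\star\bU_\star^T)\bX_\star=0$. Its $i$th row is therefore bounded by
\[
\bigl(\norm{\bm e_i^T\bE}_2+\norm{\bm e_i^T\bU_\star}_2\norm{\bU_\star^T\bE}_{\op}\bigr)/\sigma_r(\bX)\le 3\sqrt{\mu r/n}\,e_\sharp\cdot\tfrac{8}{7\sigma_r(\bX_\star)}.
\]
The second term is controlled by $\norm{\bm e_i^T\bU_\star}_2\le\sqrt{\mu r/n}$ times $\norm{\bU_\star^T(\bI-\bU\bU^T)}_{\op}$. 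By Wedin's $\sin\Theta$ theorem in its one-sided form, $\norm{(\bI-\bU\bU^T)\bU_\star}_{\op}\le\norm{\bE}_{\op}/\sigma_r(\bX)$; the direct form is $(\bI-\bU\bU^T)\bU_\star=(\bI-\bU\bU^T)(-\bE)\bV_\star\bm\Sigma_\star^{-1}$, which needs no gap. Summing the pieces gives a bound of the form
\[
c\sqrt{\mu r/n}\,e_\sharp/\sigma_r(\bX_\star),
\]
and the constants should be organized to land at $32/7$. For instance, $\frac{8}{7}\cdot 4=\frac{32}{7}$, with the row contributions summing to $4\sqrt{\mu r/n}\,e_\sharp$ over $\sigma_r(\bX)\ge\frac78\sigma_r(\bX_\star)$.

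\textbf{Incoherence of $\bU$.} This follows at once from the projector bound: $\norm{\bU}_{2,\infty}=\norm{\bU\bU^T}_{2,\infty}$, which is at most $\sqrt{\mu r/n}+\frac{32}{7}\cdot\frac18\sqrt{\mu r/n}<2\sqrt{\mu r/n}$. The same argument applied to $\bX^T$ handles $\bV$.

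\textbf{Tangent projector bound.} Set $\Delta_U=\bU\bU^T-\bU_\star\bU_\star^T$ and $\Delta_V$ similarly. Since $\Pcal_{T_{\bX}}(\bZ)=\bZ-(\bI-\bU\bU^T)\bZ(\bI-\bV\bV^T)$, the difference of the two projectors equals
\[
(\bI-\bU_\star\bU_\star^T)\bZ(\bI-\bV_\star\bV_\star^T)-(\bI-\bU\bU^T)\bZ(\bI-\bV\bV^T),
\]
which expands into terms $\Delta_U\bZ(\cdots)$ and $(\cdots)\bZ\Delta_V$. Hence the operator norm is at most $\norm{\Delta_U}_{\op}+\norm{\Delta_V}_{\op}$. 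Each $\norm{\Delta_U}_{\op}$ is a sine of principal angles, bounded by $\norm{\bE}_{\op}/\sigma_r(\bX)\le\frac87 e_\sharp/\sigma_r(\bX_\star)$ via the identity above. This gives $\frac{16}{7}$, following \cite[Lemma~4.1]{WeiCaiChanLeung2020}.

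\textbf{Main obstacle.} The hard step is the row bound: it must avoid any $\kappa$ factor. This is handled by always multiplying the error $\bE$, never $\bX_\star$, by $\bS^{-1}$ or $\bm\Sigma_\star^{-1}$. The remaining work is the constant bookkeeping needed to reach exactly $32/7$.
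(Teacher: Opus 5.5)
Your proposal is correct and follows essentially the same route as the paper. Both use the identity $(\bI-\bU_\star\bU_\star^T)\bU=(\bI-\bU_\star\bU_\star^T)\bE\bV\bS^{-1}$, the same split of $\bU\bU^T-\bU_\star\bU_\star^T$ with row contributions $3+1$ over $\sigma_r(\bX)\ge\frac78\sigma_r(\bX_\star)$, and the $\frac87 e_\sharp/\sigma_r(\bX_\star)$ principal-angle bound for each projector in the tangent estimate. The only difference is cosmetic: you deduce $\norm{\bU}_{2,\infty}\le\frac{11}{7}\sqrt{\mu r/n}$ from the rowwise projector bound, whereas the paper bounds $\bm e_i^T\bU$ directly to get $\frac{10}{7}\sqrt{\mu r/n}$ (a constant it reuses later), and either suffices for the stated $2\sqrt{\mu r/n}$.
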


\begin{proof}
The proof is deferred to \Cref{sec:proof-current-incoherence}.
\end{proof}

The next lemma gives a sharp-norm counterpart of the quadratic normal-component estimate in \cite[Lemma~4.1]{WeiCaiChanLeung2020}, together with the graph-factor representation used below.

\begin{lemma}\label[lemma]{lem:normal}
Suppose that \Cref{ass:incoherence} holds. Let $\bX=\bU\bS\bV^T\in\Mcal_r$ be a compact orthonormal factorization, and set $e_\sharp:=\sharpnorm{\bX-\bX_\star}$. If $e_\sharp\le\sigma_r(\bX_\star)/8$, then $\bG_{\bX}$ is invertible, and the graph factors in \eqref{eq:target-graph-factors} satisfy
\begin{equation}\label{eq:normal-factorization-main}
\bN_{\bX}=\bL_{\bX}\bG_{\bX}^{-1}\bR_{\bX}^T,
\end{equation}
and
\begin{equation}\label{eq:normal-factor-bounds-main}
\begin{aligned}
\norm{\bG_{\bX}^{-1}}_{\op} \le\frac4{3\sigma_r(\bX_\star)},\quad \max\left\{\norm{\bL_{\bX}}_{\op},\norm{\bR_{\bX}}_{\op}\right\} &\le e_\sharp,\\
\max\left\{\norm{\bL_{\bX}}_{2,\infty},\norm{\bR_{\bX}}_{2,\infty}\right\} &\le4\sqrt{\frac{\mu r}{n}}\,e_\sharp.&&
\end{aligned}
\end{equation}
Moreover, the normal component satisfies
\[
\sharpnorm{\bN_{\bX}} \le\frac{16}{3}\frac{e_\sharp^2}{\sigma_r(\bX_\star)}.
\]
Its row, column, and entrywise norms satisfy
\[
\max\left\{ \norm{\bN_{\bX}}_{2,\infty}, \norm{\bN_{\bX}^T}_{2,\infty} \right\} \le\frac{16}{3}\sqrt{\frac{\mu r}{n}}\frac{e_\sharp^2}{\sigma_r(\bX_\star)}, \qquad \norm{\bN_{\bX}}_\infty \le\frac{64\mu r}{3n}\frac{e_\sharp^2}{\sigma_r(\bX_\star)}.
\]
\end{lemma}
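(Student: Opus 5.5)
We will prove \Cref{lem:normal} by writing everything in terms of $\bE:=\bX_\star-\bX$ and using that $\bX$ has row space and column space exactly $\range(\bV)$ and $\range(\bU)$.

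\textbf{Step 1: the graph factors.} Since $(\bI-\bU\bU^T)\bX=0$ and $\bX(\bI-\bV\bV^T)=0$, we have
\[
\bL_{\bX}=(\bI-\bU\bU^T)\bE\bV,\qquad \bR_{\bX}=(\bI-\bV\bV^T)\bE^T\bU,\qquad \bN_{\bX}=(\bI-\bU\bU^T)\bX_\star(\bI-\bV\bV^T).
\]
The operator bounds $\norm{\bL_{\bX}}_{\op},\norm{\bR_{\bX}}_{\op}\le\norm{\bE}_{\op}\le e_\sharp$ are then immediate. For the rowwise bounds, split the projector:
\[
\bm e_i^T\bL_{\bX}=\bm e_i^T\bE\bV-(\bm e_i^T\bU)(\bU^T\bE\bV).
\]
The sharp norm gives $\norm{\bE}_{2,\infty}\le2\sqrt{\mu r/n}\,e_\sharp$. \Cref{lem:current-incoherence} gives $\norm{\bU}_{2,\infty}\le2\sqrt{\mu r/n}$, and $\norm{\bU^T\bE\bV}_{\op}\le e_\sharp$. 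Adding, we obtain $4\sqrt{\mu r/n}\,e_\sharp$; $\bR_{\bX}$ is symmetric.

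\textbf{Step 2: $\bG_{\bX}$ and the factorization.} Write $\bG_{\bX}=\bU^T\bX\bV+\bU^T\bE\bV=\bS+\bU^T\bE\bV$. Weyl gives $\sigma_r(\bX)\ge\sigma_r(\bX_\star)-e_\sharp\ge\tfrac78\sigma_r(\bX_\star)$, hence
\[
\sigma_r(\bG_{\bX})\ge\sigma_r(\bX)-e_\sharp\ge\tfrac34\sigma_r(\bX_\star),
\]
which gives invertibility and $\norm{\bG_{\bX}^{-1}}_{\op}\le 4/(3\sigma_r(\bX_\star))$. For \eqref{eq:normal-factorization-main}, decompose $\bX_\star$ in the bases $[\bU,\bU_\perp]$ and $[\bV,\bV_\perp]$. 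The blocks are
\[
\bU^T\bX_\star\bV=\bG_{\bX},\quad \bU_\perp^T\bX_\star\bV=:\bA,\quad \bU^T\bX_\star\bV_\perp=:\bB,\quad \bU_\perp^T\bX_\star\bV_\perp=:\bD.
\]
Since $\rank\bX_\star=r$ and the top-left $r\times r$ block is invertible, the Schur complement vanishes: $\bD=\bA\bG_{\bX}^{-1}\bB$. Translating back, $\bN_{\bX}=\bU_\perp\bD\bV_\perp^T=\bL_{\bX}\bG_{\bX}^{-1}\bR_{\bX}^T$. This rank/Schur-complement argument is the only conceptual step; the rest is bookkeeping.

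\textbf{Step 3: norms of $\bN_{\bX}$.} Apply submultiplicativity to the factorization:
\[
\norm{\bN_{\bX}}_{\op}\le e_\sharp\cdot\tfrac{4}{3\sigma_r}\cdot e_\sharp,\qquad \norm{\bm e_i^T\bN_{\bX}}_2\le\norm{\bm e_i^T\bL_{\bX}}_2\,\norm{\bG_{\bX}^{-1}}_{\op}\,\norm{\bR_{\bX}}_{\op}\le\tfrac{16}{3}\sqrt{\mu r/n}\,e_\sharp^2/\sigma_r,
\]
and likewise for columns. Entrywise,
\[
|(\bN_{\bX})_{ij}|\le\norm{\bm e_i^T\bL_{\bX}}_2\,\norm{\bG_{\bX}^{-1}}_{\op}\,\norm{\bm e_j^T\bR_{\bX}}_2\le\tfrac{64\mu r}{3n}\,e_\sharp^2/\sigma_r.
\]
Plugging into the definition \eqref{eq:sharp-main}, the weights $\tfrac12\sqrt{n/(\mu r)}$ and $n/(4\mu r)$ turn these bounds into $\tfrac83 e_\sharp^2/\sigma_r$, $\tfrac83 e_\sharp^2/\sigma_r$ and $\tfrac{16}{3}e_\sharp^2/\sigma_r$, respectively. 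The maximum is therefore at most $\tfrac{16}{3}e_\sharp^2/\sigma_r(\bX_\star)$, which is the claimed sharp-norm bound.

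The main care point is Step 1's rowwise bound: it must use the current basis incoherence from \Cref{lem:current-incoherence} rather than $\bU_\star$, which is exactly why that lemma precedes this one.
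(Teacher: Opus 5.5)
Your proof is correct and follows the paper's argument essentially step for step. Both use the Weyl bound $\sigma_r(\bG_{\bX})\ge\sigma_r(\bX_\star)-2e_\sharp$, the vanishing Schur complement for $\bN_{\bX}=\bL_{\bX}\bG_{\bX}^{-1}\bR_{\bX}^T$, the rowwise bounds on $\bL_{\bX},\bR_{\bX}$ via the current-basis incoherence of \Cref{lem:current-incoherence}, and submultiplicativity to obtain the operator, row, column, entrywise, and sharp-norm bounds on $\bN_{\bX}$.
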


\begin{proof}
The proof is deferred to \Cref{sec:proof-normal}.
\end{proof}

The following identity shows that the population tangent correction recovers $\bX_\star$ exactly. It is the local inverse formula for the orthographic retraction \cite[Sec.~3.2.3]{AbsilOseledets2015}; its proof verifies the required core invertibility under the stated error bound.
\begin{lemma}\label[lemma]{lem:population-exact}
Let $\bX\in\Mcal_r$ satisfy $\sharpnorm{\bX-\bX_\star}\le\sigma_r(\bX_\star)/4$. Then $\operatorname{Retr}_{\bX}(\bm t_{\bX})=\bX_\star$.
\end{lemma}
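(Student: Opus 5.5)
We compute the orthographic retraction explicitly at $\bm\xi=\bm t_{\bX}$. Write $\bX=\bU\bS\bV^T$ with orthonormal $\bU,\bV$. By \eqref{eq:tangent-projector}, $\bm t_{\bX}=\bU\bU^T\bm D+\bm D\bV\bV^T-\bU\bU^T\bm D\bV\bV^T$ with $\bm D:=\bX_\star-\bX$. Since $\bU\bU^T\bX=\bX=\bX\bV\bV^T$, we obtain
\[
\bX+\bm t_{\bX}=\bU\bU^T\bX_\star+\bX_\star\bV\bV^T-\bU\bU^T\bX_\star\bV\bV^T
=\bX_\star-(\bI-\bU\bU^T)\bX_\star(\bI-\bV\bV^T).
\]
Set $\bm Y:=\bX+\bm t_{\bX}$. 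The three quantities entering \eqref{eq:graph-retraction-opt} then simplify as follows:
\[
\bm Y\bV=\bX_\star\bV,\qquad \bU^T\bm Y=\bU^T\bX_\star,\qquad \bU^T\bm Y\bV=\bU^T\bX_\star\bV=\bG_{\bX}.
\]
The retraction therefore equals $\bX_\star\bV\,\bG_{\bX}^{-1}\,\bU^T\bX_\star$, provided $\bG_{\bX}$ is invertible.

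\textbf{Core invertibility.} We cannot quote \Cref{lem:normal} directly, because it assumes $e_\sharp\le\sigma_r/8$ while here we have only $\sigma_r/4$. We argue directly instead. Write $\bG_{\bX}=\bU^T\bX\bV+\bU^T(\bX_\star-\bX)\bV=\bS+\bU^T\bm D\bV$. Weyl's inequality gives $\sigma_r(\bX)\ge\sigma_r(\bX_\star)-\norm{\bm D}_{\op}\ge\tfrac34\sigma_r(\bX_\star)$, hence $\sigma_r(\bS)\ge\tfrac34\sigma_r(\bX_\star)$. Since $\norm{\bU^T\bm D\bV}_{\op}\le\tfrac14\sigma_r(\bX_\star)$, it follows that $\sigma_r(\bG_{\bX})\ge\tfrac12\sigma_r(\bX_\star)>0$. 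Only the operator-norm component of the sharp norm is used here.

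\textbf{Identifying the product with $\bX_\star$.} It remains to show $\bX_\star\bV\bG_{\bX}^{-1}\bU^T\bX_\star=\bX_\star$. Invertibility of $\bG_{\bX}=\bU^T\bU_\star\bm\Sigma_\star\bV_\star^T\bV$, a product of $r\times r$ matrices, forces $\bm A:=\bU^T\bU_\star$ and $\bm B:=\bV_\star^T\bV$ to be invertible. Then
\[
\bX_\star\bV\bG_{\bX}^{-1}\bU^T\bX_\star
=\bU_\star\bm\Sigma_\star\bm B\,(\bm A\bm\Sigma_\star\bm B)^{-1}\bm A\bm\Sigma_\star\bV_\star^T
=\bU_\star\bm\Sigma_\star\bm B\bm B^{-1}\bm\Sigma_\star^{-1}\bm A^{-1}\bm A\bm\Sigma_\star\bV_\star^T
=\bX_\star.
\]

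The only genuine obstacle is the invertibility of the graph core under the weaker radius $\sigma_r/4$. The spectral argument above handles it, so the rest is purely algebraic.
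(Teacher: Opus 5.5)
Your proof is correct and is the paper's argument in substance. You get the same Weyl bound $\sigma_{\min}(\bG_{\bX})\ge\sigma_r(\bX_\star)/2$, and your identity $\bX_\star\bV\bG_{\bX}^{-1}\bU^T\bX_\star=\bX_\star$ is exactly the vanishing Schur complement $\bN_{\bX}=\bL_{\bX}\bG_{\bX}^{-1}\bR_{\bX}^T$ that the paper invokes; the only differences are packaging. The paper goes through the tangent-coordinate form \eqref{eq:graph-retraction-identity} to write $\operatorname{Retr}_{\bX}(\bm t_{\bX})=\bX+\bm t_{\bX}+\bN_{\bX}$, whereas you evaluate \eqref{eq:graph-retraction-opt} directly and prove the rank-$r$ identity explicitly through the factorization $\bG_{\bX}=(\bU^T\bU_\star)\bm\Sigma_\star(\bV_\star^T\bV)$.
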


\begin{proof}
The proof is deferred to \Cref{sec:proof-population-exact}.
\end{proof}

We next bound the effect of perturbing the population tangent correction.

\begin{lemma}
\label[lemma]{lem:graph-correction}
Suppose that \Cref{ass:incoherence} holds. Let $\bX\in\Mcal_r$, set $e_\sharp:=\sharpnorm{\bX-\bX_\star}$, and let $\bm\eta\in T_{\bX}\Mcal_r$. If $e_\sharp\le\sigma_r(\bX_\star)/8$ and $\sharpnorm{\bm\eta}\le\sigma_r(\bX_\star)/16$, then $\operatorname{Retr}_{\bX}(\bm t_{\bX}+\bm\eta)$ is well defined and
\[
\sharpnorm{\operatorname{Retr}_{\bX}(\bm t_{\bX}+\bm\eta) -\bX_\star-\bm\eta} \le13\frac{e_\sharp\sharpnorm{\bm\eta}}{\sigma_r(\bX_\star)} +6\frac{\sharpnorm{\bm\eta}^2}{\sigma_r(\bX_\star)}.
\]
\end{lemma}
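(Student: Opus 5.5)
We write the retraction explicitly in the graph form and compare it with the exact identity of \Cref{lem:population-exact}. With $\bX=\bU\bS\bV^T$, every tangent vector has the form $\bU\bA^T+\bB\bV^T$. We have $\bX+\bm t_{\bX}=\bU\bG_{\bX}\bV^T+\bL_{\bX}\bV^T+\bU\bR_{\bX}^T$, since $\bm t_{\bX}=\Pcal_{T_{\bX}}(\bX_\star-\bX)$ and $\bX_\star=\bU\bG_{\bX}\bV^T+\bL_{\bX}\bV^T+\bU\bR_{\bX}^T+\bN_{\bX}$. Decompose $\bm\eta=\bU\bm\eta_G\bV^T+\bm\eta_L\bV^T+\bU\bm\eta_R^T$, where
\[
\bm\eta_G=\bU^T\bm\eta\bV,\qquad \bm\eta_L=(\bI-\bU\bU^T)\bm\eta\bV,\qquad \bm\eta_R=(\bI-\bV\bV^T)\bm\eta^T\bU.
\]
The orthographic retraction \eqref{eq:graph-retraction-opt} of $\bm t_{\bX}+\bm\eta$ is then
\[
(\bU\bG'+\bL')(\bG')^{-1}(\bG'\bV^T+\bR'^T)=\bU\bG'\bV^T+\bL'\bV^T+\bU\bR'^T+\bL'(\bG')^{-1}\bR'^T,
\]
with $\bG'=\bG_{\bX}+\bm\eta_G$, $\bL'=\bL_{\bX}+\bm\eta_L$ and $\bR'=\bR_{\bX}+\bm\eta_R$. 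Subtracting $\bX_\star+\bm\eta$, all terms that are linear in the graph factors cancel. By \eqref{eq:normal-factorization-main} we are left with
\[
\operatorname{Retr}_{\bX}(\bm t_{\bX}+\bm\eta)-\bX_\star-\bm\eta=\bL'(\bG')^{-1}\bR'^T-\bL_{\bX}\bG_{\bX}^{-1}\bR_{\bX}^T .
\]

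First, well-definedness. We have $\norm{\bm\eta_G}_{\op}\le\norm{\bm\eta}_{\op}\le\sharpnorm{\bm\eta}\le\sigma_r/16$, and \Cref{lem:normal} gives $\sigma_r(\bG_{\bX})\ge 3\sigma_r/4$. Hence $\sigma_r(\bG')\ge 11\sigma_r/16$, so $\bG'$ is invertible and $\norm{(\bG')^{-1}}_{\op}\le 16/(11\sigma_r)$. Next, expand the difference as
\[
\bm\eta_L(\bG')^{-1}\bR'^T+\bL_{\bX}(\bG')^{-1}\bm\eta_R^T+\bL_{\bX}\bigl[(\bG')^{-1}-\bG_{\bX}^{-1}\bigr]\bR_{\bX}^T,
\]
and in the last term use $(\bG')^{-1}-\bG_{\bX}^{-1}=-(\bG')^{-1}\bm\eta_G\bG_{\bX}^{-1}$. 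Every term is a product $\bA\bM\bB^T$ with $\bM$ an $r\times r$ core. We bound each of the four sharp-norm components with the product rules
\[
\norm{\bA\bM\bB^T}_{\op}\le\norm{\bA}_{\op}\norm{\bM}_{\op}\norm{\bB}_{\op},\qquad
\norm{\bA\bM\bB^T}_{2,\infty}\le\norm{\bA}_{2,\infty}\norm{\bM}_{\op}\norm{\bB}_{\op},
\]
\[
\norm{\bA\bM\bB^T}_{\infty}\le\norm{\bA}_{2,\infty}\norm{\bM}_{\op}\norm{\bB}_{2,\infty}.
\]
The required factor bounds are $\norm{\bL_{\bX}}_{\op},\norm{\bR_{\bX}}_{\op}\le e_\sharp$ and $\norm{\bL_{\bX}}_{2,\infty},\norm{\bR_{\bX}}_{2,\infty}\le 4\sqrt{\mu r/n}\,e_\sharp$ from \Cref{lem:normal}, together with matching bounds for $\bm\eta_L,\bm\eta_R$ in terms of $\sharpnorm{\bm\eta}$.

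The main obstacle is the rowwise control of $\bm\eta_L=(\bI-\bU\bU^T)\bm\eta\bV$ and $\bm\eta_R$. Here $\norm{\bm\eta\bV}_{2,\infty}\le\norm{\bm\eta}_{2,\infty}\le 2\sqrt{\mu r/n}\sharpnorm{\bm\eta}$ directly from the definition of the sharp norm. The projected part is handled by \Cref{lem:current-incoherence}: $\norm{\bU\bU^T\bm\eta\bV}_{2,\infty}\le\norm{\bU}_{2,\infty}\norm{\bm\eta}_{\op}\le 2\sqrt{\mu r/n}\,\sharpnorm{\bm\eta}$. Together these give $\norm{\bm\eta_L}_{2,\infty}\le 4\sqrt{\mu r/n}\,\sharpnorm{\bm\eta}$, and the same bound holds for $\bm\eta_R$. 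Also $\norm{\bm\eta_L}_{\op}\le\sharpnorm{\bm\eta}$, $\norm{\bL'}_{\op}\le e_\sharp+\sharpnorm{\bm\eta}$ and $\norm{\bL'}_{2,\infty}\le 4\sqrt{\mu r/n}\,(e_\sharp+\sharpnorm{\bm\eta})$.

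With these bounds, the operator and row/column components of each term are at most a constant times $(e_\sharp\sharpnorm{\bm\eta}+\sharpnorm{\bm\eta}^2)/\sigma_r$. The normalisations $\tfrac12\sqrt{n/\mu r}$ and $n/(4\mu r)$ absorb the factors $4\sqrt{\mu r/n}$ and $16\mu r/n$: the row norm of a product picks up $2\times$ its operator norm, and the entrywise norm picks up $4\times$ it. Finally, we collect the constants using $\norm{(\bG')^{-1}}_{\op}\le16/(11\sigma_r)$, $\norm{\bG_{\bX}^{-1}}_{\op}\le 4/(3\sigma_r)$, $\norm{\bm\eta_G}_{\op}\le\sharpnorm{\bm\eta}$ and $e_\sharp\le\sigma_r/8$. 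The last inequality removes the cubic contribution $e_\sharp^2\sharpnorm{\bm\eta}$ of the third term, which becomes at most a constant times $e_\sharp\sharpnorm{\bm\eta}$. The bookkeeping should produce the stated constants $13$ and $6$; if the crude worst-case component bounds overshoot, we would bound the cross term $\bm\eta_L(\bG')^{-1}\bm\eta_R^T$ separately, giving the $\sharpnorm{\bm\eta}^2$ coefficient.
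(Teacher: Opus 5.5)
Your proposal is correct and follows the paper's proof: the same graph-coordinate remainder $\bL'(\bG')^{-1}\bR'^T-\bL_{\bX}\bG_{\bX}^{-1}\bR_{\bX}^T$ expanded into the same four terms, the same factor bounds from \Cref{lem:normal,lem:current-incoherence}, and the same sharp-norm product rule. The crude bookkeeping you describe already gives the stated constants, so no separate treatment of the cross term is needed: the three $e_\sharp\sharpnorm{\bm\eta}$ contributions total $\tfrac{64}{11}+\tfrac{64}{11}+\tfrac{32}{33}=\tfrac{416}{33}<13$, and $\bm\eta_L(\bG')^{-1}\bm\eta_R^T$ contributes $\tfrac{64}{11}\sharpnorm{\bm\eta}^2/\sigma_r(\bX_\star)$ with $\tfrac{64}{11}<6$.
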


\begin{proof}
The proof is deferred to \Cref{sec:proof-graph-correction}.
\end{proof}

The following Frobenius estimates will be used for local RGD convergence and for the eventual quadratic convergence of RGN. Part~(i) is a variant of \cite[Lemma~4.1]{WeiCaiChanLeung2020}.

\begin{lemma}
\label[lemma]{lem:normal-F}
Let $\bX\in\Mcal_r$.  Then the following statements hold.

\emph{(i)} Suppose that $\norm{\bX-\bX_\star}_{\F}\le\sigma_r(\bX_\star)/4$.  Then
\[
\norm{\Pcal_{T_{\bX}^\perp}(\bX_\star-\bX)}_{\F} \le2\frac{\norm{\bX-\bX_\star}_{\F}^2}{\sigma_r(\bX_\star)}, \qquad \norm{\Pcal_{T_{\bX}}-\Pcal_{T_{\bX_\star}}}_{\F\to\F} \le3\frac{\norm{\bX-\bX_\star}_{\F}}{\sigma_r(\bX_\star)}.
\]

\emph{(ii)} Let $\bm\eta\in T_{\bX}\Mcal_r$.  Suppose that $\norm{\bX-\bX_\star}_{\F}\le\sigma_r(\bX_\star)/40$ and $\norm{\bm\eta}_{\F}\le\sigma_r(\bX_\star)/320$.  Then the graph retraction is well defined and its nonlinear remainder satisfies
\[
\norm{\operatorname{Retr}_{\bX}(\bm t_{\bX}+\bm\eta) -\bX_\star-\bm\eta}_{\F} \le\frac{11}{5} \frac{\norm{\bX-\bX_\star}_{\F}\norm{\bm\eta}_{\F}} {\sigma_r(\bX_\star)} +\frac{11}{10}\frac{\norm{\bm\eta}_{\F}^2}{\sigma_r(\bX_\star)}.
\]
\end{lemma}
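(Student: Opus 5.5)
For part (i), I will mirror the graph-factor argument of \Cref{lem:normal}, with Frobenius norms in place of sharp norms. Set $e:=\norm{\bX-\bX_\star}_{\F}\le\sigma_r(\bX_\star)/4$, $\bE:=\bX_\star-\bX$, and use the factors $\bG_{\bX},\bL_{\bX},\bR_{\bX}$ from \eqref{eq:target-graph-factors}.

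Since $(\bI-\bU\bU^T)\bX=0$, we have $\bL_{\bX}=(\bI-\bU\bU^T)\bE\bV$ and $\bR_{\bX}=(\bI-\bV\bV^T)\bE^T\bU$. Hence $\norm{\bL_{\bX}}_{\F},\norm{\bR_{\bX}}_{\F}\le e$. For the core, $\bG_{\bX}=\bm\Sigma+\bU^T\bE\bV$, so Weyl gives $\sigma_r(\bG_{\bX})\ge\sigma_r(\bX)-e\ge\sigma_r(\bX_\star)-2e\ge\sigma_r(\bX_\star)/2$.

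The factorization $\bN_{\bX}=\bL_{\bX}\bG_{\bX}^{-1}\bR_{\bX}^T$ is purely algebraic: it uses only $\rank\bX_\star=r$ and the invertibility of $\bG_{\bX}$, so the proof of \eqref{eq:normal-factorization-main} carries over verbatim. This gives $\norm{\bN_{\bX}}_{\F}\le\norm{\bL_{\bX}}_{\F}\norm{\bG_{\bX}^{-1}}_{\op}\norm{\bR_{\bX}}_{\op}\le 2e^2/\sigma_r(\bX_\star)$.

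For the projector difference, I will write
\[
\Pcal_{T_{\bX}}-\Pcal_{T_{\bX_\star}}=\Pcal_{U}^{\perp}\cdot(\text{right part})+\ldots,
\]
expanding through $\Pcal_{T}(\bZ)=\bZ-(\bI-\bU\bU^T)\bZ(\bI-\bV\bV^T)$. The difference then becomes a sum of two terms, each bounded by $\norm{\bU\bU^T-\bU_\star\bU_\star^T}_{\op}$ or its $\bV$ analogue. Davis--Kahan/Wedin gives $\norm{\bU\bU^T-\bU_\star\bU_\star^T}_{\op}\le e/(\sigma_r(\bX_\star)-e)\le\frac43 e/\sigma_r(\bX_\star)$. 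The total is then at most $\frac83 e/\sigma_r\le 3e/\sigma_r$.

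For part (ii), I will use the explicit form of the orthographic retraction. For $\bY:=\bX+\bm t_{\bX}+\bm\eta$, compression by the current bases gives $\bU^T\bY\bV=\bG_{\bX}+\bU^T\bm\eta\bV$, $(\bI-\bU\bU^T)\bY\bV=\bL_{\bX}+\bm\eta_L$, and $\bU^T\bY(\bI-\bV\bV^T)=\bR_{\bX}^T+\bm\eta_R^T$. Here I use that the tangent vector $\bm\eta$ has no $(\perp,\perp)$ block and that $\bm t_{\bX}$ reproduces the three tangent blocks of $\bX_\star-\bX$. The retraction is then
\[
\operatorname{Retr}_{\bX}=\begin{pmatrix}\bU&\bU_\perp\end{pmatrix}\begin{pmatrix}\bG'&\bR'^T\\ \bL'&\bL'\bG'^{-1}\bR'^T\end{pmatrix}\begin{pmatrix}\bV&\bV_\perp\end{pmatrix}^T,
\]
with $\bG'=\bG_{\bX}+\bm\eta_G$ and so on. By \Cref{lem:population-exact}-type reasoning, $\bX_\star$ has the same form with the unprimed blocks.

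The remainder therefore lives only in the $(\perp,\perp)$ block and equals $\bL'\bG'^{-1}\bR'^T-\bL\bG^{-1}\bR^T-0$. Telescoping it gives
\[
\bm\eta_L\bG'^{-1}\bR'^T+\bL\bG'^{-1}\bm\eta_R^T+\bL(\bG'^{-1}-\bG^{-1})\bR^T .
\]
Each term is then bounded using $\norm{\bG'^{-1}}_{\op}\le 1/(\sigma_r(\bX_\star)-2e-\norm{\bm\eta})$, $\norm{\bL}_{\F}\le e$, and $\norm{\bG'^{-1}-\bG^{-1}}\le\norm{\bG'^{-1}}\norm{\bG^{-1}}\norm{\bm\eta}$. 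The hypotheses $e\le\sigma_r/40$ and $\norm{\bm\eta}\le\sigma_r/320$ make $\norm{\bG'^{-1}}_{\op}\le(1-1/20-1/320)^{-1}/\sigma_r\approx1.06/\sigma_r$. This yields roughly $2\cdot1.06\,e\norm{\bm\eta}/\sigma_r+1.06\norm{\bm\eta}^2/\sigma_r$ plus a small cross term of order $e^2\norm{\bm\eta}/\sigma_r^2$, which is absorbed into the first term. The result fits under $\frac{11}5$ and $\frac{11}{10}$.

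I expect the main obstacle to be the constant bookkeeping in (ii). In particular, the $\bL(\bG'^{-1}-\bG^{-1})\bR^T$ term must be charged to $e\norm{\bm\eta}$ using $e/\sigma_r\le1/40$, and invertibility of $\bG'$ must be verified so that the retraction is well defined.
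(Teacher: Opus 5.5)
Your proposal is correct and follows the paper's proof essentially step by step. In (i) you use the same Weyl bound $\sigma_{\min}(\bG_{\bX})\ge\sigma_r(\bX_\star)-2\norm{\bX-\bX_\star}_{\F}$, the same Schur-complement factorization $\bN_{\bX}=\bL_{\bX}\bG_{\bX}^{-1}\bR_{\bX}^T$, and the same $\frac43$ projector estimate summing to $\frac83\le3$. In (ii) you use the same block form of the orthographic retraction (identity \eqref{eq:graph-retraction-identity}), with the remainder confined to the normal block; your three-term telescoping is the paper's four-term expansion with $\bB_\eta\bS_\eta^{-1}\bR_{\bX}^T$ and $\bB_\eta\bS_\eta^{-1}\bC_\eta^T$ grouped, and it gives the identical intermediate bound from which $\frac{11}{5}$ and $\frac{11}{10}$ are read off. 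You are also right that only the algebraic Schur-complement step is needed here, not \Cref{lem:population-exact} itself with its sharp-norm hypothesis.
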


\begin{proof}
The proof is deferred to \Cref{sec:proof-normal-F}.
\end{proof}

We next establish the standard sampling isometry at the tangent space of $\bX_\star$~\cite{Chen2015,WeiCaiChanLeung2020} and transfer it uniformly to nearby tangent spaces.

\subsection{Sampling isometry and uniform transfer}

\begin{lemma}
\label[lemma]{lem:true-tangent-rip}
Suppose that \Cref{ass:incoherence} holds, and let $\Lambda\sim\operatorname{Bernoulli}(p_\Lambda)$. If $p_\Lambda\ge c_1\mu r\log n/n$, where $c_1>0$ is a sufficiently large absolute constant, then, with probability at least $1-n^{-10}/48$,
\[
\norm{\Pcal_{T_{\bX_\star}}p_\Lambda^{-1}\Pcal_\Lambda\Pcal_{T_{\bX_\star}} -\Pcal_{T_{\bX_\star}}}_{\F\to\F} \le\frac1{16}.
\]
\end{lemma}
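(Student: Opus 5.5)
The plan is to use the standard matrix Bernstein argument of Candès--Recht and Chen. Write $\Pcal_{T_{\bX_\star}}p_\Lambda^{-1}\Pcal_\Lambda\Pcal_{T_{\bX_\star}}-\Pcal_{T_{\bX_\star}}$ as a sum of independent, zero-mean, self-adjoint operators on $\R^{n\times n}$ with the Frobenius inner product:
\[
\sum_{i,j}\Bigl(\frac{\delta_{ij}}{p_\Lambda}-1\Bigr)\,\Pcal_{T_{\bX_\star}}(\bm e_i\bm e_j^T)\otimes\Pcal_{T_{\bX_\star}}(\bm e_i\bm e_j^T).
\]
Here $\delta_{ij}\sim\operatorname{Bernoulli}(p_\Lambda)$ are independent, and $\bA\otimes\bA$ denotes the rank-one operator $\bZ\mapsto\inner{\bA}{\bZ}\bA$. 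This identity holds because $\sum_{i,j}\inner{\bm e_i\bm e_j^T}{\cdot}\,\bm e_i\bm e_j^T=\Ical$, so the mean of the sum is $\Pcal_{T_{\bX_\star}}\Ical\Pcal_{T_{\bX_\star}}=\Pcal_{T_{\bX_\star}}$.

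The key deterministic input is the incoherence bound
\[
\norm{\Pcal_{T_{\bX_\star}}(\bm e_i\bm e_j^T)}_{\F}^2=\norm{\bU_\star^T\bm e_i}_2^2+\norm{\bV_\star^T\bm e_j}_2^2-\norm{\bU_\star^T\bm e_i}_2^2\norm{\bV_\star^T\bm e_j}_2^2\le\frac{2\mu r}{n},
\]
which follows from \Cref{ass:incoherence} and the projector formula \eqref{eq:tangent-projector}.

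With this bound, each summand has operator norm at most $p_\Lambda^{-1}\cdot 2\mu r/n$. For the variance, compute
\[
\Bigl\|\sum_{i,j}\E\Bigl(\frac{\delta_{ij}}{p_\Lambda}-1\Bigr)^2\norm{\Pcal_{T_{\bX_\star}}(\bm e_i\bm e_j^T)}_{\F}^2\,\Pcal_{T_{\bX_\star}}(\bm e_i\bm e_j^T)\otimes\Pcal_{T_{\bX_\star}}(\bm e_i\bm e_j^T)\Bigr\|\le\frac{1-p_\Lambda}{p_\Lambda}\frac{2\mu r}{n}\norm{\Pcal_{T_{\bX_\star}}}\le\frac{2\mu r}{np_\Lambda}.
\]
This step uses that $\sum_{i,j}\bA_{ij}\otimes\bA_{ij}=\Pcal_{T_{\bX_\star}}$, where $\bA_{ij}:=\Pcal_{T_{\bX_\star}}(\bm e_i\bm e_j^T)$, and that each operator in the sum is positive semidefinite.

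Next, apply the matrix Bernstein inequality in dimension $n^2$ (or in $\dim T_{\bX_\star}\Mcal_r\le 2nr$). It gives
\[
\Pr\Bigl\{\norm{\cdot}_{\F\to\F}\ge t\Bigr\}\le 2n^2\exp\!\Bigl(-\frac{t^2/2}{2\mu r/(np_\Lambda)+ t\,2\mu r/(3np_\Lambda)}\Bigr).
\]
Set $t=1/16$. If $p_\Lambda\ge c_1\mu r\log n/n$, the exponent is at least $c\,c_1\log n$ for an absolute $c>0$. Choosing $c_1$ large makes the failure probability at most $2n^{2}n^{-cc_1}\le n^{-10}/48$ for all $n\ge2$. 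The constant $48$ is absorbed by enlarging $c_1$; the case of small $n$ is handled by the same enlargement, since $\log n\ge\log 2$.

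No step is a serious obstacle. The only care needed is in bookkeeping the constants so that the probability bound is exactly $n^{-10}/48$ uniformly in $n\ge2$, and in checking that the variance proxy is bounded by the operator norm of $\Pcal_{T_{\bX_\star}}$ (which is $1$), not by its trace.
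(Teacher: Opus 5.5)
Your proposal is correct and follows essentially the same route as the paper. Both use the rank-one decomposition $\sum_{i,j}(\delta_{ij}/p_\Lambda-1)\,\bm a_{ij}\otimes\bm a_{ij}$ with $\bm a_{ij}=\Pcal_{T_{\bX_\star}}(\bm e_i\bm e_j^T)$, the incoherence bound $\norm{\bm a_{ij}}_{\F}^2\le 2\mu r/n$, the variance bound via $(\bm a\otimes\bm a)^2=\norm{\bm a}_{\F}^2\,\bm a\otimes\bm a$, and self-adjoint matrix Bernstein at threshold $1/16$; the paper works on $T_{\bX_\star}\Mcal_r$ (dimension at most $2nr$) and fixes $c_1=2^{15}$ explicitly, which your exponent of roughly $c_1\log n/1045$ confirms is enough for the $n^{-10}/48$ bound.
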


\begin{proof}
The proof is deferred to \Cref{sec:proof-true-tangent-rip}.
\end{proof}

We next transfer the preceding isometry to nearby tangent spaces. A related local tangent-space estimate appears in \cite[Lemma~4.2]{WeiCaiChanLeung2020}. The following deterministic lemma holds simultaneously throughout the stated Frobenius neighborhood, so the matrix $\bX$ may depend on $\Lambda$.

\begin{lemma}\label[lemma]{lem:det-transfer}
Let $\Lambda$ be an index set and let $0<p_\Lambda\le1$. Suppose that
\begin{equation}\label{eq:det-transfer-hypothesis}
\norm{\Pcal_{T_{\bX_\star}}p_\Lambda^{-1}\Pcal_\Lambda \Pcal_{T_{\bX_\star}} -\Pcal_{T_{\bX_\star}}}_{\F\to\F} \le\frac1{16}.
\end{equation}
Then, simultaneously for all $\bX\in\Mcal_r$ satisfying
\[
\norm{\bX-\bX_\star}_{\F} \le\frac1{40}\sqrt{p_\Lambda}\,\sigma_r(\bX_\star),
\]
and all $\bm\zeta\in T_{\bX}\Mcal_r$, we have
\begin{equation}\label{eq:fixed-rgd-transfer}
\frac23\norm{\bm\zeta}_{\F}^2 \le \inner{\bm\zeta}{p_\Lambda^{-1}\Pcal_\Lambda\bm\zeta} \le \frac54\norm{\bm\zeta}_{\F}^2.
\end{equation}
If, in addition, $\norm{\bX-\bX_\star}_{\F}\le\sqrt{p_\Lambda}\,\sigma_r(\bX_\star)/128$, then
\begin{equation}\label{eq:fixed-rgd-transfer-small}
\frac78\norm{\bm\zeta}_{\F}^2 \le\inner{\bm\zeta}{p_\Lambda^{-1}\Pcal_\Lambda\bm\zeta} \le\frac98\norm{\bm\zeta}_{\F}^2.
\end{equation}
\end{lemma}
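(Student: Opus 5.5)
We transfer the isometry from $T_{\bX_\star}$ to $T_{\bX}$ by writing each $\bm\zeta\in T_{\bX}\Mcal_r$ as a tangent vector at $\bX_\star$ plus a small remainder. Set $\bm\zeta_\star:=\Pcal_{T_{\bX_\star}}\bm\zeta$ and $\bm\delta:=\bm\zeta-\bm\zeta_\star=\Pcal_{T_{\bX_\star}^\perp}\bm\zeta$. Since $\bm\zeta=\Pcal_{T_{\bX}}\bm\zeta$, we have $\bm\delta=(\Pcal_{T_{\bX}}-\Pcal_{T_{\bX_\star}})\bm\zeta$. Writing $e:=\norm{\bX-\bX_\star}_{\F}$ and using \Cref{lem:normal-F}(i), which applies because $e\le\sigma_r(\bX_\star)/40\le\sigma_r(\bX_\star)/4$, we get $\norm{\bm\delta}_{\F}\le 3\frac{e}{\sigma_r(\bX_\star)}\norm{\bm\zeta}_{\F}=:\epsilon\norm{\bm\zeta}_{\F}$. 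Under the first radius, $\epsilon\le \frac{3}{40}\sqrt{p_\Lambda}$; under the second, $\epsilon\le\frac{3}{128}\sqrt{p_\Lambda}$. The key point is that the bound carries the factor $\sqrt{p_\Lambda}$.

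Next expand the quadratic form. Write $\mathcal A:=p_\Lambda^{-1}\Pcal_\Lambda$, which is self-adjoint and positive semidefinite, so $\inner{\bm\zeta}{\mathcal A\bm\zeta}=\norm{\mathcal A^{1/2}\bm\zeta}_{\F}^2$. The hypothesis \eqref{eq:det-transfer-hypothesis} gives
\[
\tfrac{15}{16}\norm{\bm\zeta_\star}_{\F}^2\le\norm{\mathcal A^{1/2}\bm\zeta_\star}_{\F}^2\le\tfrac{17}{16}\norm{\bm\zeta_\star}_{\F}^2.
\]
For the remainder we use only the crude operator bound $\norm{\mathcal A^{1/2}}_{\F\to\F}\le p_\Lambda^{-1/2}$, so $\norm{\mathcal A^{1/2}\bm\delta}_{\F}\le p_\Lambda^{-1/2}\epsilon\norm{\bm\zeta}_{\F}=:\eta\norm{\bm\zeta}_{\F}$. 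Here $\eta\le 3/40$ in the first case and $\eta\le 3/128$ in the second. This is exactly why the radius must scale like $\sqrt{p_\Lambda}$: the remainder is not in a tangent space at $\bX_\star$, so no isometry is available and we pay the full $p_\Lambda^{-1/2}$.

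By the triangle inequality for the seminorm $\norm{\mathcal A^{1/2}\cdot}_{\F}$,
\[
\sqrt{\tfrac{15}{16}}\norm{\bm\zeta_\star}_{\F}-\eta\norm{\bm\zeta}_{\F}\le\norm{\mathcal A^{1/2}\bm\zeta}_{\F}\le\sqrt{\tfrac{17}{16}}\norm{\bm\zeta_\star}_{\F}+\eta\norm{\bm\zeta}_{\F}.
\]
We also have $\sqrt{1-\epsilon^2}\norm{\bm\zeta}_{\F}\le\norm{\bm\zeta_\star}_{\F}\le\norm{\bm\zeta}_{\F}$ by orthogonality, with $\epsilon\le\eta$.

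The remaining step is numerical. In the first case the upper constant is at most $(\sqrt{17/16}+3/40)^2\approx(1.0308+0.075)^2\approx1.223\le5/4$. The lower constant is at least $(\sqrt{15/16}\sqrt{1-0.0056}-0.075)^2\approx(0.9656-0.075)^2\approx0.793\ge2/3$. In the second case, $\eta\le0.0235$ gives an upper constant of about $(1.0308+0.0235)^2\approx1.112\le9/8$ and a lower constant of about $(0.9682\cdot0.9997-0.0235)^2\approx0.892\ge7/8$. Both constant sets therefore follow.

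The only delicate step is checking these constants, especially the lower bound $7/8$ in the small-radius case. If that margin turns out too thin, we would refine the argument by expanding $\inner{\bm\zeta_\star+\bm\delta}{\mathcal A(\bm\zeta_\star+\bm\delta)}$ directly, bounding the cross term by Cauchy--Schwarz in the $\mathcal A$-seminorm. Nothing in the argument is probabilistic, so the conclusion holds simultaneously for all $\bX$ in the neighborhood, including $\bX$ that depend on $\Lambda$.
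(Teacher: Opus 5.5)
Your proposal is correct and follows essentially the same route as the paper. Both arguments split $\bm\zeta$ into $\Pcal_{T_{\bX_\star}}\bm\zeta$ plus a remainder bounded via part~(i) of \Cref{lem:normal-F} by $\frac{3}{40}\sqrt{p_\Lambda}$ (resp.\ $\frac{3}{128}\sqrt{p_\Lambda}$), apply the hypothesis to the tangent part, control the remainder with $p_\Lambda^{-1/2}\norm{\Pcal_\Lambda}_{\F\to\F}\le p_\Lambda^{-1/2}$, and close with the triangle inequality and the same numerical checks. The only cosmetic difference is that for the larger radius you use the orthogonality bound $\norm{\Pcal_{T_{\bX_\star}}\bm\zeta}_{\F}\ge\sqrt{1-\epsilon^2}$, whereas the paper uses the cruder $1-\epsilon$; both suffice.
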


\begin{proof}
Fix any $\bX\in\Mcal_r$ satisfying $\norm{\bX-\bX_\star}_{\F}\le \sqrt{p_\Lambda}\sigma_r(\bX_\star)/40$ and any $\bm\zeta\in T_{\bX}\Mcal_r$.  By homogeneity, it suffices to consider $\norm{\bm\zeta}_{\F}=1$. The assumed neighborhood is contained in $\norm{\bX-\bX_\star}_{\F}\le\sigma_r(\bX_\star)/40$, and hence \Cref{lem:normal-F} applies. Since $\Pcal_{T_{\bX}}\bm\zeta=\bm\zeta$, we obtain
\begin{align}
\norm{\bm\zeta-\Pcal_{T_{\bX_\star}}\bm\zeta}_{\F} &= \norm{ \bigl(\Pcal_{T_{\bX}}-\Pcal_{T_{\bX_\star}}\bigr)\bm\zeta }_{\F} \notag\\
&\le \norm{\Pcal_{T_{\bX}}-\Pcal_{T_{\bX_\star}}}_{\F\to\F} \notag\\
&\le 3\frac{\norm{\bX-\bX_\star}_{\F}} {\sigma_r(\bX_\star)} \le \frac3{40}\sqrt{p_\Lambda},\notag\\
\norm{\Pcal_{T_{\bX_\star}}\bm\zeta}_{\F} &\ge 1-\norm{\bm\zeta-\Pcal_{T_{\bX_\star}}\bm\zeta}_{\F} \ge 1-\frac3{40}\sqrt{p_\Lambda}. \label{eq:det-transfer-tangent-motion}
\end{align}

Applying \eqref{eq:det-transfer-hypothesis} to $\Pcal_{T_{\bX_\star}}\bm\zeta\in T_{\bX_\star}\Mcal_r$ gives
\[
\sqrt{\frac{15}{16}} \norm{\Pcal_{T_{\bX_\star}}\bm\zeta}_{\F} \le p_\Lambda^{-1/2} \norm{\Pcal_\Lambda\Pcal_{T_{\bX_\star}}\bm\zeta}_{\F} \le \sqrt{\frac{17}{16}} \norm{\Pcal_{T_{\bX_\star}}\bm\zeta}_{\F}.
\]

By \eqref{eq:det-transfer-hypothesis}, \eqref{eq:det-transfer-tangent-motion}, the triangle inequality, and $\norm{\Pcal_\Lambda}_{\F\to\F}\le1$, we have
\begin{align*}
p_\Lambda^{-1/2}\norm{\Pcal_\Lambda\bm\zeta}_{\F} &\ge p_\Lambda^{-1/2} \norm{\Pcal_\Lambda\Pcal_{T_{\bX_\star}}\bm\zeta}_{\F} - p_\Lambda^{-1/2} \norm{ \Pcal_\Lambda \bigl(\bm\zeta-\Pcal_{T_{\bX_\star}}\bm\zeta\bigr) }_{\F}\\
&\ge \sqrt{\frac{15}{16}} \norm{\Pcal_{T_{\bX_\star}}\bm\zeta}_{\F} - p_\Lambda^{-1/2} \norm{\bm\zeta-\Pcal_{T_{\bX_\star}}\bm\zeta}_{\F}\\
&\ge\sqrt{\frac23}.
\end{align*}

By \eqref{eq:det-transfer-hypothesis}, \eqref{eq:det-transfer-tangent-motion}, $\norm{\Pcal_{T_{\bX_\star}}\bm\zeta}_{\F}\le1$, and the triangle inequality, we also have
\begin{align*}
p_\Lambda^{-1/2}\norm{\Pcal_\Lambda\bm\zeta}_{\F} &\le p_\Lambda^{-1/2} \norm{\Pcal_\Lambda\Pcal_{T_{\bX_\star}}\bm\zeta}_{\F} + p_\Lambda^{-1/2} \norm{ \Pcal_\Lambda \bigl(\bm\zeta-\Pcal_{T_{\bX_\star}}\bm\zeta\bigr) }_{\F}\\
&\le \sqrt{\frac{17}{16}} \norm{\Pcal_{T_{\bX_\star}}\bm\zeta}_{\F} + p_\Lambda^{-1/2} \norm{\bm\zeta-\Pcal_{T_{\bX_\star}}\bm\zeta}_{\F}\\
&\le\sqrt{\frac54}.
\end{align*}
Finally, since $\Pcal_\Lambda$ is an orthogonal projector, the above inequalities  give
\[
\frac23\le \inner{\bm\zeta}{p_\Lambda^{-1}\Pcal_\Lambda\bm\zeta} =p_\Lambda^{-1}\norm{\Pcal_\Lambda\bm\zeta}_{\F}^2 \le\frac54.
\]
Rescaling the $\bm\zeta$ proves \eqref{eq:fixed-rgd-transfer}. Since $\bX$ was arbitrary in the Frobenius neighborhood specified in \Cref{lem:det-transfer}, \eqref{eq:fixed-rgd-transfer} holds simultaneously throughout that neighborhood.

To prove \eqref{eq:fixed-rgd-transfer-small}, suppose that $\norm{\bX-\bX_\star}_{\F}\le\sqrt{p_\Lambda}\,\sigma_r(\bX_\star)/128$ and $\norm{\bm\zeta}_{\F}=1$. By \Cref{lem:normal-F} and orthogonality, we have
\begin{align*}
\norm{\bm\zeta-\Pcal_{T_{\bX_\star}}\bm\zeta}_{\F} &\le\frac3{128}\sqrt{p_\Lambda},\\
\norm{\Pcal_{T_{\bX_\star}}\bm\zeta}_{\F}^2 &=1-\norm{\bm\zeta-\Pcal_{T_{\bX_\star}}\bm\zeta}_{\F}^2 \ge1-\frac9{128^2}.
\end{align*}
The preceding comparison with the true tangent space now gives
\begin{align*}
\sqrt{\frac78}\le p_\Lambda^{-1/2}\norm{\Pcal_\Lambda\bm\zeta}_{\F} \le\sqrt{\frac98}.
\end{align*}
Squaring and rescaling proves \eqref{eq:fixed-rgd-transfer-small}.
\end{proof}
\subsection{Proof of Theorem~\ref{thm:fixed-rgd-local}}
\label{sec:proof-fixed-rgd}

\begin{proof}
For $C_4\ge c_1$, \Cref{lem:true-tangent-rip} implies that, with probability at least $1-n^{-10}/48$,
\[
\norm{\Pcal_{T_{\bX_\star}}p^{-1}\Pcal_\Omega\Pcal_{T_{\bX_\star}}-\Pcal_{T_{\bX_\star}}}_{\F\to\F}\le\frac1{16}.
\]
On the event supplied by \Cref{lem:true-tangent-rip}, fix any $\bX\in\Mcal_r$ satisfying
\[
\norm{\bX-\bX_\star}_{\F}\le\frac1{128}\sqrt{p}\,\sigma_r(\bX_\star),
\]
and set $\bE:=\bX_\star-\bX$ and $\bm\xi:=\Pcal_{T_{\bX}}p^{-1}\Pcal_\Omega\bE$. Applying \eqref{eq:fixed-rgd-transfer-small} and using the self-adjointness of $\Pcal_{T_{\bX}}p^{-1}\Pcal_\Omega\Pcal_{T_{\bX}}$, we obtain
\begin{equation}\label{eq:local-rgd-operator-bounds}
\norm{\Pcal_{T_{\bX}}p^{-1}\Pcal_\Omega\Pcal_{T_{\bX}}-\Pcal_{T_{\bX}}}_{\F\to\F}\le\frac18, \qquad p^{-1/2}\norm{\Pcal_\Omega\Pcal_{T_{\bX}}}_{\F\to\F}\le\frac3{2\sqrt2}.
\end{equation}
If $\bm\xi\ne\bm0$, then \eqref{eq:fixed-rgd-transfer-small} gives
\[
p^{-1}\norm{\Pcal_\Omega\bm\xi}_{\F}^2 =\inner{\bm\xi}{p^{-1}\Pcal_\Omega\bm\xi} \ge\frac78\norm{\bm\xi}_{\F}^2>0.
\]
Thus the stepsize in \eqref{eq:rgd-exact-line-search} is well defined and satisfies
\begin{equation}\label{eq:local-rgd-stepsize-bounds}
\frac89\le\alpha\le\frac87, \qquad |\alpha-1|\le\frac17.
\end{equation}
For $\bm\xi=\bm0$, set $\alpha=1$ in the following estimates, so that \eqref{eq:local-rgd-stepsize-bounds} still holds. Set $\bm\eta:=\alpha\bm\xi-\bm t_{\bX}$. Decomposing $\bE$ into its tangent and normal components, we have
\begin{equation}\label{eq:local-rgd-eta-decomposition}
\begin{aligned}
\bm\eta =(\alpha-1)\bm t_{\bX} +\alpha\bigl(\Pcal_{T_{\bX}}p^{-1}\Pcal_\Omega\Pcal_{T_{\bX}}-\Pcal_{T_{\bX}}\bigr)\bm t_{\bX}+\alpha\Pcal_{T_{\bX}}p^{-1}\Pcal_\Omega\Pcal_{T_{\bX}^\perp}\bE.
\end{aligned}
\end{equation}
The assumed neighborhood is contained in $\norm{\bE}_{\F}<\sigma_r(\bX_\star)/4$. Hence part~\textup{(i)} of \Cref{lem:normal-F} applies, and \eqref{eq:local-rgd-operator-bounds}--\eqref{eq:local-rgd-eta-decomposition} give
\begin{align}
\norm{\bm\eta}_{\F} &\le\left(|\alpha-1|+\frac\alpha8\right)\norm{\bE}_{\F} +\frac{3\alpha}{2\sqrt{2p}}\norm{\Pcal_{T_{\bX}^\perp}\bE}_{\F}\notag\\
&\le\frac27\norm{\bE}_{\F} +\frac{24}{7\sqrt2}\frac{\norm{\bE}_{\F}^2}{\sqrt p\,\sigma_r(\bX_\star)} \le\frac13\norm{\bE}_{\F}. \label{eq:local-rgd-eta-bound}
\end{align}
In particular, $\norm{\bE}_{\F}\le\sigma_r(\bX_\star)/128<\sigma_r(\bX_\star)/40$ and $\norm{\bm\eta}_{\F}\le\sigma_r(\bX_\star)/384<\sigma_r(\bX_\star)/320$. Consequently, part~\textup{(ii)} of \Cref{lem:normal-F} applies. Since $\alpha\bm\xi=\bm t_{\bX}+\bm\eta$, we obtain
\begin{equation}\label{eq:local-rgd-one-step}
\begin{aligned}
\norm{\operatorname{Retr}_{\bX}(\alpha\bm\xi)-\bX_\star}_{\F} &\le\norm{\bm\eta}_{\F}\left(1+\frac{11}{5}\frac{\norm{\bE}_{\F}}{\sigma_r(\bX_\star)}+\frac{11}{10}\frac{\norm{\bm\eta}_{\F}}{\sigma_r(\bX_\star)}\right)\\
&\le\frac13\left(1+\frac{11}{640}+\frac{11}{3840}\right)\norm{\bE}_{\F} \le\frac38\norm{\bE}_{\F}.
\end{aligned}
\end{equation}
If $\bm\xi=\bm0$, then $\operatorname{Retr}_{\bX}(\alpha\bm\xi)=\bX$, so \eqref{eq:local-rgd-one-step} implies $\bX=\bX_\star$. Thus every terminal iterate in the stated neighborhood equals $\bX_\star$.

Since $\bX$ was arbitrary in the Frobenius ball of \Cref{thm:fixed-rgd-local}, \eqref{eq:local-rgd-one-step} holds simultaneously throughout that ball. In particular, the neighborhood is invariant under the RGD update. Therefore, if $\bX_0$ satisfies the hypothesis of the theorem, induction gives
\[
\norm{\bX_k-\bX_\star}_{\F}\le\left(\frac38\right)^k\norm{\bX_0-\bX_\star}_{\F},\qquad k\ge0.
\]
The same induction verifies the hypotheses of \Cref{lem:normal-F} and the positivity of every nonterminal line-search denominator, and therefore guarantees that all subsequent iterates are well defined. This proves the theorem.
\end{proof}

\section{Multiscale Initialization and Global Convergence of RGD}
\label{sec:initialization}

We now prove \Cref{thm:sharp-spectral,thm:init}. We first bound the approximate reconstruction error in the sharp norm, then justify the residual stopping rule and verify the two local convergence conditions. Combining these estimates with \Cref{thm:fixed-rgd-local} also proves the global RGD result.

\subsection{Sharp spectral reconstruction}
\label{sec:proof-spectral}

For a fixed error matrix $\bE$ and an independent subset $\Lambda\sim\operatorname{Bernoulli}(q)$, the sampling perturbation is $\bW=(q^{-1}\Pcal_\Lambda-\Ical)\bE$. To control both row and column errors, we use the symmetric dilation
\begin{equation}\label{eq:spectral-dilation}
\mathscr W:=
\begin{bmatrix}
\bm0&\bW\\
\bW^T&\bm0
\end{bmatrix}
, \qquad \mathscr Q_\star:=
\begin{bmatrix}
\bU_\star&\bm0\\
\bm0&\bV_\star
\end{bmatrix}
.
\end{equation}
The columns of $\mathscr Q_\star$ are orthonormal, and \Cref{ass:incoherence} gives $\norm{\mathscr Q_\star}_{2,\infty}\le\sqrt{\mu r/n}$. The following lemma controls powers of the sampling perturbation on these columns.

\begin{lemma}\label[lemma]{lem:spectral-concentration}
Suppose that \Cref{ass:incoherence} holds. Let $\bE$ be fixed with $\sharpnorm{\bE}\le\tau$, where $\tau>0$. If $q\ge c_2\mu r\log n/n$, where $c_2>0$ is a sufficiently large absolute constant, then, with probability at least $1-n^{-12}/48$ over $\Lambda$,
\begin{equation}\label{eq:spectral-power-bounds}
\norm{\mathscr W}_{\op}\le\frac{\tau}{512}, \qquad \norm{\mathscr W^j\mathscr Q_\star}_{2,\infty} \le2\sqrt{\frac{\mu r}{n}}\left(\frac{\tau}{64}\right)^j, \qquad j\ge0.
\end{equation}
\end{lemma}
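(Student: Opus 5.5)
We prove the operator-norm bound with matrix Bernstein. We then prove the row bounds by leave-one-out induction on $j$, which only needs to run up to $j\asymp\log n$, because the operator-norm bound already covers larger $j$.

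\emph{Step 1 (operator norm).} Write $\bW=\sum_{i,j}(\delta_{ij}/q-1)E_{ij}\bm e_i\bm e_j^T$, where the $\delta_{ij}\sim\mathrm{Bernoulli}(q)$ are independent. This is a sum of independent mean-zero matrices, and $\norm{\mathscr W}_{\op}=\norm{\bW}_{\op}$.

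\begin{itemize}
\item Each summand is bounded by $\norm{\bE}_\infty/q\le 4\mu r\tau/(nq)$, using the definition of the sharp norm.
\item The matrix variance is at most $q^{-1}\max\{\norm{\bE}_{2,\infty}^2,\norm{\bE^T}_{2,\infty}^2\}\le 4\mu r\tau^2/(nq)$.
\end{itemize}

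Matrix Bernstein then gives, with probability $1-n^{-13}$,
\[
\norm{\bW}_{\op}\lesssim \tau\sqrt{\mu r\log n/(nq)}+\tau\,\mu r\log n/(nq).
\]
This is at most $\tau/512$ once $c_2$ is large.

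\emph{Step 2 (reduction to $j\le J$).} Since $\norm{\mathscr Q_\star}_{\op}=1$, we have
\[
\norm{\mathscr W^j\mathscr Q_\star}_{2,\infty}\le\norm{\mathscr W}_{\op}^j\le(\tau/64)^j8^{-j}.
\]
This is at most $2\sqrt{\mu r/n}(\tau/64)^j$ whenever $8^{-j}\le\sqrt{\mu r/n}$. So it suffices to treat $0\le j\le J:=\ceil{\log_8 n}$. The case $j=0$ is the incoherence of $\mathscr Q_\star$.

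\emph{Step 3 (leave-one-out induction).} For each index $m\in[2n]$ of the dilation, let $\mathscr W^{(m)}$ be $\mathscr W$ with row $m$ and column $m$ set to zero. Then $\mathscr W^{(m)}$ is independent of the sampling variables in row and column $m$, and by Step 1 it also has operator norm at most $\tau/512$. We track, with $\mathscr M_j:=\mathscr W^j\mathscr Q_\star$ and $\mathscr M_j^{(m)}:=(\mathscr W^{(m)})^j\mathscr Q_\star$, the quantities
\[
A_j:=\max_m\norm{\bm e_m^T\mathscr M_j}_2,\qquad
B_j:=\max_m\norm{\mathscr M_j-\mathscr M_j^{(m)}}_{\F}.
\]
For the row bound we split
\[
\bm e_m^T\mathscr M_j=\bm e_m^T\mathscr W\mathscr M^{(m)}_{j-1}+\bm e_m^T\mathscr W\bigl(\mathscr M_{j-1}-\mathscr M^{(m)}_{j-1}\bigr).
\]
\begin{itemize}
\item In the first term, row $m$ of $\mathscr W$ is independent of $\mathscr M^{(m)}_{j-1}$. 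Conditionally, vector Bernstein with the bounds of Step 1 gives
\[
\lesssim\tau\bigl(\sqrt{\mu r\log n/(nq)}+\mu r\log n/(nq)\bigr)\norm{\mathscr M^{(m)}_{j-1}}_{2,\infty}.
\]
We then use $\norm{\mathscr M^{(m)}_{j-1}}_{2,\infty}\le A_{j-1}+B_{j-1}$.
\item The second term is at most $\norm{\mathscr W}_{\op}B_{j-1}$.
\end{itemize}
For the difference bound we expand
\[
\mathscr W^j-(\mathscr W^{(m)})^j=\mathscr W\bigl(\mathscr W^{j-1}-(\mathscr W^{(m)})^{j-1}\bigr)+(\mathscr W-\mathscr W^{(m)})(\mathscr W^{(m)})^{j-1}.
\]
The matrix $\mathscr W-\mathscr W^{(m)}$ is supported on row and column $m$.
\begin{itemize}
\item Its row part applied to $\mathscr M^{(m)}_{j-1}$ is the independent sum already bounded above.
\item Its column part contributes at most $\norm{\mathscr W\bm e_m}_2\,\norm{\bm e_m^T\mathscr M^{(m)}_{j-1}}_2\le(\tau/512)(A_{j-1}+B_{j-1})$.
\end{itemize}
This gives $B_j\le(\tau/512)B_{j-1}+c\,\tau(A_{j-1}+B_{j-1})$ with $c\le 1/256$ for large $c_2$.

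Combining the two estimates, the coupled recursion has a contraction factor well below $1/64$. We start from $A_0\le\sqrt{\mu r/n}$ and $B_0=0$, and close the inductive hypotheses
\[
A_j\le 2\sqrt{\mu r/n}(\tau/64)^j,\qquad B_j\le\sqrt{\mu r/n}(\tau/64)^j.
\]

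\emph{Step 4 (probability).} We union bound over $m\le 2n$ and $j\le J=O(\log n)$, each event failing with probability $n^{-15}$. Together with Step 1, this yields total probability at least $1-n^{-12}/48$.

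The main obstacle is Step 3. The delicate points are, first, that the row norm of the leave-one-out power $\mathscr M^{(m)}_{j-1}$ must itself be controlled, which we handle through $A_{j-1}+B_{j-1}$. Second, the column part of $\mathscr W-\mathscr W^{(m)}$ is not independent of the rest, so it can only be bounded through $\norm{\mathscr W}_{\op}$. Finally, the absolute constants must be tuned so that the recursion contracts by $1/64$ rather than by merely $O(1)$.
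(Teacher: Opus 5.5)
Your argument is correct, but it obtains the row bounds by a different route from the paper.

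The paper never inducts on $j$. It bounds the rows of the resolvent, $\norm{(z\bI-\mathscr W)^{-1}\mathscr Q_\star}_{2,\infty}\le\frac{128}{\tau}\sqrt{\mu r/n}$, uniformly on the circle $|z|=\tau/64$. This uses a single, non-iterated leave-one-out comparison: deletion of a principal minor plus the Schur-complement formula for the deleted row, which gives a self-consistent inequality for $\norm{\bm F(z)}_{2,\infty}$. It also uses a grid of $O(\sqrt n)$ points on the circle and the resolvent identity between grid points. Cauchy's formula $\mathscr W^j\mathscr Q_\star=\frac1{2\pi\mathrm i}\oint z^j(z\bI-\mathscr W)^{-1}\mathscr Q_\star\,dz$ then yields the factor $(\tau/64)^j$ for all $j\ge0$ at once.

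You replace the resolvent by the polynomial expansion of $\mathscr W^j-(\mathscr W^{(m)})^j$ and propagate the coupled quantities $A_j,B_j$. This is more elementary. It needs no contour and no net. It also avoids the paper's truncated definition of $\bm F^{(i)}$, which is required there because the minor's resolvent need not exist off the operator-norm event.

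The price is twofold. First, your Step~2 tail reduction relies on the factor-$8$ slack between $\tau/512$ and $\tau/64$ together with $\sqrt{\mu r/n}\ge n^{-1/2}$. Second, your proximity recursion must close at every level. It does close. If the conditional Bernstein factor is at most $\tau/512$, the level-$(j-1)$ hypotheses give $A_j\le\frac{4}{512}\tau\sqrt{\mu r/n}(\tau/64)^{j-1}$ and $B_j\le\frac{7}{512}\tau\sqrt{\mu r/n}(\tau/64)^{j-1}$. These are below the targets $\frac{16}{512}$ and $\frac{8}{512}$ in the same units. In fact, the column part of $\mathscr W-\mathscr W^{(m)}$ matters only at $j=1$, since row $m$ of $(\mathscr W^{(m)})^{j-1}$ vanishes for $j\ge2$.

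Your union bound runs over $O(n\log n)$ conditional events, whereas the paper's runs over $O(n^{3/2})$. Either count is absorbed by enlarging $c_2$. Enlarging $c_2$ also forces $n$ to be large enough for the stated failure probability $n^{-12}/48$.
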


\begin{proof}
The proof is deferred to \Cref{app:fresh}.
\end{proof}

We next give a deterministic reconstruction estimate. Its hypotheses concern approximate singular factors, rather than an exact truncated SVD. This distinction allows the computation in \eqref{eq:threshold-iteration} to stop after a prescribed number of steps.

\begin{lemma}\label[lemma]{lem:spectral-reconstruction}
Suppose that \Cref{ass:incoherence} holds. Let $\bY=\bX_\star+\bW$, and suppose that \eqref{eq:spectral-power-bounds} holds for some $\tau>0$. Let $\bZ^+$ have rank at most $r$. If $\bZ^+\ne\bm0$, let $\bZ^+=\widehat{\bU}\widehat{\bm\Sigma}\widehat{\bV}^T$ be a compact singular value decomposition and suppose that
\begin{subequations}\label{eq:spectral-approximation-conditions}
\begin{align}
\sigma_{\min}(\widehat{\bm\Sigma})&\ge\frac{\tau}{8}, &\bY^T\widehat{\bU}&=\widehat{\bV}\widehat{\bm\Sigma},\label{eq:spectral-one-sided-pair}\\
\norm{\bY\widehat{\bV}-\widehat{\bU}\widehat{\bm\Sigma}}_{\op} &\le\frac{\tau}{64}\sqrt{\frac{\mu r}{n}}, &\norm{\bY-\bZ^+}_{\op}&\le\frac{7\tau}{32}.\label{eq:spectral-pair-residual}
\end{align}
\end{subequations}
If $\bZ^+=\bm0$, suppose instead that $\norm{\bY}_{\op}\le7\tau/32$. Then $\sharpnorm{\bZ^+-\bX_\star}\le\tau/4$.
\end{lemma}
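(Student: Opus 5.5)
We need to bound each of the four components of $\sharpnorm{\bZ^+-\bX_\star}$ by $\tau/4$: operator norm, the scaled row norm $\tfrac12\sqrt{n/(\mu r)}\norm{\cdot}_{2,\infty}$, the analogous column norm, and the scaled entrywise norm $\tfrac{n}{4\mu r}\norm{\cdot}_\infty$. The plan has two parts: a direct argument for the operator norm, and a Neumann-series representation of $\widehat\bU,\widehat\bV$ for the rowwise and entrywise bounds.

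\textbf{Operator norm.} This is immediate. We have
\[
\norm{\bZ^+-\bX_\star}_{\op}\le\norm{\bZ^+-\bY}_{\op}+\norm{\bW}_{\op}\le\tfrac{7\tau}{32}+\tfrac{\tau}{512}<\tfrac{\tau}{4}.
\]
The case $\bZ^+=\bm0$ is identical, using $\norm{\bY}_{\op}\le7\tau/32$. In that case we also need the other components of $\sharpnorm{\bX_\star}$. Here we use $\sigma_1(\bX_\star)\le\norm{\bY}_{\op}+\norm{\bW}_{\op}<\tau/4$ together with incoherence:
\[
\norm{\bX_\star}_{2,\infty}\le\sqrt{\mu r/n}\,\sigma_1,\qquad \norm{\bX_\star}_\infty\le(\mu r/n)\sigma_1.
\]
With the factors $\tfrac12$ and $\tfrac14$ in the sharp norm, these give the claim.

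\textbf{Representation of the approximate factors.} For $\bZ^+\neq\bm0$ we work with the dilation. Set
\[
\mathscr Y=\begin{bmatrix}\bm0&\bY\\\bY^T&\bm0\end{bmatrix}=\mathscr X_\star+\mathscr W,\qquad \widehat{\mathscr Q}=\begin{bmatrix}\widehat\bU\\\widehat\bV\end{bmatrix}.
\]
By \eqref{eq:spectral-one-sided-pair}--\eqref{eq:spectral-pair-residual},
\[
\mathscr Y\widehat{\mathscr Q}=\widehat{\mathscr Q}\widehat{\bm\Sigma}+\mathscr R,
\]
where $\mathscr R$ has top block $\bY\widehat\bV-\widehat\bU\widehat{\bm\Sigma}$ and zero bottom block, so $\mathscr R$ is small in operator norm. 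Rearranging gives a Sylvester equation:
\[
\widehat{\mathscr Q}\widehat{\bm\Sigma}-\mathscr W\widehat{\mathscr Q}=\mathscr X_\star\widehat{\mathscr Q}-\mathscr R.
\]
Since $\sigma_{\min}(\widehat{\bm\Sigma})\ge\tau/8$ and $\norm{\mathscr W}\le\tau/512$, the Neumann series converges and yields
\[
\widehat{\mathscr Q}=\sum_{j\ge0}\mathscr W^j\bigl(\mathscr X_\star\widehat{\mathscr Q}-\mathscr R\bigr)\widehat{\bm\Sigma}^{-j-1}.
\]
Next write $\mathscr X_\star=\mathscr Q_\star(\cdot)$ with factor bounded by $\sigma_1$; this is a problem if $\sigma_1$ is large. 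To avoid this, factor as $\mathscr X_\star\widehat{\mathscr Q}=\mathscr Q_\star\mathscr Q_\star^T\mathscr X_\star\widehat{\mathscr Q}$ and bound $\mathscr Q_\star^T\mathscr X_\star\widehat{\mathscr Q}\widehat{\bm\Sigma}^{-1}$ through
\[
\mathscr X_\star\widehat{\mathscr Q}=\widehat{\mathscr Q}\widehat{\bm\Sigma}+\mathscr R-\mathscr W\widehat{\mathscr Q},
\]
whose norm relative to $\widehat{\bm\Sigma}$ is $O(1)$. Then \eqref{eq:spectral-power-bounds} gives
\[
\norm{\mathscr W^j\mathscr Q_\star}_{2,\infty}\le2\sqrt{\mu r/n}(\tau/64)^j.
\]
Each term is multiplied by $\widehat{\bm\Sigma}^{-j}\le(8/\tau)^j$, so the series is geometric with ratio $1/8$. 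The $\mathscr R$ terms need $\norm{\mathscr W^j\mathscr R}_{2,\infty}\le\norm{\mathscr W}^j\norm{\mathscr R}_{\op}$, and $\norm{\mathscr R}_{\op}$ is already $O(\tau\sqrt{\mu r/n})$. This gives $\norm{\widehat{\mathscr Q}}_{2,\infty}\lesssim\sqrt{\mu r/n}$.

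\textbf{Rowwise and entrywise errors.} Write
\[
\bZ^+-\bX_\star=(\widehat\bU\widehat\bU^T-\bI)\bX_\star+\widehat\bU\widehat\bU^T\bW.
\]
This holds because $\bZ^+=\widehat\bU\widehat\bU^T\bY$ by the one-sided pair relation. The row norm of the second term is at most $\norm{\widehat\bU}_{2,\infty}\norm{\bW}_{\op}$, which is small. The first term equals $-(\bI-\widehat\bU\widehat\bU^T)\bU_\star\bU_\star^T\bX_\star$. To bound it rowwise without a factor $\sigma_1$, we use the series expansion of $\widehat{\mathscr Q}$ to show that $\widehat\bU$ captures $\bX_\star$ up to terms of order $\tau$. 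Concretely, we represent
\[
(\bI-\widehat\bU\widehat\bU^T)\bX_\star=(\bI-\widehat\bU\widehat\bU^T)(\bY-\bW)=(\bI-\widehat\bU\widehat\bU^T)(\bY-\bZ^+)-(\bI-\widehat\bU\widehat\bU^T)\bW,
\]
where $\bY-\bZ^+=\bY(\bI-\widehat\bV\widehat\bV^T)$ plus the residual. The rows are then handled by the $\mathscr W^j\mathscr Q_\star$ bounds. Columns follow symmetrically, since the dilation treats $\widehat\bV$ the same way. Entrywise bounds follow by pairing a row bound of one factor with a column bound of the other.

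\textbf{Main obstacle.} The hard step is obtaining row bounds of size $\sqrt{\mu r/n}\,\tau$ rather than $\sqrt{\mu r/n}\,\sigma_1$ without any eigengap. I would first try to express $\bZ^+-\bX_\star$ through $\widehat{\bm\Sigma}^{-1}$-weighted Neumann sums and use $\widehat{\bm\Sigma}\ge\tau/8$ only, never gaps. This is where the constants $512$ and $64$ must be tracked carefully.
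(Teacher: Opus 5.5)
Your operator-norm bound, the case $\bZ^+=\bm 0$, and the representation step agree with the paper's proof. This includes the dilation, the coefficient trick $\mathscr X_\star\widehat{\mathscr Q}\widehat{\bm\Sigma}^{-1}=\mathscr Q_\star\bC$ with $\norm{\bC}_{\op}=O(1)$ read off from the perturbed eigen-equation, the Neumann series with ratio $1/8$, and the residual terms via $\norm{\mathscr W}_{\op}^j\norm{\mathscr R}_{\op}$. (Your half-dilation $[\widehat\bU;\widehat\bV]$, in place of the paper's orthonormal $\widehat{\mathscr Q}$ with $\mathscr D=\operatorname{diag}(\widehat{\bm\Sigma},-\widehat{\bm\Sigma})$, makes no difference.)

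The gap is exactly the step you flag as the main obstacle, and the proposal leaves it unresolved. The consequence you extract from the series, $\norm{\widehat{\mathscr Q}}_{2,\infty}\lesssim\sqrt{\mu r/n}$, is too weak. With it, the off-space part of the error, $(\bI-\bU_\star\bU_\star^T)(\bZ^+-\bX_\star)=(\bI-\bU_\star\bU_\star^T)\widehat\bU\widehat{\bm\Sigma}\widehat\bV^T$, only gets row bound $\sqrt{\mu r/n}\,\sigma_1$.

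Your concrete decomposition does not repair this. Since $\bZ^+=\widehat\bU\widehat\bU^T\bY$, we have $\bY-\bZ^+=(\bI-\widehat\bU\widehat\bU^T)\bY$. So your identity just rewrites $\bX_\star-\bZ^+$ as $(\bY-\bZ^+)-\bW+\widehat\bU\widehat\bU^T\bW$.
\begin{itemize}
\item The rows of $\bY-\bZ^+$ are as hard to bound as the original quantity.
\item The standalone $\bW$ needs $\norm{\bW}_{2,\infty}$. Here \eqref{eq:spectral-power-bounds} gives only $\norm{\bW}_{2,\infty}\le\tau/512$, far above the required scale $\tau\sqrt{\mu r/n}$.
\item The bounds on $\mathscr W^j\mathscr Q_\star$ control $\bW$ only after multiplication by the true singular factors, so they do not reach these rows.
\end{itemize}
Similarly, entrywise bounds cannot come from pairing row bounds of factors. $\bZ^+$ and $\bX_\star$ separately have entries of size $\sigma_1\mu r/n$, and the cancellation between them must be captured.

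The missing idea is a weighted off-space estimate relative to the \emph{true} projector $\bm P=\mathscr Q_\star\mathscr Q_\star^T$, not $\widehat\bU\widehat\bU^T$. Multiply the series by $\widehat{\bm\Sigma}$ and apply $\bI-\bm P$. The $j=0$ term $\mathscr Q_\star\bC\widehat{\bm\Sigma}$, the only one carrying a weight of size $\sigma_1$, is annihilated. Every surviving term contains at least one factor $\mathscr W$ and one fewer inverse power of $\widehat{\bm\Sigma}$. This yields
\[
\norm{(\bI-\bm P)\widehat{\mathscr Q}\widehat{\bm\Sigma}}_{2,\infty}\lesssim\tau\sqrt{\frac{\mu r}{n}},\qquad \norm{(\bI-\bm P)\widehat{\mathscr Q}}_{2,\infty}\lesssim\sqrt{\frac{\mu r}{n}}
\]
(in the paper's normalization the constants are $3\tau/32$ and $3/4$).

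Then split the error on the left by $\bU_\star\bU_\star^T$ and $\bI-\bU_\star\bU_\star^T$.
\begin{itemize}
\item Rows of the first piece are bounded by $\sqrt{\mu r/n}\,\norm{\bZ^+-\bX_\star}_{\op}$.
\item The second piece equals $(\bI-\bU_\star\bU_\star^T)\widehat\bU\widehat{\bm\Sigma}\widehat\bV^T$, because $(\bI-\bU_\star\bU_\star^T)\bX_\star=\bm 0$. It is therefore controlled by the weighted estimate.
\end{itemize}
For entries, split on both sides.
\begin{itemize}
\item The true--true block uses $(\mu r/n)\norm{\bZ^+-\bX_\star}_{\op}$; this is where the cancellation enters.
\item The mixed blocks use the weighted off-space bound times $\norm{\mathscr Q_\star}_{2,\infty}$.
\item The off--off block uses the product of the weighted and unweighted off-space bounds.
\end{itemize}
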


\begin{proof}
The proof is deferred to \Cref{sec:proof-spectral-reconstruction}.
\end{proof}

The iteration in \eqref{eq:threshold-iteration} is a randomized subspace iteration; see, e.g., \cite{HalkoMartinssonTropp2011}. The next lemma verifies the approximation conditions for $\Tcal_\tau$. Only the singular components above the current error scale must be accurately represented. No gap between adjacent singular values is assumed.

\begin{lemma}\label[lemma]{lem:spectral-approximation}
Let $\bY\in\R^{n\times n}$ satisfy $\sigma_{r+1}(\bY)\le\tau/512$, where $\tau>0$. Then $\bZ^+=\Tcal_\tau(\bY)$ has rank at most $r$ and, with probability at least $1-n^{-12}/48$ over its Gaussian initial matrix, satisfies \eqref{eq:spectral-approximation-conditions}. If $\bZ^+=\bm0$, the conclusion is $\norm{\bY}_{\op}\le7\tau/32$.

\end{lemma}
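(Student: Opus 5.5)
The plan is to separate an exact algebraic part of \eqref{eq:spectral-approximation-conditions}, which holds for every output of the block iteration, from a probabilistic part that depends on the randomized subspace iteration \eqref{eq:threshold-iteration}. Let $\bQ\in\R^{n\times r}$ be the final factor. Write a full SVD $\bQ^T\bY=\bA\bS\bB^T$ and let $\bA_1,\bS_1,\bB_1$ collect the singular triples of $\bQ^T\bY$ with value at least $\tau/8$. Then $\bZ^+=\bQ\bA_1\bS_1\bB_1^T$, so $\rank(\bZ^+)\le r$.

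If $\bZ^+\neq\bm0$, I take $\widehat\bU=\bQ\bA_1$, $\widehat{\bm\Sigma}=\bS_1$ and $\widehat\bV=\bB_1$. This gives $\sigma_{\min}(\widehat{\bm\Sigma})\ge\tau/8$ immediately, and
\[
\bY^T\widehat\bU=(\bQ^T\bY)^T\bA_1=\bB_1\bS_1 ,
\]
which is the one-sided pair identity. Using $\bQ\bQ^T\bY\bB_1=\bQ\bA_1\bS_1$, the two remaining conditions reduce to
\[
\norm{\bY\widehat\bV-\widehat\bU\widehat{\bm\Sigma}}_{\op}=\norm{(\bI-\bQ\bQ^T)\bY\bB_1}_{\op},\qquad
\norm{\bY-\bZ^+}_{\op}\le\norm{(\bI-\bQ\bQ^T)\bY}_{\op}+\frac{\tau}{8}.
\]
The second bound uses $\norm{\bQ^T\bY-\Hcal_{\ge\tau/8}(\bQ^T\bY)}_{\op}<\tau/8$. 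The same estimate covers $\bZ^+=\bm0$, where $\norm{\bY}_{\op}\le\norm{(\bI-\bQ\bQ^T)\bY}_{\op}+\norm{\bQ^T\bY}_{\op}$ and $\norm{\bQ^T\bY}_{\op}<\tau/8$. It therefore suffices to prove
\[
\norm{(\bI-\bQ\bQ^T)\bY}_{\op}\le\frac{3\tau}{32}
\qquad\text{and}\qquad
\norm{(\bI-\bQ\bQ^T)\bY\bB_1}_{\op}\le\frac{\tau}{64}\sqrt{\frac{\mu r}{n}}.
\]

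For the subspace iteration, note that $\range(\bQ)=\range(\bM^t\bG)$ with $\bM=\bY\bY^T+c\bI$, $c=\tau^2/4096$ and $t=\ceil{12\log n}$; the QR steps only change bases. Let $\bY=\sum_j\sigma_j\bm u_j\bm v_j^T$ and $\bU_r=[\bm u_1,\ldots,\bm u_r]$. I would use the standard gap-free Halko--Martinsson--Tropp argument. With $\bm\Omega_1=\bU_r^T\bG$ (square Gaussian) and $\bm\Omega_2$ the complementary block, $\range(\bQ)$ contains $\bM^t\bG\bm\Omega_1^{-1}$. Hence for each $j\le r$,
\[
\norm{(\bI-\bQ\bQ^T)\bm u_j}_2\le\Bigl(\frac{\sigma_{r+1}^2+c}{\sigma_j^2+c}\Bigr)^{t}\norm{\bm\Omega_2}_{\op}\norm{\bm\Omega_1^{-1}}_{\op}.
\]
Gaussian facts give, with probability at least $1-n^{-12}/48$, that $\norm{\bm\Omega_2}_{\op}\lesssim\sqrt n$ and $\norm{\bm\Omega_1^{-1}}_{\op}\le n^{C}$ for a modest constant $C$; the smallest-singular-value tail of a square Gaussian matrix is used here. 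Since $\sigma_{r+1}\le\tau/512$, we have $\sigma_{r+1}^2+c\le 1.02\,c$. For $\sigma_j\ge\tau/32$ the ratio is at most about $1/4$, so the factor $4^{-12\log n}$ beats all polynomial losses. Moreover, $\sigma_j\bigl((\sigma_{r+1}^2+c)/(\sigma_j^2+c)\bigr)^t$ is maximized at moderate $\sigma_j$, so large singular values cause no harm. Splitting $\bY=\bY_{\rm hi}+\bY_{\rm lo}$ at the level $\tau/32$ gives $\norm{(\bI-\bQ\bQ^T)\bY_{\rm hi}}_{\op}\le n^{-10}\tau$ and $\norm{\bY_{\rm lo}}_{\op}\le\tau/32$. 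This yields $\norm{(\bI-\bQ\bQ^T)\bY}_{\op}\le 3\tau/32$.

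The main obstacle is the much smaller residual $\norm{(\bI-\bQ\bQ^T)\bY\bB_1}_{\op}$. Directions with $\sigma_j\in(\tau/512,\tau/32)$ are not resolved by the iteration, because the shift $c$ suppresses their convergence, so the crude splitting only gives $O(\tau)$. First I would write $\bB_1=\bY^T\bQ\bA_1\bS_1^{-1}$, which reduces the quantity to $\norm{(\bI-\bQ\bQ^T)\bY\bY^T\bQ}_{\op}\cdot 8/\tau$. I would then exploit that $\bQ$ is itself an iterate, $\bQ=\bM\bQ'\bR^{-1}$ with $\bQ'$ the previous factor, so that $(\bI-\bQ\bQ^T)\bM\bQ'=\bm0$. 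The remaining task is to show that $\range(\bQ')$ and $\range(\bQ)$ agree up to polynomially small error on the part that matters for $\bQ^T\bY$ at level $\geq\tau/8$. Those components live in the resolved block $\sigma_j\ge\tau/32$, since $\bQ^T\bY_{\rm lo}$ has norm at most $\tau/32$. The unresolved middle block should contribute only through products with these resolved directions, and such products are already $n^{-10}\tau$ small. If this bookkeeping fails, the fallback is to enlarge the exponent so that the middle band is pushed into the resolved block, at the cost of worse constants.
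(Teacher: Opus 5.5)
Your algebraic reduction is correct and is the same as the paper's. With $\widehat\bU=\bQ\bA_1$, $\widehat{\bm\Sigma}=\bS_1$, $\widehat\bV=\bB_1$, the one-sided identity and $\sigma_{\min}(\widehat{\bm\Sigma})\ge\tau/8$ are automatic, $\norm{\bY-\bZ^+}_{\op}\le\norm{(\bI-\bQ\bQ^T)\bY}_{\op}+\tau/8$, and $\bY\widehat\bV-\widehat\bU\widehat{\bm\Sigma}=(\bI-\bQ\bQ^T)\bY\bY^T\widehat\bU\widehat{\bm\Sigma}^{-1}$. Your subspace-iteration bound on $\norm{(\bI-\bQ\bQ^T)\bY}_{\op}$ is also essentially the paper's Steps~1, 2 and~4. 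The paper splits at eigenvalue $\tau^2/128$ of $\bY\bY^T+\frac{\tau^2}{4096}\bI$, which gives contraction ratio $1/30$ and absorbs its crude bound $\norm{\bG_2\bG_1^{-1}}_{\op}\le n^{32}$. With your split at $\tau/32$ the ratio is only about $1/5$, so your $n^{-10}$ is optimistic, though harmless for the $3\tau/32$ bound.

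The genuine gap is the residual bound $\norm{(\bI-\bQ\bQ^T)\bY\bB_1}_{\op}\le\frac{\tau}{64}\sqrt{\mu r/n}$, which is the real content of the lemma; your last paragraph does not prove it.
\begin{itemize}
\item As written, your reduction drops $\bA_1$. The quantity $\norm{(\bI-\bQ\bQ^T)\bY\bY^T\bQ}_{\op}$ is in general not small at the $\sqrt{\mu r/n}$ scale: $\range(\bQ)$ can contain unresolved mixtures of near-degenerate eigenvectors that are not nearly invariant under $\bY\bY^T$. Only the retained Ritz vectors can be controlled.
\item Your justification that these vectors live in the resolved block because $\norm{\bQ^T\bY_{\rm lo}}_{\op}\le\tau/32$ gives, by itself, only an $O(1)$ angle. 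Wedin's theorem yields $(\tau/32)/(\tau/8)=1/4$, not a polynomially small bound.
\item The identity $(\bI-\bQ\bQ^T)\bM\bQ'=\bm0$ only moves the same question to $\range(\bQ')$.
\item The fallback is unavailable. The exponent $t=\ceil{12\log n}$ is fixed in $\Tcal_\tau$, and directions whose eigenvalues of $\bM$ are arbitrarily close to $\sigma_{r+1}^2+\frac{\tau^2}{4096}$ are never resolved by any fixed number of iterations.
\end{itemize}

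What is missing is a spectral-gap (Sylvester, or Davis--Kahan) argument inside $\range(\bQ)$. This is what turns your intuition that the unresolved block enters only through small cross products into a proof. Let $\bU_1,\bm D_1$ collect the eigenpairs of $\bY\bY^T+\frac{\tau^2}{4096}\bI$ with eigenvalues at least $\tau^2/128$. Let $\widetilde\bQ_1$ be an orthonormal basis of $\range(\bQ\bQ^T\bU_1)$, completed inside $\range(\bQ)$ by $\widetilde\bQ_2$.
\begin{itemize}
\item Then $\widetilde\bQ_2^T\bU_1=\bm0$ exactly, so $\widetilde\bQ_2^T\bY\bY^T\widetilde\bQ_2\preceq\frac{\tau^2}{128}\bI$.
\item The weighted estimate $\norm{(\bI-\bQ\bQ^T)\bU_1\bm D_1}_{\op}\le\frac{\tau^2}{128}n^{-7}$ bounds both the leakage $(\bI-\bQ\bQ^T)\bY\bY^T\widetilde\bQ_1$ and the cross block $\widetilde\bQ_2^T\bY\bY^T\widetilde\bQ_1$ by $\frac{\tau^2}{2048}\sqrt{\mu r/n}$. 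The weight is needed because $\bm D_1$ can be huge.
\item Write $\widehat\bU=\widetilde\bQ_1\bZ_1+\widetilde\bQ_2\bZ_2$. The lower block of the Ritz equation reads $\bZ_2\widehat{\bm\Sigma}^2-(\widetilde\bQ_2^T\bY\bY^T\widetilde\bQ_2)\bZ_2=\widetilde\bQ_2^T\bY\bY^T\widetilde\bQ_1\bZ_1$.
\item Since $\widehat{\bm\Sigma}^2\succeq\frac{\tau^2}{64}\bI$, the two spectra are separated by $\tau^2/128$. This gives $\norm{\bZ_2}_{\op}\le\frac1{32}\sqrt{\mu r/n}$.
\item Hence $\norm{(\bI-\bQ\bQ^T)\bY\bY^T\widehat\bU}_{\op}\le\frac{\tau^2}{1024}\sqrt{\mu r/n}$. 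Multiplying by $\norm{\widehat{\bm\Sigma}^{-1}}_{\op}\le8/\tau$ gives the residual bound.
\end{itemize}
The gap comes from the threshold $\tau/8$ in $\Hcal_{\ge\tau/8}$ sitting strictly above the split level. Unresolved directions never need to be resolved, because the retained Ritz vectors provably avoid them.
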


\begin{proof}
The proof is deferred to \Cref{sec:proof-spectral-approximation}.
\end{proof}

\subsection{Proof of Theorem~\ref{thm:sharp-spectral}}
\label{sec:proof-sharp-spectral}

\begin{proof}
Set
\[
\bW=(q^{-1}\Pcal_\Lambda-\Ical)(\bX_\star-\bZ), \qquad \bY=\bX_\star+\bW.
\]
Choose $C_5\ge c_2$. By \Cref{lem:spectral-concentration}, except with probability $n^{-12}/48$,
\[
\eqref{eq:spectral-power-bounds}\quad\text{holds}, \qquad \sigma_{r+1}(\bY)\le\norm{\bW}_{\op}\le\frac{\tau}{512}.
\]
Conditional on such a realization of $\bY$, \Cref{lem:spectral-approximation} gives, except with probability $n^{-12}/48$,
\[
\rank(\Tcal_\tau(\bY))\le r, \qquad \eqref{eq:spectral-approximation-conditions}\quad\text{holds}.
\]
Hence \Cref{lem:spectral-reconstruction} yields
\[
\sharpnorm{\Tcal_\tau(\bY)-\bX_\star}\le\frac{\tau}{4}.
\]
The result follows by a union bound.
\end{proof}

\subsection{Proof of Proposition~\ref{prop:no-premature-rebound}}
\label{sec:proof-residual-stopping}

\begin{proof}
We continue \eqref{eq:multiscale-update} through step $K$, irrespective of the stopping test. Scalar Bernstein gives $\norm{\bX_\star}_{\F}\le\tau_0\le3\norm{\bX_\star}_{\F}$ except with probability $n^{-12}/48$. At each reconstruction, conditional on the preceding observations and Gaussian matrices, \Cref{lem:spectral-concentration} gives \eqref{eq:spectral-power-bounds} except with probability $n^{-12}/48$. Conditional on the corrected matrix, \eqref{eq:threshold-gaussian-bound} holds with the same failure bound and implies \eqref{eq:spectral-approximation-conditions} by the proof of \Cref{lem:spectral-approximation}. Induction using \Cref{lem:spectral-reconstruction} therefore gives an event, with failure probability at most $(2K+1)n^{-12}/48$, on which \eqref{eq:init-invariant}, the sampling estimates \eqref{eq:spectral-power-bounds}, and the Gaussian estimate \eqref{eq:threshold-gaussian-bound} hold throughout the complete sequence.

\emph{Observed residuals.} For every nonzero iterate on this event, \eqref{eq:spectral-offspace-entry} and the projection onto the true singular spaces give
\[
\max\{\norm{\bU_\ell}_{2,\infty},\norm{\bV_\ell}_{2,\infty}\} \le2\sqrt{\frac{\mu r}{n}},
\]
where $\bZ_\ell=\bU_\ell\bm\Sigma_\ell\bV_\ell^T$ is a compact singular value decomposition. Fix an index $\ell<K$ for which these preceding reconstruction estimates hold and $\rank(\bZ_\ell)=r$, and condition on the observations and Gaussian matrices used to construct $\bZ_\ell$. Write $\bE_\ell=\bZ_\ell-\bX_\star$. The identity
\[
\bE_\ell=\bU_\star\bU_\star^T\bE_\ell +(\bI-\bU_\star\bU_\star^T)\bE_\ell\bV_\ell\bV_\ell^T
\]
implies
\begin{equation}\label{eq:residual-relative-entry}
\norm{\bE_\ell}_{\infty} \le3\sqrt{\frac{\mu r}{n}}\norm{\bE_\ell}_{\F}.
\end{equation}
The next observation set $\Omega^{(\ell+1)}$ is independent of $\bE_\ell$. For the centered sum defining $R_\ell^2-\norm{\bE_\ell}_{\F}^2$, the variance and summand bounds in \Cref{lem:bernstein} are at most
\[
\frac{9\mu r}{nq}\norm{\bE_\ell}_{\F}^4 \quad\text{and}\quad \frac{9\mu r}{nq}\norm{\bE_\ell}_{\F}^2,
\]
respectively. Set $\bW=(q^{-1}\Pcal_{\Omega^{(\ell+1)}}-\Ical)(-\bE_\ell)$ and $T_\star=T_{\bX_\star}\Mcal_r$. Since
\[
\norm{\Pcal_{T_\star}(\bm e_i\bm e_j^T)}_{\F}^2\le\frac{2\mu r}{n},
\]
the corresponding bounds for the vectorization of $\Pcal_{T_\star}\bW$ are at most $2\mu r\norm{\bE_\ell}_{\F}^2/(nq)$ and $3\sqrt2\mu r\norm{\bE_\ell}_{\F}/(nq)$. Applying \Cref{lem:bernstein} with $t=24\log n$ gives, after increasing $C_5$,
\begin{equation}\label{eq:residual-fresh-estimates}
\frac34\norm{\bE_\ell}_{\F}^2\le R_\ell^2\le\frac54\norm{\bE_\ell}_{\F}^2, \qquad \norm{\Pcal_{T_\star}\bW}_{\F}\le\frac1{16}\norm{\bE_\ell}_{\F},
\end{equation}
except with conditional probability $n^{-12}/48$. These estimates also hold when $\bE_\ell=\bm0$. A conditional union bound over $\ell<K$, together with the preceding reconstruction events, gives total failure probability at most
\[
\frac{3K+1}{48}n^{-12}<\frac{n^{-10}}6.
\]
We work on this joint event for the remainder of the proof.

\emph{Reconstruction after rank $r$ is attained.} Fix a rank-$r$ iterate $\bZ_\ell$ with $\ell<K$, and write $\sigma=\sigma_r(\bX_\star)$ and $\tau=\tau_\ell$. The preceding reconstruction retains $r$ singular values above $\tau_{\ell-1}/8=\tau/2$. By Weyl's inequality and \eqref{eq:spectral-power-bounds},
\[
\frac{\tau}{2}\le\sigma+\frac{\tau}{128}, \qquad\text{hence}\qquad \tau\le3\sigma.
\]
For the next corrected matrix $\bY=\bX_\star+\bW$, it follows that
\begin{equation}\label{eq:residual-full-rank-scale}
\norm{\bW}_{\op}\le\frac{\tau}{512}\le\frac{\sigma}{16}, \qquad \sigma_r(\bY)\ge\frac{15\sigma}{16}.
\end{equation}

Let $\widetilde{\bX}=\Hcal_r(\bY)$ and $\widetilde{\bE}=\widetilde{\bX}-\bX_\star$. Then $\norm{\widetilde{\bE}}_{\op}\le2\norm{\bW}_{\op}$. In the left and right singular coordinates of $\bX_\star$, write $\widetilde{\bE}=\left[\begin{smallmatrix}\bA&\bB\\\bC&\bm D\end{smallmatrix}\right]$. The matrix $\bm\Sigma_\star+\bA$ is invertible, and $\rank(\widetilde{\bX})=r$ gives $\bm D=\bC(\bm\Sigma_\star+\bA)^{-1}\bB$. Consequently,
\[
\left|\inner{\Pcal_{T_\star^\perp}\bW}{\widetilde{\bE}}\right| \le\frac{\norm{\bW}_{\op}}{\sigma-2\norm{\bW}_{\op}} \norm{\bB}_{\F}\norm{\bC}_{\F} \le\frac{\norm{\bW}_{\op}}{2(\sigma-2\norm{\bW}_{\op})} \norm{\widetilde{\bE}}_{\F}^2.
\]
The best-approximation property of $\widetilde{\bX}$ implies
\[
\norm{\widetilde{\bE}}_{\F}^2 \le2\inner{\bW}{\widetilde{\bE}} \le2\norm{\Pcal_{T_\star}\bW}_{\F}\norm{\widetilde{\bE}}_{\F} +\frac{\norm{\bW}_{\op}}{\sigma-2\norm{\bW}_{\op}}\norm{\widetilde{\bE}}_{\F}^2.
\]
Using \eqref{eq:residual-full-rank-scale} and \eqref{eq:residual-fresh-estimates}, we obtain
\begin{equation}\label{eq:residual-exact-reconstruction}
\norm{\widetilde{\bX}-\bX_\star}_{\F} \le4\norm{\Pcal_{T_\star}\bW}_{\F} \le\frac14\norm{\bE_\ell}_{\F}.
\end{equation}

It remains to account for the finite subspace iteration in $\Tcal_\tau$. By \eqref{eq:residual-full-rank-scale} and $\tau\le3\sigma$, the space $\range(\bU_1)$ in the proof of \Cref{lem:spectral-approximation} is precisely the leading $r$-dimensional left singular space of $\bY$. Write $\bm\Sigma_1$ for its singular values and $\bm P=\bQ\bQ^T$ for the final subspace projector. Equation~\eqref{eq:threshold-weighted-subspace}, with $t=\ceil{12\log n}$ and $n^{32}30^{-t}\le n^{-7}$, yields
\[
\norm{(\bI-\bm P)\bU_1}_{\op}\le n^{-7}, \qquad \norm{(\bI-\bm P)\bU_1\bm\Sigma_1}_{\op} \le\frac{\tau}{8\sqrt2}n^{-7}.
\]
In particular,
\[
\sigma_r(\bQ^T\bY) \ge\sqrt{1-n^{-14}}\,\sigma_r(\bY) >\frac{\tau}{8}.
\]
Thus all $r$ singular values are retained and $\bZ_{\ell+1}=\bm P\bY$ has rank $r$. The equality of the largest principal angles between two $r$-dimensional spaces also gives
\[
\norm{\bm P(\bI-\bU_1\bU_1^T)}_{\op} =\norm{(\bI-\bm P)\bU_1}_{\op}\le n^{-7}.
\]
Since $\norm{\bY-\widetilde{\bX}}_{\op}\le\tau/512$, we have
\begin{align*}
\norm{\bZ_{\ell+1}-\widetilde{\bX}}_{\F} &\le\norm{(\bI-\bm P)\widetilde{\bX}}_{\F} +\norm{\bm P(\bY-\widetilde{\bX})}_{\F}\\
&\le\sqrt r\,n^{-7}\tau\left(\frac1{8\sqrt2}+\frac1{512}\right) \le n^{-6}\tau.
\end{align*}
Together with \eqref{eq:residual-exact-reconstruction}, this proves
\begin{equation}\label{eq:residual-relative-reconstruction}
\norm{\bE_{\ell+1}}_{\F} \le\frac14\norm{\bE_\ell}_{\F}+n^{-6}\tau_\ell.
\end{equation}
The same argument shows that every iterate after the first rank-$r$ iterate also has rank $r$.

\emph{The stopping test.} Therefore, whenever the residual test returns a candidate on the joint event, the compared rank-$r$ iterates are consecutive. Write them as $\bZ_\ell$ and $\bZ_{\ell+1}$, where $\ell+1<K$ and $R_{\ell+1}>R_\ell$. By \eqref{eq:residual-fresh-estimates} and \eqref{eq:residual-relative-reconstruction},
\[
\sqrt{\frac34}\norm{\bE_\ell}_{\F} <\sqrt{\frac54}\left(\frac14\norm{\bE_\ell}_{\F}+n^{-6}\tau_\ell\right).
\]
It follows that
\[
\norm{\bE_\ell}_{\F} <2n^{-6}\tau_\ell \le6n^{-6}\sigma_r(\bX_\star) \le\frac{\sigma_r(\bX_\star)}{1024n},
\]
where the sampling condition, with $C_5$ sufficiently large, implies $n\ge8$. Since the algorithm returns $\bZ_\ell$, this proves \eqref{eq:residual-stop-accuracy}. Finally, \eqref{eq:sharp-main} gives $\sharpnorm{\bE_\ell}\le n\norm{\bE_\ell}_{\F}/(\mu r)$, and the sampling condition gives $q\ge r/n$. Thus the returned matrix satisfies both local entrance conditions. This proves the proposition.
\end{proof}

\subsection{Proof of Theorem~\ref{thm:init}}
\label{sec:proof-finite-rgd}

\begin{proof}[Proof of \Cref{thm:init}]
We first consider the complete sequence in \eqref{eq:multiscale-update}.

\emph{Initial scale.} By incoherence,
\[
\frac{|(\bX_\star)_{ij}|^2}{\norm{\bX_\star}_{\F}^2} \le\frac{\mu^2r^2}{n^2}, \qquad \sum_{i,j}\frac{|(\bX_\star)_{ij}|^2}{\norm{\bX_\star}_{\F}^2}=1.
\]
Scalar Bernstein applied to the observed squared entries gives
\[
\Pr\!\left\{\left|\frac{\norm{\Pcal_{\Omega^{(0)}}(\bX_\star)}_{\F}^2}{q\norm{\bX_\star}_{\F}^2}-1\right|>\frac12\right\} \le2\exp\!\left(-\frac{3n^2q}{28\mu^2r^2}\right) \le\frac{n^{-12}}{48},
\]
after increasing $C_5$, since $\mu r\le n$. Thus, outside this event,
\begin{equation}\label{eq:init-scale-bound}
\norm{\bX_\star}_{\F}\le\tau_0\le3\norm{\bX_\star}_{\F}.
\end{equation}
In particular, $\tau_0>0$ and $\sharpnorm{\bX_\star}=\sigma_1(\bX_\star)\le\tau_0$.

\emph{Induction step.} Let $\mathcal F_\ell$ contain the observations in $\Omega^{(0)},\ldots,\Omega^{(\ell)}$ and the Gaussian matrices used in the first $\ell$ reconstructions. Conditional on $\mathcal F_\ell$, the matrix $\bZ_\ell$ and the scale $\tau_\ell$ are fixed, whereas $\Omega^{(\ell+1)}$ and the next Gaussian matrix are independent. Let $\mathcal G_0$ be the event in \eqref{eq:init-scale-bound}, and define
\[
\mathcal G_{\ell+1} :=\mathcal G_\ell\cap \left\{\rank(\bZ_{\ell+1})\le r, \sharpnorm{\bZ_{\ell+1}-\bX_\star}\le4^{-(\ell+1)}\tau_0\right\}.
\]
On $\mathcal G_\ell$, the induction hypothesis and \Cref{thm:sharp-spectral} imply
\[
\Pr(\mathcal G_\ell\setminus\mathcal G_{\ell+1}\mid\mathcal F_\ell) \le\frac{n^{-12}}{24}.
\]
Taking expectations and proceeding by induction proves \eqref{eq:init-invariant} through step $K$ on $\mathcal G_K$.

\emph{Probability estimate.} Since $K\le n$ and $\mathcal G_0\supseteq\cdots\supseteq\mathcal G_K$,
\begin{equation}\label{eq:init-failure}
\Pr(\mathcal G_K^c) \le\frac{n^{-12}}{48}+K\frac{n^{-12}}{24} =\frac{2K+1}{48}n^{-12} <\frac{n^{-10}}6.
\end{equation}

\emph{Verification of the local hypotheses at step $K$.} For $K=\ceil{5+\log_4(\kappa\sqrt{nr})}$, the difference $\bZ_K-\bX_\star$ has rank at most $2r$. Using \eqref{eq:init-invariant}, $\tau_0\le3\sqrt r\,\kappa\sigma_r(\bX_\star)$, and $q\ge r/n$, we obtain
\[
\norm{\bZ_K-\bX_\star}_{\F} \le\sqrt{2r}\,4^{-K}\tau_0 \le\frac{3\sqrt2}{1024}\sqrt{\frac rn}\,\sigma_r(\bX_\star) \le\frac{\sqrt q}{128}\sigma_r(\bX_\star).
\]
For $K=\ceil{6+\log_4(\mu r^{3/2}\kappa)}$, the same invariant gives
\[
\sharpnorm{\bZ_K-\bX_\star} \le4^{-K}\tau_0 \le\frac{3}{4096\mu r}\sigma_r(\bX_\star) \le\frac{\sigma_r(\bX_\star)}{1000\mu r}.
\]
In both cases,
\[
\norm{\bZ_K-\bX_\star}_{\op}<\sigma_r(\bX_\star), \qquad \rank(\bZ_K)\le r,
\]
so Weyl's inequality gives $\rank(\bZ_K)=r$.

\emph{Residual stopping.} Intersect $\mathcal G_K$ with the event in \Cref{prop:no-premature-rebound}. The total failure probability is at most $n^{-10}/3$. If no residual increase is detected, then $\widehat K=K$ and the preceding bounds apply. Otherwise, \eqref{eq:residual-stop-accuracy}, \eqref{eq:sharp-main}, and $q\ge r/n$ give
\[
\norm{\bZ_{\widehat K}-\bX_\star}_{\F}\le\frac{\sqrt q}{128}\sigma_r(\bX_\star), \qquad \sharpnorm{\bZ_{\widehat K}-\bX_\star}\le\frac{\sigma_r(\bX_\star)}{1000\mu r}.
\]
The residual test returns only an iterate of rank $r$. This proves \eqref{eq:init-rgd-entrance}--\eqref{eq:init-rgn-entrance} in both cases.
\end{proof}

We can now apply the local RGD theorem to the initialization output. The corresponding RGN argument uses the independence of $\widehat\Omega$ in \Cref{sec:proof-global-rgn}.

\subsection{Proof of Theorem~\ref{thm:rgd-global}}
\label{sec:proof-global-rgd}

\begin{proof}
Choose $C_1\ge24(C_4+C_5+1)$. Since
\[
K+1\le12\log(n\kappa), \qquad p=1-(1-q)^{K+1}\le(K+1)q,
\]
the sampling condition gives $q\ge \frac{C_1\mu r\log n}{12n}$. It also implies $K+1\le n$. Hence \Cref{thm:init} gives, with probability at least $1-n^{-10}/3$,
\[
\norm{\bX_0-\bX_\star}_{\F} \le\frac{\sqrt q}{128}\sigma_r(\bX_\star) \le\frac{\sqrt p}{128}\sigma_r(\bX_\star).
\]
The event in \Cref{thm:fixed-rgd-local} holds simultaneously throughout this neighborhood, so no independence between $\bX_0$ and $\Omega$ is required. A union bound proves the convergence statement. 
\end{proof}

\section{Convergence of RGN}
\label{sec:local-rgn}

We prove \Cref{thm:fixed} by controlling the initial RGN iterates in the sharp norm and then establishing a quadratic Frobenius recurrence. For the initial iterates, we compare the original sequence with auxiliary sequences obtained by completing one row or column. Once the Frobenius error is sufficiently small, \Cref{lem:det-transfer} gives the required estimate uniformly, without further leave-one-out comparisons.

Throughout this section, $\widehat{\Omega}\sim\operatorname{Bernoulli}(q)$ and $\Rcal^0:=q^{-1}\Pcal_{\widehat{\Omega}}$. The initial point $\bX_0$ is independent of $\widehat{\Omega}$. Inverses of tangent normal operators are taken on their corresponding tangent spaces. Whenever \Cref{lem:degree} is used below, it is applied with $(\Lambda,p_\Lambda)=(\widehat{\Omega},q)$.

\subsection{Dependence and the leave-one-out construction}
\label{sec:proof-fixed-rgn}

The difficulty is that $\bX_k=\bX_k(\widehat{\Omega})$, so concentration cannot be applied to $T_{\bX_k}\Mcal_r$ as though it were independent of the observation set $\widehat{\Omega}$.  Following the row- and column-deletion construction in \cite[Sec.~7.2 and Algorithm~5]{MaWangChiChen2020} and \cite{DingChen2020}, define, for each row $i$ and column $j$,
\[
\Rcal^{\mathrm r,i}(\bZ) :=\bm e_i\bm e_i^T\bZ +q^{-1}\Pcal_{\widehat{\Omega}}\bigl((\bI-\bm e_i\bm e_i^T)\bZ\bigr),
\]
\[
\Rcal^{\mathrm c,j}(\bZ) :=\bZ\bm e_j\bm e_j^T +q^{-1}\Pcal_{\widehat{\Omega}}\bigl(\bZ(\bI-\bm e_j\bm e_j^T)\bigr),
\]
and set
\[
\mathfrak A:=\{0\}\cup\{(\mathrm r,i):1\le i\le n\} \cup\{(\mathrm c,j):1\le j\le n\}.
\]
Each completed operator replaces one row or column by its population counterpart. The corresponding auxiliary sequence is independent of the Bernoulli variables in that row or column. These sequences are used only for $0\le k\le\bar K$.

The following lemma gives a sampling isometry that holds simultaneously for the original operator and all completed operators.

\begin{lemma}\label[lemma]{lem:all-true-rip}
Suppose that \Cref{ass:incoherence} holds. If $q\ge c_1\mu r\log n/n$, with $c_1$ as in \Cref{lem:true-tangent-rip}, then, with probability at least $1-n^{-10}/24$,
\[
\max_{\alpha\in\mathfrak A} \norm{\Pcal_{T_{\bX_\star}}\Rcal^\alpha\Pcal_{T_{\bX_\star}} -\Pcal_{T_{\bX_\star}}}_{\F\to\F} \le\frac1{16}.
\]
\end{lemma}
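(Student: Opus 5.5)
The plan is to prove the bound for each $\alpha\in\mathfrak A$ separately, with failure probability at most $n^{-12}$ times a small constant, and then take a union bound over the $2n+1$ elements of $\mathfrak A$. For $\alpha=0$ this is exactly \Cref{lem:true-tangent-rip} with $(\Lambda,p_\Lambda)=(\widehat\Omega,q)$. I will rerun its proof (matrix Bernstein) with the constant $c_1$ chosen so that the failure probability is at most $n^{-12}/96$; this costs only a constant factor in $c_1$. The resulting total failure is then at most $(2n+1)n^{-12}/96\le n^{-10}/24$.

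For a completed operator, say $\Rcal^{\mathrm r,i}$, I will write
\[
\Rcal^{\mathrm r,i}-\Ical=\sum_{(a,b):\,a\ne i}\Bigl(\frac{\delta_{ab}}{q}-1\Bigr)\Pcal_{ab},
\]
where $\Pcal_{ab}(\bZ)=\inner{\bm e_a\bm e_b^T}{\bZ}\bm e_a\bm e_b^T$ and $\delta_{ab}$ are the Bernoulli indicators. The entries in row $i$ contribute zero, because $\bm e_i\bm e_i^T\bZ$ is kept exactly. Hence
\[
\Pcal_{T_{\bX_\star}}\Rcal^{\mathrm r,i}\Pcal_{T_{\bX_\star}}-\Pcal_{T_{\bX_\star}}
\]
is a sum of independent, zero-mean, self-adjoint operators on $T_{\bX_\star}\Mcal_r$. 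This sum is the same one that appears in the $\alpha=0$ case, but with the summands indexed by row $i$ deleted. The column case is symmetric.

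I then apply matrix Bernstein to this sum. Each summand has operator norm at most $q^{-1}\norm{\Pcal_{T_{\bX_\star}}(\bm e_a\bm e_b^T)}_{\F}^2\le 2\mu r/(nq)$, using the bound $\norm{\Pcal_{T_\star}(\bm e_i\bm e_j^T)}_{\F}^2\le2\mu r/n$ already used in the paper. The variance proxy is dominated by the full-sum variance, because deleting summands only removes positive semidefinite terms. That variance is at most $\frac{1-q}{q}\max_{a,b}\norm{\Pcal_{T_{\bX_\star}}(\bm e_a\bm e_b^T)}_{\F}^2\,\norm{\sum_{a,b}\Pcal_{T_{\bX_\star}}\Pcal_{ab}\Pcal_{T_{\bX_\star}}}\le 2\mu r/(nq)$. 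The dimension factor is $\dim T_{\bX_\star}\Mcal_r\le2nr\le 2n^2$. Taking $q\ge c_1\mu r\log n/n$ with $c_1$ large, the deviation is at most $1/16$ except with probability $n^{-12}/96$, uniformly in $\alpha$.

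The main obstacle is bookkeeping rather than anything conceptual. The Bernstein parameters must be checked to be the same for every deleted row or column, so that a single constant $c_1$ serves all $2n+1$ operators. The constant must also absorb both the union bound's extra factor of $n$ and the stated probability $1-n^{-10}/24$. If the constant in \Cref{lem:true-tangent-rip} is fixed and cannot be revisited, I will instead note that its proof gives failure probability $n^{-c}$ with $c$ increasing in $c_1$, and enlarge $c_1$ accordingly.
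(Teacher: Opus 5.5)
Your proposal is correct and follows the paper's proof essentially step for step. You write each $\Pcal_{T_{\bX_\star}}\Rcal^\alpha\Pcal_{T_{\bX_\star}}-\Pcal_{T_{\bX_\star}}$ as the full centered sum of $(\delta_{ab}/q-1)\Pcal_{T_{\bX_\star}}\Pcal_{ab}\Pcal_{T_{\bX_\star}}$ with the completed row or column deleted, note that the summand bound $2\mu r/(nq)$ and the variance bound survive the deletion, and take a union bound of matrix Bernstein over the $2n+1$ operators. The only difference is cosmetic: the paper observes that its fixed choice $c_1=2^{15}$ already makes the Bernstein exponent at threshold $1/16$ at least $24\log n$, so $c_1$ never needs to be enlarged and the union bound gives $4nr(2n+1)n^{-24}\le n^{-10}/24$ directly.
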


\begin{proof}
The proof is deferred to \Cref{sec:proof-all-true-rip}.
\end{proof}

For every $\alpha\in\mathfrak A$, set $\bX_0^\alpha=\bX_0$. We use the deterministic error bound
\begin{equation}\label{eq:rgn-prefix-envelope}
\rho_k:=\frac{\sigma_r(\bX_\star)}{280\mu r} \left(\frac7{25}\right)^{2^k}, \qquad \rho_{k+1}=280\mu r\frac{\rho_k^2}{\sigma_r(\bX_\star)}.
\end{equation}

To define each auxiliary sequence on every outcome, we use the following stopping rule for $0\le k<\bar K$. Given $\bX_k^\alpha$, let $\bm t_{\bX_k^\alpha}$ be its tangent error component from \eqref{eq:tangent-normal-components}. If the tangent least-squares problem
\[
\min_{\bm\xi\in T_{\bX_k^\alpha}\Mcal_r} \frac12\inner{\bX_k^\alpha+\bm\xi-\bX_\star} {\Rcal^\alpha(\bX_k^\alpha+\bm\xi-\bX_\star)}
\]
has a unique minimizer, let $\bm\xi_k^\alpha$ be that minimizer and set $\bm\eta_k^\alpha:=\bm\xi_k^\alpha-\bm t_{\bX_k^\alpha}$.  Otherwise, set
\[
\bm\xi_k^\alpha:=\bm t_{\bX_k^\alpha}, \qquad \bm\eta_k^\alpha:=\bm 0.
\]

When the minimizer is unique and the graph retraction is defined, denote the candidate next state by
\[
\widehat{\bX}_{k+1}^\alpha :=\operatorname{Retr}_{\bX_k^\alpha}(\bm\xi_k^\alpha).
\]

We accept the update only if the minimizer is unique, the graph retraction is defined, and
\[
\sharpnorm{\bX_k^\alpha-\bX_\star}\le\rho_k, \qquad \sharpnorm{\widehat{\bX}_{k+1}^\alpha-\bX_\star}\le\rho_{k+1},
\]
hold. In this case, set $\bX_{k+1}^\alpha:=\widehat{\bX}_{k+1}^\alpha$. Otherwise, leave $\bX_k^\alpha$ unchanged and set the subsequent states equal to $\bX_\star$: for every $k+1\le\ell\le\bar K$,
\[
\bX_\ell^\alpha:=\bX_\star, \qquad \bm\xi_\ell^\alpha=\bm\eta_\ell^\alpha:=\bm 0.
\]
After stopping, we use the fixed compact SVD $\bX_\star=\bU_\star\bm\Sigma_\star\bV_\star^T$. Thus the auxiliary sequence is defined on every outcome and remains independent of the variables in its completed row or column.

For these stopped processes, define
\[
d_k^{\rm loo}:= \max_{\alpha\in\mathfrak A}\norm{\bX_k^\alpha-\bX_k^0}_{\F}, \qquad h_k^{\rm corr}:= \max_{\alpha\in\mathfrak A}\norm{\bm\eta_k^\alpha-\bm\eta_k^0}_{\F}.
\]
For $0\le k\le\bar K$, write $\bX_k:=\bX_k^0$. If the updates at indices $0,\ldots,k-1$ are accepted, this sequence agrees with \Cref{alg:fixed} through index $k$. We will prove that all these updates are accepted on the event used in the convergence proof.

\subsection{Simultaneous sampling events}

We first bound the number of observations in each row and column. These bounds will be used to control the sampled tangent operators.

\begin{lemma}\label[lemma]{lem:degree}
Let $\Lambda\sim\operatorname{Bernoulli}(p_\Lambda)$, and set $\delta_{ij}:=\mathbf 1_{\{(i,j)\in\Lambda\}}$. Then, with probability at least $1-2ne^{-p_\Lambda n/3}$,
\[
\max_i\sum_j\delta_{ij}\le2p_\Lambda n, \qquad \max_j\sum_i\delta_{ij}\le2p_\Lambda n.
\]
\end{lemma}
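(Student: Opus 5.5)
We will prove the two row bounds and the two column bounds separately and combine them with a union bound; by symmetry it suffices to treat a single row. Fix $i$ and set $D_i:=\sum_j\delta_{ij}$. Under the Bernoulli model the variables $\delta_{i1},\ldots,\delta_{in}$ are independent Bernoulli$(p_\Lambda)$, so $D_i\sim\operatorname{Binomial}(n,p_\Lambda)$ with mean $p_\Lambda n$.

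The key step is the multiplicative Chernoff bound for sums of independent $\{0,1\}$ variables:
\[
\Pr\{D_i\ge(1+\delta)p_\Lambda n\}\le\exp\!\left(-\frac{\delta^2p_\Lambda n}{2+\delta}\right),\qquad \delta>0.
\]
With $\delta=1$ this gives $\Pr\{D_i>2p_\Lambda n\}\le e^{-p_\Lambda n/3}$. The bound follows from the standard moment generating function argument. We use $\E e^{\lambda\delta_{ij}}\le\exp(p_\Lambda(e^\lambda-1))$ and take $\lambda=\log 2$. This yields the exponent $-p_\Lambda n(2\log2-1)$, and since $2\log 2-1\approx0.386>1/3$, the constant $1/3$ is confirmed. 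Alternatively, the same bound follows from the scalar Bernstein inequality: the summands are bounded by $1$ and the variance is at most $p_\Lambda n$, so the exponent is $(p_\Lambda n)^2/(2p_\Lambda n+\frac23p_\Lambda n)=\frac38p_\Lambda n\ge p_\Lambda n/3$.

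The column sums $\sum_i\delta_{ij}$ obey the identical bound. A union bound over the $n$ rows and $n$ columns then gives total failure probability at most $2ne^{-p_\Lambda n/3}$, which is exactly the statement.

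No step is genuinely hard. The only point needing care is verifying that the constant in the exponent is at least $1/3$ and using the non-strict inequality consistently. Independence of the entries within each row and column is immediate from the Bernoulli sampling model, so no decoupling is required.
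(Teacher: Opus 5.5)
Your proposal is correct and follows essentially the same route as the paper: a multiplicative Chernoff bound with deviation parameter one gives $\Pr\{d>2p_\Lambda n\}\le e^{-p_\Lambda n/3}$ for each binomial row or column degree, and a union bound over the $2n$ degrees finishes. Your explicit check of the constant (via $\lambda=\log 2$, giving exponent $2\log2-1>1/3$, or via scalar Bernstein, giving $3/8$) supplies the verification the paper delegates to its citation of Tropp's Corollary~5.2.
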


\begin{proof}
For every row or column degree $d\sim\operatorname{Binomial}(n,p_\Lambda)$, the multiplicative Chernoff bound \cite[Corollary~5.2]{Tropp2012}, with deviation parameter one, gives
\[
\Pr\{d>2p_\Lambda n\}\le e^{-p_\Lambda n/3}.
\]
Therefore, a union bound over the $2n$ row and column degrees proves the result.
\end{proof}

The next lemma gives concentration bounds that hold simultaneously for every completed row, every completed column, and every index $0\le k<\bar K$.

\begin{lemma}\label[lemma]{lem:loo-probability}
Let $\bX_0$ be the initial point and write $\delta_{ij}:=\mathbf 1_{\{(i,j)\in\widehat{\Omega}\}}$. For the stopped row-$i$ and column-$j$ trajectories, define
\[
\bm z_k^{\mathrm r,i}:=\left[\Pcal_{T_{\bX_k^{\mathrm r,i}}^\perp}(\bX_\star-\bX_k^{\mathrm r,i})-\bm\eta_k^{\mathrm r,i}\right]^T\bm e_i, \qquad \bm z_k^{\mathrm c,j}:=\left[\Pcal_{T_{\bX_k^{\mathrm c,j}}^\perp}(\bX_\star-\bX_k^{\mathrm c,j})-\bm\eta_k^{\mathrm c,j}\right]\bm e_j,
\]
and set
\[
(\bm w_k^{\mathrm r,i})_a:=\left(\frac{\delta_{ia}}q-1\right)(\bm z_k^{\mathrm r,i})_a, \qquad (\bm w_k^{\mathrm c,j})_a:=\left(\frac{\delta_{aj}}q-1\right)(\bm z_k^{\mathrm c,j})_a.
\]
Let $\bV_k^{\mathrm r,i}$ and $\bU_k^{\mathrm c,j}$ be the right and left singular factors selected by the fixed compact-SVD convention. Suppose that \Cref{ass:incoherence} holds and $\bX_0$ is independent of $\widehat{\Omega}$ and satisfies
\[
\sharpnorm{\bX_0-\bX_\star}\le\rho_0=\frac{\sigma_r(\bX_\star)}{1000\mu r}.
\]
Then, conditional on $\bX_0$, with probability at least $1-n^{-10}/48$, the following two inequalities hold simultaneously for all $1\le i,j\le n$ and $0\le k<\bar K$:
\begin{align}
2\sqrt{\frac{\mu r}{n}}\norm{\bm w_k^{\mathrm r,i}}_2+\norm{(\bV_k^{\mathrm r,i})^T\bm w_k^{\mathrm r,i}}_2 &\le8\sqrt{\frac{3\log n}{q}}\left(4\sqrt{\frac{\mu r}{n}}\norm{\bm z_k^{\mathrm r,i}}_2+\norm{\bm z_k^{\mathrm r,i}}_\infty\right)\notag\\
&\quad+\frac{64\sqrt{\mu r/n}\,\log n}{q}\norm{\bm z_k^{\mathrm r,i}}_\infty, \label{eq:simultaneous-row-event}\\
2\sqrt{\frac{\mu r}{n}}\norm{\bm w_k^{\mathrm c,j}}_2+\norm{(\bU_k^{\mathrm c,j})^T\bm w_k^{\mathrm c,j}}_2 &\le8\sqrt{\frac{3\log n}{q}}\left(4\sqrt{\frac{\mu r}{n}}\norm{\bm z_k^{\mathrm c,j}}_2+\norm{\bm z_k^{\mathrm c,j}}_\infty\right)\notag\\
&\quad+\frac{64\sqrt{\mu r/n}\,\log n}{q}\norm{\bm z_k^{\mathrm c,j}}_\infty. \label{eq:simultaneous-column-event}
\end{align}
\end{lemma}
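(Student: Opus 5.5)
The plan is to exploit the leave-one-out independence: for fixed $\alpha=(\mathrm r,i)$ and fixed $k<\bar K$, the stopped trajectory $\{\bX_\ell^{\mathrm r,i}\}_{\ell\le k}$, together with $\bm\eta_k^{\mathrm r,i}$ and the right factor $\bV_k^{\mathrm r,i}$ chosen by the fixed compact-SVD convention, is a measurable function of $\bX_0$ and of the Bernoulli variables outside row $i$. Indeed, $\Rcal^{\mathrm r,i}$ replaces row $i$ by its population counterpart, and the acceptance and stopping rules depend only on these quantities. Hence, conditional on $\bX_0$ and on $\{\delta_{ab}:a\ne i\}$, the vector $\bm z_k^{\mathrm r,i}$ and the matrix $\bV_k^{\mathrm r,i}$ are fixed. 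Meanwhile $\{\delta_{ia}\}_{a}$ are still independent Bernoulli$(q)$, so $\bm w_k^{\mathrm r,i}=\sum_a(\delta_{ia}/q-1)(\bm z_k^{\mathrm r,i})_a\bm e_a$ is a sum of independent, centered, fixed-direction terms. The column case is symmetric.

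I would bound the two terms on the left separately.

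\emph{The term $\norm{\bm w}_2$.} This is the norm of a sum of independent vectors $(\delta_{ia}/q-1)z_a\bm e_a$. I would apply vector (or matrix) Bernstein, via \Cref{lem:bernstein} used on the vectorization as in the proof of \Cref{prop:no-premature-rebound}. The variance is $\sum_a z_a^2(1-q)/q\le\norm{\bm z}_2^2/q$ and each summand is bounded by $\norm{\bm z}_\infty/q$. Taking $t\asymp 12\log n$ then gives
\[
\norm{\bm w}_2\lesssim\sqrt{\log n/q}\,\norm{\bm z}_2+(\log n/q)\norm{\bm z}_\infty .
\]

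\emph{The term $\norm{\bV^T\bm w}_2$.} This is $\sum_a(\delta_{ia}/q-1)z_a\bV^T\bm e_a$, with summands in $\R^r$. To make the summand bound depend only on $\norm{\bm z}_\infty$ times the row norm of $\bV$, I need an incoherence bound $\norm{\bV_k^{\mathrm r,i}}_{2,\infty}\le2\sqrt{\mu r/n}$. This follows from \Cref{lem:current-incoherence}, since an accepted state satisfies $\sharpnorm{\bX_k^\alpha-\bX_\star}\le\rho_k\le\sigma_r/8$, and a stopped state equals $\bX_\star$, for which the bound is \Cref{ass:incoherence}. The summand bound is then $2\sqrt{\mu r/n}\,\norm{\bm z}_\infty/q$. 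For the variance,
\[
\sum_a z_a^2\norm{\bV^T\bm e_a}_2^2/q\le\min\bigl\{\norm{\bm z}_\infty^2 r,\ 4(\mu r/n)\norm{\bm z}_2^2\bigr\}/q .
\]
Using $\sum_a\norm{\bV^T\bm e_a}^2=r$, this yields the $\norm{\bm z}_\infty$ piece. Vector Bernstein then gives
\[
\norm{\bV^T\bm w}_2\lesssim\sqrt{\log n/q}\,\bigl(\sqrt{\mu r/n}\,\norm{\bm z}_2\wedge\sqrt r\,\norm{\bm z}_\infty\bigr)+\sqrt{\mu r/n}\,(\log n/q)\norm{\bm z}_\infty .
\]
To land exactly on the stated combination $4\sqrt{\mu r/n}\norm{\bm z}_2+\norm{\bm z}_\infty$, I would instead bound the variance by $4(\mu r/n)\norm{\bm z}_2^2/q$ and absorb the remainder into the $\norm{\bm z}_\infty$ slack term. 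Adding $2\sqrt{\mu r/n}$ times the first bound and tracking constants gives \eqref{eq:simultaneous-row-event}.

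Each such event fails with conditional probability at most about $n^{-13}$ for a suitable multiple of $\log n$. Integrating out the conditioning, then taking a union bound over $2n$ indices and $\bar K\le\log_2\log_2(4n)+1$ values of $k$, gives total failure at most $n^{-10}/48$, conditionally on $\bX_0$.

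The main obstacle is the measurability claim. I must check that the stopping rule, the uniqueness test, the retraction, and the SVD convention together make $(\bm z_k^{\mathrm r,i},\bV_k^{\mathrm r,i})$ genuinely independent of row $i$ of $\widehat\Omega$ on every outcome, including degenerate ones. The stopped construction was designed precisely for this, so I would verify it carefully by induction on $k$. The constants in Bernstein are routine by comparison.
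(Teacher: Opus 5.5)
Your proposal is correct and is essentially the paper's argument. You condition on $\bX_0$ and the Bernoulli variables outside the completed row (resp.\ column), so that $\bm z_k^{\mathrm r,i}$ and $\bV_k^{\mathrm r,i}$ are fixed. You then apply two vector Bernstein bounds, which the paper packages as \Cref{lem:conditional-row}, and obtain $\norm{\bV_k^{\mathrm r,i}}_{2,\infty}\le2\sqrt{\mu r/n}$ from \Cref{lem:current-incoherence} before stopping and from \Cref{ass:incoherence} after stopping. A union bound over the $2n\bar K$ pairs finishes the argument. The one thing to pin down is the Bernstein level: take exactly $t=24\log n$ in \Cref{lem:bernstein}. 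This gives per-pair failure $(n+r+2)n^{-24}$, which survives the union bound, and it is the largest level compatible with the constant $64$ in the last term. The level $t\approx12\log n$ you first mention would leave a total failure probability of order $n^{-10}\bar K$, which is too large.
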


\begin{proof}
The proof is deferred to \Cref{sec:proof-loo-probability}.
\end{proof}

\subsection{Error bounds for the initial RGN iterates}

The next lemma bounds the sampled tangent normal operators uniformly over the sharp-norm neighborhood. It also bounds the difference between the sampled and population tangent corrections.

\begin{lemma}\label[lemma]{lem:local-conditioning}
Let $\alpha\in\mathfrak A$ and $\bX\in\Mcal_r$, set $e_\sharp:=\sharpnorm{\bX-\bX_\star}$, and let $\bm t_{\bX}$ be defined by \eqref{eq:tangent-normal-components}. Suppose that \Cref{ass:incoherence} and the conclusions of \Cref{lem:all-true-rip,lem:degree} hold, and that $e_\sharp\le\frac{\sigma_r(\bX_\star)}{1000\mu r}$. Then
\begin{equation}\label{eq:local-conditioning-isometry}
\frac9{10}\norm{\bm\zeta}_{\F}^2\le\inner{\bm\zeta}{\Rcal^\alpha\bm\zeta}\le\frac{11}{10}\norm{\bm\zeta}_{\F}^2, \qquad \bm\zeta\in T_{\bX}\Mcal_r.
\end{equation}
Let $\bm\xi_{\bX}^\alpha$ be the resulting unique sampled tangent minimizer and set $\bm\eta_{\bX}^\alpha:=\bm\xi_{\bX}^\alpha-\bm t_{\bX}$. Then
\begin{equation}\label{eq:finite-correction-F-main}
\norm{\bm\eta_{\bX}^\alpha}_{\F}\le9\mu r\frac{e_\sharp^2}{\sigma_r(\bX_\star)}.
\end{equation}
\end{lemma}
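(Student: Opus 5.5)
Two things need proving: the restricted isometry \eqref{eq:local-conditioning-isometry} on the tangent space at $\bX$, and the bound \eqref{eq:finite-correction-F-main} on the sampled correction. The isometry is a transfer argument in the spirit of \Cref{lem:det-transfer}, except that the hypothesis is now sharp-norm closeness rather than Frobenius closeness of order $\sqrt{q}\,\sigma_r$. The correction bound follows from the tangent normal equation \eqref{eq:rgn-correction-main}, which says the correction is driven only by the normal component of the error.

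\emph{Isometry.} Fix $\bm\zeta\in T_{\bX}\Mcal_r$ with $\norm{\bm\zeta}_{\F}=1$ and split it as $\bm\zeta=\Pcal_{T_{\bX_\star}}\bm\zeta+\bm\delta$, where $\bm\delta:=(\Pcal_{T_{\bX}}-\Pcal_{T_{\bX_\star}})\bm\zeta$. By \Cref{lem:current-incoherence}, $\norm{\bm\delta}_{\F}\le\frac{16}{7}e_\sharp/\sigma_r(\bX_\star)\le\frac{1}{400\mu r}$. The cross-term bound used in \Cref{lem:det-transfer} would cost a factor $q^{-1/2}$, so I do not use it. Instead I bound $\inner{\bm\delta}{\Rcal^\alpha\bm\delta}$ and $\inner{\Pcal_{T_{\bX_\star}}\bm\zeta}{\Rcal^\alpha\bm\delta}$ through the structure of $\bm\delta$.

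Writing $\bU\bU^T-\bU_\star\bU_\star^T$ and $\bV\bV^T-\bV_\star\bV_\star^T$ via their rowwise bounds from \Cref{lem:current-incoherence}, $\bm\delta$ becomes a sum of terms of the form $\bm A\bm B^T$ in which one factor has $\norm{\cdot}_{2,\infty}\lesssim\sqrt{\mu r/n}\,e_\sharp/\sigma_r(\bX_\star)$. For any matrix $\bm M=\bm A\bm B^T$, \Cref{lem:degree} gives
\[
\norm{q^{-1}\Pcal_{\widehat\Omega}(\bm A\bm B^T)}_{\F}^2\le q^{-1}\sum_{(i,j)\in\widehat\Omega}\norm{\bm A^T\bm e_i}_2^2\norm{\bm B^T\bm e_j}_2^2\le 2n\,\norm{\bm A}_{2,\infty}^2\norm{\bm B}_{\F}^2,
\]
so the $q^{-1}$ is absorbed by the degree bound. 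The completed row or column in $\Rcal^\alpha$ only improves this. Applied to the pieces of $\bm\delta$, this yields $\inner{\bm\delta}{\Rcal^\alpha\bm\delta}^{1/2}\lesssim\sqrt{\mu r}\,e_\sharp/\sigma_r(\bX_\star)\le 1/(100\sqrt{\mu r})$. Combining with \Cref{lem:all-true-rip} on $\Pcal_{T_{\bX_\star}}\bm\zeta$ and Cauchy--Schwarz for the positive semidefinite form $\inner{\cdot}{\Rcal^\alpha\cdot}$ then gives the constants $9/10$ and $11/10$.

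\emph{Correction bound.} Positive definiteness gives uniqueness of $\bm\xi^\alpha_{\bX}$, and the normal equation gives
\[
\Pcal_{T_{\bX}}\Rcal^\alpha\Pcal_{T_{\bX}}\bm\eta^\alpha_{\bX}=\Pcal_{T_{\bX}}\Rcal^\alpha\bN_{\bX},
\]
hence $\norm{\bm\eta^\alpha_{\bX}}_{\F}\le\frac{10}{9}\sup_{\bm\zeta}\inner{\bm\zeta}{\Rcal^\alpha\bN_{\bX}}$ over unit $\bm\zeta\in T_{\bX}\Mcal_r$. I would use the factorization $\bN_{\bX}=\bL_{\bX}\bG_{\bX}^{-1}\bR_{\bX}^T$ from \Cref{lem:normal}, with both outer factors having $\norm{\cdot}_{2,\infty}\le4\sqrt{\mu r/n}\,e_\sharp$. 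Applying the displayed degree estimate with $\bm A=\bL_{\bX}$ and $\bm B=\bR_{\bX}\bG_{\bX}^{-T}$ gives
\[
\norm{\Rcal^\alpha\bN_{\bX}}_{\F}\le\sqrt{2n}\cdot4\sqrt{\tfrac{\mu r}{n}}\,e_\sharp\cdot\tfrac43\sigma_r(\bX_\star)^{-1}\norm{\bR_{\bX}}_{\F}.
\]
Since $\bR_{\bX}$ has rank at most $r$, $\norm{\bR_{\bX}}_{\F}\le\sqrt r\,e_\sharp$, and the right-hand side is about $7.6\sqrt{\mu}\,r\,e_\sharp^2/\sigma_r(\bX_\star)$. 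The surplus $\sqrt{\mu}$ fits since $\mu\ge1$, and $\frac{10}{9}\cdot7.6<9$. This is within $9\mu r$; any slack that is needed can come from bounding $\bm\zeta$ only through its $\Pcal_{T_{\bX}}$ structure.

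\emph{Main obstacle.} The hardest step is the isometry transfer: making sure that no $q^{-1/2}$ loss appears when comparing $T_{\bX}$ with $T_{\bX_\star}$. The degree-based estimate above is the tool I would try first, and the constants must be tracked so that the perturbation stays below $1/20$.
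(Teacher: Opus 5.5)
Your route is the paper's. The isometry follows from the triangle inequality for the seminorm $\bm Z\mapsto\inner{\bm Z}{\Rcal^\alpha\bm Z}^{1/2}=\norm{\Acal^\alpha\bm Z}_{\F}$. The projector-difference term is controlled by the degree-weighted product bound (\Cref{lem:weighted-products}) together with the rowwise projector bounds of \Cref{lem:current-incoherence}. The correction bound uses $\bN_{\bX}=\bL_{\bX}\bG_{\bX}^{-1}\bR_{\bX}^T$, with one factor in $\norm{\cdot}_{2,\infty}$ and the other in Frobenius norm; you put the $\norm{\cdot}_{2,\infty}$ bound on $\bL_{\bX}$, the paper puts it on $\bR_{\bX}$.

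One step fails as written. Your displayed degree estimate is true for $\inner{\bm M}{\Rcal^0\bm M}=q^{-1}\norm{\Pcal_{\widehat\Omega}\bm M}_{\F}^2$. It is not true for $\norm{q^{-1}\Pcal_{\widehat\Omega}\bm M}_{\F}^2$, which carries $q^{-2}$. So in the correction step it does not bound $\norm{\Rcal^\alpha\bN_{\bX}}_{\F}$. Indeed, for $\alpha=0$ one has exactly $\norm{\Rcal^0\bN_{\bX}}_{\F}=q^{-1/2}\inner{\bN_{\bX}}{\Rcal^0\bN_{\bX}}^{1/2}$. This is precisely the $q^{-1/2}$ loss you avoided in the isometry step, and it would leave you with a bound of order $\mu r e_\sharp^2/(\sqrt q\,\sigma_r(\bX_\star))$. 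The fix is to apply Cauchy--Schwarz for the positive semidefinite form together with the upper half of the isometry you just proved:
\[
\inner{\bm\zeta}{\Rcal^\alpha\bN_{\bX}}\le\inner{\bm\zeta}{\Rcal^\alpha\bm\zeta}^{1/2}\inner{\bN_{\bX}}{\Rcal^\alpha\bN_{\bX}}^{1/2}\le\sqrt{\tfrac{11}{10}}\,\norm{\Acal^\alpha\bN_{\bX}}_{\F}.
\]
This is exactly the paper's step, and it costs an extra factor $\sqrt{11/10}$.

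Relatedly, completing a row does not improve every weighted degree. For an unobserved $(i,j)$ the weight rises from $0$ to $1$, so column weighted sums of $\Rcal^{\mathrm r,i}$ can reach $2n+1$; the paper uses $3n$ for all $\alpha\in\mathfrak A$. With $\sqrt{3n}$, $\norm{\bG_{\bX}^{-1}}_{\op}\le 4/(3\sigma_r(\bX_\star))$ and the factor $\sqrt{11/10}$, your constant becomes about $10.8>9$. There are two ways to recover it:
\begin{itemize}
\item Use $\sigma_{\min}(\bG_{\bX})\ge\sigma_r(\bX_\star)-2e_\sharp\ge\frac9{10}\sigma_r(\bX_\star)$, as the paper does; this gives about $8.97$.
\item Since both $\bL_{\bX}$ and $\bR_{\bX}$ are row-incoherent, sum along the uncompleted direction: row sums for $\Rcal^{\mathrm r,i}$ (put $\norm{\cdot}_{2,\infty}$ on $\bR_{\bX}$) and column sums for $\Rcal^{\mathrm c,j}$. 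This keeps every weighted degree at most $2n$ and gives about $8.8$.
\end{itemize}
In the isometry step the slack is large enough that replacing $2n$ by $3n$ is harmless.
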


\begin{proof}
The proof is deferred to \Cref{sec:proof-local-conditioning}.
\end{proof}

To propagate the sharp-norm error bound, we also need coordinatewise control of the corrections. The following lemma relates this control to the difference between the original and completed corrections and then bounds that difference.

\begin{lemma}
\label[lemma]{lem:correction-proximity}
Let $0\le k<\bar K$. Suppose that \Cref{ass:incoherence} and the conclusions of \Cref{lem:all-true-rip,lem:degree} hold, and that
\[
\max_{\alpha\in\mathfrak A}\sharpnorm{\bX_k^\alpha-\bX_\star}\le\rho_k, \qquad d_k^{\rm loo}\le2\sqrt{\frac{\mu r}{n}}\rho_k.
\]
Then every correction satisfies
\begin{equation}\label{eq:finite-coordinate-main}
\sharpnorm{\bm\eta_k^\alpha} \le2\sqrt{\frac{n}{\mu r}}\,h_k^{\rm corr} +19\mu r\frac{\rho_k^2}{\sigma_r(\bX_\star)}, \qquad \alpha\in\mathfrak A.
\end{equation}
If, in addition, $q\ge c_4\mu r\log n/n$ for a sufficiently large absolute constant $c_4>0$ and the conclusion of \Cref{lem:loo-probability} holds, then
\begin{equation}\label{eq:finite-proximity-main}
h_k^{\rm corr} \le128\sqrt{\frac{\mu r}{n}}\,\mu r \frac{\rho_k^2}{\sigma_r(\bX_\star)}.
\end{equation}
\end{lemma}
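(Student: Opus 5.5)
We prove \eqref{eq:finite-coordinate-main} and then \eqref{eq:finite-proximity-main}, using the normal equation in the form \eqref{eq:rgn-correction-main}. For each $\alpha$, write $\bX^\alpha:=\bX_k^\alpha$, $\bm t^\alpha:=\bm t_{\bX^\alpha}$ and $\bN^\alpha:=\bN_{\bX^\alpha}$. If the update was accepted (or the unique minimizer exists), the correction satisfies
\[
\Pcal_{T^\alpha}\Rcal^\alpha\Pcal_{T^\alpha}\bm\eta_k^\alpha=\Pcal_{T^\alpha}\Rcal^\alpha\bN^\alpha.
\]
Equivalently,
\[
\bm\eta_k^\alpha=\Pcal_{T^\alpha}\bN^\alpha+\Pcal_{T^\alpha}(\Rcal^\alpha-\Ical)(\bN^\alpha-\bm\eta_k^\alpha)=\Pcal_{T^\alpha}(\Rcal^\alpha-\Ical)(\bN^\alpha-\bm\eta_k^\alpha),
\]
where the second equality uses $\Pcal_{T^\alpha}\bN^\alpha=0$. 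On stopped trajectories $\bm\eta_k^\alpha=\bm0$, so there is nothing to prove.

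\emph{Step 1: the coordinate bound \eqref{eq:finite-coordinate-main}.} The operator norm of $\bm\eta_k^\alpha$ is at most its Frobenius norm. By \Cref{lem:local-conditioning}, this is at most $9\mu r\rho_k^2/\sigma_r(\bX_\star)$. For the $i$th row, compare with the row-$i$ completed sequence $\alpha'=(\mathrm r,i)$:
\[
\norm{\bm e_i^T\bm\eta_k^\alpha}_2\le 2h_k^{\rm corr}+\norm{\bm e_i^T\bm\eta_k^{\alpha'}}_2.
\]
For the completed sequence, row $i$ is not sampled, so $\bm e_i^T(\Rcal^{\alpha'}-\Ical)=0$ acting on row $i$. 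Hence $\bm e_i^T\bm\eta_k^{\alpha'}$ is the $i$th row of $\Pcal_{T^{\alpha'}}$ applied to a matrix with vanishing $i$th row. By \eqref{eq:tangent-projector}, this row equals
\[
\bm e_i^T\bU\bU^T\bigl(\bm Z-\bm Z\bV\bV^T\bigr),\qquad \bm Z:=(\Rcal^{\alpha'}-\Ical)(\bN^{\alpha'}-\bm\eta_k^{\alpha'}).
\]
\Cref{lem:current-incoherence} gives $\norm{\bm e_i^T\bU}_2\le 2\sqrt{\mu r/n}$. We then bound $\norm{\bU^T\bm Z}_{\op}$. This is the tangent-space part of $\bm Z$, controlled by the sampled isometry of \Cref{lem:local-conditioning} and the degree bound of \Cref{lem:degree}, together with the Frobenius bounds on $\bN^{\alpha'}$ and $\bm\eta_k^{\alpha'}$. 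The result is $O(\mu r\rho_k^2/\sigma_r)$.

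Multiplying by $\frac12\sqrt{n/(\mu r)}$ yields the row term. Columns are handled symmetrically. For the entrywise term we use
\[
|(\bm\eta)_{ij}|\le\norm{\bm e_i^T\bm\eta}_2,
\]
weighted by $n/(4\mu r)$, once more via the completed row and the incoherent factors $\bU$ and $\bV$. The additive constant $19$ collects these contributions. Note that $2\sqrt{n/(\mu r)}\,h_k^{\rm corr}$ is exactly the cost of replacing $\bm\eta^\alpha$ by $\bm\eta^{\alpha'}$ in the rows and columns.

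\emph{Step 2: the proximity bound \eqref{eq:finite-proximity-main}.} This is the main obstacle. We must compare $\bm\eta_k^{\alpha}$ with $\bm\eta_k^0$. Subtracting the two normal equations gives
\[
\Pcal_{T^0}\Rcal^0\Pcal_{T^0}(\bm\eta^\alpha-\bm\eta^0)
\]
on the left. On the right there are three kinds of terms. The first is the operator difference $\Rcal^\alpha-\Rcal^0$, which acts only on row $i$ (or column $j$) and is applied to $\bN^\alpha-\bm\eta^\alpha$. The second is the tangent-space difference $\Pcal_{T^\alpha}-\Pcal_{T^0}$, bounded by \Cref{lem:normal-F}(i) with $d_k^{\rm loo}$. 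The third is the normal-component difference, which is Lipschitz in $\bX$ with constant $O(\rho_k/\sigma_r)$ by the factorization \eqref{eq:normal-factorization-main}.

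The first term is exactly
\[
\Pcal_{T^0}\bigl(\bm e_i(\bm w_k^{\mathrm r,i})^T\bigr)
\]
up to sign. Its Frobenius norm is bounded by
\[
2\sqrt{\mu r/n}\,\norm{\bm w}_2+\norm{\bV^T\bm w}_2,
\]
which is precisely what \Cref{lem:loo-probability} controls. We use the independence of row $i$ from the completed trajectory. Using $\norm{\bm z}_2\lesssim\sqrt{\mu r/n}\,\mu r\rho_k^2/\sigma_r$ and $\norm{\bm z}_\infty\lesssim(\mu r/n)\mu r\rho_k^2/\sigma_r$, both coming from \Cref{lem:normal} and Step 1, and also $q\ge c_4\mu r\log n/n$, this term is at most a small constant times $\sqrt{\mu r/n}\,\mu r\rho_k^2/\sigma_r$.

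A subtlety arises here: Step 1 involves $h_k^{\rm corr}$ itself. We therefore absorb the resulting term $\sqrt{\log n/(qn)}\,\cdot\,2\sqrt{n/\mu r}\,\sqrt{\mu r/n}\,h$ into the left side. This is possible because $c_4$ is large.

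The remaining terms are of order $(\rho_k/\sigma_r)\,d_k^{\rm loo}\lesssim\sqrt{\mu r/n}\,\rho_k^2/\sigma_r$. Inverting the left operator via \eqref{eq:local-conditioning-isometry}, with the factor $10/9$, and collecting constants yields $128$.
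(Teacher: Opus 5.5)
Your Step~2 identifies the right three sources of discrepancy. In the paper these are $\bm T_2$ (row-operator difference), $\bm T_1$ (normal-component difference) and $\bm T_3,\bm T_4$ (tangent-space difference). Your treatment of the row-operator term via \Cref{lem:loo-probability} is essentially the paper's. The gap is in the remaining terms. Write $\sigma_r:=\sigma_r(\bX_\star)$, $\bX=\bX_k^0$, $\bY=\bX_k^{\mathrm r,i}$, $\overline{\bm\eta}=\bm\eta_k^{\mathrm r,i}$ and $\bZ=\bN_{\bY}-\overline{\bm\eta}$. The terms are then $\Pcal_{T_{\bX}}\Rcal^0(\bN_{\bX}-\bN_{\bY})$, $(\Pcal_{T_{\bX}}-\Pcal_{T_{\bY}})\Rcal^{\mathrm r,i}\bZ$ and $\Pcal_{T_{\bX}}\Rcal^0(\Pcal_{T_{\bY}}-\Pcal_{T_{\bX}})\overline{\bm\eta}$. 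In each, a sampling operator acts on a matrix outside any tangent space. There \eqref{eq:local-conditioning-isometry} says nothing, and $\Rcal^0$ costs a factor $q^{-1/2}$ in Frobenius norm.

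So Frobenius Lipschitz bounds from \Cref{lem:normal-F}(i) or the graph factorization give only, e.g., $\norm{\bm T_1}_{\F}\lesssim q^{-1/2}(\rho_k/\sigma_r)\,d_k^{\rm loo}$. This is $O(\sqrt{\mu r/n}\,\mu r\rho_k^2/\sigma_r)$ only when $q\gtrsim(\mu r)^{-2}$, which fails at $q\asymp\mu r\log n/n$ once $n\gg(\mu r)^3\log n$.

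The missing ingredient is the weighted product estimate from the degree bound, $\norm{\Acal^\alpha(\bm F\bm H^T)}_{\F}\le\sqrt{3n}\min\{\norm{\bm F}_{\F}\norm{\bm H}_{2,\infty},\norm{\bm F}_{2,\infty}\norm{\bm H}_{\F}\}$ (\Cref{lem:weighted-products}). It must be applied to low-rank expansions that have an incoherent factor:
\begin{itemize}
\item for $\bm T_1$, the graph-factor expansion of $\bN_{\bX}-\bN_{\bY}$, using $\norm{\bR_{\bX}}_{2,\infty}\le4\sqrt{\mu r/n}\,\rho_k$;
\item for $\bm T_4$, the expansion $(\Pcal_{T_{\bY}}-\Pcal_{T_{\bX}})\overline{\bm\eta}=\bm\Delta_U\bM\bm\Delta_V^T+(\bI-\bU_{\bX}\bU_{\bX}^T)\bB\bm\Delta_V^T+\bm\Delta_U\bC^T(\bI-\bV_{\bX}\bV_{\bX}^T)$, where $\overline{\bm\eta}=\bU_{\bY}\bM\bV_{\bY}^T+\bB\bV_{\bY}^T+\bU_{\bY}\bC^T$, $\bm\Delta_U=(\bI-\bU_{\bX}\bU_{\bX}^T)\bU_{\bY}$ and $\bm\Delta_V=(\bI-\bV_{\bX}\bV_{\bX}^T)\bV_{\bY}$; this needs row bounds on $\bm\Delta_U$, $\bm\Delta_V$ and on $\overline{\bm\eta}$.
\end{itemize}
For $\bm T_3$ you need a different device. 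The completed normal equation gives $\Rcal^{\mathrm r,i}\bZ\in T_{\bY}^\perp$, so the projector difference only sees $\norm{\Rcal^{\mathrm r,i}\bZ}_{\op}\le3n\norm{\bZ}_\infty$. That in turn requires an entrywise bound on $\overline{\bm\eta}$. Through these row and entry bounds, $h_k^{\rm corr}$ also enters $\bm T_3$ and $\bm T_4$, not only $\bm T_2$. Those coefficients are made small by $\rho_k\le\sigma_r/(1000\mu r)$, not by enlarging $c_4$, so your absorption step as stated is incomplete.

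There is also a scaling problem in your entrywise bound in Step~1. Taken literally, $|(\bm\eta)_{ij}|\le\norm{\bm e_i^T\bm\eta}_2$ weighted by $n/(4\mu r)$ is $\tfrac12\sqrt{n/(\mu r)}$ times the row term. It yields $\tfrac{n}{2\mu r}h_k^{\rm corr}+\tfrac92\sqrt{n/(\mu r)}\,\mu r\rho_k^2/\sigma_r$, which is too large. Transferring entries directly from $\bm\eta_k^{\mathrm r,i}$ to $\bm\eta_k^\alpha$ likewise costs $\tfrac{n}{2\mu r}h_k^{\rm corr}$.

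What works is to transfer only the projected row coordinate. Your formula gives $\bm e_i^T\bm\eta_k^{\mathrm r,i}\bV_k^{\mathrm r,i}=\bm0$. Together with $\norm{\bV_k^\alpha-\bV_k^{\mathrm r,i}}_{\op}\le3d_k^{\rm loo}/\sigma_r$, this gives $\norm{\bm e_i^T\bm\eta_k^\alpha\bV_k^\alpha}_2\le2h_k^{\rm corr}+\tfrac18\sqrt{\mu r/n}\,\mu r\rho_k^2/\sigma_r$, and similarly for columns. Then expand $(\bm\eta_k^\alpha)_{ij}$ through the tangent decomposition at $\bX_k^\alpha$, so that each such coordinate is multiplied by an incoherence factor $2\sqrt{\mu r/n}$. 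This is where the coefficient $2$ of $\sqrt{n/(\mu r)}\,h_k^{\rm corr}$ comes from; the row term alone gives $1$. It is also exactly the entrywise bound on $\overline{\bm\eta}$ that $\bm T_3$ needs.
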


\begin{proof}
The proof is deferred to \Cref{sec:proof-correction-proximity}.
\end{proof}

We next pass from the tangent corrections to the retracted iterates. By \Cref{lem:graph-correction}, if $\sharpnorm{\bX-\bX_\star}\le\rho_k$ and $\sharpnorm{\bm\eta}\le\sigma_r(\bX_\star)/16$, then
\begin{align}
\sharpnorm{\operatorname{Retr}_{\bX} (\Pcal_{T_{\bX}}(\bX_\star-\bX)+\bm\eta)-\bX_\star} &\le \sharpnorm{\bm\eta} +13\frac{\rho_k\sharpnorm{\bm\eta}}{\sigma_r(\bX_\star)} +6\frac{\sharpnorm{\bm\eta}^2}{\sigma_r(\bX_\star)}. \label{eq:finite-one-base-main}
\end{align}
The following lemma bounds the difference between retracted iterates at two nearby matrices. For $\bX,\bY\in\Mcal_r$, write $d_{\bX,\bY}:=\norm{\bX-\bY}_{\F}$.

\begin{lemma}\label[lemma]{lem:graph-two-base}
Let $\bX,\bY\in\Mcal_r$, let $\bm\eta_{\bX}\in T_{\bX}\Mcal_r$ and $\bm\eta_{\bY}\in T_{\bY}\Mcal_r$, and use $\bm t_{\bX},\bm t_{\bY}$ from \eqref{eq:tangent-normal-components}.  Set $d_\eta:=\norm{\bm\eta_{\bX}-\bm\eta_{\bY}}_{\F}$.
Suppose that
\[
\max\left\{\sharpnorm{\bX-\bX_\star}, \sharpnorm{\bY-\bX_\star}\right\} \le \rho\le\frac{\sigma_r(\bX_\star)}{1000\mu r}
\]
and
\[
\max\left\{\norm{\bm\eta_{\bX}}_{\F}, \norm{\bm\eta_{\bY}}_{\F}\right\} \le9\mu r\frac{\rho^2}{\sigma_r(\bX_\star)}.
\]
Then $\operatorname{Retr}_{\bX}(\bm t_{\bX}+\bm\eta_{\bX})$ and $\operatorname{Retr}_{\bY}(\bm t_{\bY}+\bm\eta_{\bY})$ are well defined, and
\begin{align}
\norm{\operatorname{Retr}_{\bX}(\bm t_{\bX}+\bm\eta_{\bX}) -\operatorname{Retr}_{\bY}(\bm t_{\bY}+\bm\eta_{\bY})}_{\F} \le\frac54d_\eta +19\frac{\mu r \rho^2}{\sigma_r(\bX_\star)^2}d_{\bX,\bY}. \label{eq:finite-two-base-main}
\end{align}
\end{lemma}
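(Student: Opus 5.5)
We compare both retractions with the common target $\bX_\star$. Write $\bm\xi_{\bX}:=\bm t_{\bX}+\bm\eta_{\bX}$ and $\bm\xi_{\bY}:=\bm t_{\bY}+\bm\eta_{\bY}$. The two retractions are then the outputs of the orthographic formula \eqref{eq:graph-retraction-opt}. By \Cref{lem:population-exact}, $\operatorname{Retr}_{\bX}(\bm t_{\bX})=\operatorname{Retr}_{\bY}(\bm t_{\bY})=\bX_\star$. Each retraction can therefore be written as $\bX_\star$ plus a perturbation driven by $\bm\eta$. The goal is to show that the map $(\text{base},\bm\eta)\mapsto\operatorname{Retr}_{\text{base}}(\bm t_{\text{base}}+\bm\eta)-\bX_\star$ is $\tfrac54$-Lipschitz in $\bm\eta$, measured in Frobenius norm. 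Its dependence on the base should be Lipschitz with a constant of order $\mu r\rho^2/\sigma_r^2$, which is quadratic because at $\bm\eta=0$ the map is identically zero in the base.

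Well-definedness comes first. Since $\rho\le\sigma_r/(1000\mu r)$ and $\sharpnorm{\bm\eta}\le\tfrac{4\mu r}{n}\cdot n\norm{\bm\eta}_{\F}/(\mu r)$-type comparisons are too lossy, I would bound only $\norm{\bm\eta}_{\op}\le\norm{\bm\eta}_{\F}\le 9\mu r\rho^2/\sigma_r\le\sigma_r/10^5$. This already makes the core $\bU^T(\bX+\bm\xi)\bV=\bG_{\bX}+\bU^T\bm\eta\bV$ invertible, using $\norm{\bG_{\bX}^{-1}}_{\op}\le 4/(3\sigma_r)$ from \Cref{lem:normal}.

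The main step is an explicit formula. For $\bX=\bU\bS\bV^T$, $\bX+\bm\xi_{\bX}=\bX_\star-\bN_{\bX}+\bm\eta_{\bX}$. Setting $\bA:=(\bX_\star+\bm\eta_{\bX}-\bN_{\bX})\bV$, $\bm C:=\bG_{\bX}+\bU^T\bm\eta_{\bX}\bV$ and $\bB:=\bU^T(\bX_\star+\bm\eta_{\bX}-\bN_{\bX})$, the retraction is $\bA\bm C^{-1}\bB$, and $\bU^T\bN_{\bX}=0$, $\bN_{\bX}\bV=0$. Expanding $\bm C^{-1}=\bG_{\bX}^{-1}-\bG_{\bX}^{-1}(\bU^T\bm\eta\bV)\bG_{\bX}^{-1}+\dots$ and using $\bX_\star\bV\bG_{\bX}^{-1}\bU^T\bX_\star=\bX_\star-\bN_{\bX}$, which is the graph identity from \Cref{lem:normal}, gives
\[
\operatorname{Retr}_{\bX}(\bm\xi_{\bX})-\bX_\star=\mathcal L_{\bX}(\bm\eta_{\bX})+\mathcal Q_{\bX}(\bm\eta_{\bX}).
\]
Here $\mathcal L_{\bX}(\bm\eta)=\bm\eta-\bL_{\bX}\bG_{\bX}^{-1}\bU^T\bm\eta\bV\bG_{\bX}^{-1}\bR_{\bX}^T+(\text{terms with }\bL_{\bX}\text{ or }\bR_{\bX})$ is linear, and $\mathcal Q_{\bX}$ collects the terms of order at least two in $\bm\eta$. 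I would bound $\norm{\mathcal L_{\bX}(\bm\eta)-\bm\eta}_{\F}\lesssim(\rho/\sigma_r)\norm{\bm\eta}_{\F}$, using $\norm{\bL_{\bX}}_{\op},\norm{\bR_{\bX}}_{\op}\le\rho$, together with a second-order bound for $\mathcal Q_{\bX}$.

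For the difference, write
\[
\bigl[\mathcal L_{\bX}+\mathcal Q_{\bX}\bigr](\bm\eta_{\bX})-\bigl[\mathcal L_{\bY}+\mathcal Q_{\bY}\bigr](\bm\eta_{\bY})
=\bigl[\mathcal L_{\bX}+\mathcal Q_{\bX}\bigr](\bm\eta_{\bX})-\bigl[\mathcal L_{\bX}+\mathcal Q_{\bX}\bigr](\bm\eta_{\bY})+\Delta,
\]
where $\Delta$ is the base-change term. The first piece is at most $(1+O(\rho/\sigma_r)+O(\norm{\bm\eta}/\sigma_r))d_\eta\le\tfrac54 d_\eta$. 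For $\Delta$, note that every term of $\mathcal L_{\bX}(\bm\eta)-\bm\eta$ and of $\mathcal Q_{\bX}(\bm\eta)$ contains a factor $\bL_{\bX}$, $\bR_{\bX}$, or at least two factors of $\bm\eta$. The base-dependent objects are $\bU\bU^T$, $\bV\bV^T$, $\bL_{\bX}$, $\bR_{\bX}$, and $\bG_{\bX}^{-1}$ (taken modulo the $O(r)$ gauge, which cancels in projector form). Their variation is Lipschitz in $d_{\bX,\bY}$ with constant $O(1/\sigma_r)$, by Wedin/Davis–Kahan, in the same way as \Cref{lem:normal-F}(i). Hence $\norm{\Delta}_{\F}\lesssim\frac{d_{\bX,\bY}}{\sigma_r}\bigl(\frac{\rho\norm{\bm\eta}_{\F}}{\sigma_r}+\frac{\norm{\bm\eta}_{\F}^2}{\sigma_r}\bigr)+\dots$. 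Inserting $\norm{\bm\eta}_{\F}\le 9\mu r\rho^2/\sigma_r$ makes this $\ll\mu r\rho^2 d_{\bX,\bY}/\sigma_r^2$.

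The one term that is not automatically small is $\bN_{\bX}-\bN_{\bY}$ itself. This term already appears in the identity $\bX+\bm t_{\bX}=\bX_\star$ at $\bm\eta=0$ and cancels exactly, which is the reason for the reduction to $\bX_\star$. I expect the main obstacle to be the bookkeeping that shows all surviving base-dependent terms carry a factor $\bm\eta$ or $\bL\bG^{-1}\bR^T$-type products. The factor $\mu r$ in the constant $19$ should come from $\norm{\bm\eta}_{\F}\le9\mu r\rho^2/\sigma_r$, while the pure geometric terms $\rho^2/\sigma_r^2$ are bounded by $\mu r\rho^2/\sigma_r^2$ since $\mu r\ge1$. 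To keep the constants tractable, I would work with projector forms $\bU\bU^T$ and $\bV\bV^T$, and rewrite the retraction as $(\bX+\bm\xi)\bV\bV^T\bigl[\bU\bU^T(\bX+\bm\xi)\bV\bV^T\bigr]^{\dagger}\bU\bU^T(\bX+\bm\xi)$, which is gauge-free.
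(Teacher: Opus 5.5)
Your route is essentially the paper's. You reduce both retractions to $\bX_\star$ via \Cref{lem:population-exact}, write $\operatorname{Retr}_{\bX}(\bm t_{\bX}+\bm\eta)-\bX_\star-\bm\eta$ as a sum of products each containing $\bL_{\bX}$, $\bR_{\bX}$, or two tangent coordinates of $\bm\eta$, and telescope termwise with aligned (or projector-form) factor bounds. The gap is the size you assign to the base-change term $\Delta$, and hence your account of where the constant $19$ comes from.

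\emph{$\bL_{\bX}$ and $\bR_{\bX}$ do not vary by $O(d_{\bX,\bY}/\sigma)$ relative to their size $\rho$}, where $\sigma:=\sigma_r(\bX_\star)$. Since $(\bI-\bV\bV^T)\bX^T=\bm 0$, we have $\bR_{\bX}=(\bI-\bV\bV^T)(\bX_\star-\bX)^T\bU$. This is a component of the error itself, so it moves by the full increment $\bY-\bX$. \Cref{lem:factor-motion} gives $\norm{\bR_{\bX}-\bR_{\bY}}_{\F}\le(1+3\rho/\sigma)\,d_{\bX,\bY}$, and similarly for $\bL$. Let $\bB_{\bZ},\bC_{\bZ}$ be the off-diagonal tangent coordinates of $\bm\eta_{\bZ}$, let $\bS_{\bZ}:=\bG_{\bZ}+\bU_{\bZ}^T\bm\eta_{\bZ}\bV_{\bZ}$, and let $s_\eta:=\max\{\norm{\bm\eta_{\bX}}_{\F},\norm{\bm\eta_{\bY}}_{\F}\}$. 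Then $\Delta$ contains
\[
\bB_{\bY}\bS_{\bY}^{-1}(\bR_{\bX}-\bR_{\bY})^T
\qquad\text{and}\qquad
(\bL_{\bX}-\bL_{\bY})\bS_{\bX}^{-1}\bC_{\bX}^T ,
\]
each of size up to $s_\eta d_{\bX,\bY}/(\sigma-2\rho-s_\eta)\approx 9\mu r\rho^2 d_{\bX,\bY}/\sigma^2$. This has three consequences:
\begin{itemize}
\item Your bound $\norm{\Delta}_{\F}\lesssim\frac{d_{\bX,\bY}}{\sigma}\bigl(\frac{\rho s_\eta}{\sigma}+\frac{s_\eta^2}{\sigma}\bigr)$ is too small by a factor $\sigma/\rho$, and $\Delta$ is not $\ll\mu r\rho^2 d_{\bX,\bY}/\sigma^2$.
\item The ``pure geometric $\rho^2/\sigma^2$ terms'' you invoke do not exist; they cancel in the reduction to $\bX_\star$.
\item Your opening heuristic, a base-Lipschitz constant proportional to $\norm{\bm\eta}_{\F}/\sigma$, had the right scaling, but the body loses it.
\end{itemize}

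These two terms account for about $18$ of the $19$, so the constant is nearly tight. You must use $\norm{\bS_{\bZ}^{-1}}_{\op}\le 1/(\sigma-2\rho-s_\eta)$ together with $\rho\le\sigma/(1000\mu r)$. The bound $4/(3\sigma)$ from \Cref{lem:normal} that you cite would give $24$. The remaining contributions (motion of projectors and cores, $\bG^{-1}$, the five-factor term) are $O(\mu r\rho^3 d_{\bX,\bY}/\sigma^3)$ with moderate constants, or fit in the $\frac14 d_\eta$ budget. So once $\Delta$ is corrected the plan closes.

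Separately, the identity you quote should read $\bX_\star\bV\bG_{\bX}^{-1}\bU^T\bX_\star=\bX_\star$, because $\bN_{\bX}=\bL_{\bX}\bG_{\bX}^{-1}\bR_{\bX}^T$. With $\bX_\star-\bN_{\bX}$, your zeroth-order term would contradict \Cref{lem:population-exact}.
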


\begin{proof}
The proof is deferred to \Cref{sec:proof-graph-two-base}.
\end{proof}

\subsection{Proof of Theorem~\ref{thm:fixed}}
\label{sec:proof-local-rgn}

\begin{proof}
We prove by induction that
\begin{equation}\label{eq:finite-prefix-induction}
\max_{\alpha\in\mathfrak A} \sharpnorm{\bX_k^\alpha-\bX_\star}\le\rho_k, \qquad d_k^{\rm loo}\le2\sqrt{\frac{\mu r}{n}}\,\rho_k.
\end{equation}
Whenever the candidate retraction is defined, the update rule gives
\begin{equation}\label{eq:local-rgn-one-base-decomposition}
\widehat{\bX}_{k+1}^\alpha-\bX_\star =\bm\eta_k^\alpha+ \left[ \operatorname{Retr}_{\bX_k^\alpha} (\bm t_{\bX_k^\alpha}+\bm\eta_k^\alpha) -\bX_\star-\bm\eta_k^\alpha \right].
\end{equation}
For the actual and an auxiliary candidate, subtracting the two updates gives
\begin{align*}
\widehat{\bX}_{k+1}^0-\widehat{\bX}_{k+1}^\alpha =\bm\eta_k^0-\bm\eta_k^\alpha +\left[\widehat{\bX}_{k+1}^0-\bX_\star-\bm\eta_k^0 -\bigl(\widehat{\bX}_{k+1}^\alpha-\bX_\star -\bm\eta_k^\alpha\bigr)\right].
\end{align*}
\emph{Step 1: bounds for the initial RGN iterates.} Condition on an arbitrary realization of $\bX_0$ satisfying $\sharpnorm{\bX_0-\bX_\star}\le \sigma_r(\bX_\star)/(1000\mu r)$.  Since $\bX_0$ is independent of $\widehat{\Omega}$, this conditioning leaves the law of $\widehat{\Omega}$ unchanged.  Fix $C_4\ge c_1+c_4+128$. By the sampling assumption,
\[
q\ge C_4\frac{\mu r\log n}{n} \ge(c_1+c_4)\frac{\mu r\log n}{n}.
\]
Thus the sampling-rate hypotheses of \Cref{lem:all-true-rip,lem:correction-proximity} are satisfied.

Suppose that the conclusions of \Cref{lem:all-true-rip,lem:degree,lem:loo-probability} hold simultaneously. At $k=0$, all auxiliary trajectories equal $\bX_0$ and $d_0^{\rm loo}=0$, so \eqref{eq:finite-prefix-induction} holds at $k=0$. Suppose inductively that transitions $0,\ldots,k-1$ have been accepted for every auxiliary process and that \eqref{eq:finite-prefix-induction} holds at some $k<\bar K$. Since $\rho_k\le\sigma_r(\bX_\star)/(1000\mu r)$, \Cref{lem:local-conditioning} implies that every sampled tangent normal operator at iteration $k$ is invertible.  Hence every tangent minimizer is unique, and its correction satisfies
\[
\Pcal_{T_{\bX_k^\alpha}}\Rcal^\alpha \Pcal_{T_{\bX_k^\alpha}}\bm\eta_k^\alpha =\Pcal_{T_{\bX_k^\alpha}}\Rcal^\alpha \bN_{\bX_k^\alpha}, \qquad \alpha\in\mathfrak A.
\]

Combining \eqref{eq:finite-coordinate-main} and \eqref{eq:finite-proximity-main}, and using $\rho_k\le\sigma_r(\bX_\star)/(1000\mu r)$, gives
\begin{equation}\label{eq:finite-prefix-correction-sharp}
\sharpnorm{\bm\eta_k^\alpha} \le275\mu r\frac{\rho_k^2}{\sigma_r(\bX_\star)} \le\frac{11}{40}\rho_k <\frac1{16}\sigma_r(\bX_\star), \qquad \alpha\in\mathfrak A.
\end{equation}

The bound \eqref{eq:finite-prefix-induction} at index $k$ and \eqref{eq:finite-prefix-correction-sharp} verify the hypotheses of \Cref{lem:graph-correction} with $\bX=\bX_k^\alpha$ and $\bm\eta=\bm\eta_k^\alpha$.  Hence every candidate graph core is invertible and every candidate retraction is well defined. Applying \eqref{eq:finite-one-base-main} gives
\begin{align*}
\sharpnorm{\widehat{\bX}_{k+1}^\alpha-\bX_\star} &<280\mu r\frac{\rho_k^2}{\sigma_r(\bX_\star)} =\rho_{k+1}.
\end{align*}
Thus transition $k$ is accepted for every $\alpha\in\mathfrak A$.

For the two-base update, \Cref{lem:local-conditioning} and the induction hypothesis give $\max\{\norm{\bm\eta_k^0}_{\F},\norm{\bm\eta_k^\alpha}_{\F}\} \le9\mu r\rho_k^2/\sigma_r(\bX_\star)$, which verifies the correction-size hypothesis of \Cref{lem:graph-two-base}. Using \eqref{eq:finite-two-base-main}, \eqref{eq:finite-proximity-main}, and the induction hypothesis, we obtain
\begin{align*}
\norm{\bX_{k+1}^0-\bX_{k+1}^\alpha}_{\F} \le\frac54h_k^{\rm corr} +19\frac{\mu r\rho_k^2}{\sigma_r(\bX_\star)^2} d_k^{\rm loo} <2\sqrt{\frac{\mu r}{n}}\,\rho_{k+1}.
\end{align*}
Taking the maximum over $\alpha$ proves \eqref{eq:finite-prefix-induction} at index $k+1$. Therefore, all updates before $\bar K$ are accepted, and
\[
\max_{\alpha\in\mathfrak A} \sharpnorm{\bX_k^\alpha-\bX_\star}\le\rho_k, \qquad d_k^{\rm loo}\le2\sqrt{\frac{\mu r}{n}}\,\rho_k, \qquad 0\le k\le\bar K.
\]
In particular, $\sharpnorm{\bX_k-\bX_\star}\le\rho_k$ for $0\le k\le\bar K$.

\emph{Step 2: the Frobenius error at $\bar K$.} Since the difference of two rank-$r$ matrices has rank at most $2r$, $2^{-2^{\bar K}}\le(4n)^{-1}$, $\sqrt{2r}\le2\mu r$, and $q\ge C_4\mu r\log n/n\ge n^{-2}$, we have
\begin{align*}
\norm{\bX_{\bar K}-\bX_\star}_{\F} &\le\sqrt{2r}\,\sharpnorm{\bX_{\bar K}-\bX_\star} \le\sqrt{2r}\,\rho_{\bar K}\\
&\le\frac{\sqrt{2r}}{280\mu r}\,2^{-2^{\bar K}} \sigma_r(\bX_\star) \le\frac1{40n}\sigma_r(\bX_\star) \le\frac1{40}\sqrt q\,\sigma_r(\bX_\star).
\end{align*}
Since all updates before $\bar K$ are accepted, the stopped sequence agrees with \Cref{alg:fixed} through index $\bar K$. From this point onward, $\bX_k$ denotes the iterates of that algorithm without the auxiliary stopping rule.

\emph{Step 3: quadratic convergence in the Frobenius norm.} Since $0\in\mathfrak A$, \Cref{lem:all-true-rip} gives
\[
\norm{\Pcal_{T_{\bX_\star}}q^{-1}\Pcal_{\widehat\Omega} \Pcal_{T_{\bX_\star}}-\Pcal_{T_{\bX_\star}}}_{\F\to\F} \le\frac1{16}.
\]
Fix $k\ge\bar K$ and suppose that
\[
e_k:=\norm{\bX_k-\bX_\star}_{\F} \le\frac1{40}\sqrt q\,\sigma_r(\bX_\star).
\]
Whenever the tangent minimizer is unique, set $\bm\eta_k:=\bm\xi_k-\bm t_{\bX_k}$.  The update rule gives
\begin{equation}\label{eq:local-rgn-tail-decomposition}
\bX_{k+1}-\bX_\star =\bm\eta_k+ \left[ \operatorname{Retr}_{\bX_k}(\bm t_{\bX_k}+\bm\eta_k) -\bX_\star-\bm\eta_k \right].
\end{equation}
By \Cref{lem:det-transfer} with $(\Lambda,p_\Lambda)=(\widehat{\Omega},q)$, the sampled tangent normal operator on $T_{\bX_k}\Mcal_r$ is invertible and
\begin{equation}\label{eq:local-rgn-tail-operator-bounds}
\norm{\left( \Pcal_{T_{\bX_k}}q^{-1}\Pcal_{\widehat{\Omega}} \Pcal_{T_{\bX_k}} \right)^{-1}}_{\F\to\F}\le\frac32, \qquad q^{-1/2}\norm{\Pcal_{\widehat{\Omega}} \Pcal_{T_{\bX_k}}}_{\F\to\F}\le\sqrt{\frac54}.
\end{equation}
Hence the tangent minimizer $\bm\xi_k$ is unique, and
\[
\Pcal_{T_{\bX_k}}q^{-1}\Pcal_{\widehat{\Omega}} \Pcal_{T_{\bX_k}}\bm\eta_k =\Pcal_{T_{\bX_k}}q^{-1}\Pcal_{\widehat{\Omega}}\bN_{\bX_k}.
\]
The self-adjointness of $\Pcal_{\widehat{\Omega}}$, \eqref{eq:local-rgn-tail-operator-bounds}, part~\textup{(i)} of \Cref{lem:normal-F}, and $e_k\le\sqrt q\,\sigma_r(\bX_\star)/40$ give
\begin{align}
\norm{\bm\eta_k}_{\F} &\le\frac32\norm{\Pcal_{T_{\bX_k}}q^{-1} \Pcal_{\widehat{\Omega}}\bN_{\bX_k}}_{\F}\notag\\
&\le\frac32\sqrt{\frac{5}{4q}}\norm{\bN_{\bX_k}}_{\F}\notag\\
&\le\frac{3\sqrt5}{2} \frac{e_k^2}{\sqrt q\,\sigma_r(\bX_\star)} <\frac1{320}\sigma_r(\bX_\star). \label{eq:local-rgn-tail-correction}
\end{align}
The same neighborhood gives $e_k\le\sigma_r(\bX_\star)/40$. Combining \eqref{eq:local-rgn-tail-decomposition}, \eqref{eq:local-rgn-tail-correction}, and part~\textup{(ii)} of \Cref{lem:normal-F}, and using $e_k\le\sqrt q\,\sigma_r(\bX_\star)/40$, shows that the retraction is well defined and yields
\begin{align*}
\norm{\bX_{k+1}-\bX_\star}_{\F} &\le\norm{\bm\eta_k}_{\F} \left(1+\frac{11}{5}\frac{e_k}{\sigma_r(\bX_\star)} +\frac{11}{10}\frac{\norm{\bm\eta_k}_{\F}}{\sigma_r(\bX_\star)}\right)\\
&\le4\frac{e_k^2}{\sqrt q\,\sigma_r(\bX_\star)} \le\frac1{10}e_k \le\frac1{40}\sqrt q\,\sigma_r(\bX_\star).
\end{align*}
Starting from $k=\bar K$, induction proves the Q-quadratic recurrence for every $k\ge\bar K$.  It follows that $\bX_k\to\bX_\star$.

Conditional on the chosen realization of $\bX_0$, a union bound for the events in \Cref{lem:all-true-rip,lem:degree,lem:loo-probability}, together with $q\ge C_4\mu r\log n/n$ with $C_4\ge128$ and $n\ge2$, gives total failure probability at most
\begin{equation}\label{eq:fixed-rgn-failure}
\frac{n^{-10}}{24}+2ne^{-qn/3}+\frac{n^{-10}}{48} <\frac{n^{-10}}{12}.
\end{equation}
The argument applies to every realization of $\bX_0$ satisfying $\sharpnorm{\bX_0-\bX_\star}\le\sigma_r(\bX_\star)/(1000\mu r)$. Therefore, \eqref{eq:fixed-rgn-failure} gives the conditional probability asserted in \Cref{thm:fixed} and completes the proof.
\end{proof}

\subsection{Proof of Theorem~\ref{thm:end}}
\label{sec:proof-global-rgn}

\begin{proof}
Choose $C_2\ge32(C_4+C_5+1)$. Since
\[
K+2\le16\log(2\mu r\kappa), \qquad p=1-(1-q)^{K+2}\le(K+2)q,
\]
the sampling condition gives
\[
q\ge \frac{C_2 \mu r\log n}{16n}.
\]
It also implies $K+2\le n$. Thus \Cref{thm:init} gives, with failure probability at most $n^{-10}/3$,
\[
\sharpnorm{\bX_0-\bX_\star} \le\frac{\sigma_r(\bX_\star)}{1000\mu r}.
\]
The stopping index and the returned initializer depend only on $\Omega^{(0)},\ldots,\Omega^{(K)}$ and the independent Gaussian matrices, and are therefore independent of $\widehat\Omega$. Conditional on a successful initialization, \Cref{thm:fixed} fails with probability at most $n^{-10}/12$. Hence the union bound gives the convergence statement in \Cref{thm:end} fails  with probability at most $\frac{n^{-10}}3+\frac{n^{-10}}{12}<n^{-10}$.
\end{proof}

\subsection{Proof of \Cref{cor:regularized-rgn}}
\label{sec:proof-reg-rgn}
\begin{proof}
The proof is based on the event in the proof of \Cref{thm:end}. By Step~2 in the proof of \Cref{thm:fixed}, we have $\norm{\bX_{\bar K}-\bX_\star}_{\F}
\le\frac{\sigma_r(\bX_\star)}{40n}$. For $C_2$ sufficiently large, the sampling condition also gives $\norm{\bX_{\bar K}-\bX_\star}_{\F} \le\frac{\sqrt q}{128}\sigma_r(\bX_\star)$. Fix $k\ge\bar K$ and suppose that
\[
e_k:=\norm{\bX_k-\bX_\star}_{\F} \le\frac{\sqrt q}{128}\sigma_r(\bX_\star).
\]
Let $\widehat{\bm\xi}_k$ denote the ordinary RGN direction at $\bX_k$ and set
\[
\widehat{\bm\eta}_k := \widehat{\bm\xi}_k-\bm t_{\bX_k}.
\]
The argument in Step~3 of the proof of \Cref{thm:fixed}, together with \eqref{eq:fixed-rgd-transfer-small}, gives
\[
\norm{\widehat{\bm\eta}_k}_{\F} \le \frac{3\sqrt5}{2} \frac{e_k^2}{\sqrt q\,\sigma_r(\bX_\star)}, \qquad \norm{\widehat{\bm\xi}_k}_{\F} \le\frac{17}{16}e_k.
\]
Moreover, by the definition of $\lambda_k$, the ordinary normal equation, \eqref{eq:fixed-rgd-transfer-small}, and Weyl's inequality,
\[
\frac{\lambda_k}{q} = \frac{ \norm{ \Pcal_{T_{\bX_k}}q^{-1}\Pcal_{\widehat{\Omega}} \Pcal_{T_{\bX_k}}\widehat{\bm\xi}_k }_{\F}} {\sigma_r(\bX_k)} \le \frac54\frac{e_k}{\sigma_r(\bX_\star)}.
\]
Subtracting the ordinary and regularized normal equations gives
\[
\left( \Pcal_{T_{\bX_k}}q^{-1}\Pcal_{\widehat{\Omega}} \Pcal_{T_{\bX_k}} +\frac{\lambda_k}{q}\Ical \right) (\bm\xi_k-\widehat{\bm\xi}_k) = -\frac{\lambda_k}{q}\widehat{\bm\xi}_k.
\]
Again by \eqref{eq:fixed-rgd-transfer-small}, we have
\[
\norm{\bm\xi_k-\widehat{\bm\xi}_k}_{\F} \le \frac{85}{56} \frac{e_k^2}{\sigma_r(\bX_\star)}.
\]
Hence, with $\bm\eta_k:=\bm\xi_k-\bm t_{\bX_k}$, we have
\[
\norm{\bm\eta_k}_{\F} \le \norm{\widehat{\bm\eta}_k}_{\F} +\norm{\bm\xi_k-\widehat{\bm\xi}_k}_{\F} < 5\frac{e_k^2}{\sqrt q\,\sigma_r(\bX_\star)} < \frac{\sigma_r(\bX_\star)}{320}.
\]
Part~\textup{(ii)} of \Cref{lem:normal-F} therefore applies and yields
\[
\begin{aligned}
\norm{\bX_{k+1}-\bX_\star}_{\F} \le \norm{\bm\eta_k}_{\F} \left( 1+\frac{11}{5}\frac{e_k}{\sigma_r(\bX_\star)} +\frac{11}{10}\frac{\norm{\bm\eta_k}_{\F}} {\sigma_r(\bX_\star)} \right) \le \bar C_2 \frac{e_k^2}{\sqrt q\,\sigma_r(\bX_\star)}
\end{aligned}
\]
for an absolute constant $\bar C_2>0$. The right-hand side is at most $\sqrt q\,\sigma_r(\bX_\star)/128$ for $C_2$ sufficiently large. Thus the stated neighborhood is invariant, and induction proves the result for every $k\ge\bar K$.
\end{proof}

\section{Numerical Experiments}
\label{sec:numerics}

In this section, we compare factorized gradient descent (FGD), ScaledGD~\cite{TongMaChi2021}, RGD, and RGN on randomly generated matrix completion problems. All computations were performed in MATLAB R$2024$b on $64$-bit Windows,  with an Intel Core Ultra~$5$ $125$H CPU and $16$ GB memory. We measure the reconstruction error by
\[
\operatorname{err}_k:=\frac{\norm{\bX_k-\bX_\star}_{\F}}{\norm{\bX_\star}_{\F}}.
\]

\paragraph{Convergence and running time.} We first compare the convergence and running time under different condition numbers. We take $n=1000$, $r=10$, and $p=0.2$. Let $\bU_\star$ contain the left singular vectors of an $n\times r$ matrix with independent random signs. For $\kappa\in\{1,10,100\}$, we set
\[
\bX_\star=\bU_\star\operatorname{diag}(\sigma_1,\ldots,\sigma_r)\bU_\star^T,
\]
where the nonzero singular values are linearly spaced from $1$ to $1/\kappa$. Each entry is observed independently with probability $p$ without noise. We use the same $\bU_\star$ and $\Omega$ for all three values of $\kappa$; for each $\kappa$, the four methods receive the same observations $\Pcal_\Omega(\bX_\star)$. FGD and ScaledGD use the same spectral initialization and the respective updates in \cite{TongMaChi2021}, with stepsize $0.5$ for both methods. RGD and RGN share the initialization in \Cref{alg:init}, with $q=p$, $\Omega^{(\ell)}=\Omega$, and $K=8$, returning the rank-$r$ candidate with the smallest observed residual. In \eqref{eq:threshold-iteration}, we additionally stop when
\[
\left(1-r^{-1}\norm{\bQ_{\ell,t-1}^T\bQ_{\ell,t}}_{\F}^2\right)^{1/2} \le10^{-3}
\]
holds at two consecutive iterations with $t\ge3$; the iteration limit remains $\ceil{12\log n}$. RGD and RGN follow \Cref{alg:fixed-rgd,alg:regularized-rgn}, respectively, with $\widehat\Omega=\Omega$. For RGN, CG is stopped at relative residual $0.05$ or after $200$ iterations. All four methods terminate when
\[
\frac{\norm{\Pcal_\Omega(\bX_k-\bX_\star)}_{\F}} {\norm{\Pcal_\Omega(\bX_\star)}_{\F}}\le10^{-14},
\]
in at most $1000$ iterations. \Cref{fig:convergence-time} plots $\operatorname{err}_k$ against the iteration count and elapsed time for one realization at each $\kappa$.  Elapsed time includes initialization and the subsequent iterations. 

\begin{figure}[!t]
\centering
\includegraphics[width=0.45\textwidth]{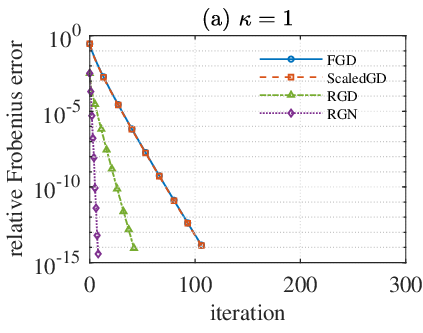}
\hfill
\includegraphics[width=0.45\textwidth]{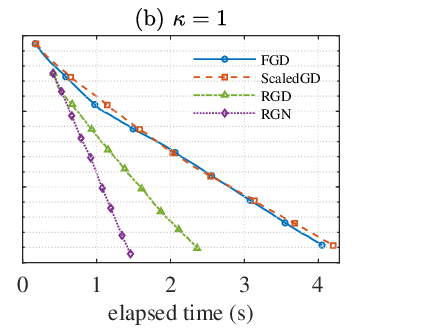}
\includegraphics[width=0.45\textwidth]{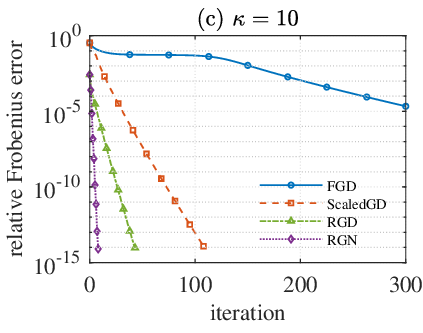}
\hfill
\includegraphics[width=0.45\textwidth]{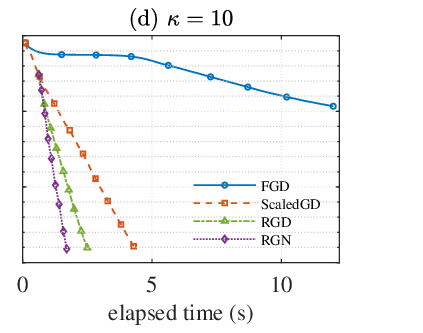}
\includegraphics[width=0.45\textwidth]{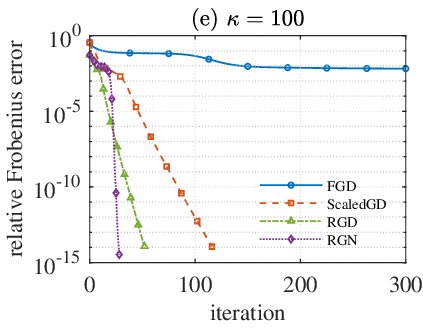}
\hfill
\includegraphics[width=0.45\textwidth]{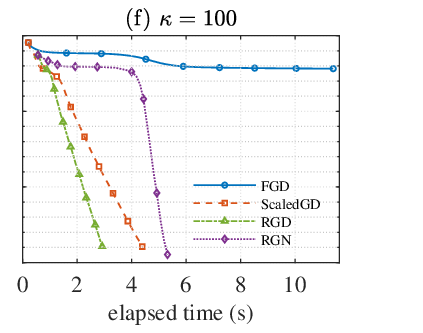}
\caption{Relative reconstruction errors of FGD, ScaledGD, RGD, and RGN
versus iteration count (left) and elapsed time in seconds (right), with
$n=1000$, $r=10$, and $p=0.2$. From top to bottom, $\kappa=1,10,100$.}
\label{fig:convergence-time}
\end{figure}

\paragraph{Empirical recovery rates.} We next examine how the number of observations needed for recovery varies with the rank. We set $n=500$ and conduct $30$ random trials for each pair $r\in\{2,4,\ldots,30\}$, $m/n\in\{4,8,\ldots,84\}$. In each trial, let $\bU_{\star,r}$ and $\bV_{\star,r}$ consist of the first $r$ columns of two independent $n\times30$ Haar orthonormal frames, and form $\bX_\star=\bU_{\star,r}\bV_{\star,r}^T$. We observe $m$ entries uniformly without replacement. For this experiment, the search directions are those in \Cref{alg:fixed-rgd,alg:fixed}, with $\widehat\Omega=\Omega$.   The relative observed residual tolerances are $10^{-12}$ for RGD and $10^{-8}$ for RGN. Recovery is declared successful if $\operatorname{err}_k\le10^{-6}$. \Cref{fig:phase-transition} reports the fraction of successful trials at each $(r,m)$.

\begin{figure}[!t]
\centering
\includegraphics[height=240bp]{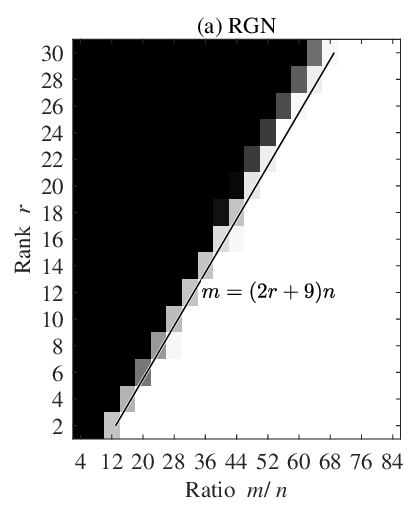}
\hfill
\includegraphics[height=240bp]{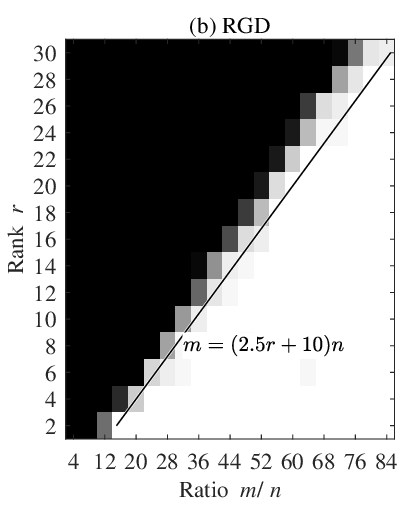}
\hfill
\includegraphics[height=240bp]{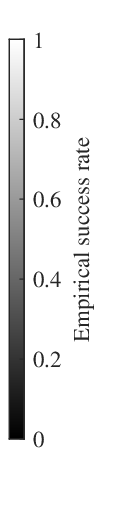}
\caption{Empirical recovery rates of RGN (left) and RGD (middle) for $n=500$
and $\kappa=1$, over $30$ trials at each $(r,m)$. The horizontal axis is
$m/n$ and the vertical axis is $r$. White denotes success in all trials,
and black denotes failure in all trials. The solid reference lines are
$m=(2r+9)n$ for RGN and $m=(2.5r+10)n$ for RGD.}
\label{fig:phase-transition}
\end{figure}
Both panels exhibit a transition from failure to successful recovery as $m$ increases.  These results show an approximately linear dependence on $nr$ over the tested ranks. The solid lines in \Cref{fig:phase-transition} provide simple reference relations for this observed transition.

\section{Concluding Remarks}\label{sec:conclusion}

We have established global recovery guarantees for RGD and RGN under standard incoherence and Bernoulli sampling. With high probability, $O(\mu nr\log n\log(n\kappa))$ and $O(\mu nr\log n\log(2\mu r\kappa))$ observations suffice for the two methods, respectively. The sample requirements are linear in $n$ and $r$, up to logarithmic factors. RGD converges linearly, while RGN satisfies a doubly exponential sharp-norm error bound during its initial iterations and a quadratic Frobenius recurrence thereafter. Both methods use the multiscale residual initialization with residual stopping and different upper bounds on the number of reconstructions for their respective local conditions. The observation set is fixed at the beginning, and the sampling requirement does not depend on the target accuracy.

Nevertheless, several extensions merit further study. One is to establish the same guarantees when every initialization step reuses the entire observation set. Another is to analyze RGN with an inexact tangent solve and a computable stopping criterion, so that the convergence rate can be related to the total number of CG iterations.

\section*{Declaration on the use of artificial intelligence}
Generative AI tools were used during revision to assist with language editing,
structural reorganization, bibliographic cross-checking, and consistency checks
of notation, formulas, and proofs. 
\appendix

\section{Lower bound for ordinary spectral initialization}
\label{app:optimality}
\label{sec:proof-necessary-scales}

We prove \Cref{thm:necessary-scales} by constructing a block-diagonal family of incoherent matrices. 

\begin{proof}[Proof of \Cref{thm:necessary-scales}]
Take $C_3=2^{-12}$. Choose pairwise disjoint sets $I_1,\ldots,I_r\subseteq\{1,\ldots,n\}$ with $|I_a|=s$, and set
\[
\bm u_a=\bm v_a=s^{-1/2}\bm1_{I_a}, \qquad \bX_\star=\kappa\bm u_1\bm v_1^T+\sum_{a=2}^r\bm u_a\bm v_a^T.
\]
Then
\[
\sigma_r(\bX_\star)=1, \qquad \kappa(\bX_\star)=\kappa, \qquad \max_i\norm{\bm e_i^T\bU_\star}_2^2 =\max_j\norm{\bm e_j^T\bV_\star}_2^2 =\frac1s=\frac{\mu r}{n}.
\]
Thus \Cref{ass:incoherence} holds. Set $d=qs$ and $\bY=q^{-1}\Pcal_\Lambda(\bX_\star)$. On the active coordinates,
\[
\bY=\operatorname{diag}(\bY_1,\ldots,\bY_r), \qquad \bY_1=\frac{\kappa}{d}\bm\Delta_1, \qquad \bY_a=\frac1d\bm\Delta_a,\quad 2\le a\le r,
\]
where the $\bm\Delta_a\in\{0,1\}^{s\times s}$ have independent Bernoulli$(q)$ entries. Since $\mu=n/(rs)$ and $\kappa^2\le s$,
\[
d=qs<C_3\kappa^2, \qquad q<C_3\frac{\kappa^2}{s}\le C_3<\frac12.
\]
Suppose first that $d\le\frac14\log(rs)$. For each active row,
\[
\pi:=\Pr\{\text{the row is empty on its support}\} =(1-q)^s\ge e^{-2qs}=e^{-2d}\ge(rs)^{-1/2}.
\]
Hence
\[
\Pr\{\text{an active row is empty}\} =1-(1-\pi)^{rs} \ge1-e^{-rs\pi} \ge1-e^{-\sqrt{rs}}>\frac12.
\]
If $i\in I_a$ is such a row, then
\[
\bm e_i^T\bY=\bm0 \quad\Longrightarrow\quad \bm e_i^T\Hcal_r(\bY)=\bm0, \qquad \norm{\bm e_i^T\bX_\star}_2\ge s^{-1/2},
\]
and therefore
\[
\sharpnorm{\Hcal_r(\bY)-\bX_\star} \ge\frac12\sqrt{s}\,\norm{\bm e_i^T(\Hcal_r(\bY)-\bX_\star)}_2 \ge\frac12.
\]
Now suppose that $d>\frac14\log(rs)$. Let $\mathcal D$ be the event that every row and column degree of every $\bm\Delta_a$ is at most $16d$. The multiplicative Chernoff bound gives
\[
\Pr(\mathcal D^c) \le2rs\exp\bigl(-(16\log16-15)d\bigr) <2(rs)^{1-(16\log16-15)/4} \le\frac1{30}.
\]
With $M=\norm{\bm\Delta_1}_{\F}^2$, we have
\[
\E M=sd, \qquad \E M^2\le s^2d^2+sd, \qquad sd>\frac{d^2}{C_3}>\frac{\log^2(rs)}{16C_3}.
\]
Hence the Paley--Zygmund inequality yields
\[
\Pr\{\mathcal D\cap\{M\ge sd/4\}\} \ge\frac9{16}\frac{sd}{sd+1}-\Pr(\mathcal D^c) >\frac12.
\]
On $\mathcal D$, the maximum row sum and the maximum column sum of
$\bm\Delta_a$ are both at most $16d$. Hence
\[
\norm{\bm\Delta_a}_{\op} \le \sqrt{ \left(\max_i\sum_j|(\bm\Delta_a)_{ij}|\right) \left(\max_j\sum_i|(\bm\Delta_a)_{ij}|\right)} \le16d, \qquad 1\le a\le r,
\]
and therefore
\begin{align*}
\norm{\bY_1}_{\F}^2 &\ge\frac{\kappa^2s}{4d}, &\norm{\bY_1}_{\op}&\le16\kappa,\\
\max_{2\le a\le r}\norm{\bY_a}_{\op} &\le16, &\sum_{j\ge2}\sigma_j(\bY_1)^2 &\ge\frac{\kappa^2s}{4d}-256\kappa^2>\left(\frac1{4C_3}-256\right)s>256(s-1).
\end{align*}
Thus
\[
\sigma_2(\bY_1)>16\ge\max_{2\le a\le r}\norm{\bY_a}_{\op}.
\]
If $\bY_1$ has at least $r$ singular values larger than $16$, then every best rank-at-most-$r$ approximation of $\bY$ is supported on $I_1\times I_1$, and hence
\[
\sharpnorm{\Hcal_r(\bY)-\bX_\star} \ge\norm{\Pcal_{(I_2\cup\cdots\cup I_r)\times(I_2\cup\cdots\cup I_r)}\bX_\star}_{\op}=1.
\]
Otherwise, the Eckart--Young--Mirsky theorem and $\sigma_2(\bY_1)>16$ imply, with $I=I_2\cup\cdots\cup I_r$,
\[
\rank\!\left(\Pcal_{I\times I}\Hcal_r(\bY)\right)\le r-2.
\]
Since $\Pcal_{I\times I}\bX_\star$ has $r-1$ singular values equal to one,
\[
\sharpnorm{\Hcal_r(\bY)-\bX_\star} \ge\norm{\Pcal_{I\times I}(\Hcal_r(\bY)-\bX_\star)}_{\op} \ge1.
\]
In both cases,
\[
\Pr\!\left\{\sharpnorm{\Hcal_r(\bY)-\bX_\star}\ge\frac12\right\}>\frac12.
\]
This proves the theorem.
\end{proof}

\section{Probabilistic Lemmas}\label{app:tools}

We collect the concentration tools used throughout the sampling arguments. We use the following standard forms of the rectangular and self-adjoint matrix Bernstein inequalities; see \cite[Theorems~1.6 and~6.1]{Tropp2012}.

\begin{lemma}\label[lemma]{lem:bernstein}
Let $\bZ_1,\ldots,\bZ_N\in\R^{d_1\times d_2}$ be independent mean-zero random matrices. Suppose that $\norm{\bZ_a}_{\op}\le L$ almost surely, and define
\[
v:=\max\left\{ \norm{\sum_a\E(\bZ_a\bZ_a^T)}_{\op}, \norm{\sum_a\E(\bZ_a^T\bZ_a)}_{\op} \right\}.
\]
Then, for every $t>0$,
\[
\Pr\left\{ \norm{\sum_a\bZ_a}_{\op} > \sqrt{2vt}+\frac23Lt \right\} \le(d_1+d_2)e^{-t}.
\]
If the $\bZ_a$ are self-adjoint operators on a $d$-dimensional Hilbert space, then, for every $u>0$,
\[
\Pr\left\{ \norm{\sum_a\bZ_a}_{\op}\ge u \right\} \le 2d\exp\left(-\frac{u^2}{2(v+Lu/3)}\right).
\]
\end{lemma}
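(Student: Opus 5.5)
This is the standard matrix Bernstein inequality, so I would derive it from the Laplace-transform (matrix Chernoff) method with Lieb's concavity theorem. I treat the self-adjoint statement first and obtain the rectangular one by Hermitian dilation.

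For self-adjoint, independent, mean-zero $\bZ_a$ on a $d$-dimensional space with $\norm{\bZ_a}_{\op}\le L$, set $\bS=\sum_a\bZ_a$. The first step is the bound
\[
\Pr\{\lambda_{\max}(\bS)\ge u\}\le\inf_{\theta>0}e^{-\theta u}\,\E\operatorname{tr}e^{\theta\bS}.
\]
The second step applies Lieb's theorem iteratively, using concavity of $\bm A\mapsto\operatorname{tr}\exp(\bm H+\log\bm A)$ together with Jensen's inequality. This gives
\[
\E\operatorname{tr}e^{\theta\bS}\le\operatorname{tr}\exp\Bigl(\sum_a\log\E e^{\theta\bZ_a}\Bigr).
\]

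The third step controls each matrix cumulant generating function. The scalar inequality $e^{x}\le1+x+\frac{x^2/2}{1-|x|/3}$ for $|x|<3$ transfers to matrices by the transfer rule, since $\theta\bZ_a$ has spectrum in $[-\theta L,\theta L]$. Using $\E\bZ_a=0$ and $\log(\bI+\bm A)\preceq\bm A$, I get
\[
\log\E e^{\theta\bZ_a}\preceq g(\theta)\,\E\bZ_a^2,\qquad g(\theta)=\frac{\theta^2/2}{1-\theta L/3}.
\]
By monotonicity of the trace exponential, the trace is at most $d\exp(g(\theta)v)$. Choosing $\theta=u/(v+Lu/3)$ yields the exponent $-u^2/(2(v+Lu/3))$. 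Applying the same argument to $-\bS$ supplies the factor $2$, which gives the self-adjoint inequality.

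For the rectangular case, I apply the self-adjoint argument to the dilations $\begin{bmatrix}\bm0&\bZ_a\\ \bZ_a^T&\bm0\end{bmatrix}$. These have dimension $d_1+d_2$, the same operator norm, and square equal to $\operatorname{diag}(\bZ_a\bZ_a^T,\bZ_a^T\bZ_a)$, so the variance parameter is exactly $v$. Because the dilation's spectrum is symmetric, $\lambda_{\max}$ already equals the operator norm, and I use the one-sided tail with factor $d_1+d_2$, with no factor $2$. It then remains to show that $u=\sqrt{2vt}+\frac23Lt$ makes $u^2/(2(v+Lu/3))\ge t$. This reduces to $u^2-\frac23Ltu-2vt\ge0$, i.e. $u\ge\frac13Lt+\sqrt{\frac19L^2t^2+2vt}$, which holds since $\sqrt{a+b}\le\sqrt a+\sqrt b$.

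The main obstacle is Lieb's concavity step, which is what makes the subadditivity of matrix cumulants valid. I would cite it as established (\cite{Tropp2012}) rather than reprove it; everything else is routine.
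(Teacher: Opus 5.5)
Your proposal is correct and takes essentially the paper's route: the paper does not prove this lemma but cites \cite[Theorems~1.6 and~6.1]{Tropp2012}. Those theorems are proved exactly by your argument, namely the matrix Laplace transform with Lieb's theorem in the self-adjoint case and Hermitian dilation for the rectangular case. Your closing algebra, showing that $u=\sqrt{2vt}+\frac23Lt$ makes the Bernstein exponent at least $t$, correctly supplies the conversion step the paper leaves implicit.
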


\subsection[Proof of the true-tangent estimate]{Proof of \Cref{lem:true-tangent-rip}}
\label{sec:proof-true-tangent-rip}

We next prove the true-tangent sampling isometry used in the local RGD analysis.

\begin{proof}[Proof of \Cref{lem:true-tangent-rip}]
Fix $c_1=2^{15}$. Let $\delta_{ij}:=\mathbf 1_{\{(i,j)\in\Lambda\}}$ and $\bm a_{ij}=\Pcal_{T_{\bX_\star}}(\bm e_i\bm e_j^T)$, and define $(\bm a\otimes\bm a)(\bZ):=\inner{\bm a}{\bZ}_{\F}\bm a$. Since the matrices $\bm e_i\bm e_j^T$ form an orthonormal basis of $\R^{n\times n}$, we have
\[
\sum_{i,j}\bm a_{ij}\otimes\bm a_{ij} =\Ical_{T_{\bX_\star}}.
\]
Consequently, we have
\[
\Pcal_{T_{\bX_\star}}p_\Lambda^{-1}\Pcal_\Lambda \Pcal_{T_{\bX_\star}}-\Pcal_{T_{\bX_\star}} =\sum_{i,j}\left(\frac{\delta_{ij}}{p_\Lambda}-1\right) (\bm a_{ij}\otimes\bm a_{ij}).
\]
By \eqref{eq:tangent-projector} and the orthogonality of its two summands, we have
\begin{align*}
\bm a_{ij}&=\bU_\star\bU_\star^T\bm e_i\bm e_j^T +(\bI-\bU_\star\bU_\star^T)\bm e_i\bm e_j^T \bV_\star\bV_\star^T,\\
\norm{\bm a_{ij}}_{\F}^2 &=\norm{\bU_\star\bU_\star^T\bm e_i}_2^2 +\norm{(\bI-\bU_\star\bU_\star^T)\bm e_i}_2^2 \norm{\bV_\star\bV_\star^T\bm e_j}_2^2 \le\frac{2\mu r}{n}.
\end{align*}
Each summand has norm at most $2\mu r/(np_\Lambda)$.  Moreover, $\E(\delta_{ij}/p_\Lambda-1)^2=(1-p_\Lambda)/p_\Lambda$ and $(\bm a\otimes\bm a)^2 =\norm{\bm a}_{\F}^2(\bm a\otimes\bm a)$, so the variance operator satisfies
\[
\frac{1-p_\Lambda}{p_\Lambda}\sum_{i,j} \norm{\bm a_{ij}}_{\F}^2(\bm a_{ij}\otimes\bm a_{ij}) \preceq\frac{2\mu r}{np_\Lambda}\Ical_{T_{\bX_\star}}.
\]
Since $\dim(T_{\bX_\star})=r(2n-r)\le2nr$, $p_\Lambda\ge c_1\mu r\log n/n$, $r\le n$, and $n\ge2$, \Cref{lem:bernstein} at threshold $1/16$ gives
\begin{align*}
\Pr\left\{ \norm{\Pcal_{T_{\bX_\star}}p_\Lambda^{-1}\Pcal_\Lambda \Pcal_{T_{\bX_\star}}-\Pcal_{T_{\bX_\star}}}_{\F\to\F} >\frac1{16}\right\} &\le\frac{n^{-10}}{48}.
\end{align*}
This proves the lemma.
\end{proof}

\subsection[Proof of the residual sampling estimate]{Proof of \Cref{lem:spectral-concentration}}\label{app:fresh}
\label{sec:proof-spectral-concentration}

We prove the sampling estimates for a fixed error matrix. Independence from the current observation component is supplied by conditioning in \Cref{sec:proof-finite-rgd}.

\begin{proof}[Proof of \Cref{lem:spectral-concentration}]
By \eqref{eq:sharp-main},
\[
\norm{\bE}_{\infty}\le\frac{4\mu r}{n}\tau, \qquad \max\{\norm{\bE}_{2,\infty},\norm{\bE^T}_{2,\infty}\} \le2\sqrt{\frac{\mu r}{n}}\tau.
\]
Hence, for $\bW=(q^{-1}\Pcal_\Lambda-\Ical)\bE$,
\begin{equation}\label{eq:residual-variance}
|W_{ij}|\le\frac{4\mu r\tau}{nq}, \qquad \max\left\{\max_i\sum_j\E W_{ij}^2,\max_j\sum_i\E W_{ij}^2\right\} \le\frac{4\mu r\tau^2}{nq}.
\end{equation}
By \Cref{lem:bernstein},
\begin{equation}\label{eq:residual-op-probability}
\Pr\!\left\{\norm{\mathscr W}_{\op}>\frac{\tau}{512}\right\} \le4n\exp\!\left(-c\frac{nq}{\mu r}\right).
\end{equation}

For $|z|=\tau/64$, set
\[
\bm F(z)=(z\bI-\mathscr W)^{-1}\mathscr Q_\star.
\]
For $i\in[2n]$, let $\mathscr W^{(i)}$ be the principal minor obtained by deleting row and column $i$, let $\bm w_i$ be the deleted off-diagonal column, and define
\[
\bm F^{(i)}(z)=
\begin{cases}
(z\bI-\mathscr W^{(i)})^{-1}(\mathscr Q_\star)_{-i,:}, &\norm{\mathscr W^{(i)}}_{\op}\le\tau/512,\\
\bm0,&\text{otherwise}.
\end{cases}
\]
Conditional on $\mathscr W^{(i)}$, \eqref{eq:residual-variance} and \Cref{lem:bernstein}, applied to the real and imaginary parts, give
\begin{equation}\label{eq:residual-row-probability}
\Pr\!\left\{ \norm{\bm w_i^T\bm F^{(i)}(z)}_2> \frac{\tau}{1024}\norm{\bm F^{(i)}(z)}_{2,\infty} \,\middle|\,\mathscr W^{(i)}\right\} \le(4r+1)\exp\!\left(-c\frac{nq}{\mu r}\right).
\end{equation}

Let $\Gamma$ be a uniform grid on $|z|=\tau/64$ with
\[
|\Gamma|\le52\sqrt n, \qquad \min_{z_0\in\Gamma}|z-z_0| \le\frac{\tau}{1024}\sqrt{\frac{\mu r}{n}}.
\]
A union bound in \eqref{eq:residual-op-probability}--\eqref{eq:residual-row-probability} yields, except with probability $Cn^3\exp(-cnq/(\mu r))$,
\begin{equation}\label{eq:residual-grid-event}
\norm{\mathscr W}_{\op}\le\frac{\tau}{512}, \qquad \norm{\bm w_i^T\bm F^{(i)}(z)}_2 \le\frac{\tau}{1024}\norm{\bm F^{(i)}(z)}_{2,\infty}
\end{equation}
simultaneously for $i\in[2n]$ and $z\in\Gamma$. Increasing $c_2$ makes this failure probability at most $n^{-12}/48$. Fix a realization in \eqref{eq:residual-grid-event}. Then
\[
\norm{(z\bI-\mathscr W^{(i)})^{-1}}_{\op}\le\frac{512}{7\tau}, \qquad \norm{\bm w_i}_2\le\frac{\tau}{512},
\]
and
\begin{align*}
\bm F_{-i,:}(z) &=\bm F^{(i)}(z)+(z\bI-\mathscr W^{(i)})^{-1}\bm w_i\bm F_{i,:}(z),\\
\norm{\bm F^{(i)}(z)}_{2,\infty} &\le\frac87\norm{\bm F(z)}_{2,\infty},\\
\bm F_{i,:}(z) &=\frac{(\mathscr Q_\star)_{i,:}+\bm w_i^T\bm F^{(i)}(z)} {z-\bm w_i^T(z\bI-\mathscr W^{(i)})^{-1}\bm w_i}.
\end{align*}
Moreover,
\[
\left|z-\bm w_i^T(z\bI-\mathscr W^{(i)})^{-1}\bm w_i\right| \ge\frac{55\tau}{3584}>\frac{\tau}{66}.
\]
Using \Cref{ass:incoherence} and \eqref{eq:residual-grid-event},
\[
\norm{\bm F(z)}_{2,\infty} \le\frac{66}{\tau}\left( \sqrt{\frac{\mu r}{n}}+ \frac{\tau}{896}\norm{\bm F(z)}_{2,\infty}\right) \le\frac{72}{\tau}\sqrt{\frac{\mu r}{n}}, \qquad z\in\Gamma.
\]
For arbitrary $|z|=\tau/64$, choose $z_0\in\Gamma$ as above. The resolvent identity gives
\[
\norm{\bm F(z)-\bm F(z_0)}_{\op} \le\left(\frac{512}{7\tau}\right)^2|z-z_0| \le\frac6\tau\sqrt{\frac{\mu r}{n}},
\]
and hence
\begin{equation}\label{eq:residual-resolvent-bound}
\sup_{|z|=\tau/64} \norm{(z\bI-\mathscr W)^{-1}\mathscr Q_\star}_{2,\infty} \le\frac{128}{\tau}\sqrt{\frac{\mu r}{n}}.
\end{equation}
Finally,
\[
\mathscr W^j\mathscr Q_\star =\frac{1}{2\pi\mathrm i}\oint_{|z|=\tau/64} z^j(z\bI-\mathscr W)^{-1}\mathscr Q_\star\,dz, \qquad j\ge0.
\]
Combining this identity with \eqref{eq:residual-resolvent-bound} proves \eqref{eq:spectral-power-bounds}.
\end{proof}

\section{Deterministic geometry of fixed-rank matrices}\label{app:geometry}

We prove the geometric estimates used in \Cref{sec:local-rgd,sec:initialization,sec:local-rgn}. We first derive the graph-retraction identity and the sharp-norm bounds at one matrix. We then prove the Frobenius estimates and the comparison bounds for two nearby matrices.

\subsection{Tangent coordinates and the graph-retraction identity}

Let $\bX=\bU\bm\Sigma\bV^T\in\Mcal_r$ and $\bm\xi\in T_{\bX}\Mcal_r$. Define the orthogonal tangent coordinates
\[
\bM:=\bU^T\bm\xi\bV,\qquad \bB:=(\bI-\bU\bU^T)\bm\xi\bV,\qquad \bC:=(\bI-\bV\bV^T)\bm\xi^T\bU.
\]
Then $\bU^T\bB=0$, $\bV^T\bC=0$, and
\[
\bm\xi=\bU\bM\bV^T+\bB\bV^T+\bU\bC^T.
\]
Moreover, $\bU^T(\bX+\bm\xi)\bV=\bm\Sigma+\bM$. Hence, whenever $\bm\Sigma+\bM$ is nonsingular, \eqref{eq:graph-retraction-opt} gives
\[
\operatorname{Retr}_{\bX}(\bm\xi) = \bigl(\bU+\bB(\bm\Sigma+\bM)^{-1}\bigr) (\bm\Sigma+\bM) \bigl(\bV+\bC(\bm\Sigma+\bM)^{-T}\bigr)^T.
\]
Expanding the product yields
\begin{equation}\label{eq:graph-retraction-identity}
\operatorname{Retr}_{\bX}(\bm\xi) = \bX+\bm\xi+\bB(\bm\Sigma+\bM)^{-1}\bC^T.
\end{equation}

For later two-base comparisons, we also record the invariance under orthogonal changes of basis. For a compact orthonormal factorization $\bX=\bU\bS\bV^T$, orthogonal changes of basis $\bU\mapsto\bU\bm O_U$ and $\bV\mapsto\bV\bm O_V$ leave $\bX$, $\Pcal_{T_{\bX}}$, and the graph retraction unchanged when the core and tangent coordinates are transformed accordingly. Thus, when comparing two matrices, we align their left and right bases separately by orthogonal Procrustes transformations and transform their cores at the same time. The estimates below are invariant under these choices; in particular, $\sigma_{\min}(\bS)=\sigma_r(\bX)$.

\subsection{Sharp-norm estimates}

We begin by controlling the current singular spaces and their projectors in the sharp-norm neighborhood.

\subsubsection{Proof of Lemma~\ref{lem:current-incoherence}}
\label{sec:proof-current-incoherence}

\begin{proof}
Let $\bE:=\bX_\star-\bX$. Weyl's inequality \cite[Supplement, Theorem~2]{CaiWuXia2025} gives $\sigma_r(\bX)\ge\sigma_r(\bX_\star)-e_\sharp$.  Since $(\bI-\bU_\star\bU_\star^T)\bX_\star=0$ and $(\bI-\bV_\star\bV_\star^T)\bX_\star^T=0$, we have
\begin{subequations}\label{eq:current-subspace-identities}
\begin{align}
(\bI-\bU_\star\bU_\star^T)\bU & =-(\bI-\bU_\star\bU_\star^T)\bE\bV\bS^{-1}, \label{eq:current-left-subspace-identity}\\
(\bI-\bV_\star\bV_\star^T)\bV & =-(\bI-\bV_\star\bV_\star^T)\bE^T\bU\bS^{-T}. \label{eq:current-right-subspace-identity}
\end{align}
\end{subequations}
For every $i$ and $j$, \Cref{ass:incoherence} gives
\[
\max\left\{ \norm{\bm e_i^T\bU_\star\bU_\star^T\bU}_2, \norm{\bm e_j^T\bV_\star\bV_\star^T\bV}_2 \right\} \le\sqrt{\frac{\mu r}{n}}.
\]
Equations~\eqref{eq:current-left-subspace-identity} and \eqref{eq:current-right-subspace-identity}, together with $e_\sharp\le\sigma_r(\bX_\star)/8$, give
\begin{align*}
\norm{\bm e_i^T(\bI-\bU_\star\bU_\star^T)\bU}_2 &\le\frac{\norm{\bm e_i^T\bE}_2 +\norm{\bU_\star^T\bm e_i}_2\norm{\bU_\star^T\bE}_{\op}} {\sigma_r(\bX_\star)-e_\sharp} \le\frac{3\sqrt{\mu r/n}\,e_\sharp} {\sigma_r(\bX_\star)-e_\sharp},\\
\norm{\bm e_i^T\bU}_2 &\le\sqrt{\frac{\mu r}{n}} \left(1+\frac{3e_\sharp}{\sigma_r(\bX_\star)-e_\sharp}\right) \le\frac{10}{7}\sqrt{\frac{\mu r}{n}},\\
\norm{\bm e_j^T(\bI-\bV_\star\bV_\star^T)\bV}_2 &\le\frac{\norm{\bm e_j^T\bE^T}_2 +\norm{\bV_\star^T\bm e_j}_2\norm{\bV_\star^T\bE^T}_{\op}} {\sigma_r(\bX_\star)-e_\sharp} \le\frac{3\sqrt{\mu r/n}\,e_\sharp} {\sigma_r(\bX_\star)-e_\sharp},\\
\norm{\bm e_j^T\bV}_2 &\le\sqrt{\frac{\mu r}{n}} \left(1+\frac{3e_\sharp}{\sigma_r(\bX_\star)-e_\sharp}\right) \le\frac{10}{7}\sqrt{\frac{\mu r}{n}},\\
\max\{\norm{\bU}_{2,\infty},\norm{\bV}_{2,\infty}\} &\le\frac{10}{7}\sqrt{\frac{\mu r}{n}} <2\sqrt{\frac{\mu r}{n}}.
\end{align*}

For two orthogonal projectors of the same rank, the operator norm of their difference equals the largest sine of the principal angles.  Hence \eqref{eq:current-left-subspace-identity}, \eqref{eq:current-right-subspace-identity}, and $e_\sharp\le\sigma_r(\bX_\star)/8$ give
\begin{equation}\label{eq:current-projector-op-left}
\begin{aligned}
\norm{\bU\bU^T-\bU_\star\bU_\star^T}_{\op} =\norm{(\bI-\bU_\star\bU_\star^T)\bU}_{\op} &\le\frac{e_\sharp}{\sigma_r(\bX_\star)-e_\sharp},\\
\norm{\bV\bV^T-\bV_\star\bV_\star^T}_{\op} =\norm{(\bI-\bV_\star\bV_\star^T)\bV}_{\op} &\le\frac{e_\sharp}{\sigma_r(\bX_\star)-e_\sharp},\\
\max\left\{ \norm{\bU\bU^T-\bU_\star\bU_\star^T}_{\op}, \norm{\bV\bV^T-\bV_\star\bV_\star^T}_{\op} \right\} &\le\frac{8}{7}\frac{e_\sharp}{\sigma_r(\bX_\star)}.
\end{aligned}
\end{equation}

For the rowwise projector bounds, the two projector identities, \Cref{ass:incoherence}, $e_\sharp\le\sigma_r(\bX_\star)/8$, and $\max\{\norm{\bU}_{2,\infty},\norm{\bV}_{2,\infty}\} \le(10/7)\sqrt{\mu r/n}$ give
\begin{align*}
\bU\bU^T-\bU_\star\bU_\star^T &=(\bI-\bU_\star\bU_\star^T)\bU\bU^T -\bU_\star\bU_\star^T(\bI-\bU\bU^T),\\
\norm{\bU\bU^T-\bU_\star\bU_\star^T}_{2,\infty} &\le\frac{4\sqrt{\mu r/n}\,e_\sharp} {\sigma_r(\bX_\star)-e_\sharp} \le\frac{32}{7}\sqrt{\frac{\mu r}{n}} \frac{e_\sharp}{\sigma_r(\bX_\star)},\\
\bV\bV^T-\bV_\star\bV_\star^T &=(\bI-\bV_\star\bV_\star^T)\bV\bV^T -\bV_\star\bV_\star^T(\bI-\bV\bV^T),\\
\norm{\bV\bV^T-\bV_\star\bV_\star^T}_{2,\infty} &\le\frac{4\sqrt{\mu r/n}\,e_\sharp} {\sigma_r(\bX_\star)-e_\sharp} \le\frac{32}{7}\sqrt{\frac{\mu r}{n}} \frac{e_\sharp}{\sigma_r(\bX_\star)},\\
\max\Big\{ \norm{\bU\bU^T-\bU_\star\bU_\star^T}_{2,\infty}, & \norm{\bV\bV^T-\bV_\star\bV_\star^T}_{2,\infty} \Big\} \le\frac{32}{7}\sqrt{\frac{\mu r}{n}} \frac{e_\sharp}{\sigma_r(\bX_\star)}.
\end{align*}
Finally, the tangent-projector identity and \eqref{eq:current-projector-op-left} give
\begin{align*}
(\Pcal_{T_{\bX}}-\Pcal_{T_{\bX_\star}})\bZ &=(\bU\bU^T-\bU_\star\bU_\star^T)\bZ(\bI-\bV\bV^T) +(\bI-\bU_\star\bU_\star^T)\bZ(\bV\bV^T-\bV_\star\bV_\star^T),\\
\norm{\Pcal_{T_{\bX}}-\Pcal_{T_{\bX_\star}}}_{\F\to\F} &\le\frac{16}{7}\frac{e_\sharp}{\sigma_r(\bX_\star)}.
\end{align*}
This proves the lemma.
\end{proof}

We next control the normal component and the associated graph factors.

\subsubsection{Proof of Lemma~\ref{lem:normal}}
\label{sec:proof-normal}

\begin{proof}
Let $\bE:=\bX_\star-\bX$. Weyl's inequality and $\norm{\bG_{\bX}-\bS}_{\op}\le e_\sharp$ give
\[
\sigma_{\min}(\bG_{\bX}) \ge \sigma_r(\bX_\star)-2e_\sharp \ge\frac34\sigma_r(\bX_\star), \qquad \norm{\bG_{\bX}^{-1}}_{\op} \le\frac4{3\sigma_r(\bX_\star)}.
\]
Relative to the orthogonal decompositions generated by $\bU$ and $\bV$, the upper-left, lower-left, and upper-right blocks of $\bX_\star$ are $\bG_{\bX}$, $\bL_{\bX}$, and $\bR_{\bX}^T$, respectively.  Since $\rank(\bX_\star)=r$ and $\bG_{\bX}$ is invertible, the Schur complement of $\bG_{\bX}$ vanishes.  Therefore, the normal component is exactly
\begin{equation}\label{eq:normal-graph-factorization}
\bN_{\bX}=\bL_{\bX}\bG_{\bX}^{-1}\bR_{\bX}^T.
\end{equation}
Moreover, we have
\begin{align*}
\max\{\norm{\bL_{\bX}}_{\op},\norm{\bR_{\bX}}_{\op}\} \le e_\sharp,\qquad \norm{\bN_{\bX}}_{\op} \le\frac43\frac{e_\sharp^2}{\sigma_r(\bX_\star)}.
\end{align*}
For the rowwise graph factors, the definition of the sharp norm and the incoherence bound in \Cref{lem:current-incoherence} give, for every $i$, we have
\begin{equation}\label{eq:normal-graph-row-bounds}
\begin{aligned}
\norm{\bm e_i^T\bL_{\bX}}_2 \le \norm{\bm e_i^T\bE}_2 +\norm{\bm e_i^T\bU}_2\norm{\bU^T\bE}_{\op} &\le 4\sqrt{\frac{\mu r}{n}}\,e_\sharp,\\
\norm{\bm e_i^T\bR_{\bX}}_2 \le \norm{\bm e_i^T\bE^T}_2 +\norm{\bm e_i^T\bV}_2\norm{\bV^T\bE^T}_{\op} &\le 4\sqrt{\frac{\mu r}{n}}\,e_\sharp,\\
\max\{\norm{\bL_{\bX}}_{2,\infty}, \norm{\bR_{\bX}}_{2,\infty}\} &\le4\sqrt{\frac{\mu r}{n}}\,e_\sharp.
\end{aligned}
\end{equation}

Applying \eqref{eq:normal-graph-row-bounds} to the left and right factors in \eqref{eq:normal-graph-factorization} yields
\[
\max\left\{\norm{\bN_{\bX}}_{2,\infty}, \norm{\bN_{\bX}^T}_{2,\infty}\right\} \le\frac{16}{3}\sqrt{\frac{\mu r}{n}} \frac{e_\sharp^2}{\sigma_r(\bX_\star)}.
\]
For every $i,j$, using the rowwise bounds on both graph factors gives
\begin{align*}
\abs{\bm e_i^T\bN_{\bX}\bm e_j} &\le \norm{\bm e_i^T\bL_{\bX}}_2 \norm{\bG_{\bX}^{-1}}_{\op} \norm{\bm e_j^T\bR_{\bX}}_2 \le\frac{64\mu r}{3n} \frac{e_\sharp^2}{\sigma_r(\bX_\star)},\\
\norm{\bN_{\bX}}_{\infty} &\le\frac{64\mu r}{3n} \frac{e_\sharp^2}{\sigma_r(\bX_\star)}.
\end{align*}
Applying \eqref{eq:sharp-main} to the bounds for $\norm{\bN_{\bX}}_{\op}$, $\norm{\bN_{\bX}}_{2,\infty}$, $\norm{\bN_{\bX}^T}_{2,\infty}$, and $\norm{\bN_{\bX}}_\infty$ gives
\[
\sharpnorm{\bN_{\bX}} \le\frac{16}{3}\frac{e_\sharp^2}{\sigma_r(\bX_\star)},
\]
which proves the sharp-norm bound in \Cref{lem:normal}.
\end{proof}

We next use the graph factorization to verify exactness of the population tangent correction.

\subsubsection{Proof of Lemma~\ref{lem:population-exact}}
\label{sec:proof-population-exact}

\begin{proof}
Equation~\eqref{eq:tangent-projector} gives the tangent coordinates
\[
\bM=\bG_{\bX}-\bm\Sigma, \qquad \bB=\bL_{\bX}, \qquad \bC=\bR_{\bX}.
\]
Moreover, Weyl's inequality gives
\[
\sigma_{\min}(\bG_{\bX}) \ge \sigma_r(\bX_\star)-2\sharpnorm{\bX-\bX_\star} \ge \frac12\sigma_r(\bX_\star)>0,
\]
so the updated core is invertible. Since $\rank(\bX_\star)=r$, the Schur complement of $\bG_{\bX}$ in the block representation of $\bX_\star$ vanishes.  Using $\bm t_{\bX}=\Pcal_{T_{\bX}}(\bX_\star-\bX)$ in \eqref{eq:graph-retraction-identity} gives
\begin{align*}
\operatorname{Retr}_{\bX}(\bm t_{\bX}) &=\bX+\bm t_{\bX} +\bL_{\bX}\bG_{\bX}^{-1}\bR_{\bX}^T,\\
\Pcal_{T_{\bX}^{\perp}}(\bX_\star-\bX) &=\bL_{\bX}\bG_{\bX}^{-1}\bR_{\bX}^T,\\
\operatorname{Retr}_{\bX}(\bm t_{\bX}) &=\bX+\Pcal_{T_{\bX}}(\bX_\star-\bX) +\Pcal_{T_{\bX}^{\perp}}(\bX_\star-\bX) =\bX_\star.
\end{align*}
This proves the population identity.
\end{proof}

For the proofs of \Cref{lem:graph-correction,lem:normal-F}, let $\bX=\bU\bm\Sigma\bV^T\in\Mcal_r$ and $\bm\eta\in T_{\bX}\Mcal_r$, and define
\begin{equation}\label{eq:tangent-correction-coordinates}
\bM_\eta:=\bU^T\bm\eta\bV, \qquad \bB_\eta:=(\bI-\bU\bU^T)\bm\eta\bV, \qquad \bC_\eta:=(\bI-\bV\bV^T)\bm\eta^T\bU, \qquad \bS_\eta:=\bG_{\bX}+\bM_\eta.
\end{equation}
These coordinates describe the perturbation of the population tangent correction.

\subsubsection{Proof of Lemma~\ref{lem:graph-correction}}

\label{sec:proof-graph-correction}

\begin{proof}
By the definition of the sharp norm and \eqref{eq:tangent-correction-coordinates}, we have
\[
\begin{aligned}
\max\left\{ \norm{\bM_\eta}_{\op},\norm{\bB_\eta}_{\op},\norm{\bC_\eta}_{\op} \right\} &\le\sharpnorm{\bm\eta},\\
\max\left\{\norm{\bB_\eta}_{2,\infty}, \norm{\bC_\eta}_{2,\infty}\right\} &\le4\sqrt{\frac{\mu r}{n}}\,\sharpnorm{\bm\eta}.
\end{aligned}
\]
By \Cref{lem:normal}, we have $\norm{\bG_{\bX}^{-1}}_{\op}\le4/(3\sigma_r(\bX_\star))$. Since $\sigma_{\min}(\bG_{\bX})\ge3\sigma_r(\bX_\star)/4$ and $\sharpnorm{\bm\eta}\le\sigma_r(\bX_\star)/16$, we have
\[
\norm{(\bG_{\bX}+\bM_\eta)^{-1}}_{\op} \le\frac{16}{11\sigma_r(\bX_\star)}.
\]
Since $e_\sharp\le\sigma_r(\bX_\star)/8<\sigma_r(\bX_\star)/4$, \Cref{lem:population-exact} applies.  It follows from \Cref{lem:population-exact} and \eqref{eq:graph-retraction-identity} that
\begin{equation}\label{eq:graph-correction-remainder}
\begin{aligned}
&~\operatorname{Retr}_{\bX}(\bm t_{\bX}+\bm\eta)-\bX_\star-\bm\eta\cr =&~\bB_\eta\bS_\eta^{-1}\bR_{\bX}^T +\bL_{\bX}\bS_\eta^{-1}\bC_\eta^T +\bB_\eta\bS_\eta^{-1}\bC_\eta^T -\bL_{\bX}\bG_{\bX}^{-1}\bM_\eta\bS_\eta^{-1}\bR_{\bX}^T.
\end{aligned}
\end{equation}
For $\bm F,\bm H\in\R^{n\times r}$ and $\bm K\in\R^{r\times r}$, we have
\begin{equation}\label{eq:sharp-product-bound}
\begin{aligned}
\sharpnorm{\bm F\bm K\bm H^T} \le\norm{\bm K}_{\op}\max\Bigl\{& \norm{\bm F}_{\op}\norm{\bm H}_{\op}, \frac12\sqrt{\frac{n}{\mu r}}\norm{\bm F}_{2,\infty}\norm{\bm H}_{\op},\\
&\frac12\sqrt{\frac{n}{\mu r}}\norm{\bm F}_{\op}\norm{\bm H}_{2,\infty}, \frac{n}{4\mu r}\norm{\bm F}_{2,\infty}\norm{\bm H}_{2,\infty} \Bigr\}.
\end{aligned}
\end{equation}
Applying \eqref{eq:sharp-product-bound} to the four terms in \eqref{eq:graph-correction-remainder}, and using \eqref{eq:normal-factor-bounds-main}, $e_\sharp\le\sigma_r(\bX_\star)/8$, and $\sharpnorm{\bm\eta}\le\sigma_r(\bX_\star)/16$, gives
\[
\sharpnorm{\operatorname{Retr}_{\bX}(\bm t_{\bX}+\bm\eta) -\bX_\star-\bm\eta} \le13\frac{e_\sharp\sharpnorm{\bm\eta}}{\sigma_r(\bX_\star)} +6\frac{\sharpnorm{\bm\eta}^2}{\sigma_r(\bX_\star)}.
\]
This proves the nonlinear remainder bound.
\end{proof}

\subsection[Proof of the Frobenius geometry estimate]{Proof of \Cref{lem:normal-F}}
\label{sec:proof-normal-F}

We next prove the Frobenius estimates used in the local RGD argument and in the quadratic RGN continuation.

\begin{proof}[Proof of \Cref{lem:normal-F}]
We first prove part~\textup{(i)}. Let $\bX=\bU\bm\Sigma\bV^T$ be a compact singular value decomposition, and use the graph factors in \eqref{eq:target-graph-factors}. Weyl's inequality and $\norm{\bG_{\bX}-\bm\Sigma}_{\op}\le\norm{\bX-\bX_\star}_{\F}$ give
\[
\sigma_{\min}(\bG_{\bX})\ge\sigma_r(\bX_\star)-2\norm{\bX-\bX_\star}_{\F}\ge\frac12\sigma_r(\bX_\star), \qquad \norm{\bG_{\bX}^{-1}}_{\op}\le\frac2{\sigma_r(\bX_\star)}.
\]
Since $\rank(\bX_\star)=r$ and $\bG_{\bX}$ is invertible, the Schur complement of $\bG_{\bX}$ in the block representation of $\bX_\star$ vanishes. Moreover, $\norm{\bL_{\bX}}_{\F}\le\norm{\bX-\bX_\star}_{\F}$ and $\norm{\bR_{\bX}}_{\op}\le\norm{\bX-\bX_\star}_{\F}$. Therefore, we have
\[
\Pcal_{T_{\bX}^\perp}(\bX_\star-\bX)=\bL_{\bX}\bG_{\bX}^{-1}\bR_{\bX}^T, \qquad \norm{\Pcal_{T_{\bX}^\perp}(\bX_\star-\bX)}_{\F}\le2\frac{\norm{\bX-\bX_\star}_{\F}^2}{\sigma_r(\bX_\star)}.
\]
For the tangent-space motion, Weyl's inequality gives $\sigma_r(\bX)\ge\sigma_r(\bX_\star)-\norm{\bX-\bX_\star}_{\F}$. Hence the equal-rank projector identity gives
\begin{align*}
\norm{\bU\bU^T-\bU_\star\bU_\star^T}_{\op} &\le\frac{\norm{\bX-\bX_\star}_{\F}}{\sigma_r(\bX_\star)-\norm{\bX-\bX_\star}_{\F}} \le\frac{4\norm{\bX-\bX_\star}_{\F}}{3\sigma_r(\bX_\star)},\\
\norm{\bV\bV^T-\bV_\star\bV_\star^T}_{\op} &\le\frac{\norm{\bX-\bX_\star}_{\F}}{\sigma_r(\bX_\star)-\norm{\bX-\bX_\star}_{\F}} \le\frac{4\norm{\bX-\bX_\star}_{\F}}{3\sigma_r(\bX_\star)}.
\end{align*}
Using the tangent-projector identity, we consequently obtain
\[
\norm{\Pcal_{T_{\bX}}-\Pcal_{T_{\bX_\star}}}_{\F\to\F} \le\frac{8\norm{\bX-\bX_\star}_{\F}}{3\sigma_r(\bX_\star)} \le3\frac{\norm{\bX-\bX_\star}_{\F}}{\sigma_r(\bX_\star)}.
\]
This proves part~\textup{(i)}.

We next prove part~\textup{(ii)}. Let $\bX=\bU\bm\Sigma\bV^T$ be a compact singular value decomposition, and use the graph factors in \eqref{eq:target-graph-factors}. Weyl's inequality and the tangent-coordinate bound $\norm{\bM_\eta}_{\op}\le\norm{\bm\eta}_{\F}$ give
\[
\norm{\bG_{\bX}^{-1}}_{\op}\le\frac1{\sigma_r(\bX_\star)-2\norm{\bX-\bX_\star}_{\F}}, \qquad \norm{\bS_\eta^{-1}}_{\op}\le\frac1{\sigma_r(\bX_\star)-2\norm{\bX-\bX_\star}_{\F}-\norm{\bm\eta}_{\F}}.
\]
In particular, the two cores are invertible under the hypotheses. Since $\rank(\bX_\star)=r$, the Schur complement identity gives $\Pcal_{T_{\bX}^\perp}(\bX_\star-\bX)=\bL_{\bX}\bG_{\bX}^{-1}\bR_{\bX}^T$. Applying \eqref{eq:graph-retraction-identity} to $\bm t_{\bX}+\bm\eta$ and using $\bS_\eta^{-1}-\bG_{\bX}^{-1}=-\bG_{\bX}^{-1}\bM_\eta\bS_\eta^{-1}$, we have
\begin{equation}\label{eq:normal-F-retraction-decomposition}
\begin{aligned}
&~\operatorname{Retr}_{\bX}(\bm t_{\bX}+\bm\eta)-\bX_\star-\bm\eta\cr =&~\bB_\eta\bS_\eta^{-1}\bR_{\bX}^T+\bL_{\bX}\bS_\eta^{-1}\bC_\eta^T+\bB_\eta\bS_\eta^{-1}\bC_\eta^T -\bL_{\bX}\bG_{\bX}^{-1}\bM_\eta\bS_\eta^{-1}\bR_{\bX}^T.
\end{aligned}
\end{equation}
Moreover, $\max\{\norm{\bL_{\bX}}_{\F},\norm{\bR_{\bX}}_{\F}\}\le\norm{\bX-\bX_\star}_{\F}$ and $\max\{\norm{\bM_\eta}_{\F},\norm{\bB_\eta}_{\F},\norm{\bC_\eta}_{\F}\}\le\norm{\bm\eta}_{\F}$. Combining these bounds with \eqref{eq:normal-F-retraction-decomposition} gives
\begin{align*}
\norm{\operatorname{Retr}_{\bX}(\bm t_{\bX}+\bm\eta)-\bX_\star-\bm\eta}_{\F} &\le\frac{2\norm{\bX-\bX_\star}_{\F}\norm{\bm\eta}_{\F}+\norm{\bm\eta}_{\F}^2}{\sigma_r(\bX_\star)-2\norm{\bX-\bX_\star}_{\F}-\norm{\bm\eta}_{\F}}\\
&\quad+\frac{\norm{\bX-\bX_\star}_{\F}^2\norm{\bm\eta}_{\F}}{\bigl(\sigma_r(\bX_\star)-2\norm{\bX-\bX_\star}_{\F}\bigr)\bigl(\sigma_r(\bX_\star)-2\norm{\bX-\bX_\star}_{\F}-\norm{\bm\eta}_{\F}\bigr)}\\
&\le\frac{11}{5}\frac{\norm{\bX-\bX_\star}_{\F}\norm{\bm\eta}_{\F}}{\sigma_r(\bX_\star)}+\frac{11}{10}\frac{\norm{\bm\eta}_{\F}^2}{\sigma_r(\bX_\star)}.
\end{align*}
This proves part~\textup{(ii)}.
\end{proof}

\subsection{Comparison of two nearby matrices}

We first compare aligned singular factors and graph cores at two nearby matrices.

\subsubsection{Variation of the aligned factors}

The next lemma compares the singular factors and graph cores of two nearby matrices. We use the projector and Procrustes estimates in \cite[Lemmas~4.1 and~4.5]{WeiCaiChanLeung2020}, aligning the left and right bases separately.
\begin{lemma}\label[lemma]{lem:factor-motion}
Let $\bX,\bY\in\Mcal_r$. For $\bZ\in\{\bX,\bY\}$, represent
\[
\bZ=\bU_{\bZ}\bS_{\bZ}\bV_{\bZ}^T
\]
in independently aligned Procrustes gauges so that $\bU_{\bX}^T\bU_{\bY}\succeq0$ and $\bV_{\bX}^T\bV_{\bY}\succeq0$, and use the graph factors from \eqref{eq:target-graph-factors} for each $\bZ\in\{\bX,\bY\}$. Suppose that
\[
\max\left\{ \sharpnorm{\bX-\bX_\star}, \sharpnorm{\bY-\bX_\star} \right\} \le \rho\le\frac{\sigma_r(\bX_\star)}{1000\mu r}.
\]
Then
\[
\max\left\{ \norm{\bU_{\bX}-\bU_{\bY}}_{\F}, \norm{\bV_{\bX}-\bV_{\bY}}_{\F}, \norm{\bU_{\bX}\bU_{\bX}^T-\bU_{\bY}\bU_{\bY}^T}_{\F}, \norm{\bV_{\bX}\bV_{\bX}^T-\bV_{\bY}\bV_{\bY}^T}_{\F} \right\} \le\frac32\frac{d_{\bX,\bY}}{\sigma_r(\bX_\star)}.
\]
Moreover,
\[
\max\left\{ \norm{\bL_{\bX}-\bL_{\bY}}_{\F}, \norm{\bR_{\bX}-\bR_{\bY}}_{\F} \right\} \le\left(1+3\frac{\rho}{\sigma_r(\bX_\star)}\right)d_{\bX,\bY},
\]
and
\[
\norm{\bG_{\bX}-\bG_{\bY}}_{\F}\le5d_{\bX,\bY}, \qquad \max_{\bZ\in\{\bX,\bY\}}\norm{\bG_{\bZ}^{-1}}_{\op} \le\frac1{\sigma_r(\bX_\star)-2\rho}.
\]
Finally,
\[
\norm{\bG_{\bX}^{-1}-\bG_{\bY}^{-1}}_{\F} \le\frac{5d_{\bX,\bY}}{\bigl(\sigma_r(\bX_\star)-2\rho\bigr)^2}.
\]
\end{lemma}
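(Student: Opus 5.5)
The proof runs in four stages: the factors, then $\bL$ and $\bR$, then the cores, then their inverses. Write $d:=d_{\bX,\bY}$ and $\sigma:=\sigma_r(\bX_\star)$.

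\emph{Singular factors.} Since $\rho\le\sigma/(1000\mu r)$, Weyl's inequality gives
\[
\min\{\sigma_r(\bX),\sigma_r(\bY)\}\ge\sigma-\rho .
\]
Wedin's $\sin\Theta$ theorem applied to $\bX$ and $\bY$ then gives
\[
\norm{\bU_{\bX}\bU_{\bX}^T-\bU_{\bY}\bU_{\bY}^T}_{\F}\le \sqrt2\,\frac{d}{\sigma-\rho},
\]
and the same bound holds for $\bV$. I will use the one-sided version: from $(\bI-\bU_{\bY}\bU_{\bY}^T)\bU_{\bX}=(\bI-\bU_{\bY}\bU_{\bY}^T)(\bX-\bY)\bV_{\bX}\bS_{\bX}^{-1}$ we get $\norm{(\bI-\bU_{\bY}\bU_{\bY}^T)\bU_{\bX}}_{\F}\le d/(\sigma-\rho)$. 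The projector difference in Frobenius norm is $\sqrt2$ times this sine quantity. That gives $\sqrt2/(1-10^{-3})<3/2$.

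For the aligned factors, the Procrustes gauge $\bU_{\bX}^T\bU_{\bY}\succeq0$ gives the standard bound
\[
\norm{\bU_{\bX}-\bU_{\bY}}_{\F}\le\sqrt2\,\norm{\sin\Theta}_{\F},
\]
since $\norm{\bU_{\bX}-\bU_{\bY}}_{\F}^2=2\sum(1-\cos\theta_i)\le2\sum\sin^2\theta_i$. The same constant $3/2$ follows, and the argument for $\bV$ is symmetric.

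\emph{Graph factors.} I will write
\begin{align*}
\bL_{\bX}-\bL_{\bY}&=(\bI-\bU_{\bX}\bU_{\bX}^T)\bX_\star\bV_{\bX}-(\bI-\bU_{\bY}\bU_{\bY}^T)\bX_\star\bV_{\bY}\\
&=(\bI-\bU_{\bX}\bU_{\bX}^T)\bX_\star(\bV_{\bX}-\bV_{\bY})+(\bU_{\bY}\bU_{\bY}^T-\bU_{\bX}\bU_{\bX}^T)\bX_\star\bV_{\bY}.
\end{align*}
A naive bound gives a factor $\sigma_1$, so instead I replace $\bX_\star$ by error terms:
\begin{itemize}
\item $(\bI-\bU_{\bX}\bU_{\bX}^T)\bX_\star=(\bI-\bU_{\bX}\bU_{\bX}^T)(\bX_\star-\bX)$, which has operator norm at most $\rho$.
\item I split $\bX_\star=\bY+(\bX_\star-\bY)$ in the second term.
\end{itemize}
The $\bY$ part is $(\bU_{\bY}\bU_{\bY}^T-\bU_{\bX}\bU_{\bX}^T)\bY\bV_{\bY}=(\bI-\bU_{\bX}\bU_{\bX}^T)\bY\bV_{\bY}$. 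This equals $(\bI-\bU_{\bX}\bU_{\bX}^T)(\bY-\bX)\bV_{\bY}$, whose Frobenius norm is at most $d$. The remaining terms are bounded by $\rho\cdot\tfrac32 d/\sigma$ and $\tfrac32 d/\sigma\cdot\rho$, for a total of $(1+3\rho/\sigma)d$. $\bR$ is handled symmetrically.

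\emph{Cores.} For $\bG$ I write
\[
\bG_{\bX}-\bG_{\bY}=\bU_{\bX}^T\bX_\star\bV_{\bX}-\bU_{\bY}^T\bX_\star\bV_{\bY}
\]
and telescope, expressing $\bX_\star$ as $\bX_\star-\bX+\bX$ and using the aligned factors. The $\bX$ parts give $\bU_{\bX}^T\bX\bV_{\bX}-\bU_{\bY}^T\bY\bV_{\bY}+\bU_{\bY}^T(\bY-\bX)\bV_{\bY}$. Here I expect trouble: $\bS_{\bX}$ versus $\bS_{\bY}$ in aligned gauges is not obviously within $O(d)$ unless the gauge differences are paired with $\norm{\bS}\le\sigma_1$. \textbf{This is the main obstacle}, because a factor $\kappa$ would appear.

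The plan is to avoid comparing $\bS$ at all and telescope with the error instead:
\[
\bG_{\bX}-\bG_{\bY}=(\bU_{\bX}-\bU_{\bY})^T\bX_\star\bV_{\bX}+\bU_{\bY}^T\bX_\star(\bV_{\bX}-\bV_{\bY}).
\]
This still has $\bX_\star$, which is large. I would then use that the Procrustes alignment makes $\bU_{\bX}-\bU_{\bY}$ nearly orthogonal to $\range(\bU_{\bX})$ up to second order. Indeed, $\bU_{\bX}^T(\bU_{\bX}-\bU_{\bY})=\bI-\bU_{\bX}^T\bU_{\bY}$ is symmetric PSD with norm at most $\sin^2\theta$. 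A skew-symmetric residual could otherwise multiply $\bS$; the symmetric gauge removes it.

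Then I write $\bX_\star=\bU_{\bX}\bG_{\bX}\bV_{\bX}^T+(\text{terms with }\bL_{\bX},\bR_{\bX},\bN_{\bX})$. The quadratic part is of size $\sigma_1\cdot(d/\sigma)^2$. This is absorbed because $d\le2\sqrt{2r}\rho\ll\sigma/\kappa$ is not guaranteed. If this fails, I would fall back on the argument of \cite[Lemma~4.5]{WeiCaiChanLeung2020}, which I am allowed to cite, to get the constant $5$. Comparing with $(1+3\rho/\sigma)$ and the other pieces, the constant budget of $5$ leaves room.

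\emph{Inverses.} Weyl's inequality with $\norm{\bG_{\bZ}-\bS_{\bZ}}_{\op}\le\rho$ gives $\sigma_{\min}(\bG_{\bZ})\ge\sigma-2\rho$. Then
\[
\bG_{\bX}^{-1}-\bG_{\bY}^{-1}=\bG_{\bX}^{-1}(\bG_{\bY}-\bG_{\bX})\bG_{\bY}^{-1}
\]
yields the final estimate.
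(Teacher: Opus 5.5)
Your arguments for the singular factors, for $\bL$ and $\bR$, and for the inverses are correct. For the factors you use the one-sided identity $(\bI-\bU_{\bY}\bU_{\bY}^T)\bU_{\bX}=(\bI-\bU_{\bY}\bU_{\bY}^T)(\bX-\bY)\bV_{\bX}\bS_{\bX}^{-1}$ together with $1-\cos\theta\le\sin^2\theta$ in the Procrustes gauge, which makes this step self-contained where the paper cites a reference. Your $\bL$, $\bR$ argument reproduces the constant $1+3\rho/\sigma$ exactly, and the bound on $\norm{\bG_{\bZ}^{-1}}_{\op}$ and the resolvent step are fine. The genuine gap is the core bound $\norm{\bG_{\bX}-\bG_{\bY}}_{\F}\le5d$, and the final inverse estimate rests on it. Set $\bC_U:=\bU_{\bX}^T\bU_{\bY}\succeq0$, with $d=d_{\bX,\bY}$ and $\sigma=\sigma_r(\bX_\star)$ as in your notation. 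Your telescoping produces $(\bU_{\bX}-\bU_{\bY})^T\bX_\star\bV_{\bX}=(\bI-\bC_U)\bG_{\bX}+(\bU_{\bX}-\bU_{\bY})^T\bL_{\bX}$. You bound the first piece by $\norm{\bI-\bC_U}_{\F}\norm{\bG_{\bX}}_{\op}\lesssim\sigma_1(\bX_\star)\,d^2/\sigma^2$. That is $O(d)$ only if $d\lesssim\sigma/\kappa$, which does not follow from $d\le2\sqrt{2r}\rho$ and $\rho\le\sigma/(1000\mu r)$. Your sentence saying the term is absorbed ``because \dots\ is not guaranteed'' contradicts itself. The fallback citation is not an argument either: the paper uses \cite[Lemmas~4.1 and~4.5]{WeiCaiChanLeung2020} only for the factor and projector bounds, and proves the $\kappa$-free core bound directly.

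The missing idea is never to separate $\bI-\bC_U$ from the singular-value matrix. Since $\bC_U$ is symmetric, $\bI-\bC_U^2=\bU_{\bY}^T(\bI-\bU_{\bX}\bU_{\bX}^T)\bU_{\bY}$, and $\bU_{\bY}\bS_{\bY}=\bY\bV_{\bY}$, so
\[
(\bI-\bC_U)\bS_{\bY}=(\bI+\bC_U)^{-1}\bU_{\bY}^T(\bI-\bU_{\bX}\bU_{\bX}^T)(\bY-\bX)\bV_{\bY}.
\]
Because $\norm{(\bI+\bC_U)^{-1}}_{\op}\le1$ for $\bC_U\succeq0$, this gives $\norm{(\bI-\bC_U)\bS_{\bY}}_{\F}\le d$ with no $\sigma_1(\bX_\star)$. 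In the same way, $\norm{\bS_{\bY}(\bI-\bC_V)}_{\F}\le d$. The paper then writes
\[
\bS_{\bX}-\bS_{\bY}=\bU_{\bX}^T(\bX-\bY)\bV_{\bX}+(\bC_U-\bI)\bS_{\bY}\bC_V+\bS_{\bY}(\bC_V-\bI),
\]
which gives $\norm{\bS_{\bX}-\bS_{\bY}}_{\F}\le3d$. Writing $\bG_{\bZ}=\bS_{\bZ}+\bU_{\bZ}^T(\bX_\star-\bZ)\bV_{\bZ}$ and telescoping the second piece in three terms costs $d+3\rho d/\sigma$, for a total of at most $5d$. The same identity with $\bX$ and $\bY$ exchanged, $(\bI-\bC_U)\bS_{\bX}=(\bI+\bC_U)^{-1}\bU_{\bX}^T(\bI-\bU_{\bY}\bU_{\bY}^T)(\bX-\bY)\bV_{\bX}$, would also close your own decomposition. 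For that, expand $\bG_{\bX}=\bS_{\bX}+\bU_{\bX}^T(\bX_\star-\bX)\bV_{\bX}$ and $\bU_{\bY}^T\bX_\star=\bG_{\bY}\bV_{\bY}^T+\bR_{\bY}^T$, so that the second term becomes $\bG_{\bY}(\bC_V-\bI)+\bR_{\bY}^T(\bV_{\bX}-\bV_{\bY})$.
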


\begin{proof}
By Weyl's inequality, $\min\{\sigma_r(\bX),\sigma_r(\bY)\}>0$. Since $(\bI-\bU_{\bX}\bU_{\bX}^T)\bX=\bm 0$, $\bY\bV_{\bY}\bS_{\bY}^{-1}=\bU_{\bY}$, and the transposed identities hold on the right, the Procrustes estimates in \cite[Lemmas~4.1 and~4.5]{WeiCaiChanLeung2020} give
\begin{align*}
\norm{\bU_{\bX}-\bU_{\bY}}_{\F} &\le\frac32\frac{d_{\bX,\bY}}{\sigma_r(\bX_\star)}, & \norm{\bV_{\bX}-\bV_{\bY}}_{\F} &\le\frac32\frac{d_{\bX,\bY}}{\sigma_r(\bX_\star)},\\
\norm{\bU_{\bX}\bU_{\bX}^T-\bU_{\bY}\bU_{\bY}^T}_{\F} &\le\frac32\frac{d_{\bX,\bY}}{\sigma_r(\bX_\star)}, & \norm{\bV_{\bX}\bV_{\bX}^T-\bV_{\bY}\bV_{\bY}^T}_{\F} &\le\frac32\frac{d_{\bX,\bY}}{\sigma_r(\bX_\star)}.
\end{align*}
This proves the factor and projector estimates.

In the Procrustes gauges of the statement, set
\[
\bC_U:=\bU_{\bX}^T\bU_{\bY}\succeq0, \qquad \bC_V:=\bV_{\bX}^T\bV_{\bY}\succeq0.
\]
Since $\rank(\bX-\bY)\le2r$ and $\max\{\sharpnorm{\bX-\bX_\star},\sharpnorm{\bY-\bX_\star}\}\le\rho$, we have $d_{\bX,\bY}\le2\sqrt{2r}\,\rho$. Combining this with $\rho\le\sigma_r(\bX_\star)/(1000\mu r)$ and the projector estimates obtained from \cite[Lemmas~4.1 and~4.5]{WeiCaiChanLeung2020} shows that $\bC_U$ and $\bC_V$ are invertible. Their definitions give
\[
\bI-\bC_U^2=\bU_{\bY}^T(\bI-\bU_{\bX}\bU_{\bX}^T)\bU_{\bY}, \qquad \bI-\bC_V^2=\bV_{\bY}^T(\bI-\bV_{\bX}\bV_{\bX}^T)\bV_{\bY}.
\]
Consequently, we have
\begin{align*}
(\bI-\bC_U)\bS_{\bY} &= (\bI+\bC_U)^{-1}\bU_{\bY}^T(\bI-\bU_{\bX}\bU_{\bX}^T)(\bY-\bX)\bV_{\bY},\\
\bS_{\bY}(\bI-\bC_V) &=\bU_{\bY}^T(\bY-\bX)(\bI-\bV_{\bX}\bV_{\bX}^T)\bV_{\bY}(\bI+\bC_V)^{-1}.
\end{align*}
Since $\bS_{\bX}=\bU_{\bX}^T\bX\bV_{\bX}$ and $\bS_{\bY}=\bU_{\bY}^T\bY\bV_{\bY}$, substituting these identities gives
\begin{align*}
\bS_{\bX}-\bS_{\bY} &=\bU_{\bX}^T(\bX-\bY)\bV_{\bX}+(\bC_U-\bI)\bS_{\bY}\bC_V+\bS_{\bY}(\bC_V-\bI),\\
\norm{\bS_{\bX}-\bS_{\bY}}_{\F} &\le3d_{\bX,\bY}.
\end{align*}

Using the definitions of the graph factors together with the factor and projector bounds, we have
\begin{align*}
\norm{\bL_{\bX}-\bL_{\bY}}_{\F} &\le d_{\bX,\bY}+3\frac{\rho\,d_{\bX,\bY}}{\sigma_r(\bX_\star)},\\
\norm{\bR_{\bX}-\bR_{\bY}}_{\F} &\le d_{\bX,\bY}+3\frac{\rho\,d_{\bX,\bY}}{\sigma_r(\bX_\star)}.
\end{align*}
Moreover, we have
\begin{align*}
\norm{\bU_{\bX}^T(\bX_\star-\bX)\bV_{\bX}-\bU_{\bY}^T(\bX_\star-\bY)\bV_{\bY}}_{\F}\le d_{\bX,\bY}+3\frac{\rho\,d_{\bX,\bY}}{\sigma_r(\bX_\star)}.
\end{align*}
Combining the preceding estimate with $\norm{\bS_{\bX}-\bS_{\bY}}_{\F}\le3d_{\bX,\bY}$ yields
\[
\norm{\bG_{\bX}-\bG_{\bY}}_{\F}\le5d_{\bX,\bY}.
\]
For $\bZ\in\{\bX,\bY\}$, the definition of $\bG_{\bZ}$ and Weyl's inequality give
\[
\sigma_{\min}(\bG_{\bZ})\ge\sigma_r(\bX_\star)-2\rho, \qquad \norm{\bG_{\bZ}^{-1}}_{\op}\le\frac1{\sigma_r(\bX_\star)-2\rho}.
\]
The resolvent identity then gives
\[
\norm{\bG_{\bX}^{-1}-\bG_{\bY}^{-1}}_{\F} \le\frac{5d_{\bX,\bY}}{\bigl(\sigma_r(\bX_\star)-2\rho\bigr)^2}.
\]
This completes the proof.
\end{proof}

We now apply the factor estimates to the graph-retraction identity. The proof also verifies that both updated cores are invertible, so both retractions are well defined.

\subsubsection{Proof of Lemma~\ref{lem:graph-two-base}}
\label{sec:proof-graph-two-base}

\begin{proof}
We align the singular factors as in \Cref{lem:factor-motion}. For $\bZ\in\{\bX,\bY\}$, write
\[
\bM_{\bZ}:=\bU_{\bZ}^T\bm\eta_{\bZ}\bV_{\bZ},\qquad \bB_{\bZ}:=(\bI-\bU_{\bZ}\bU_{\bZ}^T)\bm\eta_{\bZ}\bV_{\bZ},
\]
and
\[
\bC_{\bZ}:=(\bI-\bV_{\bZ}\bV_{\bZ}^T)\bm\eta_{\bZ}^T\bU_{\bZ},\qquad \bS_{\bm\eta,\bZ}:=\bG_{\bZ}+\bM_{\bZ}.
\]
Set
\[
s_\eta:=\max\{\norm{\bm\eta_{\bX}}_{\F},\norm{\bm\eta_{\bY}}_{\F}\},\qquad \Delta:=d_\eta+3\frac{s_\eta d_{\bX,\bY}}{\sigma_r(\bX_\star)}.
\]
Using the aligned bases, we have
\begin{align*}
\bM_{\bX}-\bM_{\bY} &=(\bU_{\bX}-\bU_{\bY})^T\bm\eta_{\bX}\bV_{\bX}+\bU_{\bY}^T(\bm\eta_{\bX}-\bm\eta_{\bY})\bV_{\bX}+\bU_{\bY}^T\bm\eta_{\bY}(\bV_{\bX}-\bV_{\bY}),\\
\bB_{\bX}-\bB_{\bY} &=(\bU_{\bY}\bU_{\bY}^T-\bU_{\bX}\bU_{\bX}^T)\bm\eta_{\bX}\bV_{\bX}+(\bI-\bU_{\bY}\bU_{\bY}^T)(\bm\eta_{\bX}-\bm\eta_{\bY})\bV_{\bX}\\
&\quad +(\bI-\bU_{\bY}\bU_{\bY}^T)\bm\eta_{\bY}(\bV_{\bX}-\bV_{\bY}),\\
\bC_{\bX}-\bC_{\bY} &=(\bV_{\bY}\bV_{\bY}^T-\bV_{\bX}\bV_{\bX}^T)\bm\eta_{\bX}^T\bU_{\bX}+(\bI-\bV_{\bY}\bV_{\bY}^T)(\bm\eta_{\bX}-\bm\eta_{\bY})^T\bU_{\bX}\\
&\quad +(\bI-\bV_{\bY}\bV_{\bY}^T)\bm\eta_{\bY}^T(\bU_{\bX}-\bU_{\bY}).
\end{align*}
Therefore, \Cref{lem:factor-motion} gives
\begin{equation}\label{eq:two-base-coordinate-differences}
\max\left\{\norm{\bM_{\bX}-\bM_{\bY}}_{\F},\norm{\bB_{\bX}-\bB_{\bY}}_{\F},\norm{\bC_{\bX}-\bC_{\bY}}_{\F}\right\}\le\Delta.
\end{equation}
Moreover, the definitions of the tangent and graph coordinates imply
\begin{equation}\label{eq:two-base-coordinate-sizes}
\max_{\bZ\in\{\bX,\bY\}}\max\left\{\norm{\bM_{\bZ}}_{\F},\norm{\bB_{\bZ}}_{\F},\norm{\bC_{\bZ}}_{\F}\right\}\le s_\eta, \qquad \max_{\bZ\in\{\bX,\bY\}}\max\left\{\norm{\bL_{\bZ}}_{\op},\norm{\bR_{\bZ}}_{\op}\right\}\le \rho.
\end{equation}

The correction-size hypothesis gives $s_\eta\le9\mu r \rho^2/\sigma_r(\bX_\star)$. Furthermore, Weyl's inequality and \eqref{eq:two-base-coordinate-sizes} give
\[
\sigma_{\min}(\bS_{\bm\eta,\bZ}) \ge\sigma_r(\bX_\star)-2\rho-s_\eta>0, \qquad \norm{\bS_{\bm\eta,\bZ}^{-1}}_{\op} \le\frac1{\sigma_r(\bX_\star)-2\rho-s_\eta}, \qquad \bZ\in\{\bX,\bY\}.
\]
Using the resolvent identity, \Cref{lem:factor-motion}, and \eqref{eq:two-base-coordinate-differences}, we also obtain
\begin{equation}\label{eq:two-base-updated-core-motion}
\norm{\bS_{\bm\eta,\bX}^{-1}-\bS_{\bm\eta,\bY}^{-1}}_{\F} \le\frac{5d_{\bX,\bY}+\Delta}{\bigl(\sigma_r(\bX_\star)-2\rho-s_\eta\bigr)^2}.
\end{equation}

Since $\rho<\sigma_r(\bX_\star)/4$, \Cref{lem:population-exact} applies at both base points. Hence \eqref{eq:graph-retraction-identity} gives, for $\bZ\in\{\bX,\bY\}$, we have
\begin{equation}\label{eq:two-base-retraction-remainder}
\begin{aligned}
&~\operatorname{Retr}_{\bZ}(\bm t_{\bZ}+\bm\eta_{\bZ})-\bX_\star-\bm\eta_{\bZ}\cr =&~\bB_{\bZ}\bS_{\bm\eta,\bZ}^{-1}\bR_{\bZ}^T+\bL_{\bZ}\bS_{\bm\eta,\bZ}^{-1}\bC_{\bZ}^T+\bB_{\bZ}\bS_{\bm\eta,\bZ}^{-1}\bC_{\bZ}^T -\bL_{\bZ}\bG_{\bZ}^{-1}\bM_{\bZ}\bS_{\bm\eta,\bZ}^{-1}\bR_{\bZ}^T.
\end{aligned}
\end{equation}

We compare the four terms in \eqref{eq:two-base-retraction-remainder}. In each product, we use the Frobenius norm for a correction or a difference and the operator norm for the remaining graph factors. For the first term, we have
\begin{align*}
&~\bB_{\bX}\bS_{\bm\eta,\bX}^{-1}\bR_{\bX}^T-\bB_{\bY}\bS_{\bm\eta,\bY}^{-1}\bR_{\bY}^T\cr =&~(\bB_{\bX}-\bB_{\bY})\bS_{\bm\eta,\bX}^{-1}\bR_{\bX}^T+\bB_{\bY}(\bS_{\bm\eta,\bX}^{-1}-\bS_{\bm\eta,\bY}^{-1})\bR_{\bX}^T+\bB_{\bY}\bS_{\bm\eta,\bY}^{-1}(\bR_{\bX}-\bR_{\bY})^T.
\end{align*}
Interchanging the left and right factors gives the corresponding expansion for $\bL_{\bZ}\bS_{\bm\eta,\bZ}^{-1}\bC_{\bZ}^T$. For the third term, we have
\begin{align*}
&~\bB_{\bX}\bS_{\bm\eta,\bX}^{-1}\bC_{\bX}^T-\bB_{\bY}\bS_{\bm\eta,\bY}^{-1}\bC_{\bY}^T \cr =&~(\bB_{\bX}-\bB_{\bY})\bS_{\bm\eta,\bX}^{-1}\bC_{\bX}^T+\bB_{\bY}(\bS_{\bm\eta,\bX}^{-1}-\bS_{\bm\eta,\bY}^{-1})\bC_{\bX}^T+\bB_{\bY}\bS_{\bm\eta,\bY}^{-1}(\bC_{\bX}-\bC_{\bY})^T.
\end{align*}
Finally, the five-factor difference has the telescoping expansion
\begin{align*}
&~\bL_{\bX}\bG_{\bX}^{-1}\bM_{\bX}\bS_{\bm\eta,\bX}^{-1}\bR_{\bX}^T-\bL_{\bY}\bG_{\bY}^{-1}\bM_{\bY}\bS_{\bm\eta,\bY}^{-1}\bR_{\bY}^T\cr =&~(\bL_{\bX}-\bL_{\bY})\bG_{\bX}^{-1}\bM_{\bX}\bS_{\bm\eta,\bX}^{-1}\bR_{\bX}^T+\bL_{\bY}(\bG_{\bX}^{-1}-\bG_{\bY}^{-1})\bM_{\bX}\bS_{\bm\eta,\bX}^{-1}\bR_{\bX}^T\\
&\quad+\bL_{\bY}\bG_{\bY}^{-1}(\bM_{\bX}-\bM_{\bY})\bS_{\bm\eta,\bX}^{-1}\bR_{\bX}^T+\bL_{\bY}\bG_{\bY}^{-1}\bM_{\bY}(\bS_{\bm\eta,\bX}^{-1}-\bS_{\bm\eta,\bY}^{-1})\bR_{\bX}^T\\
&\quad+\bL_{\bY}\bG_{\bY}^{-1}\bM_{\bY}\bS_{\bm\eta,\bY}^{-1}(\bR_{\bX}-\bR_{\bY})^T.
\end{align*}
Applying \Cref{lem:factor-motion} and \eqref{eq:two-base-coordinate-differences}--\eqref{eq:two-base-updated-core-motion} termwise in these expansions, and then using $\Delta=d_\eta+3s_\eta d_{\bX,\bY}/\sigma_r(\bX_\star)$, $s_\eta\le9\mu r \rho^2/\sigma_r(\bX_\star)$, and $\rho\le\sigma_r(\bX_\star)/(1000\mu r)$, yields
\[
\norm{\operatorname{Retr}_{\bX}(\bm t_{\bX}+\bm\eta_{\bX})-\operatorname{Retr}_{\bY}(\bm t_{\bY}+\bm\eta_{\bY})-(\bm\eta_{\bX}-\bm\eta_{\bY})}_{\F}\le\frac14d_\eta+19\frac{\mu r \rho^2}{\sigma_r(\bX_\star)^2}d_{\bX,\bY}.
\]
Adding $\norm{\bm\eta_{\bX}-\bm\eta_{\bY}}_{\F}=d_\eta$ proves \eqref{eq:finite-two-base-main}.
\end{proof}

\section{Sharp-norm estimates for spectral initialization}\label{app:spectral}

We prove the deterministic reconstruction estimate and the finite approximation used in \Cref{sec:initialization}. The estimates concern the reconstructed matrix and do not require separation between adjacent singular values.

\subsection[Proof of the sharp reconstruction estimate]{Proof of \Cref{lem:spectral-reconstruction}}
\label{sec:proof-spectral-reconstruction}

\begin{proof}[Proof of \Cref{lem:spectral-reconstruction}]
Suppose first that $\bZ^+\ne\bm0$. With the dilation in \eqref{eq:spectral-dilation}, set
\[
\widehat{\mathscr Q} :=\frac1{\sqrt2}
\begin{bmatrix}
\widehat{\bU}&\widehat{\bU}\\
\widehat{\bV}&-\widehat{\bV}
\end{bmatrix}
, \qquad \mathscr D:=\operatorname{diag}(\widehat{\bm\Sigma},-\widehat{\bm\Sigma}), \qquad \bm P:=\mathscr Q_\star\mathscr Q_\star^T.
\]
The columns of $\widehat{\mathscr Q}$ are orthonormal, and $\norm{\mathscr D^{-1}}_{\op}\le8/\tau$. The one-sided singular equation in \eqref{eq:spectral-one-sided-pair} and the residual bound in \eqref{eq:spectral-pair-residual} give
\begin{equation}\label{eq:spectral-dilated-equation}
\left(
\begin{bmatrix}
\bm0&\bX_\star\\
\bX_\star^T&\bm0
\end{bmatrix}
+\mathscr W\right)\widehat{\mathscr Q} =\widehat{\mathscr Q}\mathscr D+\mathscr F, \qquad \norm{\mathscr F}_{\op}\le\frac{\tau}{64}\sqrt{\frac{\mu r}{n}}.
\end{equation}
The coefficient of the true singular spaces is
\[
\bC:=
\begin{bmatrix}
\bm0&\bm\Sigma_\star\\
\bm\Sigma_\star&\bm0
\end{bmatrix}
\mathscr Q_\star^T\widehat{\mathscr Q}\mathscr D^{-1}.
\]
By \eqref{eq:spectral-dilated-equation}, we have
\[
\mathscr Q_\star\bC =\widehat{\mathscr Q}-\mathscr W\widehat{\mathscr Q}\mathscr D^{-1} +\mathscr F\mathscr D^{-1}, \qquad \norm{\bC}_{\op} \le1+\frac8\tau\left(\frac{\tau}{512}+\frac{\tau}{64}\sqrt{\frac{\mu r}{n}}\right) \le\frac54.
\]
In particular, this estimate does not use the ratio $\sigma_1(\bX_\star)/\tau$.

Since $\norm{\mathscr W}_{\op}\norm{\mathscr D^{-1}}_{\op}\le1/64$, the same equation has the convergent expansion
\begin{equation}\label{eq:spectral-reconstruction-series}
\widehat{\mathscr Q} =\sum_{j\ge0}\mathscr W^j\mathscr Q_\star\bC\mathscr D^{-j} -\sum_{j\ge0}\mathscr W^j\mathscr F\mathscr D^{-j-1}.
\end{equation}
For $j\ge1$, \eqref{eq:spectral-power-bounds} implies
\[
\norm{(\bI-\bm P)\mathscr W^j\mathscr Q_\star}_{2,\infty} \le2\sqrt{\frac{\mu r}{n}}\left(\frac{\tau}{64}\right)^j +\sqrt{\frac{\mu r}{n}}\norm{\mathscr W}_{\op}^j \le3\sqrt{\frac{\mu r}{n}}\left(\frac{\tau}{64}\right)^j.
\]
Also, $\norm{(\bI-\bm P)\mathscr W^j\mathscr F}_{2,\infty}\le\norm{\mathscr W}_{\op}^j\norm{\mathscr F}_{\op}$. Applying these estimates to \eqref{eq:spectral-reconstruction-series} gives
\begin{align}
\norm{(\bI-\bm P)\widehat{\mathscr Q}\mathscr D}_{2,\infty} &\le\sqrt{\frac{\mu r}{n}}\left( \frac{15\tau}{256(1-1/8)}+\frac{\tau}{64(1-1/64)}\right) \le\frac{3\tau}{32}\sqrt{\frac{\mu r}{n}},\label{eq:spectral-offspace-weighted}\\
\norm{(\bI-\bm P)\widehat{\mathscr Q}}_{2,\infty} &\le\frac34\sqrt{\frac{\mu r}{n}},\qquad \norm{(\bI-\bm P)\widehat{\mathscr Q}\mathscr D\widehat{\mathscr Q}^T(\bI-\bm P)}_{\infty} \le\frac{9\tau\mu r}{128n}.\label{eq:spectral-offspace-entry}
\end{align}

The matrix $\widehat{\mathscr Q}\mathscr D\widehat{\mathscr Q}^T$ is the symmetric dilation of $\bZ^+$. By \eqref{eq:spectral-pair-residual}, we have
\begin{equation}\label{eq:spectral-reconstruction-op}
\norm{\bZ^+-\bX_\star}_{\op} \le\frac{7\tau}{32}+\frac{\tau}{512} =\frac{113\tau}{512}.
\end{equation}
Decompose its error on the left by $\bm P$ and $\bI-\bm P$. Since $\norm{\mathscr Q_\star}_{2,\infty}\le\sqrt{\mu r/n}$, \eqref{eq:spectral-offspace-weighted}--\eqref{eq:spectral-reconstruction-op} yield
\[
\max\{\norm{\bZ^+-\bX_\star}_{2,\infty},\norm{(\bZ^+-\bX_\star)^T}_{2,\infty}\} \le\left(\frac{113}{512}+\frac3{32}\right)\tau\sqrt{\frac{\mu r}{n}} =\frac{161\tau}{512}\sqrt{\frac{\mu r}{n}}.
\]
For the entrywise norm, decompose the error on both sides by the same projections. The true-space term is bounded by $(\mu r/n)\norm{\bZ^+-\bX_\star}_{\op}$, the two mixed terms by $3\tau\mu r/(32n)$ each, and the remaining term by \eqref{eq:spectral-offspace-entry}. Therefore,
\[
\norm{\bZ^+-\bX_\star}_{\infty} \le\left(\frac{113}{512}+\frac6{32}+\frac9{128}\right)\frac{\tau\mu r}{n} =\frac{245\tau\mu r}{512n}.
\]
Combining these three bounds in \eqref{eq:sharp-main} gives $\sharpnorm{\bZ^+-\bX_\star}\le\tau/4$.

If $\bZ^+=\bm0$, then $\sigma_1(\bX_\star)\le\norm{\bY}_{\op}+\norm{\bW}_{\op}\le113\tau/512$. Incoherence gives $\sharpnorm{\bX_\star}=\sigma_1(\bX_\star)$, so the same conclusion follows.
\end{proof}

\subsection[Proof of the approximate reconstruction estimate]{Proof of \Cref{lem:spectral-approximation}}
\label{sec:proof-spectral-approximation}

\begin{proof}[Proof of \Cref{lem:spectral-approximation}]
We condition on the fixed matrix $\bY$. Let $d_1\ge\cdots\ge d_n>0$ be the eigenvalues of $\bY\bY^T+\tau^2\bI/4096$, with corresponding orthonormal eigenvectors $\bm u_1,\ldots,\bm u_n$. The hypothesis on $\sigma_{r+1}(\bY)$ gives
\begin{equation}\label{eq:threshold-tail-eigenvalue}
d_{r+1} \le \frac{\tau^2}{512^2}+\frac{\tau^2}{4096} <\frac{\tau^2}{4000}.
\end{equation}
Let $\bU_1$ contain the eigenvectors whose eigenvalues are at least $\tau^2/128$, and let $\bm D_1$ be the diagonal matrix of those eigenvalues. By \eqref{eq:threshold-tail-eigenvalue}, their number is at most $r$. Let $\bU_2$ and $\bm D_2$ contain the remaining eigenvectors and eigenvalues, so that
\[
\bY\bY^T+\frac{\tau^2}{4096}\bI =\bU_1\bm D_1\bU_1^T+\bU_2\bm D_2\bU_2^T, \qquad \norm{\bm D_2}_{\op}\le\frac{\tau^2}{128}.
\]
These decompositions are used only in the proof, not in the computation of $\Tcal$.

\emph{Step 1: the initial subspace.} Put $\bU_{[r]}=[\bm u_1,\ldots,\bm u_r]$. In the eigenbasis $[\bU_{[r]},\bU_{[r]}^\perp]$, write the Gaussian initial matrix as $[\bG_1^T,\bG_2^T]^T$, where $\bG_1\in\R^{r\times r}$. Orthogonal invariance preserves the standard Gaussian distribution. Since $\Pr\{|g|\le t\}\le t$ for $g\sim N(0,1)$,
\[
\Pr\!\left\{\min_{1\le j\le r}\operatorname{dist} \bigl((\bG_1)_{:,j},\operatorname{span}\{(\bG_1)_{:,k}:k\ne j\}\bigr)<n^{-20}\right\} \le n^{-19},
\]
and therefore
\[
\norm{\bG_1^{-1}}_{\op}\le\sqrt r\,n^{20}.
\]
Moreover,
\[
\E\norm{\bG_2}_{\F}^2\le n^2, \qquad \Pr\{\norm{\bG_2}_{\F}>n^{11}\}\le n^{-20}.
\]
Hence, with probability at least $1-2n^{-19}\ge1-n^{-12}/48$,
\begin{equation}\label{eq:threshold-gaussian-bound}
\norm{\bG_2\bG_1^{-1}}_{\op}\le n^{32}.
\end{equation}

\emph{Step 2: weighted subspace estimates.} Fix a realization satisfying \eqref{eq:threshold-gaussian-bound}. Thin QR does not change the column space, so \eqref{eq:threshold-iteration} gives
\[
\range(\bQ_t)=\range\!\left(\left(\bY\bY^T+\frac{\tau^2}{4096}\bI\right)^t\bG\right).
\]
Relative to $\bU_{[r]}\oplus\bU_{[r]}^\perp$, this space has graph matrix
\[
\operatorname{diag}(d_{r+1}^t,\ldots,d_n^t)\bG_2\bG_1^{-1} \operatorname{diag}(d_1^{-t},\ldots,d_r^{-t}).
\]
Since $\range(\bU_1)\subseteq\range(\bU_{[r]})$, \eqref{eq:threshold-tail-eigenvalue} and $d_j\ge\tau^2/128$ on $\range(\bU_1)$ imply, for $t\ge1$ and $s\in\{0,1/2,1\}$, we have
\begin{equation}\label{eq:threshold-weighted-subspace}
\norm{(\bI-\bQ_t\bQ_t^T)\bU_1\bm D_1^s}_{\op} \le\left(\frac{\tau^2}{128}\right)^s n^{32}30^{-t}.
\end{equation}
For $t=\ceil{12\log n}$ and $n\ge2$, we have
\[
n^{32}30^{-t}\le n^{-7}\le\frac1{64}\sqrt{\frac{\mu r}{n}}.
\]
Let $\bQ$ denote the final factor, and put $\bm P=\bQ\bQ^T$ and $\bE=(\bI-\bm P)\bU_1$. Then
\begin{equation}\label{eq:threshold-final-subspace}
\norm{\bE\bm D_1^s}_{\op} \le\frac1{64}\sqrt{\frac{\mu r}{n}} \left(\frac{\tau^2}{128}\right)^s, \qquad s\in\{0,1/2,1\}.
\end{equation}
If $\bU_1$ has no columns, then $\norm{\bY}_{\op}<\tau/(8\sqrt2)<\tau/8$. No singular value is retained and the conclusion is immediate. We henceforth suppose that $\bU_1$ is nonempty.

\emph{Step 3: the retained singular factors.} An orthonormal basis for $\range(\bm P\bU_1)$ is
\[
\widetilde{\bQ}_1=(\bU_1-\bE)(\bI-\bE^T\bE)^{-1/2}, \qquad \norm{(\bI-\bE^T\bE)^{-1/2}}_{\op}\le\frac43.
\]
Complete it inside $\range(\bQ)$ by $\widetilde{\bQ}_2$. Then $\widetilde{\bQ}_2^T\bU_1=\bm0$. Since $\bU_1^T\bE=\bE^T\bE$, direct substitution gives
\begin{align*}
(\bI-\bm P)\bY\bY^T\widetilde{\bQ}_1 &=\left[\bE\bm D_1-\bE\bm D_1\bE^T\bE -(\bI-\bm P)\bU_2\bm D_2\bU_2^T\bE\right](\bI-\bE^T\bE)^{-1/2},\\
\widetilde{\bQ}_2^T\bY\bY^T\widetilde{\bQ}_1 &=-\widetilde{\bQ}_2^T\bU_2\bm D_2\bU_2^T\bE(\bI-\bE^T\bE)^{-1/2}.
\end{align*}
Using \eqref{eq:threshold-final-subspace} and $\mu r/n\le1$, these identities imply
\begin{subequations}\label{eq:threshold-ritz-blocks}
\begin{align}
\norm{(\bI-\bm P)\bY\bY^T\widetilde{\bQ}_1}_{\op} &\le\frac{\tau^2}{2048}\sqrt{\frac{\mu r}{n}},\label{eq:threshold-ritz-high}\\
\norm{\widetilde{\bQ}_2^T\bY\bY^T\widetilde{\bQ}_1}_{\op} &\le\frac{\tau^2}{4096}\sqrt{\frac{\mu r}{n}}, \qquad \widetilde{\bQ}_2^T\bY\bY^T\widetilde{\bQ}_2\preceq\frac{\tau^2}{128}\bI.\label{eq:threshold-ritz-low}
\end{align}
\end{subequations}

The compressed SVD in \eqref{eq:threshold-definition} gives $\bY^T\widehat{\bU}=\widehat{\bV}\widehat{\bm\Sigma}$ and $\sigma_{\min}(\widehat{\bm\Sigma})\ge\tau/8$. Its retained left factors are Ritz vectors of $\bY\bY^T$ in $\range(\bQ)$. Write $\widehat{\bU}=\widetilde{\bQ}_1\bZ_1+\widetilde{\bQ}_2\bZ_2$. The lower block of the Ritz equation is
\[
\bZ_2\widehat{\bm\Sigma}^{\,2} -(\widetilde{\bQ}_2^T\bY\bY^T\widetilde{\bQ}_2)\bZ_2 =\widetilde{\bQ}_2^T\bY\bY^T\widetilde{\bQ}_1\bZ_1.
\]
The spectra on the two sides are separated by at least $\tau^2/128$. The integral solution of this Sylvester equation, together with \eqref{eq:threshold-ritz-low} and $\norm{\bZ_1}_{\op}\le1$, therefore gives
\[
\norm{\bZ_2}_{\op} \le\frac{128}{\tau^2}\norm{\widetilde{\bQ}_2^T\bY\bY^T\widetilde{\bQ}_1\bZ_1}_{\op} \le\frac1{32}\sqrt{\frac{\mu r}{n}}.
\]
Since $\widetilde{\bQ}_2$ is orthogonal to $\bU_1$, also $\norm{\bY\bY^T\widetilde{\bQ}_2}_{\op}\le\tau^2/128$. Combining this with \eqref{eq:threshold-ritz-high} yields
\[
\norm{(\bI-\bm P)\bY\bY^T\widehat{\bU}}_{\op} \le\frac{\tau^2}{1024}\sqrt{\frac{\mu r}{n}}.
\]
Moreover, $\bm P\bY\widehat{\bV}=\widehat{\bU}\widehat{\bm\Sigma}$. Consequently,
\[
\norm{\bY\widehat{\bV}-\widehat{\bU}\widehat{\bm\Sigma}}_{\op} =\norm{(\bI-\bm P)\bY\bY^T\widehat{\bU}\widehat{\bm\Sigma}^{-1}}_{\op} \le\frac{\tau}{128}\sqrt{\frac{\mu r}{n}} \le\frac{\tau}{64}\sqrt{\frac{\mu r}{n}}.
\]
If no singular value is retained, this residual estimate is unnecessary.

\emph{Step 4: the unreconstructed part.} The $s=1/2$ estimate in \eqref{eq:threshold-final-subspace} controls the part of $\bY$ in $\range(\bU_1)$, since $\sigma_j(\bY)\le\sqrt{d_j}$. On its orthogonal complement the singular values are smaller than $\tau/(8\sqrt2)$. Thus
\[
\norm{(\bI-\bm P)\bY}_{\op} \le\frac{\tau}{512}\sqrt{\frac{\mu r}{n}} +\frac{\tau}{8\sqrt2} <\frac{3\tau}{32}.
\]
The singular values discarded from $\bm P\bY$ are smaller than $\tau/8$. Therefore,
\[
\norm{\bY-\bZ^+}_{\op} \le\norm{(\bI-\bm P)\bY}_{\op} +\norm{\bm P\bY-\bQ\Hcal_{\ge\tau/8}(\bQ^T\bY)}_{\op} \le\frac{3\tau}{32}+\frac{\tau}{8} =\frac{7\tau}{32}.
\]
This includes the case $\bZ^+=\bm0$. Thus all approximation conclusions hold on \eqref{eq:threshold-gaussian-bound}. Since $\bY\bY^T+\tau^2\bI/4096\succ\bm0$ and $\bG$ has full column rank almost surely, every thin QR step is well defined. A fixed orthonormal completion defines the procedure on the remaining null event without changing its operation count.
\end{proof}

\section{Leave-one-out estimates for the initial RGN iterates}\label{app:loo}

We prove the sampling and correction estimates used to control the RGN iterates for $0\le k\le\bar K$. The concentration and geometric tools are given in \Cref{app:tools,app:geometry}. The passage to the quadratic Frobenius recurrence is proved in \Cref{sec:proof-local-rgn}.

\subsection{Sampling estimates for the completed operators}

We use the completed sampling operators $\Rcal^\alpha$ and the index set $\mathfrak A$ defined in \Cref{sec:proof-fixed-rgn}.  For $\alpha\in\mathfrak A$, let $\Acal^\alpha$ be the entrywise nonnegative square root of $\Rcal^\alpha$, so $(\Acal^\alpha)^*\Acal^\alpha=\Rcal^\alpha$.

We first establish the simultaneous sampling isometries for the actual and completed operators.

\subsubsection{Proof of Lemma~\ref{lem:all-true-rip}}
\label{sec:proof-all-true-rip}

\begin{proof}
Let $\delta_{ij}:=\mathbf 1_{\{(i,j)\in\widehat\Omega\}}$ and $\bm a_{ij}=\Pcal_{T_{\bX_\star}}(\bm e_i\bm e_j^T)$, where $(\bm a\otimes\bm a)(\bZ):=\inner{\bm a}{\bZ}_{\F}\bm a$. By \Cref{ass:incoherence}, $\norm{\bm a_{ij}}_{\F}^2\le2\mu r/n$.  Since the matrices $\bm e_i\bm e_j^T$ form an orthonormal basis and $\Pcal_{T_{\bX_\star}}$ is an orthogonal projector, we have
\begin{align*}
\sum_{i,j}\bm a_{ij}\otimes\bm a_{ij} &=\Ical_{T_{\bX_\star}},\\
(\bm a_{ij}\otimes\bm a_{ij})^2 &=\norm{\bm a_{ij}}_{\F}^2(\bm a_{ij}\otimes\bm a_{ij}),\\
\frac1q\sum_{i,j}\norm{\bm a_{ij}}_{\F}^2 (\bm a_{ij}\otimes\bm a_{ij}) &\preceq\frac{2\mu r}{nq}\Ical_{T_{\bX_\star}}.
\end{align*}
For the actual operator, $\Pcal_{T_{\bX_\star}}\Rcal^0\Pcal_{T_{\bX_\star}} -\Pcal_{T_{\bX_\star}}$ is the centered sum of $(\delta_{ij}/q-1)(\bm a_{ij}\otimes\bm a_{ij})$ over all coordinates. For $\Rcal^{\mathrm r,i}$, the centered sum is restricted to coordinates $(a,b)$ with $a\ne i$; for $\Rcal^{\mathrm c,j}$, it is restricted to coordinates with $b\ne j$.

For the actual centered sum, each summand and the variance operator satisfy
\begin{align*}
\norm{(\delta_{ij}/q-1)(\bm a_{ij}\otimes\bm a_{ij})}_{\F\to\F} &\le\frac{2\mu r}{nq},\\
\sum_{i,j}\E\left[(\delta_{ij}/q-1)^2 (\bm a_{ij}\otimes\bm a_{ij})^2\right] &=\frac{1-q}{q}\sum_{i,j}\norm{\bm a_{ij}}_{\F}^2 (\bm a_{ij}\otimes\bm a_{ij}) \preceq\frac{2\mu r}{nq}\Ical_{T_{\bX_\star}}.
\end{align*}
Restricting the sums to a completed row or column preserves these two inequalities. Since $q\ge c_1\mu r\log n/n$ and $c_1=2^{15}$, the Bernstein exponent at threshold $1/16$ is at least $24\log n$. Since $\dim(T_{\bX_\star})\le2nr$ and $|\mathfrak A|=2n+1$, \Cref{lem:bernstein} followed by a union bound gives
\[
\Pr\left\{ \max_{\alpha\in\mathfrak A} \norm{\Pcal_{T_{\bX_\star}}\Rcal^\alpha\Pcal_{T_{\bX_\star}} -\Pcal_{T_{\bX_\star}}}_{\F\to\F}>\frac1{16} \right\} \le4nr(2n+1)n^{-24} \le\frac{n^{-10}}{24}.
\]
\end{proof}

We next record the weighted product estimate used to transfer these sampling bounds to nearby tangent spaces.

\subsubsection{A weighted product estimate}

\begin{lemma}\label[lemma]{lem:weighted-products}
Suppose that the conclusion of \Cref{lem:degree} holds with $(\Lambda,p_\Lambda)=(\widehat{\Omega},q)$. Then, for every $\alpha\in\mathfrak A$ and all matrices $\bm F,\bm H$ with $n$ rows and the same number of columns,
\[
\norm{\Acal^\alpha(\bm F\bm H^T)}_{\F} \le\sqrt{3n}\min\left\{ \norm{\bm F}_{\F}\norm{\bm H}_{2,\infty}, \norm{\bm F}_{2,\infty}\norm{\bm H}_{\F} \right\}.
\]
\end{lemma}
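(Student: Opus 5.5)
We need to bound $\norm{\Acal^\alpha(\bm F\bm H^T)}_{\F}^2=\sum_{a,b}(\Rcal^\alpha)_{ab}\,|(\bm F\bm H^T)_{ab}|^2$, where $(\Rcal^\alpha)_{ab}\ge0$ is the diagonal weight of the completed operator. The plan is to bound each entry by Cauchy--Schwarz and then use the row and column degree bounds from \Cref{lem:degree}. Since the statement is symmetric under exchanging $\bm F$ and $\bm H$ (transpose, which maps a completed row operator to a completed column operator), it suffices to prove
\[
\norm{\Acal^\alpha(\bm F\bm H^T)}_{\F}^2\le3n\norm{\bm F}_{\F}^2\norm{\bm H}_{2,\infty}^2 .
\]

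\emph{Step 1: weights.} For every $\alpha\in\mathfrak A$, the weight $w^\alpha_{ab}$ multiplying entry $(a,b)$ is either $\delta_{ab}/q$ or $1$. For $\alpha=(\mathrm r,i)$ it equals $1$ on row $i$; for $\alpha=(\mathrm c,j)$ it equals $1$ on column $j$. Hence $\Acal^\alpha$ is multiplication by $\sqrt{w^\alpha_{ab}}$.

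\emph{Step 2: entrywise bound.} Cauchy--Schwarz over the columns gives
\[
|(\bm F\bm H^T)_{ab}|^2\le\norm{\bm e_a^T\bm F}_2^2\,\norm{\bm e_b^T\bm H}_2^2\le\norm{\bm e_a^T\bm F}_2^2\norm{\bm H}_{2,\infty}^2 .
\]
Therefore
\[
\norm{\Acal^\alpha(\bm F\bm H^T)}_{\F}^2\le\norm{\bm H}_{2,\infty}^2\sum_a\norm{\bm e_a^T\bm F}_2^2\Bigl(\sum_b w^\alpha_{ab}\Bigr).
\]

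\emph{Step 3: row weight sums.} It remains to show $\max_a\sum_b w^\alpha_{ab}\le3n$. This is the only place the event of \Cref{lem:degree} enters.
\begin{itemize}
\item For $\alpha=0$, the row sum is $q^{-1}\sum_b\delta_{ab}\le2n$.
\item For $\alpha=(\mathrm r,i)$, row $i$ has sum $n$ and every other row has sum at most $2n$.
\item For $\alpha=(\mathrm c,j)$, each row has sum at most $2n+1\le3n$, since column $j$ contributes $1$ instead of $\delta_{aj}/q$.
\end{itemize}
In every case the row sums are at most $3n$.

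\emph{Step 4: symmetric case.} For the bound with $\norm{\bm F}_{2,\infty}\norm{\bm H}_{\F}$, bound $|(\bm F\bm H^T)_{ab}|^2\le\norm{\bm F}_{2,\infty}^2\norm{\bm e_b^T\bm H}_2^2$ and use the column weight sums. These are bounded by $3n$ by the identical case check. Taking the minimum of the two bounds finishes the proof.

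The main step, such as it is, is the bookkeeping in Step 3: we must verify that the completed row or column, whose weight is $1$ rather than $\delta/q$, does not spoil the bound. Allowing the constant $3n$ instead of $2n$ absorbs the extra $+1$.
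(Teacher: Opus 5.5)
Your proof is correct and is essentially the paper's argument: Cauchy--Schwarz on each entry $(\bm f_a^T\bm h_b)^2\le\norm{\bm f_a}_2^2\norm{\bm h_b}_2^2$, combined with the observation that every weighted row and column sum of $\Rcal^\alpha$ is at most $3n$. That bound holds because the degree bound of \Cref{lem:degree} gives $2n$, the completed row or column has sum $n$, and each opposite line gains at most $1$. Your case check in Step~3 is exactly the paper's bookkeeping, written out more explicitly.
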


\begin{proof}
For the actual operator, weighted row and column sums are the degrees divided by $q$, hence at most $2n$.  Completing one row or column adds at most one to every opposite weighted degree and replaces the completed weighted degree by $n$, so all weighted sums are at most $3n$.

If $\bm f_i^T$ and $\bm h_j^T$ are the rows of $\bm F$ and $\bm H$, and $w_{ij}^\alpha$ are the weights of $\Rcal^\alpha$, then
\begin{align*}
\norm{\Acal^\alpha(\bm F\bm H^T)}_{\F}^2 &=\sum_{i,j}w_{ij}^\alpha(\bm f_i^T\bm h_j)^2\\
&\le\sum_i\norm{\bm f_i}_2^2 \sum_jw_{ij}^\alpha\norm{\bm h_j}_2^2\\
&\le3n\norm{\bm F}_{\F}^2\norm{\bm H}_{2,\infty}^2.
\end{align*}
Interchanging rows and columns gives
\[
\norm{\Acal^\alpha(\bm F\bm H^T)}_{\F}^2 \le3n\norm{\bm F}_{2,\infty}^2\norm{\bm H}_{\F}^2.
\]
Taking square roots in the two preceding inequalities and minimizing the resulting upper bounds proves the lemma.
\end{proof}

\subsection{Proof of Lemma~\ref{lem:loo-probability}}
\label{sec:proof-loo-probability}
We first prove the conditional row estimate; the corresponding column estimate follows by transposition in the subsequent leave-one-out argument. The Proof of Lemma~\ref{lem:loo-probability} is after the following lemma.


\begin{lemma}\label[lemma]{lem:conditional-row}
Let $\delta_1,\ldots,\delta_n$ be independent Bernoulli$(p_\Lambda)$ variables, and let $\bm z\in\R^n$ and $\bV\in\R^{n\times r}$ be deterministic. Set
\[
\bm w=(w_j)_{j=1}^n, \qquad w_j=\left(\frac{\delta_j}{p_\Lambda}-1\right)z_j.
\]
If $\bV^T\bV=\bI$ and $\norm{\bV}_{2,\infty}\le2\sqrt{\mu r/n}$, then, with probability at least $1-(n+r+2)n^{-24}$,
\begin{align*}
2\sqrt{\frac{\mu r}{n}}\norm{\bm w}_2+\norm{\bV^T\bm w}_2 \le 8\sqrt{\frac{3\log n}{p_\Lambda}} \left(4\sqrt{\frac{\mu r}{n}}\norm{\bm z}_2+\norm{\bm z}_\infty\right)+\frac{64\sqrt{\mu r/n}\,\log n}{p_\Lambda}\norm{\bm z}_\infty.
\end{align*}
\end{lemma}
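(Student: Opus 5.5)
We bound the two terms on the left separately, each by a Bernstein inequality from \Cref{lem:bernstein}, taking the deviation parameter $t=24\log n$ throughout. Each such application fails with probability at most (dimension)$\cdot n^{-24}$. A union bound over the events then gives the failure probability $(n+r+2)n^{-24}$.

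\emph{Step 1: the projected vector.} The vector $\bV^T\bm w=\sum_j(\delta_j/p_\Lambda-1)z_j\bV^T\bm e_j$ is a sum of independent mean-zero vectors in $\R^r$, viewed as $r\times1$ matrices. Each summand has norm at most
\[
p_\Lambda^{-1}\norm{\bm z}_\infty\norm{\bV}_{2,\infty}\le 2p_\Lambda^{-1}\sqrt{\mu r/n}\,\norm{\bm z}_\infty .
\]
For the variance, the two matrix variances are controlled by the trace quantity
\[
p_\Lambda^{-1}\sum_j z_j^2\norm{\bV^T\bm e_j}_2^2\le p_\Lambda^{-1}\min\left\{4\tfrac{\mu r}{n}\norm{\bm z}_2^2,\ \norm{\bm z}_\infty^2 r\right\}.
\]
We use the first bound, $4\mu r\norm{\bm z}_2^2/(np_\Lambda)$. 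The rectangular Bernstein inequality with dimension $r+1$ then gives
\[
\norm{\bV^T\bm w}_2\le \sqrt{8t/p_\Lambda}\sqrt{\mu r/n}\,\norm{\bm z}_2+\tfrac43\,t\,p_\Lambda^{-1}\sqrt{\mu r/n}\,\norm{\bm z}_\infty .
\]
With $t=24\log n$ this is $O(\sqrt{\log n/p_\Lambda})\sqrt{\mu r/n}\norm{\bm z}_2+O(\log n/p_\Lambda)\sqrt{\mu r/n}\norm{\bm z}_\infty$, which fits inside the target constants.

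\emph{Step 2: the full vector.} For $\norm{\bm w}_2$, apply Bernstein to $\sum_j(\delta_j/p_\Lambda-1)z_j\bm e_j$ as an $n\times1$ matrix. The summand bound is $\norm{\bm z}_\infty/p_\Lambda$ and the variance is at most $\norm{\bm z}_2^2/p_\Lambda$, so
\[
\norm{\bm w}_2\lesssim\sqrt{\log n/p_\Lambda}\,\norm{\bm z}_2+(\log n/p_\Lambda)\norm{\bm z}_\infty .
\]
Multiplying by $2\sqrt{\mu r/n}$ gives a contribution of the form $\sqrt{\log n/p_\Lambda}\sqrt{\mu r/n}\norm{\bm z}_2+\sqrt{\mu r/n}(\log n/p_\Lambda)\norm{\bm z}_\infty$. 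This matches the first and last terms of the target. The remaining $\sqrt{\log n/p_\Lambda}\,\norm{\bm z}_\infty$ slack in the target is not needed here, but it gives room if one instead bounds $\norm{\bm w}_2$ coordinatewise. The $n$ in the failure count $(n+r+2)$ suggests the authors used, alternatively, a scalar Bernstein bound per coordinate, or a scalar bound on $\norm{\bm w}_2^2-\E\norm{\bm w}_2^2$ plus per-coordinate control. Either route works.

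The main obstacle is only bookkeeping of absolute constants: we must check that $\sqrt{2vt}+\frac23Lt$ at $t=24\log n$ fits under $8\sqrt{3\log n/p_\Lambda}$ and $64\log n/p_\Lambda$ with the stated coefficients. For instance, $\sqrt{2\cdot 4(\mu r/n)\norm{\bm z}_2^2\cdot 24\log n/p_\Lambda}=8\sqrt{3\log n/p_\Lambda}\sqrt{\mu r/n}\norm{\bm z}_2$, and this is exactly where the constant $8\sqrt3$ comes from. The $\norm{\bm z}_\infty$ term inside the parentheses absorbs any alternative variance bound. Summing the two steps yields the displayed inequality.
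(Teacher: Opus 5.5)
Your proposal is correct and follows the paper's proof: two applications of the rectangular Bernstein inequality at $t=24\log n$, to $\bm w$ as an $n\times1$ matrix and to $\bV^T\bm w$ as an $r\times1$ matrix. The dimension factors $n+1$ and $r+1$ give exactly $(n+r+2)n^{-24}$, so your Step 2 already accounts for the $n$ and no per-coordinate argument is needed; with explicit constants the two $\sqrt{\mu r/n}\,\log n/p_\Lambda$ coefficients are $32+32=64$, which exactly matches the target. The only difference is that you bound the variance for $\bV^T\bm w$ by the trace $4\mu r\norm{\bm z}_2^2/(np_\Lambda)$, whereas the paper uses the looser $\norm{\bm z}_\infty^2/p_\Lambda+4\mu r\norm{\bm z}_2^2/(np_\Lambda)$; that looser bound is the source of the $\norm{\bm z}_\infty$ term inside the parentheses, which your version does not need.
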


\begin{proof}
For $\bm Z_j:=(p_\Lambda^{-1}\delta_j-1)z_j\bm e_j$, we have $\norm{\bm Z_j}_2\le\norm{\bm z}_\infty/p_\Lambda$, $\norm{\sum_j\E(\bm Z_j\bm Z_j^T)}_{\op}\le\norm{\bm z}_\infty^2/p_\Lambda$, and $\sum_j\E\norm{\bm Z_j}_2^2\le\norm{\bm z}_2^2/p_\Lambda$. Thus the variance parameter in \Cref{lem:bernstein} satisfies $v\le\norm{\bm z}_2^2/p_\Lambda$, and applying the lemma with $t=24\log n$ gives
\begin{align*}
L\le\frac{\norm{\bm z}_\infty}{p_\Lambda},\qquad v\le\frac{\norm{\bm z}_2^2}{p_\Lambda},\qquad \norm{\bm w}_2 \le4\sqrt{\frac{3\log n}{p_\Lambda}}\norm{\bm z}_2 +\frac{16\log n}{p_\Lambda}\norm{\bm z}_\infty.
\end{align*}
For $\widetilde{\bm Z}_j:=(p_\Lambda^{-1}\delta_j-1)z_j\bV^T\bm e_j$, the incoherence bound $\norm{\bV}_{2,\infty}\le2\sqrt{\mu r/n}$ gives $\norm{\widetilde{\bm Z}_j}_2\le2\sqrt{\mu r/n}\,\norm{\bm z}_\infty/p_\Lambda$. Moreover, 
$$
\norm{\sum_j\E(\widetilde{\bm Z}_j\widetilde{\bm Z}_j^T)}_{\op}\le\norm{\bm z}_\infty^2/p_\Lambda\qquad\text{and}\qquad\sum_j\E\norm{\widetilde{\bm Z}_j}_2^2\le4\mu r\norm{\bm z}_2^2/(np_\Lambda).
$$
Thus we may take $v\le\norm{\bm z}_\infty^2/p_\Lambda+4\mu r\norm{\bm z}_2^2/(np_\Lambda)$ in \Cref{lem:bernstein}. Applying the lemma with $t=24\log n$ gives
\[
\norm{\bV^T\bm w}_2 \le4\sqrt{\frac{3\log n}{p_\Lambda}} \left(2\sqrt{\frac{\mu r}{n}}\norm{\bm z}_2+\norm{\bm z}_\infty\right) +\frac{32\sqrt{\mu r/n}\,\log n}{p_\Lambda}\norm{\bm z}_\infty.
\]
The Bernstein bounds for $\norm{\bm w}_2$ and $\norm{\bV^T\bm w}_2$ fail with probability at most $(n+1)n^{-24}$ and $(r+1)n^{-24}$, respectively. Therefore, by a union bound, both estimates hold with probability at least $1-(n+r+2)n^{-24}$.  On the intersection of the two Bernstein events, adding the two estimates and collecting the $\norm{\bm z}_2$ and $\norm{\bm z}_\infty$ terms gives
\begin{align*}
2\sqrt{\frac{\mu r}{n}}\norm{\bm w}_2+\norm{\bV^T\bm w}_2 &\le16\sqrt{\frac{3\mu r\log n}{np_\Lambda}}\norm{\bm z}_2 +4\sqrt{\frac{3\log n}{p_\Lambda}}\norm{\bm z}_\infty +\frac{64\sqrt{\mu r/n}\,\log n}{p_\Lambda}\norm{\bm z}_\infty\\
&\le8\sqrt{\frac{3\log n}{p_\Lambda}} \left(4\sqrt{\frac{\mu r}{n}}\norm{\bm z}_2 +\norm{\bm z}_\infty\right) +\frac{64\sqrt{\mu r/n}\,\log n}{p_\Lambda}\norm{\bm z}_\infty,
\end{align*}
which is the claimed inequality.
\end{proof}

We use the stopped auxiliary sequences and the quantities $\rho_k$, $d_k^{\rm loo}$, and $h_k^{\rm corr}$ defined in \Cref{sec:proof-fixed-rgn}. With the fixed compact-SVD convention in \Cref{lem:loo-probability}, write
\[
\bX_k^\alpha=\bU_k^\alpha\bS_k^\alpha(\bV_k^\alpha)^T,
\]
and set $\bN_k^\alpha:=\bN_{\bX_k^\alpha}$ as in \eqref{eq:tangent-normal-components}. For each comparison, the left and right singular bases are aligned as in \Cref{app:geometry}.Now we give the proof of Lemma~\ref{lem:loo-probability}.

\begin{proof}[Proof of Lemma~\ref{lem:loo-probability}]
Fix a row $i$ and condition on $\bX_0$ and the Bernoulli variables outside row $i$.  The stopped row-$i$ trajectory, its residual row, and its right singular factors are then fixed, whereas the row-$i$ Bernoulli variables remain independent.

Before stopping, the initial error bound and the accepted updates give $\sharpnorm{\bX_k^{\mathrm r,i}-\bX_\star}\le\rho_k$. Therefore, the proof of \Cref{lem:current-incoherence} gives, for every such $k<\bar K$,
\begin{align*}
\sharpnorm{\bX_k^{\mathrm r,i}-\bX_\star} &\le\rho_k\le\rho_0 \le\frac{\sigma_r(\bX_\star)}8. \\
\norm{\bV_k^{\mathrm r,i}}_{2,\infty} &\le\frac{10}{7}\sqrt{\frac{\mu r}{n}} <2\sqrt{\frac{\mu r}{n}}.
\end{align*}
After stopping, the sequence is equal to $\bX_\star$, and \Cref{ass:incoherence} gives $\norm{\bV_\star}_{2,\infty}\le\sqrt{\mu r/n}<2\sqrt{\mu r/n}$.

Thus \Cref{lem:conditional-row} applies at every $k<\bar K$.  Applying \Cref{lem:conditional-row} to the transposed column-$j$ process, conditional on the variables outside column $j$, gives the column inequalities in \Cref{lem:loo-probability}.  Taking expectations over the conditioned variables and a union bound over at most $2n\bar K$ pairs gives failure probability at most $n^{-10}/48$ for $\bar K\le n$.

The row and column bounds in \Cref{lem:loo-probability} are obtained before alignment and are invariant under the subsequent left and right orthogonal Procrustes transformations. Consequently, conditional on every admissible realization of $\bX_0$, all row and column inequalities in the lemma hold simultaneously with failure probability at most $n^{-10}/48$.
\end{proof}

\subsection{Local solvability and correction size}
\label{sec:proof-local-conditioning}

We next use the simultaneous sampling events to prove local invertibility and bound the tangent correction.

\begin{proof}[Proof of \Cref{lem:local-conditioning}]
Since $\mu r\ge1$ and $e_\sharp\le\sigma_r(\bX_\star)/(1000\mu r)$, \Cref{lem:current-incoherence,lem:weighted-products} and \eqref{eq:tangent-projector} give
\begin{equation}\label{eq:local-conditioning-projector-bounds}
\begin{aligned}
\norm{\bU\bU^T-\bU_\star\bU_\star^T}_{2,\infty}+\norm{\bV\bV^T-\bV_\star\bV_\star^T}_{2,\infty} &\le\frac{64}{7}\sqrt{\frac{\mu r}{n}}\frac{e_\sharp}{\sigma_r(\bX_\star)},\\
\norm{\Acal^\alpha(\Pcal_{T_{\bX}}-\Pcal_{T_{\bX_\star}})}_{\F\to\F} &\le\frac{64\sqrt3}{7}\sqrt{\mu r}\frac{e_\sharp}{\sigma_r(\bX_\star)},\\
\norm{\Pcal_{T_{\bX}}-\Pcal_{T_{\bX_\star}}}_{\F\to\F} &\le\frac{16}{7}\frac{e_\sharp}{\sigma_r(\bX_\star)}.
\end{aligned}
\end{equation}
Fix $\bm\zeta\in T_{\bX}\Mcal_r$. By \Cref{lem:all-true-rip}, the triangle inequality, and \eqref{eq:local-conditioning-projector-bounds}, we have
\begin{align*}
\norm{\Acal^\alpha\bm\zeta}_{\F} &\ge\norm{\Acal^\alpha\Pcal_{T_{\bX_\star}}\bm\zeta}_{\F}-\norm{\Acal^\alpha(\Pcal_{T_{\bX}}-\Pcal_{T_{\bX_\star}})\bm\zeta}_{\F}\\
&\ge\sqrt{\frac{15}{16}}\norm{\Pcal_{T_{\bX_\star}}\bm\zeta}_{\F}-\frac{64\sqrt3}{7}\sqrt{\mu r}\frac{e_\sharp}{\sigma_r(\bX_\star)}\norm{\bm\zeta}_{\F}\\
&\ge\sqrt{\frac9{10}}\norm{\bm\zeta}_{\F}.
\end{align*}
Using the upper inequality in \Cref{lem:all-true-rip}, the triangle inequality, and \eqref{eq:local-conditioning-projector-bounds}, we also have
\begin{align*}
\norm{\Acal^\alpha\bm\zeta}_{\F} &\le\norm{\Acal^\alpha\Pcal_{T_{\bX_\star}}\bm\zeta}_{\F}+\norm{\Acal^\alpha(\Pcal_{T_{\bX}}-\Pcal_{T_{\bX_\star}})\bm\zeta}_{\F}\\
&\le\sqrt{\frac{17}{16}}\norm{\Pcal_{T_{\bX_\star}}\bm\zeta}_{\F}+\frac{64\sqrt3}{7}\sqrt{\mu r}\frac{e_\sharp}{\sigma_r(\bX_\star)}\norm{\bm\zeta}_{\F}\\
&\le\sqrt{\frac{11}{10}}\norm{\bm\zeta}_{\F}.
\end{align*}
Here we used
\[
\norm{\Pcal_{T_{\bX_\star}}\bm\zeta}_{\F}\ge\norm{\bm\zeta}_{\F}-\norm{(\Pcal_{T_{\bX}}-\Pcal_{T_{\bX_\star}})\bm\zeta}_{\F}
\]
in the lower bound and $\norm{\Pcal_{T_{\bX_\star}}\bm\zeta}_{\F}\le\norm{\bm\zeta}_{\F}$ in the upper bound. Squaring the inequalities $\norm{\Acal^\alpha\bm\zeta}_{\F}\ge\sqrt{9/10}\norm{\bm\zeta}_{\F}$ and $\norm{\Acal^\alpha\bm\zeta}_{\F}\le\sqrt{11/10}\norm{\bm\zeta}_{\F}$, and using $(\Acal^\alpha)^*\Acal^\alpha=\Rcal^\alpha$, proves \eqref{eq:local-conditioning-isometry}.

We next prove the correction estimate. The definitions of $\bm t_{\bX}$, $\bm\xi_{\bX}^\alpha$, and $\bm\eta_{\bX}^\alpha$ give
\[
\Pcal_{T_{\bX}}\Rcal^\alpha\Pcal_{T_{\bX}}\bm\eta_{\bX}^\alpha=\Pcal_{T_{\bX}}\Rcal^\alpha\bN_{\bX}.
\]
Write $\bX=\bU\bm\Sigma\bV^T$ as a compact singular value decomposition. By \eqref{eq:local-conditioning-isometry}, \Cref{lem:normal,lem:weighted-products}, and Weyl's inequality, we obtain
\begin{align*}
\sigma_{\min}(\bG_{\bX})&\ge\sigma_r(\bX_\star)-2e_\sharp\ge\frac9{10}\sigma_r(\bX_\star),\\
\norm{\bm\eta_{\bX}^\alpha}_{\F} &\le\frac{10}{9}\sqrt{\frac{11}{10}}\norm{\Acal^\alpha\bN_{\bX}}_{\F}\\
&\le\frac{10}{9}\sqrt{\frac{11}{10}}\sqrt{3n}\norm{\bL_{\bX}\bG_{\bX}^{-1}}_{\F}\norm{\bR_{\bX}}_{2,\infty}\\
&\le9\mu r\frac{e_\sharp^2}{\sigma_r(\bX_\star)}.
\end{align*}
Therefore, \eqref{eq:finite-correction-F-main} follows.
\end{proof}

\subsection{Comparison of the tangent corrections}

We first bound the variation in the sampled normal component between two nearby matrices.


\begin{lemma}\label[lemma]{lem:sampled-normal-motion}
Let $\alpha\in\mathfrak A$ and $\bX,\bY\in\Mcal_r$, and use $d_{\bX,\bY}$ from \Cref{lem:factor-motion}.  Suppose that \Cref{ass:incoherence} holds and
\[
\max\left\{\sharpnorm{\bX-\bX_\star}, \sharpnorm{\bY-\bX_\star}\right\}\le \rho \le\frac{\sigma_r(\bX_\star)}{1000\mu r}.
\]
If $d_{\bX,\bY}\le2\sqrt{\mu r/n}\,\rho$, then, on the joint events in \Cref{lem:all-true-rip,lem:degree}, it holds
\[
\norm{\Pcal_{T_{\bX}}\Rcal^\alpha(\bN_{\bX}-\bN_{\bY})}_{\F} \le32\sqrt{\frac{\mu r}{n}} \mu r\frac{\rho^2}{\sigma_r(\bX_\star)}.
\]
\end{lemma}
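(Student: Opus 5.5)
Since $\Pcal_{T_{\bX}}$ is an orthogonal projector, we have $\norm{\Pcal_{T_{\bX}}\Rcal^\alpha\bZ}_{\F}\le\sup_{\bm\zeta}\inner{\Acal^\alpha\bm\zeta}{\Acal^\alpha\bZ}$ over unit $\bm\zeta\in T_{\bX}\Mcal_r$. By \eqref{eq:local-conditioning-isometry}, $\norm{\Acal^\alpha\bm\zeta}_{\F}\le\sqrt{11/10}$. So the whole task reduces to showing
\[
\norm{\Acal^\alpha(\bN_{\bX}-\bN_{\bY})}_{\F}\lesssim\sqrt{\mu r/n}\,\mu r\rho^2/\sigma_r(\bX_\star).
\]
To do this, I write both normal components through the graph factorization of \Cref{lem:normal}, $\bN_{\bZ}=\bL_{\bZ}\bG_{\bZ}^{-1}\bR_{\bZ}^T$, in the aligned Procrustes gauges of \Cref{lem:factor-motion}. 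The difference then telescopes as
\[
\bN_{\bX}-\bN_{\bY}=(\bL_{\bX}-\bL_{\bY})\bG_{\bX}^{-1}\bR_{\bX}^T+\bL_{\bY}(\bG_{\bX}^{-1}-\bG_{\bY}^{-1})\bR_{\bX}^T+\bL_{\bY}\bG_{\bY}^{-1}(\bR_{\bX}-\bR_{\bY})^T.
\]
Each term has the form $\bm F\bm H^T$, so \Cref{lem:weighted-products} applies:
\[
\norm{\Acal^\alpha(\bm F\bm H^T)}_{\F}\le\sqrt{3n}\,\norm{\bm F}_{\F}\norm{\bm H}_{2,\infty},
\]
or the transposed version. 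In each term, the factor carrying the difference goes in Frobenius norm and the other in $2,\infty$ norm.

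For the first term, take $\bm F=(\bL_{\bX}-\bL_{\bY})\bG_{\bX}^{-1}$ and $\bm H=\bR_{\bX}$. \Cref{lem:factor-motion} gives $\norm{\bm F}_{\F}\le(1+3\rho/\sigma_r)d_{\bX,\bY}/(\sigma_r-2\rho)$, and \Cref{lem:normal} gives $\norm{\bR_{\bX}}_{2,\infty}\le4\sqrt{\mu r/n}\,\rho$. Using $d_{\bX,\bY}\le2\sqrt{\mu r/n}\,\rho$, this term is at most about
\[
\sqrt{3n}\cdot2\sqrt{\mu r/n}\,\rho\cdot4\sqrt{\mu r/n}\,\rho/\sigma_r\approx8\sqrt3\sqrt{\mu r/n}\,\mu r\,\rho^2/\sigma_r.
\]
The third term is symmetric: put the row norm on $\bL_{\bY}$ and the Frobenius norm on $\bR_{\bX}-\bR_{\bY}$, which gives the same bound.

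The middle term is where I expect the constants to be tight. \Cref{lem:factor-motion} gives $\norm{\bG_{\bX}^{-1}-\bG_{\bY}^{-1}}_{\F}\le5d_{\bX,\bY}/(\sigma_r-2\rho)^2$. Taking $\bm F=\bL_{\bY}(\bG_{\bX}^{-1}-\bG_{\bY}^{-1})$ with $\norm{\bm F}_{\F}\le\rho\cdot5d_{\bX,\bY}/\sigma_r^2$ and $\bm H=\bR_{\bX}$ with its row bound, this term carries an extra factor $\rho/\sigma_r\le1/(1000\mu r)$. It is therefore negligible.

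Summing gives roughly $2\cdot8\sqrt3(1+o(1))\approx27.7$, times $\sqrt{11/10}$, which is about $29<32$. So the claimed constant $32$ should come out with the small slack absorbing the $(1-2\rho/\sigma_r)^{-1}$ and $(1+3\rho/\sigma_r)$ factors. The main care points are these. First, the gauge alignment must be consistent: the $\bN_{\bZ}$ are gauge-invariant, so the telescoping may be carried out in the Procrustes gauge. Second, the $2,\infty$ bounds of \Cref{lem:normal} must apply at both $\bX$ and $\bY$, which holds since $\rho\le\sigma_r/8$. If $\sqrt{11/10}$ pushes the constant over, I would instead bound $\Pcal_{T_{\bX}}\Rcal^\alpha$ directly using the $\sqrt{3n}$ bound and the upper isometry constant $\sqrt{17/16}$ plus the projector-motion correction.
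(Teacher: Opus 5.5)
Your proposal is correct and follows the paper's proof essentially step for step. You use the same three-term telescoping of $\bN_{\bX}-\bN_{\bY}$ through the graph factorization, and the same termwise application of \Cref{lem:weighted-products} together with \Cref{lem:factor-motion,lem:normal}; the paper records the intermediate bound $15\sqrt{\mu r}\,\rho\,d_{\bX,\bY}/\sigma_r(\bX_\star)$, which is consistent with your $8\sqrt3(1+o(1))$. The remaining steps also coincide: substitute $d_{\bX,\bY}\le2\sqrt{\mu r/n}\,\rho$, then apply the factor $\sqrt{11/10}$ from \eqref{eq:local-conditioning-isometry}. Your arithmetic already gives about $29<32$, so the fallback you mention is not needed.
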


\begin{proof}
By the graph-factor representation in \Cref{lem:normal}, we have
\begin{equation}\label{eq:normal-motion-expansion}
\begin{aligned}
\bN_{\bX}-\bN_{\bY} ={}&(\bL_{\bX}-\bL_{\bY})\bG_{\bX}^{-1}\bR_{\bX}^T+\bL_{\bY}(\bG_{\bX}^{-1}-\bG_{\bY}^{-1})\bR_{\bX}^T\\
&+\bL_{\bY}\bG_{\bY}^{-1}(\bR_{\bX}-\bR_{\bY})^T.
\end{aligned}
\end{equation}
Applying \Cref{lem:weighted-products,lem:factor-motion,lem:normal} termwise in \eqref{eq:normal-motion-expansion}, and using $\rho\le\sigma_r(\bX_\star)/(1000\mu r)$, gives
\[
\norm{\Acal^\alpha(\bN_{\bX}-\bN_{\bY})}_{\F} \le15\sqrt{\mu r}\frac{\rho\,d_{\bX,\bY}}{\sigma_r(\bX_\star)}.
\]
Since $d_{\bX,\bY}\le2\sqrt{\mu r/n}\,\rho$, it follows that
\[
\norm{\Acal^\alpha(\bN_{\bX}-\bN_{\bY})}_{\F} \le30\mu r\sqrt{\frac{\mu r}{n}}\frac{\rho^2}{\sigma_r(\bX_\star)}.
\]
Finally, \eqref{eq:local-conditioning-isometry} and $(\Acal^\alpha)^*\Acal^\alpha=\Rcal^\alpha$ give
\[
\norm{\Pcal_{T_{\bX}}\Rcal^\alpha(\bN_{\bX}-\bN_{\bY})}_{\F}\le\sqrt{\frac{11}{10}}\norm{\Acal^\alpha(\bN_{\bX}-\bN_{\bY})}_{\F}\le32\sqrt{\frac{\mu r}{n}}\mu r\frac{\rho^2}{\sigma_r(\bX_\star)}.
\]
This proves the claim.
\end{proof}

We now estimate the tangent transport, which bound the error incurred by projecting a tangent correction onto a nearby tangent space.
\begin{lemma}\label[lemma]{lem:tangent-transport}
Let $\alpha\in\mathfrak A$, $h\ge0$, $\bX,\bY\in\Mcal_r$, and $\bZ\in T_{\bY}\Mcal_r$. Suppose that $\bX$ and $\bY$ satisfy the hypotheses of \Cref{lem:sampled-normal-motion}, that $d_{\bX,\bY}\le2\sqrt{\mu r/n}\,\rho$, and that
\[
\norm{\bZ}_{\F}\le9\mu r\frac{\rho^2}{\sigma_r(\bX_\star)}, \qquad \max\left\{\norm{\bZ}_{2,\infty},\norm{\bZ^T}_{2,\infty}\right\} \le2h+38\mu r\sqrt{\frac{\mu r}{n}}\frac{\rho^2}{\sigma_r(\bX_\star)}.
\]
Then, on the joint events in \Cref{lem:all-true-rip,lem:degree},
\[
\norm{\Pcal_{T_{\bX}}\Rcal^\alpha(\Pcal_{T_{\bY}}-\Pcal_{T_{\bX}})\bZ}_{\F} \le\frac12\mu r\sqrt{\frac{\mu r}{n}}\frac{\rho^2}{\sigma_r(\bX_\star)}+\frac1{40}h.
\]
\end{lemma}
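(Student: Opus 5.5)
We write $\bZ$ in its factored tangent form and split $(\Pcal_{T_{\bY}}-\Pcal_{T_{\bX}})\bZ$ into low-rank products whose factors are either projector differences, with small Frobenius norm by \Cref{lem:factor-motion}, or have small rowwise norms. Each product is then passed through $\Acal^\alpha$ via \Cref{lem:weighted-products}, pairing a Frobenius-small factor with a $2,\infty$-small one. Since $\Pcal_{T_{\bX}}\Rcal^\alpha=\Pcal_{T_{\bX}}(\Acal^\alpha)^*\Acal^\alpha$ and $\Pcal_{T_{\bX}}(\Acal^\alpha)^*$ has norm at most $\sqrt{11/10}$ by \eqref{eq:local-conditioning-isometry}, it suffices to bound $\norm{\Acal^\alpha(\Pcal_{T_{\bY}}-\Pcal_{T_{\bX}})\bZ}_{\F}$.

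Write $\bm\Delta_U:=\bU_{\bY}\bU_{\bY}^T-\bU_{\bX}\bU_{\bX}^T$ and $\bm\Delta_V$ analogously. Since $\bZ\in T_{\bY}\Mcal_r$, we have $\bZ=\Pcal_{T_{\bY}}\bZ$, and therefore
\[
(\Pcal_{T_{\bY}}-\Pcal_{T_{\bX}})\bZ=(\bI-\Pcal_{T_{\bX}})\bZ=(\bI-\bU_{\bX}\bU_{\bX}^T)\bZ(\bI-\bV_{\bX}\bV_{\bX}^T).
\]
Inserting $\bZ=\bU_{\bY}\bU_{\bY}^T\bZ+(\bI-\bU_{\bY}\bU_{\bY}^T)\bZ\bV_{\bY}\bV_{\bY}^T$ and using $(\bI-\bU_{\bX}\bU_{\bX}^T)\bU_{\bY}\bU_{\bY}^T=(\bI-\bU_{\bX}\bU_{\bX}^T)\bm\Delta_U$ gives the two terms
\[
(\bI-\bU_{\bX}\bU_{\bX}^T)\bm\Delta_U\bZ(\bI-\bV_{\bX}\bV_{\bX}^T)
\quad\text{and}\quad
(\bI-\bU_{\bX}\bU_{\bX}^T)(\bI-\bU_{\bY}\bU_{\bY}^T)\bZ\bm\Delta_V.
\]
Each term is a product with at most rank $2r$ inner dimension. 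We write it as $\bm F\bm H^T$ with $\bm F=(\bI-\bU_{\bX}\bU_{\bX}^T)\bm\Delta_U[\bU_{\bX},\bU_{\bY}]$ or similar. The Frobenius norm of $\bm F$ is at most $\norm{\bm\Delta_U}_{\F}\lesssim d_{\bX,\bY}/\sigma_r(\bX_\star)$ by \Cref{lem:factor-motion}.

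For the rowwise factors we distinguish the two factor types. When $\bm H$ comes from $\bZ$, we use the hypothesis on $\norm{\bZ^T}_{2,\infty}$. We also use $\norm{\bZ\bV_{\bX}\bV_{\bX}^T}_{2,\infty}\le\norm{\bZ}_{2,\infty}$ up to the incoherence factor $2\sqrt{\mu r/n}\cdot\sqrt{2r}$; more carefully, $\norm{\bm e_j^T\bZ^T\bV_{\bX}}\le\norm{\bZ}_{\op}\norm{\bV_{\bX}}_{2,\infty}$, and $\norm{\bZ}_{\op}\le\norm{\bZ}_{\F}$. When the small-Frobenius factor is $\bm\Delta_U$, its rows are controlled by \Cref{lem:current-incoherence}. \Cref{lem:weighted-products} then yields contributions of size
\[
\sqrt{3n}\cdot\frac{d_{\bX,\bY}}{\sigma_r(\bX_\star)}\Bigl(2h+38\mu r\sqrt{\mu r/n}\,\rho^2/\sigma_r(\bX_\star)+\sqrt{\mu r/n}\,\norm{\bZ}_{\F}\Bigr).
\]
Substituting $d_{\bX,\bY}\le2\sqrt{\mu r/n}\,\rho$ gives the prefactor $\sqrt{3n}\cdot2\sqrt{\mu r/n}\,\rho/\sigma_r(\bX_\star)=2\sqrt3\sqrt{\mu r}\,\rho/\sigma_r(\bX_\star)\le 2\sqrt3/(1000\sqrt{\mu r})$. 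Multiplying $2h$ by this prefactor gives a coefficient well below $1/40$. The $\rho^2$ terms acquire an extra factor $\sqrt{\mu r}\,\rho/\sigma_r(\bX_\star)\le10^{-3}$, so they come out below $\tfrac12\mu r\sqrt{\mu r/n}\,\rho^2/\sigma_r(\bX_\star)$.

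The main obstacle is the bookkeeping of which factor receives the $2,\infty$ norm in each product. The outer projectors $(\bI-\bU_{\bX}\bU_{\bX}^T)$ do not preserve rowwise bounds trivially. We handle this by expanding $\bI-\bU_{\bX}\bU_{\bX}^T$ and bounding the $\bU_{\bX}\bU_{\bX}^T$ piece using $\norm{\bU_{\bX}}_{2,\infty}\le2\sqrt{\mu r/n}$ times an operator norm. We expect this to cost at most a constant factor of about $3$, which the numerical slack absorbs.
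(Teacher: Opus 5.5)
Your decomposition is correct and is essentially the paper's. Set $\bm P_U:=(\bI-\bU_{\bX}\bU_{\bX}^T)\bU_{\bY}$, $\bm P_V:=(\bI-\bV_{\bX}\bV_{\bX}^T)\bV_{\bY}$ and $\bB:=(\bI-\bU_{\bY}\bU_{\bY}^T)\bZ\bV_{\bY}$. Your two terms are then $\bm P_U\bigl[(\bI-\bV_{\bX}\bV_{\bX}^T)\bZ^T\bU_{\bY}\bigr]^T$ and $(\bI-\bU_{\bX}\bU_{\bX}^T)\bB\,\bm P_V^T$. The paper splits the first into its $\bM$ and $\bC$ parts, and then, like you, passes everything through \Cref{lem:weighted-products} and the $\sqrt{11/10}$ bound.

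The gap is in the constants, which are what the lemma asserts and which feed the absorption step in \Cref{lem:correction-proximity}. Your claim that the slack absorbs a factor of about $3$ is false. Write $A:=\mu r\sqrt{\mu r/n}\,\rho^2/\sigma_r(\bX_\star)$. Done exactly as you describe, the row bounds are $2h+56A$ for $(\bI-\bV_{\bX}\bV_{\bX}^T)\bZ^T\bU_{\bY}$ and $2h+74A$ for $(\bI-\bU_{\bX}\bU_{\bX}^T)\bB$; the piece $\bU_{\bX}\bU_{\bX}^T\bB$ alone costs $2\sqrt{\mu r/n}\,\norm{\bZ}_{\F}\le18A$. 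Suppose $\norm{\bm P_U}_{\F},\norm{\bm P_V}_{\F}\le c\,d_{\bX,\bY}/\sigma_r(\bX_\star)$. Then the final $\rho^2$-coefficient is $\sqrt{11/10}\cdot c\cdot 2\sqrt3\cdot10^{-3}\cdot130\approx0.47\,c$. Taking $c=\frac32$ from \Cref{lem:factor-motion}, as you propose, gives about $0.71>\frac12$, so the stated bound does not follow. (The $h$-coefficient is fine either way: about $0.022<\frac1{40}$ even with $c=\frac32$.)

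The fix is to bound the motion factor directly rather than through the projector difference. Since $(\bI-\bU_{\bX}\bU_{\bX}^T)\bX=\bm0$ and $\bU_{\bY}=\bY\bV_{\bY}\bS_{\bY}^{-1}$, you have $\bm P_U=(\bI-\bU_{\bX}\bU_{\bX}^T)(\bY-\bX)\bV_{\bY}\bS_{\bY}^{-1}$. Hence $\norm{\bm P_U}_{\F}\le d_{\bX,\bY}/(\sigma_r(\bX_\star)-\rho)$, i.e. $c\approx1$, and likewise for $\bm P_V$. This gives a coefficient of about $0.473$, which is only a $5\%$ margin.

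For genuine room, also use $\bU_{\bY}^T\bB=\bm0$ to write $\bU_{\bX}^T\bB=(\bU_{\bX}-\bU_{\bY})^T\bB$. That piece then becomes second order, and $74A$ drops to about $57A$. This is the paper's route, which reaches $\frac25A+\frac1{45}h$ before the $\sqrt{11/10}$ factor.

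One small slip: the bound you need is on the row $\bm e_j^T\bV_{\bX}\bV_{\bX}^T\bZ^T\bU_{\bY}$, by $\norm{\bV_{\bX}}_{2,\infty}\norm{\bZ}_{\op}$, not on $\bm e_j^T\bZ^T\bV_{\bX}$.
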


\begin{proof}
Write $\bY=\bU_{\bY}\bS_{\bY}\bV_{\bY}^T$ and use the tangent decomposition
\[
\bZ=\bU_{\bY}\bM\bV_{\bY}^T+\bB\bV_{\bY}^T+\bU_{\bY}\bC^T, \qquad \bU_{\bY}^T\bB=0, \quad \bV_{\bY}^T\bC=0.
\]
Since the three terms are mutually orthogonal, we have
\[
\max\left\{\norm{\bM}_{\F},\norm{\bB}_{\F},\norm{\bC}_{\F}\right\}\le\norm{\bZ}_{\F}.
\]
Moreover, $\bB=\bZ\bV_{\bY}-\bU_{\bY}\bM$ and $\bC=\bZ^T\bU_{\bY}-\bV_{\bY}\bM^T$. Therefore, \Cref{lem:current-incoherence} and the assumed bounds on $\bZ$ give
\[
\max\left\{\norm{\bB}_{2,\infty},\norm{\bC}_{2,\infty}\right\} \le2h+56\mu r\sqrt{\frac{\mu r}{n}}\frac{\rho^2}{\sigma_r(\bX_\star)}.
\]

Define
\[
\bm\Delta_U:=(\bI-\bU_{\bX}\bU_{\bX}^T)\bU_{\bY}, \qquad \bm\Delta_V:=(\bI-\bV_{\bX}\bV_{\bX}^T)\bV_{\bY}.
\]
Since we have
\[
\bm\Delta_U=(\bI-\bU_{\bX}\bU_{\bX}^T)(\bY-\bX)\bV_{\bY}\bS_{\bY}^{-1}, \qquad \bm\Delta_V=(\bI-\bV_{\bX}\bV_{\bX}^T)(\bY-\bX)^T\bU_{\bY}\bS_{\bY}^{-T},
\]
Weyl's inequality gives
\begin{equation}\label{eq:tangent-transport-factor-motion}
\max\left\{\norm{\bm\Delta_U}_{\F},\norm{\bm\Delta_V}_{\F}\right\} \le\frac{d_{\bX,\bY}}{\sigma_r(\bX_\star)-\rho}.
\end{equation}
For every $i$, the identity $\bm\Delta_U=(\bI-\bU_{\bX}\bU_{\bX}^T)(\bY-\bX)\bV_{\bY}\bS_{\bY}^{-1}$ gives
\begin{align*}
\norm{\bm e_i^T\bm\Delta_U}_2 &\le\frac{\norm{\bm e_i^T(\bY-\bX)}_2+\norm{\bm e_i^T\bU_{\bX}}_2\norm{\bY-\bX}_{\op}}{\sigma_r(\bY)}
\le9\sqrt{\frac{\mu r}{n}}\frac{\rho}{\sigma_r(\bX_\star)},
\end{align*}
where the last inequality follows from the definition of the sharp norm, \Cref{lem:current-incoherence}, and $\sigma_r(\bY)\ge\sigma_r(\bX_\star)-\rho$. Applying the same calculation to $\bm\Delta_V=(\bI-\bV_{\bX}\bV_{\bX}^T)(\bY-\bX)^T\bU_{\bY}\bS_{\bY}^{-T}$ gives
\begin{equation}\label{eq:tangent-transport-factor-row}
\max\left\{\norm{\bm\Delta_U}_{2,\infty},\norm{\bm\Delta_V}_{2,\infty}\right\} \le9\sqrt{\frac{\mu r}{n}}\frac{\rho}{\sigma_r(\bX_\star)}.
\end{equation}
Furthermore, since $\bU_{\bY}^T\bB=0$ and $\bV_{\bY}^T\bC=0$, we have
\begin{align*}
\norm{(\bI-\bU_{\bX}\bU_{\bX}^T)\bB}_{2,\infty} &\le\norm{\bB}_{2,\infty}+\norm{\bU_{\bX}}_{2,\infty}\norm{(\bU_{\bX}-\bU_{\bY})^T\bB}_{\F},\\
\norm{(\bI-\bV_{\bX}\bV_{\bX}^T)\bC}_{2,\infty} &\le\norm{\bC}_{2,\infty}+\norm{\bV_{\bX}}_{2,\infty}\norm{(\bV_{\bX}-\bV_{\bY})^T\bC}_{\F}.
\end{align*}
Combining these inequalities with \Cref{lem:factor-motion}, $d_{\bX,\bY}\le2\sqrt{\mu r/n}\,\rho$, and the Frobenius bound on $\bZ$, we obtain
\begin{equation}\label{eq:tangent-transport-projected-row}
\max\left\{\norm{(\bI-\bU_{\bX}\bU_{\bX}^T)\bB}_{2,\infty},\norm{(\bI-\bV_{\bX}\bV_{\bX}^T)\bC}_{2,\infty}\right\} \le2h+57\mu r\sqrt{\frac{\mu r}{n}}\frac{\rho^2}{\sigma_r(\bX_\star)}.
\end{equation}

Since $\bZ\in T_{\bY}\Mcal_r$, substituting its tangent decomposition into \eqref{eq:tangent-projector} gives
\begin{equation}\label{eq:tangent-transport-decomposition}
\begin{aligned}
(\Pcal_{T_{\bY}}-\Pcal_{T_{\bX}})\bZ =&\bm\Delta_U\bM\bm\Delta_V^T+(\bI-\bU_{\bX}\bU_{\bX}^T)\bB\bm\Delta_V^T+\bm\Delta_U\bC^T(\bI-\bV_{\bX}\bV_{\bX}^T).
\end{aligned}
\end{equation}
Applying \Cref{lem:weighted-products} to the first term and using \eqref{eq:tangent-transport-factor-motion}--\eqref{eq:tangent-transport-factor-row}, we have
\[
\norm{\Acal^\alpha(\bm\Delta_U\bM\bm\Delta_V^T)}_{\F} \le9\sqrt{3\mu r}\frac{\rho}{\sigma_r(\bX_\star)}\norm{\bm\Delta_U}_{\F}\norm{\bM}_{\F}.
\]
Applying \Cref{lem:weighted-products} to the second and third terms in \eqref{eq:tangent-transport-decomposition}, and then using \eqref{eq:tangent-transport-factor-motion} and \eqref{eq:tangent-transport-projected-row}, we obtain
\begin{align*}
&~\norm{\Acal^\alpha((\bI-\bU_{\bX}\bU_{\bX}^T)\bB\bm\Delta_V^T)}_{\F} +\norm{\Acal^\alpha(\bm\Delta_U\bC^T(\bI-\bV_{\bX}\bV_{\bX}^T))}_{\F}\\
\le&~2\sqrt{3n}\frac{d_{\bX,\bY}}{\sigma_r(\bX_\star)-\rho} \left(2h+57\mu r\sqrt{\frac{\mu r}{n}}\frac{\rho^2}{\sigma_r(\bX_\star)}\right).
\end{align*}
Substituting $d_{\bX,\bY}\le2\sqrt{\mu r/n}\,\rho$, $\rho\le\sigma_r(\bX_\star)/(1000\mu r)$, $\norm{\bM}_{\F}\le\norm{\bZ}_{\F}$, and $\mu r\le n$ into the bounds for the three terms in \eqref{eq:tangent-transport-decomposition} gives
\begin{equation}\label{eq:tangent-transport-weighted}
\norm{\Acal^\alpha(\Pcal_{T_{\bY}}-\Pcal_{T_{\bX}})\bZ}_{\F} \le\frac25\mu r\sqrt{\frac{\mu r}{n}}\frac{\rho^2}{\sigma_r(\bX_\star)}+\frac1{45}h.
\end{equation}
Finally, \eqref{eq:local-conditioning-isometry}, $(\Acal^\alpha)^*\Acal^\alpha=\Rcal^\alpha$, and \eqref{eq:tangent-transport-weighted} give
\[
\norm{\Pcal_{T_{\bX}}\Rcal^\alpha(\Pcal_{T_{\bY}}-\Pcal_{T_{\bX}})\bZ}_{\F} \le\sqrt{\frac{11}{10}}\norm{\Acal^\alpha(\Pcal_{T_{\bY}}-\Pcal_{T_{\bX}})\bZ}_{\F} \le\frac12\mu r\sqrt{\frac{\mu r}{n}}\frac{\rho^2}{\sigma_r(\bX_\star)}+\frac1{40}h.
\]
This proves the result.
\end{proof}

\subsection{Proof of \Cref{lem:correction-proximity}}
\label{sec:proof-correction-proximity}

We now combine the normal-motion and tangent-transport estimates to compare the actual and leave-one-out tangent corrections.

\begin{proof}[Proof of \Cref{lem:correction-proximity}]
\emph{First estimate: coordinatewise bounds.} Fix $\alpha$ and a row $i$.  By the definitions of $d_k^{\rm loo}$ and $h_k^{\rm corr}$ and by \Cref{lem:factor-motion} in the aligned gauges, we have
\begin{align*}
\norm{\bm\eta_k^\alpha-\bm\eta_k^{\mathrm r,i}}_{\F} \le2h_k^{\rm corr},~~ \norm{\bX_k^\alpha-\bX_k^{\mathrm r,i}}_{\F} \le2d_k^{\rm loo},~~ \norm{\bV_k^\alpha-\bV_k^{\mathrm r,i}}_{\op} \le3\frac{d_k^{\rm loo}}{\sigma_r(\bX_\star)}.
\end{align*}
Set $\bX=\bX_k^{\mathrm r,i}=\bU\bS\bV^T$, $\bN=\bN_k^{\mathrm r,i}$, and $\bm\eta=\bm\eta_k^{\mathrm r,i}$.  Since $\Rcal^{\mathrm r,i}$ is the identity on row $i$ and $\bN\bV=0$, the correction equation and, for every $\bm a\in\R^r$, its test against $\bm e_i\bm a^T\bV^T$ give
\begin{align*}
\Pcal_{T_{\bX}}\Rcal^{\mathrm r,i}(\bN-\bm\eta)&=0,\quad \bm e_i\bm a^T\bV^T =\bU\big[(\bU^T\bm e_i)\bm a^T\big]\bV^T +\big[(\bI-\bU\bU^T)\bm e_i\bm a^T\big]\bV^T\in T_{\bX}\Mcal_r,\\
\bm e_i^T(\bN-\bm\eta)\bV&=0,\quad \bm e_i^T\bm\eta_k^{\mathrm r,i}\bV_k^{\mathrm r,i}=0.
\end{align*}
Since $\bm e_i^T\bm\eta_k^{\mathrm r,i}\bV_k^{\mathrm r,i}=0$, the $i$th row of $\bm\eta_k^{\mathrm r,i}$ contains only its $\bU\bC^T$ component. Hence \Cref{lem:current-incoherence}, \eqref{eq:finite-correction-F-main}, $\mu r/n\le1$ from \Cref{ass:incoherence}, and $\rho_k/\sigma_r(\bX_\star)\le1/(1000\mu r)$ give
\begin{align}
\norm{\bm e_i^T\bm\eta_k^{\mathrm r,i}}_2 &\le2\sqrt{\frac{\mu r}{n}}\norm{\bm\eta_k^{\mathrm r,i}}_{\F} \le18\sqrt{\frac{\mu r}{n}}\mu r \frac{\rho_k^2}{\sigma_r(\bX_\star)},\notag\\
\norm{\bm e_i^T\bm\eta_k^\alpha\bV_k^\alpha}_2 &\le2h_k^{\rm corr}+54\sqrt{\frac{\mu r}{n}}\mu r \frac{\rho_k^2}{\sigma_r(\bX_\star)} \frac{d_k^{\rm loo}}{\sigma_r(\bX_\star)}\notag\\
&\le2h_k^{\rm corr}+\frac18\sqrt{\frac{\mu r}{n}}\mu r \frac{\rho_k^2}{\sigma_r(\bX_\star)}. \label{eq:correction-row-coordinate}
\end{align}
Replacing $(\mathrm r,i,\bU,\bV)$ by $(\mathrm c,j,\bV,\bU)$ and transposing gives the column form of \eqref{eq:correction-row-coordinate}. Moreover, comparing each row with its row-completed correction, and each column with its column-completed correction, gives
\begin{equation}\label{eq:correction-row-norm}
\max\left\{\norm{\bm\eta_k^\alpha}_{2,\infty},\norm{(\bm\eta_k^\alpha)^T}_{2,\infty}\right\} \le2h_k^{\rm corr}+18\sqrt{\frac{\mu r}{n}}\mu r\frac{\rho_k^2}{\sigma_r(\bX_\star)},\qquad \alpha\in\mathfrak A.
\end{equation}

For a tangent matrix $\bm\eta$ at $\bX=\bU\bm\Sigma\bV^T$, we have
\[
\bm\eta=(\bm\eta\bV)\bV^T+\bU(\bm\eta^T\bU)^T -\bU(\bU^T\bm\eta\bV)\bV^T.
\]
Equation~\eqref{eq:correction-row-coordinate} and its transpose, together with the tangent decomposition, \Cref{lem:current-incoherence}, and $\norm{\bm\eta}_{\F}\le9\mu r\rho_k^2/\sigma_r(\bX_\star)$, give
\[
\norm{\bm\eta}_{\op} \le9\mu r\frac{\rho_k^2}{\sigma_r(\bX_\star)}, \qquad \sharpnorm{\bm\eta} \le2\sqrt{\frac{n}{\mu r}}h_k^{\rm corr} +19\mu r\frac{\rho_k^2}{\sigma_r(\bX_\star)}.
\]
This proves \eqref{eq:finite-coordinate-main}.

\emph{Second estimate: comparison of the corrections.} Fix a row $i$, and set
\[
\bX=\bX_k^0,\qquad \bY=\bX_k^{\mathrm r,i},\qquad \bm\eta=\bm\eta_k^0,\qquad \overline{\bm\eta}=\bm\eta_k^{\mathrm r,i},\qquad \bZ=\bN_{\bY}-\overline{\bm\eta}.
\]
The two correction equations are
\begin{subequations}\label{eq:correction-equations}
\begin{align}
\Pcal_{T_{\bX}}\Rcal^0\Pcal_{T_{\bX}}\bm\eta &=\Pcal_{T_{\bX}}\Rcal^0\bN_{\bX}, \label{eq:correction-actual-equation}\\
\Pcal_{T_{\bY}}\Rcal^{\mathrm r,i}\Pcal_{T_{\bY}}\overline{\bm\eta} &=\Pcal_{T_{\bY}}\Rcal^{\mathrm r,i}\bN_{\bY}. \label{eq:correction-row-equation}
\end{align}
\end{subequations}
Since $\bm\eta\in T_{\bX}$ and $\overline{\bm\eta}\in T_{\bY}$, \eqref{eq:correction-row-equation} gives $\Pcal_{T_{\bY}}\Rcal^{\mathrm r,i}\bZ=0$.  Subtracting \eqref{eq:correction-row-equation} from \eqref{eq:correction-actual-equation} and adding and subtracting $\bN_{\bY}$ and $\Rcal^{\mathrm r,i}$ give
\begin{equation}\label{eq:correction-proximity-decomposition}
\begin{aligned}
(\Pcal_{T_{\bX}}\Rcal^0\Pcal_{T_{\bX}}) (\bm\eta-\Pcal_{T_{\bX}}\overline{\bm\eta}) &= \underbrace{\Pcal_{T_{\bX}}\Rcal^0(\bN_{\bX}-\bN_{\bY})}_{\bm T_1} +\underbrace{\Pcal_{T_{\bX}}(\Rcal^0-\Rcal^{\mathrm r,i})\bZ}_{\bm T_2}\\
&\quad+\underbrace{(\Pcal_{T_{\bX}}-\Pcal_{T_{\bY}})\Rcal^{\mathrm r,i}\bZ}_{\bm T_3} +\underbrace{\Pcal_{T_{\bX}}\Rcal^0(\Pcal_{T_{\bY}}-\Pcal_{T_{\bX}})\overline{\bm\eta}}_{\bm T_4}.
\end{aligned}
\end{equation}
Replacing $(\mathrm r,i)$ by $(\mathrm c,j)$ and transposing gives the column form of \eqref{eq:correction-proximity-decomposition}. By \Cref{lem:local-conditioning}, the inverse of the left-hand operator has norm at most $10/9$.  We bound $\bm T_1,\ldots,\bm T_4$ separately, as in \cite[Lemma~11]{MaWangChiChen2020}.

For $\bm T_1$, \Cref{lem:sampled-normal-motion} gives $\displaystyle \norm{\bm T_1}_{\F} \le32\sqrt{\frac{\mu r}{n}}\mu r \frac{\rho_k^2}{\sigma_r(\bX_\star)}$.

For $\bm T_2$, write the completed row of $\bZ$ as $\bm z_i^T$, set $\delta_{ij}:=\mathbf 1_{\{(i,j)\in\widehat{\Omega}\}}$, and write $w_j=(\delta_{ij}/q-1)(\bm z_i)_j$. Conditional on $\bX_0$ and the Bernoulli variables outside row $i$, $\bm z_i$ and $\bV_k^{\mathrm r,i}$ are fixed, whereas $\{\delta_{ij}\}_{j=1}^n$ remain independent Bernoulli$(q)$ variables. Hence the conclusion of \Cref{lem:loo-probability} gives \eqref{eq:simultaneous-row-event} with $\bm z=\bm z_i$ and $\bV=\bV_k^{\mathrm r,i}$.

Write $\bX_k^{\mathrm r,i}=\bU\bm\Sigma\bV^T$, $\bN=\bN_k^{\mathrm r,i}$, and $\overline{\bm\eta}=\bm\eta_k^{\mathrm r,i}$.  Testing the correction equation against $\bm e_i\bm a^T\bV^T\in T_{\bX_k^{\mathrm r,i}}\Mcal_r$ and using $\bN\bV=0$ give
\[
\bm e_i^T(\bN-\overline{\bm\eta})\bV=0, \qquad \bm e_i^T\overline{\bm\eta}\bV=0.
\]
Set $\bC_{\rm off}:=(\bI-\bV\bV^T)\overline{\bm\eta}^{T}\bU$. Using \Cref{lem:normal} for $\bN_k^{\mathrm r,i}$, the identity $\bm e_i^T\overline{\bm\eta}\bV=0$ and \Cref{lem:local-conditioning} for $\bm e_i^T\overline{\bm\eta}$, and the transposed form of \eqref{eq:correction-row-coordinate} for $\bC_{\rm off}$, the tangent decomposition of $\overline{\bm\eta}$ gives
\begin{subequations}\label{eq:loo-residual-bounds}
\begin{align}
\norm{\bm e_i^T\bN_k^{\mathrm r,i}}_2 &\le\frac{16}{3}\sqrt{\frac{\mu r}{n}} \frac{\rho_k^2}{\sigma_r(\bX_\star)}, \notag\\
\norm{\bN_k^{\mathrm r,i}}_\infty &\le\frac{16}{3}\frac{4\mu r}{n} \frac{\rho_k^2}{\sigma_r(\bX_\star)},\notag\\
\norm{\bm e_i^T\overline{\bm\eta}}_2 &\le2\sqrt{\frac{\mu r}{n}} \norm{\overline{\bm\eta}}_{\F} \le18\sqrt{\frac{\mu r}{n}}\mu r \frac{\rho_k^2}{\sigma_r(\bX_\star)},\notag\\
\norm{\bm e_j^T\bC_{\rm off}}_2 &\le\norm{\bm e_j^T\overline{\bm\eta}^T\bU}_2 +\norm{\bm e_j^T\bV}_2 \norm{\bU^T\overline{\bm\eta}\bV}_{\op}\notag\\
&\le2h_k^{\rm corr} +\frac18\sqrt{\frac{\mu r}{n}}\mu r \frac{\rho_k^2}{\sigma_r(\bX_\star)} +18\sqrt{\frac{\mu r}{n}}\mu r \frac{\rho_k^2}{\sigma_r(\bX_\star)},\notag\\
\norm{\bm z_i}_2 &\le24\mu r\sqrt{\frac{\mu r}{n}} \frac{\rho_k^2}{\sigma_r(\bX_\star)}, \label{eq:loo-residual-row}\\
\norm{\bm z_i}_\infty &\le\frac{60(\mu r)^2}{n}\frac{\rho_k^2}{\sigma_r(\bX_\star)} +4\sqrt{\frac{\mu r}{n}}\,h_k^{\rm corr}. \label{eq:loo-residual-entry}
\end{align}
\end{subequations}
Substituting \eqref{eq:loo-residual-row} and \eqref{eq:loo-residual-entry} into \eqref{eq:simultaneous-row-event}, and using $q\ge c_4\mu r\log n/n$ with $c_4=2^{20}$, gives
\[
2\sqrt{\frac{\mu r}{n}}\norm{\bm w}_2 +\norm{(\bV_k^{\mathrm r,i})^T\bm w}_2 \le13\sqrt{\frac{\mu r}{n}} \mu r \frac{\rho_k^2}{\sigma_r(\bX_\star)}+\frac{17}{50}h_k^{\rm corr}.
\]
Equation~\eqref{eq:tangent-projector}, \Cref{lem:factor-motion}, and $\rho_k\le\sigma_r(\bX_\star)/(1000\mu r)$ give
\begin{align*}
\norm{\Pcal_{T_{\bY}}(\bm e_i\bm w^T)}_{\F}^2 &=\norm{\bU_{\bY}^T\bm e_i}_2^2\norm{\bm w}_2^2 +\norm{(\bI-\bU_{\bY}\bU_{\bY}^T)\bm e_i}_2^2 \norm{\bV_{\bY}^T\bm w}_2^2,\\
3\frac{d_k^{\rm loo}}{\sigma_r(\bX_\star)}\norm{\bm w}_2 &\le\frac{3\rho_k}{\sigma_r(\bX_\star)} \left(2\sqrt{\frac{\mu r}{n}}\norm{\bm w}_2 +\norm{\bV_{\bY}^T\bm w}_2\right),\\
\norm{\bm T_2}_{\F} &\le14\mu r\sqrt{\frac{\mu r}{n}} \frac{\rho_k^2}{\sigma_r(\bX_\star)}+\frac7{20}h_k^{\rm corr}.
\end{align*}

For $\bm T_3$, we use the row-coordinate estimate \eqref{eq:correction-row-coordinate} and its transpose. Together with the tangent decomposition and the Frobenius bound on $\overline{\bm\eta}$, they give
\[
\norm{\overline{\bm\eta}}_\infty \le8\sqrt{\frac{\mu r}{n}}h_k^{\rm corr}+37\frac{(\mu r)^2}{n}\frac{\rho_k^2}{\sigma_r(\bX_\star)}.
\]
Consequently, \eqref{eq:correction-row-equation}, \Cref{lem:normal,lem:degree,lem:factor-motion}, $d_k^{\rm loo}\le2\sqrt{\mu r/n}\,\rho_k$, and $\rho_k\le\sigma_r(\bX_\star)/(1000\mu r)$ give
\begin{align*}
\norm{\bZ}_\infty &\le8\sqrt{\frac{\mu r}{n}} h_k^{\rm corr} +64\frac{(\mu r)^2}{n} \frac{\rho_k^2}{\sigma_r(\bX_\star)},\\
\bU_{\bY}^T\Rcal^{\mathrm r,i}\bZ&=0, \quad \Rcal^{\mathrm r,i}\bZ\bV_{\bY}=0,\\
\norm{\Rcal^{\mathrm r,i}\bZ}_{\op} &\le3n\norm{\bZ}_\infty,\\
\norm{(\Pcal_{T_{\bX}}-\Pcal_{T_{\bY}}) \Rcal^{\mathrm r,i}\bZ}_{\F} &\le3\frac{d_k^{\rm loo}}{\sigma_r(\bX_\star)} \norm{\Rcal^{\mathrm r,i}\bZ}_{\op},\\
\norm{\bm T_3}_{\F} &\le\frac65\sqrt{\frac{\mu r}{n}} \mu r \frac{\rho_k^2}{\sigma_r(\bX_\star)}+\frac3{20}h_k^{\rm corr}.
\end{align*}
For $\bm T_4$, \eqref{eq:finite-correction-F-main} and \eqref{eq:correction-row-norm}, followed by \Cref{lem:tangent-transport}, give
\begin{align*}
\norm{\overline{\bm\eta}}_{\F} &\le9\mu r\frac{\rho_k^2}{\sigma_r(\bX_\star)},\\
\max\{\norm{\overline{\bm\eta}}_{2,\infty}, \norm{\overline{\bm\eta}^{T}}_{2,\infty}\} &\le2h_k^{\rm corr}+38\mu r\sqrt{\frac{\mu r}{n}} \frac{\rho_k^2}{\sigma_r(\bX_\star)},\\
\norm{\bm T_4}_{\F} &\le\frac12\sqrt{\frac{\mu r}{n}} \mu r \frac{\rho_k^2}{\sigma_r(\bX_\star)}+\frac1{40}h_k^{\rm corr}.
\end{align*}
To compare the two corrections themselves, we also account for the component of $\overline{\bm\eta}$ normal to $T_{\bX}$. By \Cref{lem:factor-motion,lem:local-conditioning} and \eqref{eq:correction-proximity-decomposition}, we have
\[
\norm{\bm\eta-\overline{\bm\eta}}_{\F} \le\frac{10}{9}\sum_{j=1}^4\norm{\bm T_j}_{\F} +\frac{3d_k^{\rm loo}}{\sigma_r(\bX_\star)}\norm{\overline{\bm\eta}}_{\F}.
\]
Substituting the four estimates for $\norm{\bm T_1}_{\F},\ldots,\norm{\bm T_4}_{\F}$ derived in this proof, using $d_k^{\rm loo}\le2\sqrt{\mu r/n}\,\rho_k$, and applying their transposed counterparts to each completed column yield
\[
h_k^{\rm corr} \le\left(53+54\frac{\rho_k}{\sigma_r(\bX_\star)}\right)\sqrt{\frac{\mu r}{n}}\mu r\frac{\rho_k^2}{\sigma_r(\bX_\star)}+\frac7{12}h_k^{\rm corr}.
\]
Since $\rho_k/\sigma_r(\bX_\star)\le1/(1000\mu r)$ and $\mu r\ge1$, rearranging gives
\[
h_k^{\rm corr} \le128\sqrt{\frac{\mu r}{n}}\,\mu r \frac{\rho_k^2}{\sigma_r(\bX_\star)}.
\]
This proves \eqref{eq:finite-proximity-main}. Together with \eqref{eq:finite-coordinate-main}, this completes the proof.
\end{proof}

\section{Matrix-free implementation and computational complexity}\label{app:implementation}

We describe the matrix-free realization of the RGD and RGN steps and record the resulting  complexity. Let $\bX=\bU\bS\bV^T\in\Mcal_r$ be a compact singular value decomposition. Every $\bm\xi\in T_{\bX}\Mcal_r$ can be written as
\[
\bm\xi=\bU\bM\bV^T+\bB\bV^T+\bU\bC^T, \qquad \bU^T\bB=0, \quad \bV^T\bC=0;
\]
see \cite[Sec.~2.1]{Vandereycken2013}. Define $\Phi_{\bX}(\bM,\bB,\bC):=\bU\bM\bV^T+\bB\bV^T+\bU\bC^T$. Then $\Phi_{\bX}$ is an isometry onto $T_{\bX}\Mcal_r$ with
\[
\Phi_{\bX}^*(\bZ) =\bigl(\bU^T\bZ\bV,(\bI-\bU\bU^T)\bZ\bV,(\bI-\bV\bV^T)\bZ^T\bU\bigr).
\]
For the RGD direction, orthogonality gives
\[
\norm{\bm\xi}_{\F}^2=\norm{\bM}_{\F}^2+\norm{\bB}_{\F}^2+\norm{\bC}_{\F}^2.
\]
Thus the numerator in \eqref{eq:rgd-exact-line-search} costs $O(nr)$ operations. Its denominator is obtained by first forming $\bU\bM$, then evaluating $\Phi_{\bX}(\bM,\bB,\bC)$ on $\Omega$ and summing the squared entries, which costs $O(|\Omega|r+nr^2)$ operations. The graph retraction costs $O(nr^2+r^3)$ operations. Since $r\le n$, the exact line search and the graph retraction therefore preserve the $O(|\Omega|r+nr^2)$ cost per RGD iteration.

For RGN, writing $\bm z=(\bM,\bB,\bC)$, the tangent normal equation \eqref{eq:ambient-normal-equation} is equivalent to
\begin{equation}\label{eq:matrix-free-coordinate-normal}
\mathcal H_{\bX}\bm z=-\bm g_{\bX}, \qquad \mathcal H_{\bX}:=\Phi_{\bX}^*q^{-1}\Pcal_{\widehat{\Omega}}\Phi_{\bX}, \qquad \bm g_{\bX}:=\Phi_{\bX}^*q^{-1}\Pcal_{\widehat{\Omega}}(\bX-\bX_\star).
\end{equation}
Thus no $r(2n-r)\times r(2n-r)$ normal matrix is formed. Both the formation of $\bm g_{\bX}$ and one application of $\mathcal H_{\bX}$ cost $O(|\widehat{\Omega}|r+nr^2)$ operations, and the graph retraction costs $O(nr^2+r^3)$ operations; see \cite{AbsilOseledets2015}. This gives the standard orders for fixed-rank Riemannian matrix completion; see, for example, \cite{WeiCaiChanLeung2020}.

By \Cref{lem:local-conditioning}, $\mathcal H_{\bX}$ is positive definite along the RGN iterates covered by \Cref{thm:fixed}. Therefore, exact CG applied to \eqref{eq:matrix-free-coordinate-normal} is well defined and terminates in at most $\dim(T_{\bX}\Mcal_r)=r(2n-r)$ iterations \cite{Greenbaum1997}. In particular, $1\le J_k\le r(2n-r)$ whenever an RGN correction is computed, and the $k$th RGN iteration costs
\[
O\!\left(J_k(|\widehat{\Omega}|r+nr^2)\right).
\]

\subsection{Proof of \Cref{prop:reconstruction-cost}}
\label{proof:reconstruction-cost}
\begin{proof}
Set
\[
\bY=\bZ+q^{-1}\Pcal_\Lambda(\bX_\star-\bZ) =\bU\bm\Sigma\bV^T+\bS,
\]
where $\rank(\bZ)\le r$ and $\bS$ has at most $m$ nonzero entries. For $\bQ\in\R^{n\times r}$,
\[
\bY^T\bQ =\bV\bm\Sigma(\bU^T\bQ)+\bS^T\bQ, \qquad \bY\bQ =\bU\bm\Sigma(\bV^T\bQ)+\bS\bQ.
\]
Thus the two matrix products in each step of \eqref{eq:threshold-iteration}, together with the thin QR factorization, require $O(mr+nr^2)$ operations. Since there are $\ceil{12\log n}$ such steps, the subspace iteration costs $O\!\left((mr+nr^2)\log n\right)$. For the final spectral reconstruction, compute
\[
\bY^T\bQ=\widetilde{\bQ}\bR, \qquad \bR^T=\bU_R\bm\Sigma_R\bV_R^T.
\]
Then
\[
\Hcal_{\ge\tau/8}(\bQ^T\bY) = \bU_R\Hcal_{\ge\tau/8}(\bm\Sigma_R) (\widetilde{\bQ}\bV_R)^T,
\]
and hence
\[
\Tcal_\tau(\bY) = \bQ\bU_R\Hcal_{\ge\tau/8}(\bm\Sigma_R) (\widetilde{\bQ}\bV_R)^T.
\]
The thin QR factorization and the $r\times r$ singular value decomposition require $O(mr+nr^2)$ additional operations. Evaluating the entries of $\bZ$ on $\Lambda$ and summing the squared residuals costs $O(mr)$ operations. This proves the result.
\end{proof}

\subsection{Computational complexity of RGD and RGN}
\begin{theorem}\label[theorem]{thm:complexity}
Let $0<\varepsilon\le1/4$. Under the conditions of \Cref{thm:rgd-global}, RGD attains relative Frobenius accuracy $\varepsilon$ with  complexity
\[
O\!\left( \mu nr^2\log n\log(n\kappa) \left[ \log n+\log\frac1\varepsilon \right] \right).
\]
Under the conditions of \Cref{thm:end}, RGN attains relative Frobenius accuracy $\varepsilon$ with arithmetic complexity
\[
O\!\left( \mu nr^2\log n \left[ \log n\log(2\mu r\kappa) + J\log\log\frac1\varepsilon \right] \right),
\]
where $J_k\le J\le r(2n-r)$ for all RGN iterations used to attain the prescribed accuracy.
\end{theorem}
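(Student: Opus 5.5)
The bound is obtained by adding the cost of the initialization to the cost of the iterations, using the per-step costs already established and the iteration counts implied by \Cref{thm:rgd-global,thm:end}. The first task is to control the number of observations. Since $|\Omega|$ is a sum of $n^2$ independent Bernoulli$(p)$ variables, a scalar Chernoff bound gives $|\Omega|\le 2pn^2$ with probability at least $1-e^{-pn^2/3}$, which is negligible. The same bound holds for each $|\Omega^{(\ell)}|\le 2qn^2$ and for $|\widehat\Omega|\le 2qn^2$. I would take $p$ and $q$ at the thresholds of the theorems, namely $p\asymp\mu r\log n\log(n\kappa)/n$ for RGD, and $p\asymp\mu r\log n\log(2\mu r\kappa)/n$ with $q\asymp \mu r\log n/n$ for RGN. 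These choices give $|\Omega|r\gtrsim nr^2$, so in every per-step cost the term $|\Omega|r$ dominates $nr^2$.

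\emph{Initialization.} \Cref{alg:init} performs at most $K$ reconstructions. By \Cref{prop:reconstruction-cost}, each costs $O((|\Omega^{(\ell)}|r+nr^2)\log n)$, plus $O(|\Omega^{(\ell)}|r)$ for the residual test. Under the analysis split, $|\Omega^{(\ell)}|\lesssim qn^2\asymp\mu nr\log n$. Since $K=O(\log(n\kappa))$ for RGD and $K=O(\log(2\mu r\kappa))$ for RGN, the initialization costs $O(\mu nr^2\log^2 n\log(n\kappa))$ and $O(\mu nr^2\log^2n\log(2\mu r\kappa))$, respectively. These are exactly the $\log n$ terms in the two bracketed expressions. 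If the implementation reuses the full set $\Omega$ as in \Cref{rem:initialization-sampling}, the RGD count picks up an extra factor $\log(n\kappa)$, which the stated bound still absorbs.

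\emph{Iterations.} For RGD, \Cref{thm:rgd-global} gives $\norm{\bX_k-\bX_\star}_{\F}\le(3/8)^k\norm{\bX_0-\bX_\star}_{\F}$. From the initialization, $\norm{\bX_0-\bX_\star}_{\F}\le\sqrt q\,\sigma_r(\bX_\star)/128\le\norm{\bX_\star}_{\F}$, so $O(\log(1/\varepsilon))$ iterations suffice. Each costs $O(|\Omega|r+nr^2)=O(\mu nr^2\log n\log(n\kappa))$, and adding the initialization gives the first bound. For RGN, \Cref{thm:end} bounds the first $\bar K=O(\log\log n)$ iterations. From $\bar K$ on, the recurrence $e_{k+1}\le 4e_k^2/(\sqrt q\sigma_r)$ with $e_{\bar K}\le\sigma_r/(40n)$ gives $4e_{\bar K}/(\sqrt q\sigma_r)\le 1/10$, using $q\ge n^{-2}$. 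Writing $a_k:=4e_k/(\sqrt q\sigma_r)$, this gives $a_{k+1}\le a_k^2$, so $a_{\bar K+j}\le 10^{-2^j}$. Reaching $e_k\le\varepsilon\norm{\bX_\star}_{\F}$ therefore takes $O(\log\log(1/\varepsilon)+\log\log n)$ further iterations. The $\log\log n$ part, together with the $\bar K$ warm-up steps, is absorbed into the initialization term after multiplying by $J$; otherwise I would simply state the count as $O(\log\log(n/\varepsilon))$. Each iteration costs $O(J(|\widehat\Omega|r+nr^2))=O(J\mu nr^2\log n)$. This gives $J\log\log(1/\varepsilon)$ times $\mu nr^2\log n$. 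The stated bound needs this $q$-level count, which is why $\widehat\Omega$ rather than $\Omega$ enters here.

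\emph{Main obstacle.} The real difficulty is the bookkeeping. The warm-up RGN iterations for $k\le\bar K$ cost $J\,\mu nr^2\log n\log\log n$, and this must be absorbed into the stated bound. I would handle it using $\log\log n\le\log n\log(2\mu r\kappa)$ only when $J$ is treated as $O(1)$ relative to that term. If that fails, I would add $\bar K$ to the iteration count explicitly, which requires $\varepsilon\le 1/4$ so that $\log\log(1/\varepsilon)\ge c$.
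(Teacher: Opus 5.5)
Your sample-count and initialization accounting and your RGD part agree with the paper's proof. The paper bounds the initialization by $O(pn^2r\log n)$ via $\sum_\ell|\Omega^{(\ell)}|=O(Bqn^2)$ and $Bq=O(p)$, which is what you do subset by subset.

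The RGN iteration count, however, has a genuine gap, at exactly the point you flag as the main obstacle. You treat the $\bar K=\lceil\log_2\log_2(4n)\rceil$ leading iterations as mandatory warm-up, and neither fallback removes the resulting $J\mu nr^2\log n\log\log n$ term. Absorbing it into $\mu nr^2\log^2n\log(2\mu r\kappa)$ fails because $J$ may be as large as $r(2n-r)$. The condition $\log\log(1/\varepsilon)\ge c$, which follows from $\varepsilon\le1/4$, absorbs only additive constants, not the unbounded $\bar K$. What you actually obtain is $J\log\log(n/\varepsilon)$, which is strictly weaker than the claim for, say, constant $\varepsilon$ and $J\asymp n$. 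Also, your own bound $a_{\bar K+j}\le10^{-2^j}$, together with $\sqrt q\,\sigma_r(\bX_\star)\le\norm{\bX_\star}_{\F}$, already gives $O(\log\log(1/\varepsilon))$ iterations after $\bar K$; the extra $\log\log n$ you attach there is spurious.

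The missing idea is that the first $\bar K$ iterations are not overhead. \Cref{thm:end} gives doubly exponential decay from $k=0$, and you only count iterations until accuracy $\varepsilon$ is reached. Since the error has rank at most $2r$ and $\norm{\bX_\star}_{\F}\ge\sqrt r\,\sigma_r(\bX_\star)$, for $k\le\bar K$ we have
\[
\frac{\norm{\bX_k-\bX_\star}_{\F}}{\norm{\bX_\star}_{\F}}\le\frac{\sqrt{2r}\,\sharpnorm{\bX_k-\bX_\star}}{\sqrt r\,\sigma_r(\bX_\star)}\le\left(\frac7{25}\right)^{2^k}.
\]
Let $k_\varepsilon$ be the first index at which relative accuracy $\varepsilon$ holds.

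If $k_\varepsilon\le\bar K$, the display gives $k_\varepsilon=O(\log\log(1/\varepsilon))$ directly.

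If $k_\varepsilon>\bar K$, then $\varepsilon<(7/25)^{2^{\bar K}}\le(7/25)^{\log_2(4n)}$. Hence $\log(1/\varepsilon)\ge\log(4n)$, so $\bar K=O(\log\log(4n))=O(\log\log(1/\varepsilon))$. Your quadratic recursion from $a_{\bar K}\le1/10$ then adds only $O(\log\log(1/\varepsilon))$ further steps.

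In both cases $k_\varepsilon=O(\log\log(1/\varepsilon))$, with $\varepsilon\le1/4$ absorbing the additive constants. Multiplying by the per-iteration cost $O(J\mu nr^2\log n)$ gives the stated bound.

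Separately, your side remark that reusing the full $\Omega$ in the initialization is still absorbed is incorrect. $K$ reconstructions on $\Omega$ cost $O(\mu nr^2\log^2n\log^2(n\kappa))$, which exceeds the stated RGD bound unless $\log(1/\varepsilon)\gtrsim\log n\log(n\kappa)$. This does not affect the theorem, which is stated under the split-sample conditions.
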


\begin{proof}
\emph{Initialization.}
Let $m_\ell=|\Omega^{(\ell)}|$ and let $B=K+1$ for RGD and $B=K+2$ for RGN. By \Cref{prop:reconstruction-cost}, at most $K$ reconstruction steps require
\[
O\!\left( \left[ r\sum_{\ell=1}^{K}m_\ell+Knr^2 \right]\log n \right)
\]
operations. The residual tests cost at most $O(r\sum_{\ell=1}^{K}m_\ell)$ operations and have lower order. Since the observation components are independent Bernoulli$(q)$ sets, $\sum_{\ell=0}^{B-1}m_\ell=O(Bqn^2)$ with high probability. Moreover, $Bq=O(p)$ and $qn\gtrsim\mu r\log n$, so $Knr^2=O(Bqn^2r)$. Therefore the initialization cost is
\begin{equation}\label{eq:initialization-cost-simple}
O(pn^2r\log n).
\end{equation}

\emph{RGD.} By the matrix-free implementation above, one RGD iteration costs $O(|\Omega|r+nr^2)$. At the sampling rate of \Cref{thm:rgd-global}, $|\Omega|=O\!\left(\mu nr\log n\log(n\kappa)\right)$ with high probability. Combining the resulting per-iteration cost, the linear convergence in \Cref{thm:rgd-global}, and \eqref{eq:initialization-cost-simple} gives
\[
O\!\left( \mu nr^2\log n\log(n\kappa) \left[ \log n+\log\frac1\varepsilon \right] \right).
\]

\emph{RGN.} By the matrix-free implementation above, the $k$th RGN iteration costs $O\!\left(J_k(|\widehat{\Omega}|r+nr^2)\right)$. At the sampling rate of \Cref{thm:end}, $|\widehat{\Omega}|=O(\mu nr\log n)$ with high probability. Combining this estimate, the convergence rate in \Cref{thm:end}, and \eqref{eq:initialization-cost-simple} gives
\[
O\!\left( \mu nr^2\log n \left[ \log n\log(2\mu r\kappa) + J\log\log\frac1\varepsilon \right] \right).
\]
\end{proof}

\bibliographystyle{plain}
\bibliography{references}

\begin{thebibliography}{10}

\bibitem{AbsilOseledets2015}
P.-A. Absil and Ivan~V. Oseledets.
\newblock Low-rank retractions: A survey and new results.
\newblock {\em Computational Optimization and Applications}, 62(1):5--29, 2015.

\bibitem{BlanchardTannerWei2015}
Jeffrey~D. Blanchard, Jared Tanner, and Ke~Wei.
\newblock {CGIHT}: Conjugate gradient iterative hard thresholding for
  compressed sensing and matrix completion.
\newblock {\em Information and Inference: A Journal of the IMA}, 4(4):289--327,
  2015.

\bibitem{CaiCaiYou2023}
Hanqin Cai, Jian-Feng Cai, and Juntao You.
\newblock Structured gradient descent for fast robust low-rank {Hankel} matrix
  completion.
\newblock {\em SIAM Journal on Scientific Computing}, 45(3):A1172--A1198, 2023.

\bibitem{CaiHuangLuYou2025}
HanQin Cai, Longxiu Huang, Xiliang Lu, and Juntao You.
\newblock Accelerating ill-conditioned {Hankel} matrix recovery via structured
  {Newton}-like descent.
\newblock {\em Inverse Problems}, 41(7):075015, 2025.

\bibitem{CaiWuXia2025}
Jian-Feng Cai, Tong Wu, and Ruizhe Xia.
\newblock Fast non-convex matrix sensing with optimal sample complexity.
\newblock In {\em Proceedings of the Forty-first Conference on Uncertainty in
  Artificial Intelligence}, pages 497--520, 2025.

\bibitem{CandesRecht2009}
Emmanuel~J. Cand{\`e}s and Benjamin Recht.
\newblock Exact matrix completion via convex optimization.
\newblock {\em Foundations of Computational Mathematics}, 9(6):717--772, 2009.

\bibitem{CandesTao2010}
Emmanuel~J. Cand{\`e}s and Terence Tao.
\newblock The power of convex relaxation: Near-optimal matrix completion.
\newblock {\em IEEE Transactions on Information Theory}, 56(5):2053--2080,
  2010.

\bibitem{Chen2015}
Yudong Chen.
\newblock Incoherence-optimal matrix completion.
\newblock {\em IEEE Transactions on Information Theory}, 61(5):2909--2923,
  2015.

\bibitem{ChenBhojanapalliSanghaviWard2015}
Yudong Chen, Srinadh Bhojanapalli, Sujay Sanghavi, and Rachel Ward.
\newblock Completing any low-rank matrix, provably.
\newblock {\em Journal of Machine Learning Research}, 16(94):2999--3034, 2015.

\bibitem{DingChen2020}
Lijun Ding and Yudong Chen.
\newblock Leave-one-out approach for matrix completion: Primal and dual
  analysis.
\newblock {\em IEEE Transactions on Information Theory}, 66(11):7274--7301,
  2020.

\bibitem{Greenbaum1997}
Anne Greenbaum.
\newblock {\em Iterative Methods for Solving Linear Systems}, volume~17 of {\em
  Frontiers in Applied Mathematics}.
\newblock Society for Industrial and Applied Mathematics, Philadelphia, PA,
  1997.

\bibitem{HalkoMartinssonTropp2011}
Nathan Halko, Per-Gunnar Martinsson, and Joel~A. Tropp.
\newblock Finding structure with randomness: Probabilistic algorithms for
  constructing approximate matrix decompositions.
\newblock {\em SIAM Review}, 53(2):217--288, 2011.

\bibitem{Hardt2014}
Moritz Hardt.
\newblock Understanding alternating minimization for matrix completion.
\newblock In {\em 2014 IEEE 55th Annual Symposium on Foundations of Computer
  Science}, pages 651--660, 2014.

\bibitem{HardtWootters2014}
Moritz Hardt and Mary Wootters.
\newblock Fast matrix completion without the condition number.
\newblock In {\em Proceedings of the 27th Conference on Learning Theory}, pages
  638--678, 2014.

\bibitem{JainMekaDhillon2010}
Prateek Jain, Raghu Meka, and Inderjit~S. Dhillon.
\newblock Guaranteed rank minimization via singular value projection.
\newblock In {\em Advances in Neural Information Processing Systems},
  volume~23, pages 937--945, 2010.

\bibitem{JainNetrapalli2015}
Prateek Jain and Praneeth Netrapalli.
\newblock Fast exact matrix completion with finite samples.
\newblock In {\em Proceedings of the 28th Conference on Learning Theory}, pages
  1007--1034, 2015.

\bibitem{JainNetrapalliSanghavi2013}
Prateek Jain, Praneeth Netrapalli, and Sujay Sanghavi.
\newblock Low-rank matrix completion using alternating minimization.
\newblock In {\em Proceedings of the Forty-Fifth Annual ACM Symposium on Theory
  of Computing}, pages 665--674, 2013.

\bibitem{KeshavanMontanariOh2010}
Raghunandan~H. Keshavan, Andrea Montanari, and Sewoong Oh.
\newblock Matrix completion from a few entries.
\newblock {\em IEEE Transactions on Information Theory}, 56(6):2980--2998,
  2010.

\bibitem{KummerleVerdun2021}
Christian K{\"u}mmerle and Claudio~M. Verdun.
\newblock A scalable second order method for ill-conditioned matrix completion
  from few samples.
\newblock In {\em Proceedings of the 38th International Conference on Machine
  Learning}, pages 5872--5883, 2021.

\bibitem{LiuVandenberghe2009}
Zhang Liu and Lieven Vandenberghe.
\newblock Interior-point method for nuclear norm approximation with application
  to system identification.
\newblock {\em SIAM Journal on Matrix Analysis and Applications},
  31(3):1235--1256, 2009.

\bibitem{MaWangChiChen2020}
Cong Ma, Kaizheng Wang, Yuejie Chi, and Yuxin Chen.
\newblock Implicit regularization in nonconvex statistical estimation: Gradient
  descent converges linearly for phase retrieval, matrix completion, and blind
  deconvolution.
\newblock {\em Foundations of Computational Mathematics}, 20(3):451--632, 2020.

\bibitem{NgoSaad2012}
Thanh~T. Ngo and Yousef Saad.
\newblock Scaled gradients on {Grassmann} manifolds for matrix completion.
\newblock In {\em Advances in Neural Information Processing Systems},
  volume~25, pages 1412--1420, 2012.

\bibitem{SunLuo2016}
Ruoyu Sun and Zhi-Quan Luo.
\newblock Guaranteed matrix completion via non-convex factorization.
\newblock {\em IEEE Transactions on Information Theory}, 62(11):6535--6579,
  2016.

\bibitem{TannerWei2013}
Jared Tanner and Ke~Wei.
\newblock Normalized iterative hard thresholding for matrix completion.
\newblock {\em SIAM Journal on Scientific Computing}, 35(5):S104--S125, 2013.

\bibitem{TongMaChi2021}
Tian Tong, Cong Ma, and Yuejie Chi.
\newblock Accelerating ill-conditioned low-rank matrix estimation via scaled
  gradient descent.
\newblock {\em Journal of Machine Learning Research}, 22(150):1--63, 2021.

\bibitem{Tropp2012}
Joel~A. Tropp.
\newblock User-friendly tail bounds for sums of random matrices.
\newblock {\em Foundations of Computational Mathematics}, 12(4):389--434, 2012.

\bibitem{VakninLauferNadler2025}
Eilon Vaknin~Laufer and Boaz Nadler.
\newblock {RGNMR}: A {Gauss--Newton} method for robust matrix completion with
  theoretical guarantees.
\newblock In {\em Advances in Neural Information Processing Systems},
  volume~38, pages 66931--66973, 2025.

\bibitem{Vandereycken2013}
Bart Vandereycken.
\newblock Low-rank matrix completion by {Riemannian} optimization.
\newblock {\em SIAM Journal on Optimization}, 23(2):1214--1236, 2013.

\bibitem{WangZhangGu2017}
Lingxiao Wang, Xiao Zhang, and Quanquan Gu.
\newblock A unified computational and statistical framework for nonconvex
  low-rank matrix estimation.
\newblock In {\em Proceedings of the 20th International Conference on
  Artificial Intelligence and Statistics}, pages 981--990, 2017.

\bibitem{WangWei2026}
Tianming Wang and Ke~Wei.
\newblock Leave-one-out analysis for nonconvex robust matrix completion with
  general thresholding functions.
\newblock {\em Numerical Algorithms}, 2026.

\bibitem{WeiCaiChanLeung2020}
Ke~Wei, Jian-Feng Cai, Tony~F. Chan, and Shingyu Leung.
\newblock Guarantees of {Riemannian} optimization for low rank matrix
  completion.
\newblock {\em Inverse Problems and Imaging}, 14(2):233--265, 2020.

\bibitem{XuShenChiMa2023}
Xingyu Xu, Yandi Shen, Yuejie Chi, and Cong Ma.
\newblock The power of preconditioning in overparameterized low-rank matrix
  sensing.
\newblock In {\em Proceedings of the 40th International Conference on Machine
  Learning}, pages 38611--38654, 2023.

\bibitem{ZhuYuan2025}
Xiaojing Zhu and Fengyi Yuan.
\newblock A {Riemannian} regularized {Gauss--Newton} method for low-rank matrix
  completion.
\newblock {\em AIMS Mathematics}, 10(12):28556--28582, 2025.

\bibitem{ZilberNadler2022}
Pini Zilber and Boaz Nadler.
\newblock {GNMR}: A provable one-line algorithm for low rank matrix recovery.
\newblock {\em SIAM Journal on Mathematics of Data Science}, 4(2):909--934,
  2022.

\end{thebibliography}

\end{document}